\documentclass[11pt]{amsart}

\makeatletter
\def\l@section{\@tocline{1}{10pt}{1pc}{}{}}
\def\l@subsection{\@tocline{2}{0pt}{1pc}{4.6em}{}}
\def\l@subsubsection{\@tocline{3}{0pt}{1pc}{7.6em}{}}
\renewcommand{\tocsection}[3]{%
  \indentlabel{\@ifnotempty{#2}{\makebox[2.3em][l]{%
    \ignorespaces#1 #2.\hfill}}}\textbf{#3}}
\renewcommand{\tocsubsection}[3]{%
  \indentlabel{\@ifnotempty{#2}{\hspace*{2.3em}\makebox[2.3em][l]{%
    \ignorespaces#1 #2.\hfill}}}#3}
\renewcommand{\tocsubsubsection}[3]{%
  \indentlabel{\@ifnotempty{#2}{\hspace*{4.6em}\makebox[3em][l]{%
    \ignorespaces#1 #2.\hfill}}}#3}
\makeatother

\setcounter{tocdepth}{4}

\usepackage{amsmath,amssymb,amscd,amsthm,graphicx,enumerate,tikz-cd}

\usepackage{hyperref}
\hypersetup{breaklinks=true}

\usepackage{soul}

\usepackage{enumitem} 
\newlist{condenum}{enumerate}{1} 
\setlist[condenum]{label=\bfseries Condition \arabic*.,  ref=\arabic*, wide}

\newcounter{mtheorem}
\newtheorem{mtheorem}[mtheorem]{Theorem}

\newtheorem{mcor}[mtheorem]{Corollary}

\setcounter{mtheorem}{0}

\numberwithin{equation}{section}
\setcounter{secnumdepth}{3}
\setcounter{tocdepth}{3}

\setlength{\parskip}{1ex}

\theoremstyle{plain}

\makeatletter

\makeatother

\usepackage[utf8]{inputenc}
\usepackage{amssymb}
\usepackage{amsthm}
\usepackage{mathrsfs}
\usepackage{amsfonts}
\usepackage{amsmath}
\usepackage{mathtools}
\usepackage{makecell}
\usepackage{xcolor}
\bibliographystyle{alpha}

\usepackage[margin=1.25in]{geometry}

\usepackage{lipsum}
\makeatletter
\def\ps@pprintTitle{%
 \let\@oddhead\@empty
 \let\@evenhead\@empty
 \def\@oddfoot{}%
 \let\@evenfoot\@oddfoot}
\makeatother

\newcommand{\R}{\mathbb{R}}

\newcommand{\Rm}{\textnormal{Rm}}

\newcommand{\Ric}{\textnormal{Ric}}

\newcommand{\rii}{\rightarrow\infty}
\newcommand{\ri}{\rightarrow}

\newcommand{\diam}{\textnormal{diam}}

\numberwithin{equation}{section}

\newtheorem{theorem}{Theorem}[section]
\newtheorem{lem}[theorem]{Lemma}
\newtheorem{remark}[theorem]{Remark}
\newtheorem{prop}[theorem]{Proposition}

\newtheorem{cor}[theorem]{Corollary}
\newtheorem{con}[theorem]{Conjecture}
\theoremstyle{definition}
\newtheorem{defn}[theorem]{Definition}
\newtheorem*{theorem*}{Theorem}
\newtheorem{eg}[theorem]{Example}

\usepackage{xpatch}
\makeatletter
\xpatchcmd{\tableofcontents}{\contentsname \@mkboth}{\small\contentsname \@mkboth}{}{}
\xpatchcmd{\listoffigures}{\chapter *{\listfigurename }}{\chapter *{\small\listfigurename }}{}{}
\makeatother

\makeatletter
\def\blfootnote{\xdef\@thefnmark{}\@footnotetext}
\makeatother

\begin{document}

\begin{abstract}
In $1996$, H.-D.~Cao constructed a $U(n)$-invariant steady gradient K\"ahler-Ricci soliton on $\mathbb{C}^{n}$ and asked whether every steady gradient K\"ahler-Ricci soliton of positive curvature on $\mathbb{C}^{n}$ is necessarily $U(n)$-invariant (and hence unique up to scaling). Recently, Apostolov-Cifarelli answered this question in the negative for $n=2$. Here, we construct a family of  $U(1)\times U(n-1)$-invariant, but not $U(n)$-invariant, complete steady gradient K\"ahler-Ricci solitons with strictly positive curvature operator on real $(1,\,1)$-forms (in particular, with strictly positive sectional curvature) on $\mathbb{C}^{n}$ for $n\geq3$, thereby answering Cao's question in the negative for $n\geq3$. This family of steady Ricci solitons interpolates between Cao's $U(n)$-invariant steady K\"ahler-Ricci soliton and the product of the cigar soliton and Cao's $U(n-1)$-invariant steady K\"ahler-Ricci soliton. This provides the K\"ahler analog of the Riemannian flying wings construction of Lai. In the process of the proof, we also demonstrate that the almost diameter rigidity of $\mathbb{P}^{n}$ endowed with the Fubini-Study metric does not hold even if the curvature operator is bounded below by $2$ on real $(1,\,1)$-forms.
\end{abstract}

\title[A family of K\"ahler flying wing steady solitons]{A family of K\"ahler flying wing steady Ricci solitons}

\author[P.-Y.~Chan]{Pak-Yeung Chan}
\address[]{Mathematics Institute, Zeeman Building, University of Warwick, Coventry CV4 7AL, UK}
\email{pak-yeung.chan@warwick.ac.uk}

\author[R.~J.~Conlon]{Ronan J.~Conlon}
\address[]{Department of Mathematical Sciences, The University of Texas at Dallas, Richardson, TX 75080, USA}
\email{ronan.conlon@utdallas.edu}

\author[Y.~Lai]{Yi Lai}
\address[]{Department of Mathematics, Stanford University, Stanford, CA 94305, USA}
\email{yilai@stanford.edu}

\maketitle

\section{Introduction}
\subsection{Overview}
Ricci solitons are self-similar solutions of the Ricci flow that serve as generalizations of Einstein manifolds. They play an important role in the singularity analysis of the Ricci flow. Specifically, a \emph{Ricci soliton} is a triple $(M,\,g,\,X)$, where $M$ is a Riemannian manifold endowed with a complete Riemannian metric $g$ and a complete vector field $X$, such that
\begin{equation}\label{e: soliton}
\Ric_{g}+\frac{1}{2}\mathcal{L}_{X}g=\lambda\,g
\end{equation}
for some $\lambda\in\mathbb{R}$. A Ricci soliton is called \emph{steady} if $\lambda=0$, \emph{expanding}
if $\lambda<0$, and \emph{shrinking} if $\lambda>0$. If $X=\nabla^{g} f$ for some smooth real-valued function $f$ on $M$,
then we say that $(M,\,g,\,X)$ (or $(M,\,g,\,f)$) is a \emph{gradient} Ricci soliton, and we call $f$ the (\emph{soliton}) \emph{potential function}. In this case, the soliton equation \eqref{e: soliton}
becomes $$\Ric_{g}+\nabla^{2}f = \lambda\,g.$$
If the potential function moreover has a critical point $p\in M$, then we denote the soliton by $(M,g,f,p)$.

In \eqref{e: soliton}, if $g$ is K\"ahler and $X$ is real holomorphic, then we say that $(M,\,g,\,X)$ is a \emph{K\"ahler-Ricci soliton}. Let $\omega$ denote the K\"ahler
form of $g$. If $(M,\,g,\,X)$ is in addition gradient, then \eqref{e: soliton} may be rewritten as
\begin{equation*}
\rho_{\omega}+i\partial\bar{\partial}f=\lambda\,\omega,
\end{equation*}
where $\rho_{\omega}$ is the Ricci form of $\omega$ and $f$ is the potential. 
In this article, we are concerned with complete steady gradient K\"ahler-Ricci solitons. 

In real dimension $2$, the only example of a non-flat steady gradient Ricci soliton is Hamilton's cigar soliton \cite{cigar} which is rotationally symmetric. For real dimension $n\ge3$, the only $n$-dimensional non-flat rotationally symmetric (i.e., $O(n)$-symmetric) steady gradient Ricci soliton is the Bryant soliton \cite{bryant}. More recently, the third-named author constructed a family of $\mathbb{Z}_2\times O(n-1)$-symmetric, non-rotationally symmetric, steady gradient Ricci solitons on $\mathbb{R}^{n}$
for $n\ge3$. These solitons are collapsed for $n=3$ and are called ``flying wings'', reflecting the fact that they are asymptotic to two-dimensional sectors at infinity.
For $n\geq4$, they are non-collapsed. Moreover, in even dimensions $n\ge4$ they are not K\"ahler, since the curvature operator is strictly positive everywhere; cf.~Section 2 or \cite{DZ20}. 

In the K\"ahler world, for any complex dimension $n\geq1$, Cao \cite{Cao1996} constructed a $U(n)$-invariant steady gradient K\"ahler-Ricci soliton on $\mathbb{C}^{n}$, and on the canonical bundle $K_{\mathbb{P}^{n}}$ of complex projective space $\mathbb{P}^n$, and in doing so generalized Hamilton's cigar soliton to higher dimensions. Further generalizations were subsequently obtained by Dancer-Wang \cite{DancerWang2011}, Yang \cite{Yang12}, Biquard-Macbeth \cite{BM17}, the second-named author and Deruelle \cite{CD20}, and more recently by Sch\"afer \cite{Sch21, Sch20}. Cao-Hamilton \cite{CH00} showed that the underlying complex manifold of a complete steady gradient K\"ahler-Ricci soliton with strictly positive Ricci curvature admitting a critical point of the scalar curvature must be diffeomorphic to $\R^{2n}$. Bryant \cite{Bry08}
and Chau–Tam \cite{CT05} independently improved this result by showing that the underlying complex manifold of such a steady gradient K\"ahler-Ricci soliton must in fact be biholomorphic to $\mathbb C^{n}$.
As it turns out, Cao's steady gradient K\"ahler-Ricci soliton on $\mathbb C^n$ has strictly positive sectional curvature. In light of this fact, he conjectured the following.
\begin{con}[Cao's Conjecture \cite{Cao1996}]\label{Cao conj}
   A complete steady gradient K\"ahler-Ricci soliton with positive curvature on $\mathbb{C}^n$ must be isometric (up to scaling) to Cao's $U(n)$-invariant steady gradient K\"ahler-Ricci soliton on $\mathbb{C}^n$.
\end{con}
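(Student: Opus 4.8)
The plan is to show that Conjecture~\ref{Cao conj} is in fact \emph{false} for every $n\geq 3$, by constructing an explicit one-parameter family of counterexamples --- the K\"ahler analogues of Lai's Riemannian flying wings. The starting point is a symmetry reduction. Writing $\mathbb{C}^{n}=\mathbb{C}_{z_{0}}\times\mathbb{C}^{n-1}_{z'}$, a $U(1)\times U(n-1)$-invariant K\"ahler metric is governed by a single convex function of the two real variables $x=|z_{0}|^{2}$ and $y=|z'|^{2}$ --- a K\"ahler potential $\Phi(x,y)$, or, dually, a symplectic potential on the moment image $\{x\geq0,\,y\geq0\}$ --- and the steady K\"ahler-Ricci soliton equation $\rho_{\omega}+i\partial\bar\partial f=0$, with $f$ invariant, becomes a real Monge--Amp\`ere type equation for this function on the quadrant. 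Its solutions are pinned down by their smooth closing-up behavior along the two axes $\{z_{0}=0\}$ and $\{z'=0\}$ together with one free interpolation parameter $a$, whose two extreme values should recover, on one end, Cao's $U(n)$-invariant soliton (for which $\Phi$ depends only on $x+y$) and, on the other end, the product of Hamilton's cigar on $\mathbb{C}_{z_{0}}$ with Cao's $U(n-1)$-invariant soliton on $\mathbb{C}^{n-1}_{z'}$ (for which $\Phi$ splits as a sum).

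For each interior value of $a$ I would produce a \emph{complete} solution by a continuity/shooting argument: prescribe the axis (boundary) conditions, integrate, and establish via a priori estimates --- monotonicity of suitable curvature- and potential-quantities, barrier comparisons with the two endpoint solitons, and maximum-principle arguments excluding finite-distance degeneration --- that the metric extends to all of $\mathbb{C}^{n}$ and closes up smoothly along the opposite axis. By construction the resulting metric is K\"ahler and solves the steady soliton equation; completeness of the soliton vector field $X=\nabla f$ follows from the control on $f$ near infinity obtained along the way.

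It then remains to verify three qualitative features. First, \emph{strict positivity of the curvature operator on real $(1,1)$-forms} --- equivalently, that a finite list of differential inequalities in the reduced unknowns holds throughout; one checks these at the axes and at the endpoint solitons and propagates them along the integration by a maximum principle, whence in particular strict positivity of the sectional curvature. Second, \emph{failure of $U(n)$-invariance}: for interior $a$ the soliton is asymptotic at infinity to a geometry modeled on a two-dimensional sector times $\mathbb{C}^{n-1}$ that differs from the $U(n)$-symmetric model, so comparison of asymptotic cones (or of the shape of the level sets of $f$) shows that the soliton is not isometric, even up to scale, to any $U(n)$-invariant metric. Third, the \emph{diameter rigidity remark}: the construction naturally yields K\"ahler metrics on $\mathbb{P}^{n}$ --- read off from the cross-sectional/base geometry in one of the limiting regimes --- whose curvature operator on real $(1,1)$-forms is bounded below by $2$ and whose diameters approach that of $(\mathbb{P}^{n},g_{FS})$, yet which do not Gromov--Hausdorff converge to the Fubini-Study metric, contradicting any Cheng-type almost-rigidity statement under this curvature bound.

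The step I expect to be the main obstacle is the existence argument over the entire open range of $a$: one must simultaneously show that the Monge--Amp\`ere problem admits a global solution closing up smoothly on the far axis \emph{and} that the positivity inequalities survive all the way, with no degeneration except in the two limiting cases --- essentially an openness--closedness argument in $a$ whose closedness rests on delicate uniform estimates. Identifying the right coordinates and invariant quantities so that these estimates become tractable is, I anticipate, where most of the effort lies.
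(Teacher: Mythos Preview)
The statement is a \emph{conjecture}, and the paper does not prove it but rather refutes it for all $n\geq 2$ via Theorem~\ref{t: existence of new}. Your proposal correctly aims at disproof by construction, but the route you outline is genuinely different from the paper's.

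You propose a \emph{direct} attack: reduce the steady soliton equation under $U(1)\times U(n-1)$ symmetry to a Monge--Amp\`ere type problem in two radial variables, solve it by a shooting/continuity argument in an interpolation parameter, and verify strict positivity of the curvature operator by propagating differential inequalities via maximum principles. This is close in spirit to the Apostolov--Cifarelli approach for $n=2$, and as the paper notes, it is precisely the positivity of curvature in dimensions $n\geq 3$ that remains unclear for their construction. Your own final paragraph identifies the same obstacle: maintaining the curvature inequalities globally along the continuity path is the hard step, and you give no mechanism beyond hope that the right quantities exist.

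The paper circumvents this entirely by an \emph{indirect} route, adapting Lai's Riemannian collapsing method. Rather than solving the steady equation, one first constructs collapsing K\"ahler metrics on $\mathbb{P}^{n-1}$ with $\Rm\geq 2$ on real $(1,1)$-forms (Sections 3--4, via a gluing with Cao's expanders and a Ricci flow smoothing), lifts these to K\"ahler cones on $\mathbb{C}^{n}$, and then invokes the Conlon--Deruelle existence theorem (Theorem~\ref{Thm E}) to obtain \emph{expanding} K\"ahler-Ricci solitons with $\Rm>0$ on real $(1,1)$-forms asymptotic to these cones. Positive curvature is thus inherited for free from the expanding-soliton machinery rather than verified by hand. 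The steady solitons then arise as rescaled limits of these expanders as the asymptotic volume ratio collapses, and positivity passes to the limit. The interpolation parameter is realised via a K\"ahler--Ricci flow on $\mathbb{P}^{n-1}$ connecting the collapsing metrics to the Fubini--Study metric.

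In short: your approach is plausible but leaves the central analytic difficulty---global existence with preserved curvature positivity---unresolved, whereas the paper's method sidesteps it by importing positivity from the expanding-soliton continuity theory, at the cost of a more elaborate construction of the base metrics on $\mathbb{P}^{n-1}$. The diameter non-rigidity statement (Theorem~\ref{t: GH convergence on CPn}) falls out of the paper's construction of those base metrics; in your scheme it would require a separate argument.
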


As pointed out by Cao \cite{Cao1996}, this conjecture is true when $n=1$ as the real Killing vector field $J\nabla f$ provides the $U(1)$-symmetry. In this case, the soliton is the cigar soliton. In higher dimensions, it is natural to investigate the local version of this uniqueness problem, namely whether or not Cao's soliton can be perturbed to another steady gradient K\"ahler-Ricci soliton nearby. To this end, Chau-Schn\"urer \cite{CS05} demonstrated that Cao's soliton on $\mathbb{C}^n$ is dynamically stable under sufficiently small perturbations of the K\"ahler potential which have fast decay at infinity. Uniqueness of Cao's soliton under suitable $C^1$ asymptotic conditions at spatial infinity was proved by Cui \cite{CuiThesis16} via Brendle's Killing vector field method \cite{brendlesteady3d, Brendle_jdg_high} (see also \cite{CF16, CD20, Sch20}).

In the more rigid \emph{shrinking} soliton case, Ni \cite{ni2005ancient} showed that $\mathbb{P}^n$ endowed with the $U(n+1)$-invariant Fubini-Study metric is the unique shrinking gradient K\"ahler-Ricci soliton with strictly positive bisectional curvature. This condition is strictly weaker than strictly positive sectional curvature and strictly positive curvature operator on real $(1,\,1)$-forms. In the more flexible \emph{expanding} soliton case, there is a one-parameter family of $U(n)$-invariant expanding gradient K\"ahler-Ricci solitons with positive curvature on $\mathbb C^n$ constructed by Cao \cite{Cao1997jdg}. Furthermore, the second-named author and Deruelle \cite{con-der} proved that there exist 
continuous families of asymptotically conical expanding gradient K\"ahler-Ricci solitons with positive curvature on $\mathbb{C}^{n}$. Heuristically speaking, steady K\"ahler-Ricci solitons exhibit behavior residing on the cusp of shrinking and expanding solitons. They are particularly delicate due to volume collapsing phenomenon \cite{DZ20}. It is therefore tempting to understand the general uniqueness of positively curved steady K\"ahler-Ricci solitons.

To this end, Apostolov-Cifarelli \cite{apostolov2023hamiltonian} recently constructed counterexamples to Conjecture \ref{Cao conj} for $n=2$. More precisely, they used Hamiltonian two-forms and toric geometry to construct a one-parameter family of $U(1)\times U(1)$-invariant positively curved steady gradient K\"ahler-Ricci solitons on $\mathbb{C}^{2}$. They also constructed complete steady gradient K\"ahler-Ricci solitons on $\mathbb{C}^n$ with more general symmetries for $n\ge 3$. However, it is unclear whether or not
their examples exhibit positive curvature in these dimensions, hence Conjecture \ref{Cao conj} remains open for $n\ge 3$.

\subsection{Main results}

\subsubsection{Existence of steady K\"ahler-Ricci solitons}

In our first result, we construct complete steady gradient K\"ahler-Ricci solitons on $\mathbb{C}^{n}$ with strictly positive curvature operator on $(1,\,1)$-forms that are not $U(n)$-invariant. The condition of $\Rm>0$ on real $(1,\,1)$-forms in particular implies strictly positive sectional curvature. Thus, these steady solitons provide counterexamples to Conjecture \ref{Cao conj} in any dimension $n\ge2$.

\begin{mtheorem}\label{t: existence of new}
Let $n\ge2$ and set $c_n=\tfrac{1}{2n(n+1)}$. 
 Then for all $\alpha\in[0,c_n]$, there exists a $U(1)\times U(n-1)$-invariant complete steady gradient K\"ahler-Ricci soliton $(M,g,f,p)$ on $\mathbb{C}^n$ with strictly positive curvature operator on real $(1,\,1)$-forms such that $R(p)=1$ and the lowest sectional curvature at $p$ is equal to $\alpha$. 
\end{mtheorem}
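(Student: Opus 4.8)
The plan is to construct the soliton metric explicitly as a $U(1)\times U(n-1)$-invariant K\"ahler metric on $\mathbb{C}^n$ by reducing the soliton equation to a system of ODEs, following the strategy of Cao \cite{Cao1996} but with the larger symmetry group $U(1)\times U(n-1)$ in place of $U(n)$, and running the continuity/shooting argument in the parameter $\alpha$. First I would set up the momentum-type coordinates: write $\mathbb{C}^n=\mathbb{C}\times\mathbb{C}^{n-1}$ with coordinates $(z_0,w)$, use the two moment-map variables associated with $|z_0|^2$ and $|w|^2$, and express a $U(1)\times U(n-1)$-invariant K\"ahler potential in terms of a function of these two variables. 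The steady K\"ahler-Ricci soliton equation $\rho_\omega+i\partial\bar\partial f=0$ together with the requirement that $X=\nabla^g f$ be real holomorphic then becomes a system of (ultimately second-order) ODEs; imposing the correct symmetry and smooth closing-up at the fixed-point locus $\{z_0=0\}\cup\{w=0\}$ fixes the boundary conditions. The free parameter in the resulting family will be $\alpha$, normalized so that $R(p)=1$; $\alpha=0$ should recover the product $\cigar\times(\text{Cao's }U(n-1)\text{-soliton})$ and $\alpha=c_n$ should recover Cao's $U(n)$-invariant soliton on $\mathbb{C}^n$.

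The key steps, in order, are: (i) derive the reduced ODE system and identify the one-parameter family of solutions, with $\alpha$ entering through the initial data at $p$; (ii) prove long-time existence/completeness of the metric — i.e.\ that the solution of the ODE system exists for all values of the independent variable and defines a complete metric on all of $\mathbb{C}^n$, closing up smoothly along the two coordinate hyperplanes where the $U(1)$ or $U(n-1)$ orbits degenerate; (iii) verify the curvature positivity, namely that the curvature operator on real $(1,1)$-forms is strictly positive, by computing the curvature of the warped-product-type metric in terms of the ODE unknowns and checking the relevant inequalities along the whole solution; and (iv) compute the lowest sectional curvature at the critical point $p$ and show it equals $\alpha$, and that $R(p)=1$, pinning down the normalization and the endpoints $\alpha\in\{0,c_n\}$.

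I expect step (iii), the curvature positivity, to be the main obstacle. For the $U(n)$-invariant ansatz, positivity of all the relevant curvature terms reduces (as in Cao's work) to a handful of inequalities among the profile function and its derivatives that can be verified by maximum-principle or monotonicity arguments; but with only $U(1)\times U(n-1)$ symmetry there are more independent sectional curvatures (mixed planes between the $\mathbb{C}$-factor and the $\mathbb{C}^{n-1}$-factor, and planes tangent to $\mathbb{C}^{n-1}$), and the curvature operator on $(1,1)$-forms involves a genuinely matrix-valued positivity condition rather than a scalar one. The strategy here is to write the curvature operator in a block form adapted to the symmetry, reduce positive-definiteness to finitely many ODE inequalities in the two profile functions, and then propagate these inequalities along the flow using the soliton structure — in particular exploiting that the soliton identities (the contracted second Bianchi identity, $R+|\nabla f|^2=\text{const}$, and the evolution of $\Rm$ under the soliton vector field) give differential inequalities that preserve the positivity cone. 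The restriction $\alpha\le c_n$ is precisely the threshold up to which this cone condition survives out to infinity; for $\alpha>c_n$ one expects the mixed sectional curvatures to change sign. A secondary difficulty is step (ii): controlling the asymptotics of the solution at infinity well enough to conclude geodesic completeness and the stated interpolation behavior between Cao's soliton and the cigar-times-Cao product, which will require a careful analysis of the ODE system near its limiting regimes.
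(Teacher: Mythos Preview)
Your proposal takes a fundamentally different route from the paper, and there are genuine gaps in it.

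\textbf{What the paper actually does.} The paper does not construct the steady solitons directly by solving an ODE system. Instead it uses the third-named author's collapsing method from \cite{Lai2020_flying_wing}: (a) build a family of $U(n-1)$-invariant K\"ahler metrics on $\mathbb{P}^{n-1}$ with $\Rm\ge 2$ on real $(1,1)$-forms that Gromov--Hausdorff collapse to $[0,\tfrac{\pi}{2}]$ (this is Theorem~\ref{t: GH convergence on CPn}, proved via a gluing of Cao expanders into singular warped metrics, then smoothing by Ricci flow and a heat-kernel argument); (b) lift these to $U(1)\times U(n-1)$-invariant K\"ahler cone metrics on $\mathbb{C}^n$ via Proposition~\ref{iceland}; (c) invoke Conlon--Deruelle \cite{con-der} (Theorem~\ref{Thm E}) to obtain a smooth one-parameter family of \emph{expanding} gradient K\"ahler--Ricci solitons with strictly positive curvature operator on real $(1,1)$-forms asymptotic to these cones; (d) let the asymptotic volume ratio collapse and take a Cheeger--Gromov limit to get steady solitons; (e) identify the two endpoint limits (Cao's $U(n)$-soliton and the product $\cigar\times$Cao$_{n-1}$) and interpolate by the intermediate value theorem on the sectional curvature $K(\sigma)$ at $p$.

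\textbf{Why your direct ODE plan is problematic.} First, the $U(1)\times U(n-1)$-action on $\mathbb{C}^n$ is cohomogeneity \emph{two}: a general invariant K\"ahler potential is a function of $(|z_0|^2,|w|^2)$ and the soliton equation is a nonlinear PDE in two real variables, not a system of ODEs. Reducing to ODEs requires an additional ansatz (e.g.\ the Hamiltonian $2$-form machinery of Apostolov--Cifarelli), which you do not specify. Second, even granting such a reduction, your step (iii) is exactly the step that is open in the literature: Apostolov--Cifarelli \emph{do} construct $U(1)\times U(n-1)$-invariant steady K\"ahler--Ricci solitons on $\mathbb{C}^n$ for $n\ge 3$ by an explicit ODE construction, and the paper explicitly notes that their positivity in dimensions $n\ge 3$ is unclear. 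Your strategy of ``propagating curvature inequalities along the soliton vector field'' is not a known mechanism for the curvature operator on real $(1,1)$-forms in this cohomogeneity, and there is no maximum-principle argument in the paper or the literature that accomplishes this directly on the steady soliton.

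\textbf{The key idea you are missing.} The paper circumvents the curvature-positivity obstacle entirely: positivity of $\Rm$ on real $(1,1)$-forms is obtained \emph{for free} on the expanding solitons by the continuity method of \cite{con-der}, and is then inherited (at least in the nonnegative sense) by the steady limit. Strict positivity for the intermediate solitons follows by the strong maximum principle / de Rham splitting argument in Section~5: either the limit has strictly positive curvature operator on $(1,1)$-forms, or it splits, and the splitting is shown to occur only at the endpoint $\alpha=0$. The intermediate value argument then fills in all $\alpha\in(0,c_n)$. This indirect route is the essential new input, and it is absent from your outline.
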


The family of steady solitons in this theorem
interpolates between the product of the cigar soliton and Cao's $U(n-1)$-invariant steady K\"ahler-Ricci soliton ($\alpha=0$)
and Cao's $U(n)$-invariant steady K\"ahler-Ricci soliton ($\alpha=c_n$). These solitons serve as the K\"ahler analog of the $\mathbb Z_2\times O(n-1)$-symmetric $n$-dimensional Riemannian steady Ricci solitons from \cite{Lai2020_flying_wing,lai20223d}, and so we call them K\"ahler flying wings.  As demonstrated in Appendix \ref{s: appendix B} (cf.~Corollary \ref{noniso}), by comparing the respective soliton vector fields, we show that the steady solitons of Theorem \ref{t: existence of new} are not isometric to those of Apostolov-Cifarelli \cite{apostolov2023hamiltonian}. In light of the results from \cite{ChenZhu2005, DZ20, ni2005ancient}, the steady gradient K\"ahler-Ricci solitons of Theorem \ref{t: existence of new} are all collapsed, have zero asymptotic volume ratio, and have volume growth rate bounded below by $r^n$, where $n$ is the complex dimension of the underlying manifold.

More generally, constructing explicit examples of complete non-compact K\"ahler manifolds with strictly positive sectional curvature in higher dimensions has long been an important problem in K\"ahler geometry.
Surprisingly, not much progress was made until the mid-90s. In earlier work \cite{Kle77}, Klembeck constructed $U(n)$-invariant complete non-compact K\"ahler manifolds on $\mathbb{C}^n$ with strictly positive bisectional curvature $BK>0$. When $n=1$, the same example was also constructed by Hamilton \cite{cigar} independently and is known as the cigar soliton.
However, for $n\ge 2$, these examples do not have nonnegative 
sectional curvature 
\cite[Example 1]{WZ11}. To the best of our knowledge, Cao's $U(n)$-invariant expanding and steady gradient K\"ahler-Ricci solitons \cite{Cao1996, Cao1997jdg} are the first examples of complete non-compact K\"ahler manifolds in higher dimensions with strictly positive sectional curvature in the literature (see \cite{WZ11}). Wu-Zheng \cite{WZ11} systematically study $U(n)$-invariant K\"ahler manifolds with strictly positive sectional curvature and provided more non-trivial examples with $U(n)$-symmetry, exotic volume growth, and scalar curvature decay. We refer the reader to \cite{WZ11, YZ13} for a more detailed historical account in this direction. Since positive curvature is an open condition, one may easily generate other K\"ahler manifolds with strictly positive sectional curvature via a small compact perturbation of the K\"ahler potential of the aforementioned metrics. Theorem \ref{t: existence of new} generalizes previous examples in \cite{apostolov2023hamiltonian, con-der} by giving positively curved sub-$U(n)$-symmetric examples in higher dimensions that cannot be obtained by small perturbations of the K\"ahler potential of Cao's examples.

\subsubsection{Non-almost diameter rigidity of $\mathbb{P}^{n}$}

In the course of the proof of Theorem \ref{t: existence of new}, we obtain a non-almost diameter rigidity result for $\mathbb{P}^{n}$. The classical Myers theorem states that a complete Riemannian manifold $(M,\,g)$ with $\Ric\ge (n-1)g$ must have diameter $\text{diam}(M,\,g)\le \pi$. Cheng \cite{Cheng1975} proved that $\text{diam}(M,\,g)= \pi$ if and only if $M$ is isometric to the standard sphere of radius $1$. It is natural to ask whether or not ``almost diameter'' rigidity holds, meaning whether or not a Riemannian manifold $(M,\,g)$ with $\Ric\ge (n-1)g$ having diameter close to $\pi$ is also close to the standard sphere of radius $1$ in a certain sense.
However, the almost diameter rigidity does not hold even topologically. Indeed, there are metrics $g$ with $\Ric_{g}\ge (n-1)g$ and $\operatorname{diam}(g)\ge \pi-\varepsilon$ for arbitrarily small $\varepsilon>0$ on $\mathbb P^n$ for $n\ge2$ by Anderson \cite{An90}, 
and $S^k\times S^{n-k}$ for any $k\ge2$ and $n-k\ge 3$ by Otsu \cite{Ot91}. 
On the other hand, the almost diameter rigidity does hold under additional conditions: With a suitable negative sectional curvature lower bound, Perelman proved that it is homeomorphic to $S^n$ \cite{perelman1995diameter}.
Very recently, Ren-Rong \cite{RR2023} showed that the manifold must be $\delta$-bi-H\"older close to the standard sphere for any $\delta>0$
if local universal covers are sufficiently close to the Euclidean ball under a uniform scale; see also \cite{cheeger1995almost,cheeger1996shape,Maximo23} for further discussion of the almost rigidity assuming $\Ric_{g}\ge(n-1)g$. 
Finally, the almost diameter rigidity does not hold under the even stronger curvature condition $\Rm\ge1$. The third-named author constructed examples with $\Rm\ge1$ that are arbitrarily close in the Gromov-Hausdorff sense to an interval of length $\pi$ \cite{Lai2020_flying_wing}. 

In the K\"ahler case, Li-Wang \cite{LiWang2005} proved that a complete K\"ahler manifold with bisectional curvature $BK\ge 2$ must have diameter bounded above by that of half of the Fubini-Study metric, that is, $\frac{\pi}{2}$. Recently, Datar-Seshadri \cite{DS2023} improved a rigidity result of Liu-Yuan \cite{GY2018} (see also \cite{ TamYu12}) by showing that if the diameter is equal to $\frac{\pi}{2}$, then the K\"ahler manifold is holomorphically isometric to $\mathbb{P}^n$. The next natural step then is to investigate the almost diameter rigidity in the K\"ahler case. In contrast to the Riemannian case, the K\"ahler structure imposes extra restrictions on the geometry of the manifold.
By the results of Mori and Siu-Yau on the Frankel conjecture \cite{mori1979projective,siu1980compact}, the positive bisectional curvature assumption already guarantees that the closed K\"ahler manifold is biholomorphic to $\mathbb P^n$. It is therefore interesting to ask if closeness to maximal diameter implies closeness to $\mathbb{P}^{n}$ in a more restrictive way, say the Gromov-Hausdorff sense, under the curvature condition. As a byproduct of our construction, we exhibit a degeneration of a family of K\"ahler metrics with bisectional curvature $BK\ge 2$ to a one-dimensional interval with optimal diameter equal to $\frac{\pi}{2}$. This demonstrates that the almost diameter rigidity of $\mathbb{P}^{n}$ in the K\"ahler case does not hold in general without further assumptions. More precisely, we show that almost diameter rigidity of $\mathbb{P}^{n}$ does not hold under an even stronger curvature condition.

\begin{mtheorem}\label{t: GH convergence on CPn}
    Let $n\geq 1$. Then for all $\varepsilon>0$, there exists a $U(n)$-invariant K\"ahler metric $g$ on $\mathbb{P}^n$ having curvature operator $\Rm$ bounded below by $2$ on {\sl real} $(1,1)$-forms everywhere (in particular, the holomorphic bisectional curvature BK $\ge 2$) such that 
    \[
    d_{GH}\left((\mathbb{P}^n,d_g),[0,\tfrac{\pi}{2}]\right)\le\varepsilon.
    \]
\end{mtheorem}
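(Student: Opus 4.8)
The plan is to realise $(\mathbb{P}^n,g)$ via the Calabi-type momentum construction for $U(n)$-invariant K\"ahler metrics and then to exhibit an explicit collapsing family. View $\mathbb{P}^n$ as the cohomogeneity-one $U(n)$-space with isolated fixed point $p_0=[1:0:\dots:0]$ and singular orbit the hyperplane $\mathbb{P}^{n-1}_\infty=\{z_0=0\}$; on the open dense orbit, identified with $(0,T)\times S^{2n-1}$, a $U(n)$-invariant K\"ahler metric has the form
\[
g_\varphi=\frac{d\tau^2}{\varphi(\tau)}+\varphi(\tau)\,\sigma\otimes\sigma+\tau\,\pi^{*}g_{B},
\]
where $\tau\in(0,T)$ is the momentum coordinate, $\varphi>0$ is a smooth profile on $(0,T)$, $\sigma$ is the standard connection form of the Hopf fibration $\pi\colon S^{2n-1}\to\mathbb{P}^{n-1}$, and $g_B$ is a fixed multiple of the Fubini-Study metric on $\mathbb{P}^{n-1}$. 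After normalising the momentum coordinate, smoothness at $p_0$ forces $\varphi(0)=0$, $\varphi'(0)=2$, and smoothness across $\mathbb{P}^{n-1}_\infty$ (whose normal bundle is $\mathcal{O}(1)$) forces $\varphi(T)=0$, $\varphi'(T)=-2$; the Fubini-Study metric corresponds to the parabola $\varphi_{FS}(\tau)=\tfrac2T\tau(T-\tau)$, and $T$ is the free K\"ahler-class parameter, with $\diam(\mathbb{P}^n,\omega_{FS})=\tfrac{\pi}{2}$ for the correct normalization.

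I would next compute the curvature of $g_\varphi$ in the $U(n)$-adapted unitary coframe given by the ``fibre'' direction (spanned by $\partial_\tau$ and $\sigma^{\sharp}$) together with $n-1$ ``base'' directions tangent to $\mathbb{P}^{n-1}$. Because everything is $U(n)$-invariant, the curvature operator of $g_\varphi$ on real $(1,1)$-forms is, at each point, block-diagonal with respect to the isotropy representation, so the requirement $\Rm\ge 2$ on real $(1,1)$-forms reduces to a short list of scalar differential inequalities in $\varphi,\varphi',\varphi'',\tau$: schematically, a fibre inequality governed by $-\varphi''$ together with a $\tfrac{2-\varphi'}{\tau}$ correction, a mixed fibre--base inequality governed by $\tfrac{2-\varphi'}{\tau}$, and base inequalities governed by $\tfrac{2}{\tau}-\tfrac{\varphi}{\tau^{2}}$ and the curvature of $(\mathbb{P}^{n-1},g_B)$. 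These are the same inequalities that underlie the curvature analysis in the proof of Theorem \ref{t: existence of new}; in particular, evaluating them at $\tau=0$ shows that $\Rm\ge 2$ at $p_0$ forces $\varphi''(0)\le -4$, and they all hold for $\varphi=\varphi_{FS}$.

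To prove the theorem I would then choose, for each large $k$, a profile $\varphi_k$ on an interval $[0,T_k]$ with $T_k\to0$ of the following shape: $\varphi_k$ leaves $\tau=0$ with $\varphi_k'(0)=2$ and $\varphi_k''(0)=-4$ (the extremal value permitted by $\Rm\ge 2$ at $p_0$), is bent over to a plateau of height $\approx\varepsilon_k:=(2T_k/\pi)^2$, remains $\approx\varepsilon_k$ on the bulk of $[0,T_k]$, and descends symmetrically to $\varphi_k(T_k)=0$ with $\varphi_k'(T_k)=-2$. Then $\varepsilon_k\to0$ with $\varepsilon_k\ll T_k$, so the Hopf circles (length $\sim\sqrt{\varphi_k}\le\sqrt{\varepsilon_k}$) and the singular orbit $\mathbb{P}^{n-1}_\infty$ (diameter $\sim\sqrt{T_k}$) both collapse; in particular every $U(n)$-orbit in $(\mathbb{P}^n,g_{\varphi_k})$ has diameter $o(1)$, while
\[
D_k:=\int_{0}^{T_k}\frac{d\tau}{\sqrt{\varphi_k(\tau)}}=\frac{\pi}{2}+o(1)
\]
by the choice of $\varepsilon_k$ (and $D_k\le\diam\le\tfrac\pi2$ by the Li-Wang bound). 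The ``distance from $p_0$'' map $(\mathbb{P}^n,d_{g_{\varphi_k}})\to[0,D_k]$ is then $1$-Lipschitz, onto, with fibres of diameter $o(1)$, hence an $o(1)$-Gromov-Hausdorff approximation, so $d_{GH}\big((\mathbb{P}^n,d_{g_{\varphi_k}}),[0,\tfrac\pi2]\big)\to0$, which gives the claim for every $\varepsilon>0$.

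The step I expect to be the main obstacle is choosing $\varphi_k$ so that the full list of $(1,1)$-curvature inequalities holds with $\Rm_{g_{\varphi_k}}\ge2$ everywhere, notwithstanding the plateau, where $\varphi_k''\approx0$. What saves the construction is that on the plateau $\tau$ ranges only over $(0,T_k)$ with $T_k$ small, so the base-type quantities $\tfrac{2-\varphi_k'}{\tau}$ and $\tfrac{2}{\tau}-\tfrac{\varphi_k}{\tau^{2}}$ are of size $\gtrsim 1/T_k\to\infty$ and dominate $2$ comfortably; the genuinely delicate region is the short ``bending layer'' near each endpoint, where $\varphi_k''$ must be driven very negative to turn $\varphi_k$ over at height $\varepsilon_k$ while the fibre, mixed, and base curvatures are all kept above $2$. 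Verifying that the prescribed boundary jets at $\tau=0,T_k$ can be interpolated to the plateau without ever letting $\Rm$ drop below $2$ on real $(1,1)$-forms is the technical core; once this is in place the collapse and the evaluation $D_k\to\pi/2$ are routine. (For $n=1$ there is no base factor and the construction reduces to the elementary fact that $S^2=\mathbb{P}^1$ carries rotationally symmetric metrics with $\Rm\ge2$ on $(1,1)$-forms that are Gromov-Hausdorff close to $[0,\tfrac\pi2]$.)
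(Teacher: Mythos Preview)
Your proposal has a genuine gap at exactly the point you flag as the ``main obstacle'', and the gap is fatal for the plateau profile rather than merely technical.

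The pure fibre curvature condition has \emph{no} base-type correction. In the doubly-warped form $ds^{2}+a^{2}(s)\eta^{2}+b^{2}(s)g^{T}$, evaluating the curvature operator on the real $(1,1)$-form supported in the radial/Reeb direction (i.e.\ $u^{n\bar n}=1$, all other entries zero) gives exactly $-a''/(4a)$; this is condition~(2) of Lemma~\ref{conditions on ab}, derived from the entry $\hat R_{n\bar n n\bar n}=a''/(4a)$ in \eqref{curform}. Under the change to momentum coordinates one has $a'=\tfrac12\varphi'$ and $a''=\tfrac12\varphi''\,a$, hence
\[
-\frac{a''}{4a}\;=\;-\frac{\varphi''}{8}.
\]
Thus $\Rm\ge 2$ on real $(1,1)$-forms forces $-\varphi''\ge 8$ (or $\ge 4$ in your normalisation) at \emph{every} point. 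On your plateau $\varphi_k''\approx 0$, so $\Rm\ge 2$ fails there outright. The quantities $\tfrac{2-\varphi'}{\tau}$ and $\tfrac{2}{\tau}-\tfrac{\varphi}{\tau^2}$ that you invoke enter only the mixed and base components (conditions (1), (3), (4) of Lemma~\ref{conditions on ab}); they cannot compensate in condition~(2).

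This is not a defect of one profile that a cleverer $\varphi_k$ would fix easily. The constraint $-\varphi''\ge C>0$ together with $\varphi(0)=\varphi(T)=0$, $\varphi'(0)=-\varphi'(T)=2$ already forces $T\le 4/C$, and in arclength a Sturm comparison gives $a(s)\le\tfrac12\sin(2s)$; profiles that make $\int a$ small and $L\to\pi/2$ while satisfying~(2) tend to violate~(3) (try the piecewise-sinusoidal candidates). The paper bypasses a direct construction entirely: it first produces smooth metrics with $\Rm>0$ everywhere but $\Rm\ge2$ only outside arbitrarily small sets, by gluing Cao's expanding solitons into the singular model $h_k$ (Proposition~\ref{l: smooth metric with nonnegative Rm}). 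It then runs Ricci flow from these approximations and uses a heat-kernel argument (Lemmas~\ref{l: heat inequality} and~\ref{l: limit flows g_k}): since $\lambda=\tfrac12\inf_{(1,1)}\Rm$ is a supersolution of the heat equation and the ``bad'' initial region has measure $\to 0$, one obtains $\lambda\ge1$ everywhere for $t>0$. Picking $t_k\to0$ then yields the collapsing sequence with $\Rm\ge2$ globally (Corollary~\ref{p: heat kernel}). This maximum-principle step is the essential replacement for the direct profile verification that your outline leaves open.
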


In Theorem \ref{t: GH convergence on CPn}, it is clear by volume comparison that the volume of $g$ goes to $0$ as $\varepsilon\to0$. One may wonder if the almost diameter rigidity holds under an additional uniform volume lower bound. Indeed as mentioned in \cite{RR2023}, counterexamples in the Riemannian case with uniform volume lower bound were provided by Anderson \cite{An90} and Otsu \cite{Ot91} independently. In the K\"ahler case, we provide counterexamples in Corollary \ref{p: heat kernel} by finding a sequence of $U(n)$-invariant K\"ahler metrics on $\mathbb{P}^n$ with $\Rm\ge 2$ on {\sl real} $(1,1)$-forms with volumes uniformly bounded from below, whose diameters converge to $\frac{\pi}{2}$, but whose metrics are not close to half of the Fubini-Study metric in the Gromov-Hausdorff sense.

The precise definition of curvature operator $\Rm\ge2$ on real $(1,\,1)$-forms is given below in Definition \ref{equiv}. In particular, we show that this curvature condition implies that the holomorphic bisectional curvature $BK\ge 2$. The curvature condition $\Rm>0$
on complex-valued $(1,\,1)$-forms is also known as {\sl strictly positive complex curvature operator} and was studied in \cite{CaoChow1986, ChenZhu2005, Siu1980, WZ11}. 
Here, we study the curvature condition $\Rm>2$
on {\sl real} $(1,1)$-forms, and show that it is equivalent to the condition that the K\"ahler cone over the corresponding Sasaki manifold has $\Rm>0$ on real $(1,\,1)$-forms in the {\sl transverse} directions. Moreover, these K\"ahler cones can be smoothed out by expanding K\"ahler-Ricci solitons with $\Rm>0$ on real $(1,\,1)$-forms by a result of the second-named author and Deruelle \cite{con-der}.  Theorem \ref{t: GH convergence on CPn} therefore implies
\begin{mcor}\label{cor exp}
    Let $n\geq2$. Then for all $\varepsilon>0$, there exists a $U(1)\times U(n-1)$-invariant expanding gradient K\"ahler-Ricci soliton on $\mathbb{C}^n$ with $\Rm>0$
    on {\sl real} $(1,1)$-forms whose link $(S^{2n-1},h)$ satisfies 
    \[
    d_{GH}\left((S^{2n-1},d_h),[0,\tfrac{\pi}{2}]\right)\le\varepsilon.
    \]
\end{mcor}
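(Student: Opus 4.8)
The plan is to combine Theorem \ref{t: GH convergence on CPn} with the smoothing result of Conlon--Deruelle \cite{con-der} for K\"ahler cones. First I would extract from the proof of Theorem \ref{t: GH convergence on CPn} not just a single metric but a sequence of $U(n)$-invariant K\"ahler metrics $g_j$ on $\mathbb{P}^n$, each with curvature operator bounded below by $2$ on real $(1,1)$-forms, such that $(\mathbb{P}^n, d_{g_j})$ converges in the Gromov--Hausdorff sense to the interval $[0,\tfrac{\pi}{2}]$. The curvature condition $\Rm\ge 2$ on real $(1,1)$-forms is, by the equivalence discussed before Corollary \ref{cor exp} (the one attached to Definition \ref{equiv}), precisely the statement that the metric cone $C(S^{2n-1}_j)$ over the associated Sasaki structure on the unit circle bundle $S^{2n-1}$ of $\mathcal{O}_{\mathbb{P}^n}(-1)$, equipped with the cone K\"ahler metric $\og_j = dr^2 + r^2 h_j$ determined by $g_j$, has $\Rm>0$ on real $(1,1)$-forms in the transverse directions (equivalently, the transverse K\"ahler structure, which is just $g_j$ up to the obvious scaling, has $\Rm>2$ on real $(1,1)$-forms). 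This is the bridge that converts a statement about $\mathbb{P}^n$ into a statement about a K\"ahler cone singularity.

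Next I would invoke \cite{con-der}: a K\"ahler cone with $\Rm>0$ on real $(1,1)$-forms in the transverse directions admits a smoothing by a complete expanding gradient K\"ahler-Ricci soliton on the resolution $\mathbb{C}^n$ (the total space of $\mathcal{O}_{\mathbb{P}^n}(-1)$ contracted at the zero section, i.e.\ $\mathbb{C}^n$) that is asymptotic to the given cone and retains $\Rm>0$ on real $(1,1)$-forms. Because $g_j$ is $U(n)$-invariant, the cone and hence the expanding soliton inherit a $U(1)\times U(n-1)$-invariance (the $U(1)$ acting on the fiber/radial-companion direction and $U(n-1)\subset U(n)$; equivalently one may arrange the soliton to be $U(n)$-invariant, but the weaker symmetry stated in the corollary certainly holds). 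Denote by $(\mathbb{C}^n, G_j, F_j)$ this expanding soliton and by $(S^{2n-1}, h_j)$ its link, which by construction is exactly the link of the cone $C(S^{2n-1}_j)$, i.e.\ the Sasaki metric determined by $g_j$.

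It then remains to transfer the Gromov--Hausdorff convergence from $(\mathbb{P}^n, d_{g_j})$ to the links $(S^{2n-1}, d_{h_j})$. Here the key point is that the link $(S^{2n-1}, h_j)$ is a circle bundle over $(\mathbb{P}^n, g_j)$ whose fibers have length proportional to the radius of the fiber circle in the defining construction; in the degeneration of Theorem \ref{t: GH convergence on CPn} this fiber length is driven to $0$ (this is exactly the collapsing that produces the one-dimensional limit), so the bundle projection $(S^{2n-1}, d_{h_j})\to(\mathbb{P}^n, d_{g_j})$ is a Gromov--Hausdorff approximation with error tending to $0$. Composing with the approximation $(\mathbb{P}^n,d_{g_j})\to[0,\tfrac{\pi}{2}]$ and using the triangle inequality for $d_{GH}$, one gets $d_{GH}((S^{2n-1},d_{h_j}),[0,\tfrac{\pi}{2}])\to 0$, so for $j$ large this is at most $\varepsilon$, as desired.

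The main obstacle I anticipate is verifying that the smoothing of \cite{con-der} can be performed \emph{uniformly enough} along the sequence, and that it does not distort the link: one needs the expanding soliton to be genuinely asymptotic to the cone $C(S^{2n-1}_j)$ so that its link is isometric to $(S^{2n-1}, h_j)$ (and not merely close to it), which is what \cite{con-der} provides, but one should double-check that the transverse curvature positivity $\Rm>2$ on real $(1,1)$-forms of $g_j$ — as opposed to mere nonnegativity — is what triggers the applicability of that theorem, and that the symmetry group is preserved. Once that is in hand, the rest is a soft Gromov--Hausdorff argument. A secondary, more bookkeeping-level point is confirming that the relevant cone is indeed $C(S^{2n-1})=\mathbb{C}^n\setminus\{0\}$ with the standard complex structure, so that the resolution is $\mathbb{C}^n$ and the statement is about expanding solitons on $\mathbb{C}^n$ as claimed.
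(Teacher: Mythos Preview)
Your strategy is the paper's strategy: Theorem~\ref{t: GH convergence on CPn} provides collapsing transverse metrics, Proposition~\ref{iceland} (together with Theorem~\ref{cone vs base}) lifts them to K\"ahler cone metrics on $\mathbb{C}^n$ with $\Rm>0$ on real $(1,1)$-forms in the transverse directions, and Theorem~\ref{Thm E} (Conlon--Deruelle) smooths these by expanding solitons with the same curvature sign. Your Gromov--Hausdorff argument for the link via the shrinking circle fibers is also correct, and is made precise in the paper by the diameter and volume bounds \eqref{diam sasa}--\eqref{vol sasa} of Proposition~\ref{iceland}(2b).

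However, your indexing is off by one throughout, and this is not cosmetic. The link of a K\"ahler cone on $\mathbb{C}^n$ is $S^{2n-1}$, which is the unit circle bundle in $\mathcal{O}_{\mathbb{P}^{n-1}}(-1)$, not $\mathcal{O}_{\mathbb{P}^n}(-1)$; the transverse K\"ahler base is $\mathbb{P}^{n-1}$, not $\mathbb{P}^n$. Following your text literally would produce an expanding soliton on $\mathbb{C}^{n+1}$ with link $S^{2n+1}$. The fix is to apply Theorem~\ref{t: GH convergence on CPn} with $n$ replaced by $n-1$ (valid since that theorem holds for all $n\ge1$), obtaining $U(n-1)$-invariant metrics on $\mathbb{P}^{n-1}$ with $\Rm\ge2$ on real $(1,1)$-forms; this is exactly what the paper does at the start of Section~5.

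Relatedly, your description of the symmetry is confused. The transverse metric is only $U(n-1)$-invariant, not $U(n)$-invariant, and the $U(1)\times U(n-1)$-action on $\mathbb{C}^n=\mathbb{C}\times\mathbb{C}^{n-1}$ is the one in Proposition~\ref{iceland}(2a): $U(1)$ acts on the first coordinate and $U(n-1)$ on the last $n-1$. One cannot arrange the resulting soliton to be $U(n)$-invariant---that would force the transverse metric to be Fubini--Study, which is incompatible with collapse to an interval. Finally, your worry about strict versus non-strict ($\Rm\ge2$ vs.\ $\Rm>2$) is legitimate; the paper handles it implicitly by running the Ricci flow a short further time (strict inequality then follows from the strong maximum principle, cf.\ Lemma~\ref{l: heat inequality}) and absorbing the tiny diameter change into $\varepsilon$.
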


\subsection{Outline of proofs}
We begin by recalling some classical methods that have been used to construct Ricci solitons and explain their
relevance to our construction.

\noindent\textbf{ODE methods:}~If the metric is assumed to satisfy certain symmetries, then both the Einstein and Ricci soliton equation reduce to a family of ODEs. In the Riemannian case, assuming rotational symmetry, Hamilton constructed the two-dimensional cigar soliton \cite{cigar} and Bryant constructed the $n$-dimensional Bryant soliton for all $n\ge3$. Appleton \cite{appleton2017family} constructed four-dimensional $U(2)$-invariant, non-collapsed, non-K\"ahler steady solitons on the line bundles $\mathcal{O}_{\mathbb{P}^{1}}(k)$, $k>2$, over $\mathbb{P}^{1}$. 

In the K\"ahler case, assuming $U(n)$-symmetry, Cao \cite{Cao1996} and Koiso \cite{koiso1990rotationally} found shrinking K\"ahler-Ricci solitons on twisted
projective line bundles over $\mathbb{P}^{n-1}$
for $n\ge2$. Cao \cite{Cao1997jdg} also constructed a one-parameter family of complete expanding
K\"ahler-Ricci solitons on $\mathbb C^n$, and steady K\"ahler-Ricci solitons on $\mathbb C^n$ and on the blow-up of $\mathbb{C}^n/\mathbb{Z}_n$ at the origin. Feldman-Ilmanen-Knopf \cite{feldman2003rotationally} then constructed the corresponding blow-down shrinking K\"ahler-Ricci soliton; see \cite{calabi1979metriques, CaoHD, dancer2008steady, DancerWang2011, eguchi1978asymptotically,gastel2004family,ivey1994newsteady,pedersen1999quasi,wang2004kahler} for more examples using ODEs.

\noindent\textbf{Continuity methods:}~In \cite{Siepmann2013Thesis}, Siepmann used the continuity method to construct expanding K\"ahler-Ricci solitons
coming out of Ricci-flat K\"ahler cones. Deruelle \cite{De15} extended the continuity method to the Riemannian case by constructing expanding gradient Ricci solitons coming out of positively curved cones.
In \cite{con-der}, the second-named author and Deruelle extended the aforesaid work of Siepmann by
using the continuity method to construct expanding 
K\"ahler-Ricci solitons emanating from K\"ahler cones with $\Rm>0$ on real $(1,\,1)$-forms. The key ingredient in the continuity method is to show that a deformation of the cone metric can be lifted to a deformation of the expanding gradient Ricci soliton. This relies on the invertibility of the linearized operator of the expanding soliton equation, which is true under the assumption of suitable positive curvature conditions. Recently, Bamler-Chen \cite{bamler2023degree} developed a new continuity method using degree theory that only requires nonnegative scalar curvature. In this situation, the linearized operator is not necessarily invertible. We also refer the reader to the following related works for expanding solitons: \cite{cao2023complete,chan2023curvature,chodosh,CF16,CDS19,deruelle2022initial,lee2022three,lott2017note,schulze2013expanding,Siepmann2013Thesis}. 

\noindent\textbf{Collapsing methods:}~In \cite{Lai2020_flying_wing}, the third-named author observed an intricate relationship between expanding and steady gradient Ricci solitons. Namely, for any sequence of expanding Ricci solitons with collapsing asymptotic volume ratio, zooming in on the points with the highest curvature, a steady Ricci soliton can be observed. A major step is to find a sequence of cone metrics with collapsing asymptotic volume ratios which themselves can be lifted to expanding gradient Ricci solitons by the continuity method. Finding a collapsing sequence of cone metrics is equivalent to finding a sequence of metrics on the link of the cone with collapsing volumes. Depending on the curvature conditions of the expanding solitons, the collapsing links should also satisfy certain curvature conditions.

In the Riemannian case \cite{Lai2020_flying_wing}, the links need to satisfy $\Rm>1$ for the cones over them to be lifted to expanding Ricci solitons with $\Rm>0$ \cite{De15}. Here, we require the link to be a Sasaki metric on $S^{2n-1}$ over a K\"ahler metric on $\mathbb P^{n-1}$ with $\Rm>2$ on real $(1,\,1)$-forms. We show that this is equivalent to the corresponding cone having $\Rm>0$ on real $(1,\,1)$-forms. These cones can therefore be lifted to expanding gradient K\"ahler-Ricci solitons with $\Rm>0$ on real $(1,\,1)$-forms by work of the second-named author and Deruelle \cite{CD20}. In the following, we explain the construction of the desired metrics on $\mathbb P^{n-1}$ and outline the proof.

In Section 2, we include some preliminaries on K\"{a}hler and Sasaki geometry, as well as the $U(1)\times U(n-1)$-invariant K\"{a}hler cone metric on $\mathbb{C}^n$ induced by a $U(n-1)$-invariant metric on $\mathbb{P}^{n-1}$. The details of the curvature computations are contained in Appendix A.

In Section 3, we construct a sequence of smooth K\"ahler
metrics on $\mathbb{P}^{n-1}$ with $\Rm\ge2$ on real $(1,\,1)$-forms that converge to the interval $[0,\tfrac{\pi}{2}]$ in the Gromov-Hausdorff sense. We do this by first writing the Fubini-Study metric on $\mathbb{P}^{n-1}$ as a doubly warped product over the interval $[0,\frac{\pi}{2}]$. Then, by scaling down the two warping functions, we obtain a sequence of singular metrics collapsing to the interval $[0,\frac{\pi}{2}]$ with singularities at $0$ and $\frac{\pi}{2}$, and satisfying $\Rm\ge2$ on real $(1,\,1)$-forms on the smooth part. 
By cutting off the conical singularity at $0$ on arbitrarily small scales and gluing back a portion of a suitable Cao's expanding soliton, we can approximate the singular metrics by smooth K\"ahler metrics on $\mathbb{P}^{n-1}$ with $\Rm\ge2$ outside of an arbitrarily small neighborhood of $0$, 
and with $\Rm>0$ everywhere.

In Section 4, we take a limit of Ricci flows starting from these approximating metrics, and obtain a Ricci flow starting from each singular metric which smooths out the singularities. 
We show that the Ricci flow satisfies $\Rm\ge 2$ everywhere at all positive times. This yields the almost non-rigidity result of Theorem \ref{t: GH convergence on CPn}.

In Section 5, we lift the sequence of K\"ahler metrics on $\mathbb P^{n-1}$ in Theorem \ref{t: GH convergence on CPn} 
to a sequence of expanding K\"ahler-Ricci solitons with $\Rm>0$ on real $(1,\,1)$-forms.
We will show that they converge to a steady soliton asymptotic to a two-dimensional sector of angle $\frac{\pi}{2}$, and the soliton splits off a cigar factor. Then, using the same interpolation construction of the third-named author from \cite{Lai2020_flying_wing}, we obtain the family of solitons of Theorem \ref{t: existence of new}. In Appendix \ref{s: appendix B}, we show that these solitons are not isometric to those of \cite{apostolov2023hamiltonian}.

\subsection*{Acknowledgements}
P.-Y.~C. is supported by EPSRC grant EP/T019824/1. R.C.~is supported by NSF grant DMS-1906466. Y.L.~is supported by NSF grant DMS-2203310 and EPSRC grant EP/T019824/1. She would also like to acknowledge the University of Warwick for its hospitality during the time some of this article was completed. 
The authors wish to thank Vestislav Apostolov,
Charles Cifarelli, Man-Chun Lee, Lei Ni, Felix Schulze, and Peter Topping for helpful discussions. We also thank Guofang Wei for providing us with additional references on the almost rigidity, and Charles Cifarelli for providing us with the proofs of Lemma \ref{hello} and Proposition \ref{bye}. This project began when the authors met at the workshop entitled \emph{Ricci flow and related topics} at the University of Warwick in March 2023. The authors would like to express their deep gratitude to Peter Topping for organizing the workshop.

\section{Preliminaries}\label{prelim}

\subsection{K\"ahler cones and Sasaki metrics}\label{sasakii}
In this subsection, we recall several useful notions and definitions, in particular that of a K\"ahler cone and Sasaki metric. For basics in K\"ahler geometry, we refer the reader to Huybrechts \cite{huybrechts}. For a more comprehensive reference for Sasakian geometry, we refer the reader to Boyer-Galicki \cite{book:Boyer}.

We begin with
\begin{defn}[Riemannian cone]
Let $(S,\,g)$ be a compact connected Riemannian manifold. The \emph{Riemannian cone} $C_{0}$
 with \emph{link} $S$ is defined to be $\R_{+} \times S$ with metric $g_{0} = dr^2 \oplus r^2 g$ up to isometry. The radius function $r$ is then characterized intrinsically as the distance from the tip in the metric completion. We normally identify $S$ with the level set $\{r=1\}$.
\end{defn}
Then we have
\begin{defn}[K\"ahler cone]\label{defn: kahler cones}
A \emph{K{\"a}hler cone} is a Riemannian cone $(C_0,g_0)$ such that $g_0$ is K{\"a}hler, together with a choice of $g_0$-parallel complex structure $J_0$. This will in fact often be unique up to sign. We then have a K{\"a}hler form $\omega_0(X,Y) = g_0(J_0X,Y)$, and $\omega_0 = \frac{i}{2}\partial\bar{\partial} r^2$ with respect to $J_0$.
\end{defn}

The vector field $r\partial_{r}$ on a K\"ahler cone is real holomorphic and $\xi:=J_{0}r\partial_r$ is real holomorphic and Killing. This latter vector field is known as the \emph{Reeb field}. The closure of its flow in the isometry group of the link of the cone generates the holomorphic isometric action of a real torus on $C_{0}$ that fixes the tip of the cone. We call a K{\"a}hler cone ``quasiregular'' if this action is an
$S^1$-action (and, in particular, ``regular'' if this $S^1$-action is free), and ``irregular'' if the action generated is that of a real torus of rank $>1$.

Given a K\"ahler cone $(C_{0},\,\omega_{0}=\frac{i}{2}\partial\bar{\partial}r^{2})$ with radius function $r$, it is true that
\begin{equation}\label{conemetric}
\omega_{0}=rdr\wedge\eta +\frac{1}{2}r^{2}d\eta,
\end{equation}
where
\begin{equation}\label{contact}
\eta=i(\bar{\partial}-\partial)\log r=d^{c}\log(r)
\end{equation}
restricts to a contact form on the link $\{r=1\}$ of $C_{0}$ and we define $d^{c}:=i(\bar{\partial}-\partial)$. Clearly, any K\"ahler cone metric on $L^{\times}$, the contraction of the zero section of a negative line bundle $L$ over a projective manifold, with some positive multiple of the radial vector field equal to the Euler vector field on $L\setminus\{0\}$, is regular. In fact, as the following theorem states, this property characterises all regular K\"ahler cones.
\begin{theorem}[{\cite[Theorem 7.5.1]{book:Boyer}}]\label{regulars}
Let $(C_{0},\,\omega_{0})$ be a regular K\"ahler cone with K\"ahler cone metric $\omega_{0}=\frac{i}{2}\partial\bar{\partial}r^{2}$, radial function $r$, and radial vector field $r\partial_{r}$. Then:
\begin{enumerate}[label=\textnormal{(\roman{*})}, ref=(\roman{*})]
\item $C_{0}$ is biholomorphic to the blowdown $L^\times$ of the zero section of a negative line bundle $L$ over a projective manifold $D$, with $a\cdot r\partial_{r}$ equal to the Euler field on $L\setminus\{0\}$ for some $a>0$.
\item Let $p:L\to D$ denote the projection. Then, writing $\omega_{0}$ as in \eqref{conemetric}, we have that $\frac{1}{2}d\eta=p^{*}\omega^{T}$ for some K\"ahler form $\omega^{T}$ on $D$ with $[\omega^{T}]=2\pi a\cdot c_{1}(L^{*})$.
\end{enumerate}
\end{theorem}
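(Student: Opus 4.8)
The plan is to promote the Reeb flow to an honest holomorphic $\mathbb{C}^*$-action, pass to the complex quotient, and read off the line bundle and its Chern class from the Kähler data. \emph{Step 1: the $\mathbb{C}^*$-action and the quotient.} By hypothesis the cone is regular, so the closure of the Reeb flow in the isometry group of the link $S=\{r=1\}$ is a free $S^1$-action. The fields $r\partial_r$ and $\xi=J_0 r\partial_r$ are both real holomorphic and commute (since $J_0$ is parallel and $[r\partial_r,r\partial_r]=0$), so together they generate a holomorphic action of $\mathbb{C}^*\cong\mathbb{R}_{>0}\times S^1$ on $C_0=\mathbb{R}_{>0}\times S$, where $\mathbb{R}_{>0}$ scales $r$ and $S^1$ is the period-normalized Reeb flow. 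This action is free, and it is proper because $S$ is compact; hence $D:=C_0/\mathbb{C}^*=S/S^1$ is a compact complex manifold and $p\colon C_0\to D$ is a holomorphic principal $\mathbb{C}^*$-bundle. Let $L\to D$ be the holomorphic line bundle associated to $p$ via the standard representation of $\mathbb{C}^*$ on $\mathbb{C}$; then $C_0\cong L\setminus 0_D$ biholomorphically, and under this identification the fiber-scaling (Euler) vector field of $L$ is a positive multiple $a\,r\partial_r$ of $r\partial_r$, where $a>0$ is the period of the Reeb flow divided by $2\pi$. The metric completion of $C_0$ is obtained by contracting the zero section of $L$ to a point; this is the Grauert contraction, which exists because $L$ is negative---a fact I will re-derive from positivity in Step 3. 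This proves (i), with ``$L^\times$'' denoting $L\setminus 0_D$.

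\emph{Step 2: $d\eta$ descends.} I would then observe that $\eta=d^c\log r$ restricts on each link to a connection $1$-form for the circle bundle $S\to D$: it satisfies $\eta(\xi)=1$; it annihilates $r\partial_r$ (because $\xi$ and $r\partial_r$ are $g_0$-orthogonal, by skew-symmetry of $\omega_0$); and it is invariant under both $r\partial_r$ and $\xi$ (since $\mathcal{L}_{r\partial_r}\log r=1$ and $\xi\cdot\log r=0$, and both fields commute with $d^c$ by holomorphicity). Hence $d\eta$ is horizontal and $\mathbb{C}^*$-invariant, i.e.\ basic, so there is a unique real $2$-form $\omega^T$ on $D$ with $p^*\omega^T=\tfrac{1}{2} d\eta$; writing $\omega_0$ as in \eqref{conemetric} and using that $\omega_0$ is Kähler, the transverse Kähler structure of $S$ is positive, so $\omega^T$ is a Kähler form on $D$.

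\emph{Step 3: the Chern class.} Restricting to $S$, the curvature of $\eta$---rescaled so that the fiber $S^1$ acts with the standard period $2\pi$, which produces the factor $a$---represents, via Chern--Weil, the first Chern class of the circle bundle $S\to D$, namely $c_1(L^*)$; carrying the constants through the conventions $\omega_0=\tfrac{i}{2}\partial\bar\partial r^2$, $d^c=i(\bar\partial-\partial)$, and the Chern--Weil normalization yields $[\omega^T]=2\pi a\,c_1(L^*)$. Since $\omega^T>0$, the class $c_1(L^*)$ is Kähler, so $L^*$ is ample, $D$ is projective by Kodaira's embedding theorem, and $L$ is negative; this finishes (ii) and closes the loop in (i).

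The step I expect to be the main obstacle is Step 3---not the existence of the Chern--Weil identity, which is standard, but pinning down the \emph{exact} constant $2\pi a$. This requires fixing the normalization of $a$ (equivalently the period of the Reeb flow and the precise identification $C_0\cong L^\times$) and then carefully bookkeeping the various factors of $2$ and $2\pi$ buried in the conventions for the cone Kähler form, the operator $d^c$, and the Chern--Weil representative. A secondary point that must not be glossed over is properness of the $\mathbb{C}^*$-action in Step 1: without it $D$ need not even be Hausdorff, and this is exactly where compactness of the link enters.
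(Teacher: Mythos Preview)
The paper does not give its own proof of this theorem: it is quoted as \cite[Theorem 7.5.1]{book:Boyer} and used as a black box, so there is nothing in the paper to compare your argument against line by line. Your sketch is the standard argument and is essentially correct; the one point worth tightening is the sign/orientation convention in Step~3, since whether the associated line bundle is $L$ or $L^{*}$ depends on whether you associate via the standard or inverse character of $\mathbb{C}^{*}$, and this must be made consistent with your definition of the Euler field and with the sign in $\eta(\xi)=1$ before the constant $2\pi a$ can be pinned down.
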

\noindent Evidently, the flow generated by the vector fields $\{ r\partial_{r},\,\xi\}$ produces the standard $\mathbb{C}^{*}$-action on the fibres of $L$. We give the K\"ahler form $\omega^{T}$ a special name.

\begin{defn}
The K\"ahler form $\omega^{T}$ on $D$ from Theorem \ref{regulars}(ii) is called the \emph{transverse K\"ahler form} of $\omega_{0}$ on $D$, with the corresponding K\"ahler metric $g^{T}$ called the \emph{transverse K\"ahler metric}. 
\end{defn}
\noindent In light of \eqref{conemetric}, the K\"ahler cone metric $g_{0}$ associated to $\omega_{0}$ takes the form
\begin{equation}\label{metricc}
g_{0}=dr^{2}+r^{2}(\eta^{2}+p^{*}g^{T}).
\end{equation}

As the next example demonstrates, Theorem \ref{regulars} is reversible in that one can always endow $L^\times$ with the structure of a regular K\"ahler cone metric.

\begin{eg}[{\cite[Theorem 7.5.2]{book:Boyer}}]\label{regular-eg}
Let $L$ be a negative line bundle over a projective manifold $D$ and let $p:L\to D$ denote the projection. Then $L$ has a hermitian metric $h$ with negative curvature. Locally, $h$ is defined by a smooth nonnegative real-valued function which, by abuse of notation, we also write as $h$. This is just the norm with respect to $h$ of the unit section in a local trivialisation of $L$. The real $(1,\,1)$-form $i\partial\bar{\partial}\log h$ then defines a (global) K\"ahler form on $D$. 

Set $r^{2}=(h|w|^{2})^{a}>0$ for any $a>0$, with $w$ the coordinate on the fibre. Then $\frac{r^{2}}{2}$ defines the K\"ahler potential of a K\"ahler cone metric on $L^{\times}$, the contraction of the zero section of $L$, with K\"ahler form $\omega_{0}=\frac{i}{2}\partial\bar{\partial}r^{2}$ and radial vector field $r\partial_{r}$ a scaling of the Euler vector field on $L\setminus\{0\}$ by $\frac{1}{a}$.
Finally, writing $\omega_{0}$ as in \eqref{conemetric}, we have that $\frac{1}{2}d\eta=p^{*}\omega^{T}$, where $\omega^{T}=a\cdot i\partial\bar{\partial}\log h$ is the tranverse K\"ahler form, a K\"ahler form on $D$ with $[\omega^{T}]=2\pi a\cdot c_{1}(L^{*})$.
\end{eg}

As a specific example, we have
\begin{eg}\label{flatt}
In Example \ref{regular-eg}, one can consider the holomorphic line bundle \linebreak $\pi:\mathcal{O}_{\mathbb{P}^{n-1}}(-1)\to\mathbb{P}^{n-1}$ endowed with the hermitian metric $h$ whose curvature form is $-\omega_{FS}$, that is, negative the Fubini-Study metric on $\mathbb{P}^{n-1}$ \cite[Example 4.3.12]{huybrechts}. For any $a>0$, consider the K\"ahler cone metric defined by 
$r^{2}=(h|w|^{2})^{a}$, with $w$ the coordinate on the fibre. The corresponding K\"ahler cone via the usual identification of the blowdown of the zero section of $\mathcal{O}_{\mathbb{P}^{n-1}}(-1)$
with $\mathbb{C}^{n}$ resulting from the construction is $\mathbb{C}^{n}$ endowed with the K\"ahler cone metric $\omega_{0}=\frac{i}{2}\partial\bar{\partial}r^{2}=\frac{i}{2}\partial\bar{\partial}(|z|^{2a})$, with the Reeb vector field $\xi$ a scaling by $\frac{1}{a}$ of that whose flow rotates the Hopf fibres with period $2\pi$. In this case, the transverse K\"ahler form is given by $\frac{a}{2}\cdot\omega_{FS}$. Clearly, when $a=1$, we obtain the flat metric on $\mathbb{C}^{n}$.
\end{eg}

One may deform a K\"ahler cone to generate more 
examples in the following way.

\begin{defn}[Type II deformation]\label{d: type two deformation}
Let $(C_{0},\,\omega_{0}=\frac{i}{2}\partial\bar{\partial}r^{2})$ be a K\"ahler
cone with complex structure $J_{0}$
and let $\varphi:C_{0}\to\mathbb{R}$ be a smooth
real-valued function satisfying $\mathcal{L}_{r\partial_{r}}\varphi=\mathcal{L}_{J_{0}r\partial_{r}}\varphi=0$ with $\tilde{\omega}_{0}=\frac{i}{2}\partial\bar{\partial}(r^{2}e^{2\varphi})>0$. Then $(C_{0},\,\tilde{\omega}_{0})$, a K\"ahler cone   
with radius function $\tilde{r}:=re^{\varphi}$ and radial vector field $\tilde{r}\partial_{\tilde{r}}=r\partial_{r}$, is called a \emph{deformation of type II} (\emph{of $(C_{0},\,\omega_{0})$}).
\end{defn}
\noindent Let $\tilde{\eta}=i(\bar{\partial}-\partial)\log\tilde{r}$. Then by \eqref{conemetric}, $\tilde{\omega}_{0}$ may be written as
\begin{equation*}
\begin{split}
\tilde{\omega}_{0}&=\frac{i}{2}\partial\bar{\partial}(r^{2}e^{2\varphi})=\tilde{r}d\tilde{r}\wedge\tilde{\eta} +\frac{1}{2}\tilde{r}^{2}d\tilde{\eta}=\tilde{r}d\tilde{r}\wedge\tilde{\eta} +\frac{1}{2}\tilde{r}^{2}(d\eta+i\partial\bar{\partial}\varphi).
\end{split}
\end{equation*}
We refer the reader to \cite[Section 7.5.1]{book:Boyer} or \cite[Proposition 4.2]{FOW09} for more details.

The link of a K\"ahler cone is called a ``Sasaki'' manifold.

\begin{defn}[Sasaki manifolds]\label{Sasakii}
A compact (odd real dimensional) Riemannian manifold $(S,\,g)$ is \emph{Sasaki} if and only if the Riemannian cone over $(S,\,g)$ is a K\"ahler cone.
\end{defn}

A Sasaski manifold $(S,\,g)$ is naturally a contact manifold with contact form $\eta$ and Reeb vector field given by the restriction of \eqref{contact} and $\xi$ to $\{r=1\}\cong S$, respectively.
We will only consider \emph{regular} Sasaki manifolds, i.e., those Sasaki manifolds for which the corresponding K\"ahler cone $C_{0}$ is regular. Then via Theorem \ref{regulars}(ii), we have a map $p:C_{0}\to D$ onto a compact K\"ahler manifold $(D,\,g^{T})$, where $g^{T}$ is the transverse K\"ahler metric whose K\"ahler form $\omega^{T}$ satisfies $\frac{1}{2}d\eta=p^{*}\omega^{T}$. Moreover, restricting $p$ to $S\cong\{r=1\}$, we get a map $p:(S,\,g)\to(D,\,g^{T})$ which is a Riemannian submersion, as $g=\eta^{2}+p^{*}g^{T}$ thanks to \eqref{metricc}. This realizes $S$ as the total space of an $S^{1}$-bundle over $D$, the fibres of which are precisely the orbits of the flow of $\xi$.

\begin{eg}\label{flat2}
In the K\"ahler cone described in Example \ref{flatt}, the corresponding Sasaki manifold $(S,\,g)$ is the $(2n-1)$-sphere $S=S^{2n-1}$ with
$g$ the round metric of curvature $1$, $p:(S^{2n-1},\,g)\to(\mathbb{P}^{n-1},\,g^{T})$ is the Hopf fibration, and $g^{T}=\frac{1}{2}g_{\text{FS}}$
is the Fubini-Study metric on $\mathbb{P}^{n-1}$ normalised so that $\Ric(g_{FS})=ng_{FS}$.
The contact form $\eta$ in this case is the restriction of the one-form $d^{c}\log r$ to $S^{2n-1}\subset\mathbb{R}^{2n}$.
\end{eg}
For more details on Sasaki manifolds, we refer the reader to \cite{book:Boyer}.

\subsection{Doubly warped product metrics}\label{T3}

In this section, we introduce doubly warped product metrics and 
highlight the key features of such metrics that we need.

For $n\geq2$, let $(M,\,g,\,\eta,\,\xi)$ be a (possibly irregular) $(2n-1)$-dimensional Sasaki manifold as defined in Definition \ref{Sasakii}, with Sasaki metric $g$, contact one-form $\eta$, Reeb vector field $\xi$, and tranverse K\"ahler metric $g^{T}$. For a given connected open interval $I=(0,\,L)\subset\mathbb{R},\,L>0,$ we define on the real $2n$-dimensional manifold $\widehat{M}:=M\times I$ a doubly-warped product Riemannian metric $\hat{g}$ by
\begin{equation} \label{cohomo in App}
\hat{g}:=ds^{2}+a^2(s)\eta^{2}+b^2(s)g^{T},
\end{equation}
where $s$ is the coordinate on $(0,\,L)$. Without loss of generality, we assume that $a(s),\,b(s)>0$. The archetypical example of this construction is the K\"ahler cone itself.

\begin{eg}\label{cone}
Set $a(s)=b(s)=s$ and $L=\infty$. Then one obtains on $\widehat{M}$ the K\"ahler cone over the Sasaki manifold $(M,\,g)$.
\end{eg}

We endow $\widehat{M}$ with a complex structure in the following way. Since $(M,\,g)$ is Sasaki, we know that the cone $C_{0}=M\times\mathbb{R}_{+}$ over $M$ is K\"ahler,
and is in particular a complex manifold with complex structure we denote by $J_{0}$. Let $r$ denote the coordinate on the $\mathbb{R}_{+}$-factor of $C_{0}$. We define a map $\phi:(0,\,L)\to(0,\,\infty)$ as the unique solution to the ODE:
\begin{equation}\label{oode}
\left\{ \begin{array}{ll}
a(s)\phi'(s)=\phi(s), & \\
\phi\left(\frac{L}{2}\right)=1. & \\
\end{array} \right.
\end{equation}
This is given explicitly by $\phi(s)=e^{\int_{\frac{L}{2}}^{s}\frac{du}{a(u)}}$. Then since $\phi'(s)>0$ for $s>0$, $\phi$ defines a diffeomorphism onto its image. We define a map  $\Phi:\widehat{M}\to C_{0}$ by $\Phi(x,\,s)=(x,\,\phi(s))$. This is also a diffeomorphism onto its image, and so we define a complex structure on $\widehat{M}$ by $\widehat{J}:=\Phi^{*}J_{0}$.
By construction $\Phi_{*}(a(s)\partial_{s})=r\partial_{r}$, and so
$\widehat{J}(a(s)\partial_{s})=\xi$.

It turns out that $\hat{g}$ is hermitian with respect to $\widehat{J}$
with fundamental form given by 
\begin{equation*}
\widehat{\omega}=\hat{g}(\widehat{J}(\cdot),\,\cdot)
=a(s)ds\wedge\eta+b(s)^{2}\omega^{T}.
\end{equation*}
We give necessary and sufficient conditions for when $(\widehat{M},\,\hat{g},\,\widehat{J})$ is K\"ahler.

\begin{lem}\label{kahlerr}
$(\widehat{M},\,\hat{g},\,\widehat{J})$ is K\"ahler if and only if $a(s)=b(s)b'(s)$. If this is the case, then the K\"ahler form $\widehat{\omega}$ of 
$(\widehat{M},\,\hat{g},\,J_{0})$ is given by
$$\widehat{\omega}=dd^{c}\left(\frac{1}{2}\int_{\frac{L}{2}}^{s}\frac{b(u)^{2}}{a(u)}\,du\right),$$
where $d^{c}=i\left(\bar{\partial}-\partial\right)$.
\end{lem}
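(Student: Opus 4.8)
The plan is to compute the exterior derivative $d\widehat{\omega}$ directly from the formula $\widehat{\omega}=a(s)\,ds\wedge\eta+b(s)^{2}\omega^{T}$ and extract the Kähler condition, then integrate once to produce the potential. First I would recall the structure equations on the Sasaki manifold: since $\tfrac12 d\eta=p^{*}\omega^{T}$ (Theorem \ref{regulars}(ii) and the remarks following it), we have $d\eta=2\omega^{T}$ on $\widehat{M}$ (suppressing the pullback $p^{*}$), and $d\omega^{T}=0$ because $\omega^{T}$ is a Kähler form on $D$. Differentiating term by term,
\[
d\widehat{\omega}=a'(s)\,ds\wedge d\eta\cdot(-1)+\text{(sign bookkeeping)}+2b(s)b'(s)\,ds\wedge\omega^{T},
\]
and after being careful with the sign in $d(ds\wedge\eta)=-ds\wedge d\eta=-2\,ds\wedge\omega^{T}$, the two contributions combine to
\[
d\widehat{\omega}=\bigl(2b(s)b'(s)-2a(s)\bigr)\,ds\wedge\omega^{T}.
\]
Since $ds\wedge\omega^{T}$ is nowhere zero as a form on $\widehat{M}$, this vanishes identically if and only if $a(s)=b(s)b'(s)$, which gives the stated equivalence.

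Next, assuming $a=bb'$, I would verify the potential formula. Set $u(s):=\tfrac12\int_{L/2}^{s}\frac{b(v)^{2}}{a(v)}\,dv$, so that $u'(s)=\tfrac12\,\frac{b(s)^{2}}{a(s)}$. Because $u$ is a function of $s$ alone and $\widehat{M}$ carries the complex structure $\widehat{J}=\Phi^{*}J_{0}$ with $\widehat{J}(a(s)\partial_{s})=\xi$, one computes $d^{c}u=i(\bar\partial-\partial)u$; using that $d^{c}$ applied to a function $v(s)$ of $s$ alone yields (up to the usual normalization) $d^{c}u = u'(s)\,\widehat{J}^{*}ds$, and $\widehat{J}^{*}ds$ is dual to $-\widehat{J}\partial_s = -\tfrac1{a(s)}\xi$, so $d^{c}u$ is a multiple of $\eta$: explicitly $d^{c}u = \tfrac{2u'(s)}{a(s)}\cdot a(s)\,\eta$ worked out to $d^{c}u=\frac{b(s)^{2}}{a(s)}\,a(s)\,\eta \cdot(\text{const})$. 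Cleaning up the constants, $d^{c}u$ equals $a(s)\eta$ up to the factor fixed by the $b^2/a$ normalization, and then $dd^{c}u=d(a(s)\eta)=a'(s)\,ds\wedge\eta+a(s)\,d\eta=a'(s)\,ds\wedge\eta+2a(s)\,\omega^{T}$. Comparing with $\widehat{\omega}=a(s)\,ds\wedge\eta+b(s)^{2}\omega^{T}$ and using $a=bb'$ so that $b^{2}{}'=2bb'=2a$, hence $\int$ matches up after differentiation: the $ds\wedge\eta$ coefficients agree because $u$ was chosen precisely so that the relevant antiderivative has derivative $a(s)$, and the $\omega^{T}$ coefficients agree because $\tfrac{d}{ds}b(s)^{2}=2a(s)$. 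This shows $\widehat{\omega}=dd^{c}u$, which in particular re-proves that $\widehat{\omega}$ is closed when $a=bb'$.

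The main obstacle I anticipate is purely bookkeeping: getting the normalization constants in $d^{c}=i(\bar\partial-\partial)$ correct relative to the real operator $dd^{c}$, and tracking the sign in $d(ds\wedge\eta)$, since an error there would flip the ODE to $a=-bb'$ or rescale the potential. The cleanest way to pin these down is to test the formulas on the model case $a(s)=b(s)=s$, $L=\infty$ (Example \ref{cone}), where $\widehat{\omega}$ must reduce to the cone form $r\,dr\wedge\eta+\tfrac12 r^{2}d\eta$ under $s\mapsto r$; here $a=bb'$ reads $s=s\cdot1$, which holds, and $u(s)=\tfrac12\int_{L/2}^{s}v\,dv=\tfrac14(s^{2}-\text{const})$, so $dd^{c}u=\tfrac14 dd^{c}(s^2)=\tfrac{i}{2}\partial\bar\partial(\tfrac12 r^2)$ after the identification — matching $\omega_{0}=\tfrac{i}{2}\partial\bar\partial r^{2}$ up to the expected factor, which fixes every constant. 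Once the model case is calibrated, the general computation is a one-line verification. I would also remark that the "only if" direction needs $\widehat{M}$ connected (so that $2bb'-2a\equiv0$ on an interval forces the pointwise identity), which holds by hypothesis since $I$ is an interval and $M$ is a manifold.
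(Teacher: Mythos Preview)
Your approach is essentially the same as the paper's: compute $d\widehat{\omega}$ from $\widehat{\omega}=a(s)\,ds\wedge\eta+b(s)^{2}\omega^{T}$ using $d\eta=2\omega^{T}$ to get the K\"ahler condition, then exhibit a primitive for $\widehat{\omega}$ as $d^{c}$ of a function of $s$. The paper does the second step slightly more cleanly by first observing $\widehat{\omega}=d\bigl(\tfrac12 b(s)^{2}\eta\bigr)$ and then invoking the identity $\eta=d^{c}s/a(s)$ (which follows from $\eta=d^{c}\log r$ on the cone together with the ODE $a(s)\phi'(s)=\phi(s)$), so that $\tfrac12 b^{2}\eta=d^{c}u$ with $u'(s)=\tfrac{b(s)^{2}}{2a(s)}$; this avoids the constant-chasing you flag. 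One slip to fix: your line ``$d^{c}u=a(s)\eta$, so $dd^{c}u=a'(s)\,ds\wedge\eta+2a(s)\,\omega^{T}$'' is off --- the correct primitive is $d^{c}u=\tfrac12 b(s)^{2}\eta$, whence $dd^{c}u=bb'\,ds\wedge\eta+b^{2}\omega^{T}=\widehat{\omega}$ --- but your subsequent coefficient-matching and cone calibration would catch and correct this.
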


\begin{proof}
The fundamental form $\widehat{\omega}$ of $\hat{g}$ is given by
\begin{equation}\label{kahlerform}
\widehat{\omega}=a(s)ds\wedge\eta+b(s)^{2}\omega^{T}.
\end{equation}
Clearly $d\widehat{\omega}=0$ if and only if $a(s)=b(s)b'(s)$. Here we use the fact that $d\eta=2\omega^{T}$. 

Regarding the last statement, it is clear that from \eqref{kahlerform} that
$$\widehat{\omega}=d\left(\frac{1}{2}b(s)^{2}\eta\right).$$
Now, in light of \eqref{oode}, we can write
$$\eta=d^{c}\log(s)=d^{c}\log(\phi(s))=\left(\frac{\phi'(s)}{\phi(s)}\right)d^{c}s=\frac{d^{c}s}{a(s)}.$$
The desired expression follows.
\end{proof}

This leads us on to our next examples. The first illustrates the fact that the Fubini-Study metric on $\mathbb{P}^{n}$ can be realized as a doubly warped product.

\begin{eg}[{\cite[p.17]{petersen}}]\label{fubini}
Let $(M,\,g,\,\eta,\,\xi)$ be the round Sasaki structure on
$S^{2n-1}$, set $a(s)=\sin(s)\cos(s)=\frac{1}{2}\sin(2s)$ and $b(s)=\sin(s)$, and let $I=\left(0,\,\frac{\pi}{2}\right)$. Then the doubly warped product metric closes up at $0$ by adding a point and at $\frac{\pi}{2}$ by adding a $\mathbb{P}^{n-1}$. The resulting metric is one half of the Fubini-Study metric on
$\widehat{M}=\mathbb{P}^{n}$.
In light of Lemma \ref{kahlerr}, the K\"ahler form $\omega_{FS}$ of the Fubini-Study metric may be written as
$$\frac{1}{2}\omega_{FS}=dd^{c}\left(\frac{1}{2}\int_{\frac{\pi}{4}}^{s}\tan(u)\,du\right)=\frac{1}{2}dd^{c}\left(\ln\sec(s)\right)\quad\textrm{on $M\times\left(0,\,\frac{\pi}{2}\right)$.}$$
\end{eg}

The next example yields complete doubly warped product K\"ahler metrics on $\mathbb{P}^{n}$.

\begin{eg}\label{projective}
Again working with the round Sasaki structure on the 
$(2n-1)$-sphere $(S^{2n-1},\,g)$, realized as an $S^{1}$-bundle
$p:(S^{2n-1},\,g)\to(\mathbb{P}^{n-1},\,g^{T})$ over $\mathbb{P}^{n-1}$ via the Hopf fibration, with transverse metric $g^{T}=\frac{1}{2}g_{FS}$, one half of the Fubini-Study metric $g_{FS}$ on $\mathbb{P}^{n-1}$ normalised so that $\operatorname{Ric}(g_{FS})=ng_{FS}$ (cf.~Example \ref{flat2}), the doubly warped product construction yields K\"ahler metrics on $(0,\,L)\times S^{2n-1}$ which close up at $0$ by adding a point and at $L$ by adjoining a $\mathbb{P}^{n-1}$ to give a complete K\"ahler metric on $\mathbb{P}^{n}$ if and only if $a$ and $b$ can be extended smoothly to $0$ and $L$ such that
\begin{equation}\label{e: smoothness for a}
    a^{(\textnormal{even})}(0)=0,\qquad a'(0)=1,\qquad a^{(\textnormal{even})}(L)=0,\qquad a'(L)=-1,
\end{equation}
and
\begin{equation}\label{e: smoothness for b}
    b^{(\textnormal{even})}(0)=0,\qquad b'(0)=1,\qquad b^{(\textnormal{odd})}(L)=0,\qquad b(L)>0.
    \end{equation}
(The smoothness condition at $s=L$ was also considered by Tran \cite[Lemma 3.3]{Tran2023}.) One can verify that 
one half of the Fubini-Study metric on $\mathbb{P}^{n}$, 
considered in Example \ref{fubini} where $L=\frac{\pi}{2}$, $a(s)=\frac{1}{2}\sin(2s)$, and $b(s)=\sin(s)$, satisfies these conditions. These conditions have the following inferences for the functions $a$ and $b$. Using \cite[Lemma 2.1]{milnor}, we can write them in the following way:
\begin{equation}\label{boundaryy}
\begin{split}
a(x)&=xg_{1}(x)\qquad\textrm{for $x\in[0,\,L)$, $g_{1}(0)=a'(0)=1$, and $g_{1}'(0)=\frac{1}{2}a''(0)=0$,}\\
a(x)&=(L-x)g_{2}(L-x)\qquad\textrm{for $x\in(0,\,L]$, $g_{2}(0)=-a'(L)=1$, and $g_{2}'(0)=\frac{1}{2}a''(L)=0$,} \\
b(x)&=xg_{3}(x)\qquad\textrm{for $x\in[0,\,L)$, and $g_{3}(0)=b'(0)=1$, and $g_{3}'(0)=\frac{1}{2}b''(0)=0$,}\\
b(x)&=b(L)+(L-x)g_{4}(L-x)\qquad\textrm{for $x\in(0,\,L]$,
$g_{4}(0)=-b'(L)=0$},\\
&\qquad\qquad\qquad\qquad\qquad\qquad\qquad\qquad\qquad\qquad\qquad\qquad\qquad\textrm{and $g_{4}'(0)=\frac{1}{2}b''(L)=0$,}\\
\end{split}
\end{equation}
where $g_{i}(x),\,i=1,\ldots,4,$ are smooth functions.

Assume that $a$ and $b$ are chosen so that $[0,\,L]\times S^{2n-1}$
is smooth at $0$. Let $t_i\to0$ be a sequence of positive numbers converging to zero. Then the rescaled metrics\linebreak $ds^{2}+(t_i^{-1}a(t_i s))^{2}\eta\otimes \eta+(t_i^{-1}b(t_i s))^{2}g^{T}$ converge smoothly locally to the Euclidean metric \linebreak $ds^{2}+s^{2}\eta\otimes \eta+s^{2}g^{T}$
on $\mathbb{C}^{n}$ (realized as a Riemannian cone over $S^{2n-1}$; cf.~\eqref{metricc}). The smooth convergence immediately implies that $a(0)=b(0)=0$ and $a'(0)=b'(0)=1$.
In fact, by rewriting the metric under the Cartesian product as in \cite[Chapter 1, Sections 3.4 and 4.3]{petersen}, one can see that the pair of conditions \eqref{e: smoothness for a} and \eqref{e: smoothness for b} are necessary and sufficient for smoothness at $0$ and $L$.
\end{eg}

\subsection{$U(1)\times U(n-1)$-K\"ahler cone metrics on $\mathbb{C}^{n}$ coming from $U(n-1)$-invariant metrics on $\mathbb{P}^{n-1}$}
In this section, we will demonstrate how to construct $U(1)\times U(n-1)$-K\"ahler cone metrics on $\mathbb{C}^{n}$ from the following data.
The main result of this section is Proposition \ref{iceland}.

Let $n\geq3$ and recall Example \ref{projective} in the case of the round $(2n-3)$-sphere $(S^{2n-3},\,g)$ realized as an $S^{1}$-bundle $p:(S^{2n-3},\,g)\to(\mathbb{P}^{n-2},\,g^{T})$ over $(\mathbb{P}^{n-2},\,g^{T})$ with $2g^{T}=g_{FS}$, the Fubini-Study metric on $\mathbb{P}^{n-2}$ normalised so that $\operatorname{Ric}(g_{FS})=(n-1)g_{FS}$. We consider doubly warped product K\"ahler metrics on $S^{2n-3}\times(0,\,L),\,L>0$, of the form
\begin{equation}\label{dwp}
\bar{g}=ds^{2}+\bar{a}(s)^{2}\eta^{2}+\bar{b}(s)^{2}g^{T}.
\end{equation}
Here $s$ is the coordinate on the interval $(0,\,L)$ and $\eta$ is the contact form associated to the round Sasaki structure on $S^{2n-3}$ (see Example \ref{flat2}). Write $\omega^{T}$ for the K\"ahler form associated to $g^{T}$. We have that 
$\bar{a}(s)=\bar{b}(s)\bar{b}'(s)$ by Lemma \ref{kahlerr} because $\bar{g}$ is K\"ahler, and in addition that \eqref{e: smoothness for a} and \eqref{e: smoothness for b} hold true so that, as described in Example \ref{projective}, $\bar{g}$ closes up smoothly at the endpoints of $(0,\,L)$ to give a warped product K\"ahler metric $\bar{g}$ on $\mathbb{P}^{n-1}$. Without loss of generality, we can assume that $\bar{a}(s)>0$. Then, as written in Lemma \ref{kahlerr}, the K\"ahler form $\bar{\omega}$ of $\bar{g}$ can be written as
$$\bar{\omega}=\bar{a}(s)ds\wedge\eta+\bar{b}(s)^{2}\omega^{T}=dd^{c}\left(\frac{1}{2}\int_{\frac{L}{2}}^{s}\frac{\bar{b}(u)^{2}}{\bar{a}(u)}\,du\right),$$
and the corresponding volume form can be computed as 
$$\bar{\omega}^{n-1}=(n-1)\cdot\bar{a}(s)\cdot\bar{b}(s)^{2n-4}ds\wedge\eta\wedge(\omega^{T})^{n-2}.$$
In particular,
\begin{equation}\label{coeff}
\begin{split}
\int_{\mathbb{P}^{n-1}}\bar{\omega}^{n-1}&=c(n)\int_{0}^{L}\bar{a}(s)\cdot\bar{b}(s)^{2n-4}\,ds=c(n)\int_{0}^{L}\bar{b}'(s)\cdot\bar{b}(s)^{2n-3}\,ds
=\frac{c(n)\bar{b}(L)^{2n-2}}{2n-2}>0,
\end{split}
\end{equation}
where $c(n)>0$ is a dimensional constant and where we have used \eqref{e: smoothness for b} and the fact that $\bar{a}(s)=\bar{b}(s)\bar{b}'(s)$.

We begin by proving some preliminary lemmas before stating and proving the main result.

\begin{lem}\label{ireland}
The map $\phi:(0,\,L)\to(0,\,\infty)$ defined by $$\phi(s)= e^{\int^{s}_{\frac{L}{2}}\frac{du}{\bar{a}(u)}}$$ is invertible.   
\end{lem}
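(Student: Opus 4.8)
The plan is to show that $\phi$ is injective and then identify its image. Since $\bar a(u)>0$ on the open interval $(0,\,L)$ by assumption, the integrand $\frac{1}{\bar a(u)}$ is a well-defined, continuous, strictly positive function on $(0,\,L)$. Hence the function $s\mapsto \int_{L/2}^{s}\frac{du}{\bar a(u)}$ is $C^{1}$ on $(0,\,L)$ with strictly positive derivative $\frac{1}{\bar a(s)}>0$, so it is strictly increasing, and therefore $\phi(s)=e^{\int_{L/2}^{s}\frac{du}{\bar a(u)}}$ is strictly increasing as well (the exponential being increasing). A strictly monotone continuous function on an interval is injective onto its image and admits a continuous inverse there; this already gives invertibility in the sense required.

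To make the statement clean I would also pin down the image. The point is that $\bar a$ vanishes to first order at the endpoints: by \eqref{boundaryy} we have $\bar a(s)=s\,g_{1}(s)$ near $s=0$ with $g_{1}(0)=1$, and $\bar a(s)=(L-s)\,g_{2}(L-s)$ near $s=L$ with $g_{2}(0)=1$. Consequently $\frac{1}{\bar a(u)}\sim \frac{1}{u}$ as $u\to 0^{+}$ and $\frac{1}{\bar a(u)}\sim\frac{1}{L-u}$ as $u\to L^{-}$, so both integrals $\int_{0}\frac{du}{\bar a(u)}$ and $\int^{L}\frac{du}{\bar a(u)}$ diverge. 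Therefore $\int_{L/2}^{s}\frac{du}{\bar a(u)}\to-\infty$ as $s\to 0^{+}$ and $\to+\infty$ as $s\to L^{-}$, whence $\phi(s)\to 0^{+}$ and $\phi(s)\to+\infty$ respectively. Combined with continuity and strict monotonicity, this shows $\phi$ is a homeomorphism (indeed a diffeomorphism) from $(0,\,L)$ onto $(0,\,\infty)$, matching the notation $\phi:(0,\,L)\to(0,\,\infty)$ in the statement.

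There is no real obstacle here; the only point requiring a little care is the behavior at the endpoints, which is where one must invoke the smoothness/boundary conditions \eqref{e: smoothness for a}–\eqref{e: smoothness for b} (equivalently the expansions \eqref{boundaryy}) rather than just positivity of $\bar a$ on the open interval — positivity alone gives injectivity but not surjectivity onto $(0,\,\infty)$. So the proof is: (i) $\bar a>0$ on $(0,\,L)$ $\Rightarrow$ $\phi$ is $C^{1}$ with $\phi'=\phi/\bar a>0$ $\Rightarrow$ $\phi$ strictly increasing, hence injective; (ii) the first-order vanishing of $\bar a$ at $0$ and $L$ forces the defining integral to run over all of $\mathbb{R}$, so $\phi$ surjects onto $(0,\,\infty)$; (iii) conclude that $\phi$ is invertible with smooth inverse.
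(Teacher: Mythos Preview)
Your proof is correct and follows essentially the same approach as the paper: monotonicity from $\bar a>0$ on $(0,\,L)$, and surjectivity onto $(0,\,\infty)$ from the first-order vanishing of $\bar a$ at the endpoints (via \eqref{boundaryy}), which forces the defining integral to diverge to $\pm\infty$. The paper just makes the endpoint estimate slightly more explicit by bounding $|g_{2}(L-x)-1|\leq\tfrac{1}{2}$ near $L$ to compare with $\int\frac{du}{L-u}$, but your asymptotic argument $\frac{1}{\bar a(u)}\sim\frac{1}{u}$ and $\frac{1}{\bar a(u)}\sim\frac{1}{L-u}$ is equivalent.
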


\begin{proof}
Since $\bar{a}(s)>0$ by assumption, we see directly that $\phi'(s)>0$ for $s\in(0,\,L)$ so that $\phi(s)$ is strictly increasing. In addition, in light of \eqref{boundaryy}, 
with $|g_{2}(L-x)-1|\leq\frac{1}{2}$ for $x$ in an interval of the form $(0,\,\varepsilon)$ for some $\varepsilon>0$ sufficiently small, we see that for $s\in(L-\varepsilon,\,L)$,
$$\int_{\frac{L}{2}}^{s}\frac{du}{\bar{a}(u)}
=\int_{\frac{L}{2}}^{L-\varepsilon}\frac{du}{\bar{a}(u)}+\int_{L-\varepsilon}^{s}\frac{du}{\bar{a}(u)}\geq C+\frac{2}{3}
\int_{L-\varepsilon}^{s}\frac{du}{L-u}$$
which tends to $+\infty$ as $s\to L^{-}$. Similar behavior occurs as $s\to0^{+}$, hence $\phi(s)$ is indeed a diffeomorphism. 
\end{proof}

Let $(w_{1},\ldots,w_{n})$ henceforth denote holomorphic coordinates on $\mathbb{C}^{n}$. 

\begin{lem}\label{l: s bar}
Define a function $\hat{s}:\mathbb{C}^{n}\setminus\{0\}\to[0,\,L]$ by
\begin{equation*}
\hat{s} (w_{1},\ldots,w_{n})=\left\{ \begin{array}{ll}
 0 & \textrm{if $w_{2}=\ldots=w_{n}=0$ (and $w_{1}\neq0$)},\\
\phi^{-1}\left(\frac{\sqrt{\sum_{j=2}^{n}|w_{j}|^{2}}}{|w_{1}|}\right)
& \textrm{if $w_{1}\neq0$ and $w_j\neq 0$ for some $j\ge 2$},\\
L & \textrm{if $w_{1}=0$.}\\
\end{array} \right.
\end{equation*}
Then $\hat{s}$ is continuous and invariant under the diagonal $\mathbb{C}^{*}$-action on $\mathbb{C}^{n}$.
\end{lem}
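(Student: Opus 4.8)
The plan is to treat the two assertions in turn, the $\mathbb{C}^{*}$-invariance being essentially immediate. The diagonal action sends $(w_{1},\ldots,w_{n})$ to $(\lambda w_{1},\ldots,\lambda w_{n})$ for $\lambda\in\mathbb{C}^{*}$, and this preserves each of the three defining conditions (``$w_{2}=\cdots=w_{n}=0$'', ``$w_{1}\neq0$ and $w_{j}\neq0$ for some $j\ge2$'', ``$w_{1}=0$''); moreover, on the middle region it multiplies both $|w_{1}|$ and $\sqrt{\sum_{j\ge2}|w_{j}|^{2}}$ by $|\lambda|$, so the ratio $\rho(w):=\sqrt{\sum_{j\ge2}|w_{j}|^{2}}/|w_{1}|$ is unchanged. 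Hence $\hat{s}$ is constant on $\mathbb{C}^{*}$-orbits.

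For continuity, I would first record that the three cases are mutually exclusive and exhaust $\mathbb{C}^{n}\setminus\{0\}$. On the open set $U:=\{w_{1}\neq0,\ w_{j}\neq0\textrm{ for some }j\ge2\}$ the function $\rho$ is continuous with values in $(0,\infty)$, and $\phi^{-1}\colon(0,\infty)\to(0,L)$ is continuous by Lemma \ref{ireland}, so $\hat{s}=\phi^{-1}\circ\rho$ is continuous on $U$. It then remains to verify continuity at points of the two ``boundary'' strata $\{w_{1}\neq0,\ w_{2}=\cdots=w_{n}=0\}$, where $\hat{s}=0$, and $\{w_{1}=0\}$, where $\hat{s}=L$.

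The key input for this is the endpoint behaviour of $\phi$, which is already contained in the proof of Lemma \ref{ireland}: using \eqref{boundaryy}, the integral $\int_{L/2}^{s}\frac{du}{\bar{a}(u)}$ tends to $-\infty$ as $s\to0^{+}$ and to $+\infty$ as $s\to L^{-}$, so that $\lim_{s\to0^{+}}\phi(s)=0$ and $\lim_{s\to L^{-}}\phi(s)=+\infty$; equivalently $\phi^{-1}(\rho)\to0$ as $\rho\to0^{+}$ and $\phi^{-1}(\rho)\to L$ as $\rho\to\infty$. Given this, at a point $w^{(0)}$ with $w_{1}^{(0)}\neq0$ and $w_{2}^{(0)}=\cdots=w_{n}^{(0)}=0$, all nearby points have $w_{1}\neq0$ and $\rho(w)\to0$ as $w\to w^{(0)}$ (numerator $\to0$, denominator bounded away from $0$), so whether a nearby point lies in the zero stratum (value $0$) or in $U$ (value $\phi^{-1}(\rho)\to0$) we get $\hat{s}(w)\to0$; and at a point $w^{(0)}$ with $w_{1}^{(0)}=0$, since $w^{(0)}\neq0$ the numerator stays bounded away from $0$ nearby, so $\rho(w)\to\infty$ along $\{w_{1}\neq0\}$ and $\hat{s}(w)\to L$, matching the value $L$ on $\{w_{1}=0\}$.

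I do not expect a serious obstacle here. The only mild point of care — and the step I would flag as the crux — is correctly identifying the limiting values of $\phi$ at the two ends of $(0,L)$, which rests on the divergence of the defining integral already established in Lemma \ref{ireland}; beyond that it is just bookkeeping over which stratum a given nearby point occupies when checking continuity along the two boundary strata.
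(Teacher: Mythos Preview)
Your proposal is correct and follows essentially the same approach as the paper: both arguments reduce to the endpoint behaviour of $\phi$ established in Lemma \ref{ireland} (namely $\phi^{-1}(\rho)\to0$ as $\rho\to0^{+}$ and $\phi^{-1}(\rho)\to L$ as $\rho\to\infty$) to handle the two boundary strata. The only cosmetic difference is that the paper first descends to $\mathbb{P}^{n-1}$ via the $\mathbb{C}^{*}$-invariance and checks continuity in the standard homogeneous-coordinate charts, whereas you work directly in $\mathbb{C}^{n}\setminus\{0\}$.
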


\begin{proof}
The function $\hat{s}$ is clearly invariant under the diagonal $\mathbb{C}^{*}$-action on $\mathbb{C}^{n}$. As such, it descends to a function that we denote by $\tilde{s}:\mathbb{P}^{n-1}\to[0,\,L]$ on $\mathbb{P}^{n-1}$. To complete the proof of the lemma, it suffices to show that $\tilde s$ is continuous in the induced homogeneous coordinates $[w_{1}:\ldots:w_{n}]$ on $\mathbb{P}^{n-1}$.

To this end, since $\phi$ extends to a continuous function with $\phi(0)=0$, 
$\tilde{s}$ is clearly continuous on the open subset $\{[w_{1}:w_{2}:\ldots:w_{n}]\in\mathbb{P}^{n-1}\,|\,w_{1}\neq0\}$.
Let $2\leq j\leq n$ and consider the chart
$\{[w_{1}:w_{2}:\ldots:w_{n}]\in\mathbb{P}^{n-1}\,|\,w_{j}\neq0\}$.
In this chart, we see that for $w_{1}\neq0$,
$$\tilde{s}([w_{1}:\ldots:w_{n}])=\phi^{-1}\left(\left|\frac{w_{j}}{w_{1}}\right|\sqrt{1+\sum_{\substack{k=2 \\ k\neq j}}^{n}|w_{k}|^{2}}\right).$$
Since the quantity inside the parentheses tends to $+\infty$ as $w_{1}\to0$, we see that $\tilde s$ is continuous, as required.
\end{proof}

\begin{lem}\label{uk}
Define a function $\widehat{\varphi}:\mathbb{C}^{n}\setminus\{0\}\to\mathbb{R}$ by
$$\widehat{\varphi}(w)=2\int_{\frac{L}{2}}^{\hat{s}(w)}\frac{\bar{b}(u)^{2}}{\bar{a}(u)}\,du-\bar{b}(L)^{2}\cdot\ln\left(1+e^{2\int_{\frac{L}{2}}^{\hat{s}(w)}\frac{du}{\bar{a}(u)}}\right),$$
where $\hat{s}$ is the function defined in Lemma \ref{l: s bar}. Then $\widehat{\varphi}$ is smooth and invariant under the diagonal $\mathbb{C}^{*}$-action on $\mathbb{C}^{n}$.   
\end{lem}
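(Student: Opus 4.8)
The plan is to prove the $\mathbb{C}^*$-invariance immediately and then establish smoothness by a local analysis, splitting $\mathbb{C}^n\setminus\{0\}$ according to the behaviour of $\hat s$. Invariance is clear: by Lemma~\ref{l: s bar}, $\hat s$ is invariant under the diagonal $\mathbb{C}^*$-action, and $\widehat\varphi(w)=F(\hat s(w))$ depends on $\hat s(w)$ alone, where
\[
F(t):=2\int_{\frac L2}^{t}\frac{\bar b(u)^2}{\bar a(u)}\,du-\bar b(L)^2\ln\!\Big(1+e^{2\int_{\frac L2}^t\frac{du}{\bar a(u)}}\Big).
\]
Set $A:=\sum_{j=2}^{n}|w_j|^2$; then $\hat s$ equals $0$ where $A=0$ (necessarily $w_1\neq0$) and equals $L$ where $w_1=0$, while on $\{w_1\neq0,\ A\neq0\}$ we have $\hat s=\phi^{-1}\!\big(\sqrt{A/|w_1|^2}\big)$. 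Since $A/|w_1|^2$ is positive and smooth there, $\phi^{-1}\colon(0,\infty)\to(0,L)$ is a diffeomorphism by Lemma~\ref{ireland}, and $F$ is smooth on $(0,L)$ (because $\bar a,\bar b>0$ and the argument of the logarithm is positive), $\widehat\varphi$ is smooth on $\{w_1\neq0,\ A\neq0\}$. As these three sets cover $\mathbb{C}^n\setminus\{0\}$, it remains only to prove smoothness across $\{A=0\}\setminus\{w_1=0\}$ and across $\{w_1=0\}\setminus\{0\}$.

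Across $\{A=0\}\setminus\{w_1=0\}$ I would show that $\widehat\varphi$ is a smooth function of $P:=A/|w_1|^2$, which is itself smooth in $w$ off $\{w_1=0\}$. By construction $\phi(\hat s)^2=P$, so $e^{2\int_{L/2}^{\hat s}du/\bar a(u)}=P$ and the logarithmic term of $\widehat\varphi$ is exactly $-\bar b(L)^2\ln(1+P)$, visibly smooth in $P$. For the first term, the substitution $v=\phi(u)^2$ (using $\phi'=\phi/\bar a$) turns it into $\int_1^{P}\bar b(u(v))^2\,v^{-1}\,dv$, and one checks that $\bar b(u)^2/\phi(u)^2$ and the inverse map $v\mapsto u(v)^2$ are smooth functions of $v$ near $v=0$. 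This is exactly where the smoothness conditions \eqref{e: smoothness for a} and \eqref{e: smoothness for b} are used: the vanishing of the even-order derivatives of $\bar a$ and $\bar b$ at $s=0$ makes $\bar b(s)^2$ even and $\phi(s)^2$ equal to $s^2$ times a positive even function of $s$, hence both smooth functions of $s^2$; since $s^2\mapsto\phi(s)^2$ has positive derivative at $0$, inverting it expresses $s^2$, and therefore the integrand, smoothly in $v$. Thus the first term, and so $\widehat\varphi$, is a smooth function of $P$.

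Across $\{w_1=0\}\setminus\{0\}$ the analogue uses the reciprocal variable $Q:=|w_1|^2/A$, smooth in $w$ where $A\neq0$, but first a logarithmic cancellation must be extracted: since $\phi(\hat s)^2=1/Q$ we have $2\int_{L/2}^{\hat s}du/\bar a(u)=-\ln Q$ and $\ln(1+\phi(\hat s)^2)=\ln(1+Q)-\ln Q$, so writing $\bar b(u)^2=\bar b(L)^2+\big(\bar b(u)^2-\bar b(L)^2\big)$ in the first term of $F$ gives
\[
\widehat\varphi=2\int_{\frac L2}^{\hat s}\frac{\bar b(u)^2-\bar b(L)^2}{\bar a(u)}\,du-\bar b(L)^2\ln(1+Q),
\]
the divergent $\ln Q$'s having cancelled precisely because the coefficient of the logarithm in $\widehat\varphi$ is $\bar b(L)^2$. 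The second term is smooth in $Q$. For the first, \eqref{e: smoothness for a} and \eqref{e: smoothness for b} now give that near $s=L$ the function $\bar a$ has only odd and $\bar b$ only even powers of $L-s$, whence $\big(\bar b(u)^2-\bar b(L)^2\big)/\bar a(u)=(L-u)\,N\big((L-u)^2\big)$ for a smooth $N$, so the integral is a smooth function of $(L-\hat s)^2$; likewise $\phi(\hat s)^{-2}=(L-\hat s)^2\,\hat W\big((L-\hat s)^2\big)$ with $\hat W(0)>0$, so $(L-\hat s)^2$ is a smooth function of $Q=\phi(\hat s)^{-2}$. Hence $\widehat\varphi$ is smooth across $\{w_1=0\}\setminus\{0\}$, completing the proof.

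The \emph{main obstacle} is precisely the behaviour across these two loci. There $\hat s$ is only continuous, and in fact generically not even $C^1$, because of the $\sqrt{A}$ and $1/\sqrt{A}$ entering its definition; the particular combination $\widehat\varphi$ is nonetheless smooth, and the argument works only because two features conspire and must be isolated: the even-derivative vanishing in \eqref{e: smoothness for a}--\eqref{e: smoothness for b} removes all the half-integer powers of the natural variables produced by $\phi$ and $\phi^{-1}$, and the specific coefficient $\bar b(L)^2$ in front of the logarithm is exactly the one that cancels the logarithmic divergence of $\int\bar b(u)^2/\bar a(u)\,du$ as $\hat s\to L$. Pinning down these two cancellations --- rather than the routine substitutions and Taylor expansions --- is the substance of the argument.
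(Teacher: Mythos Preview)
Your proof is correct and takes a genuinely different route from the paper's.  The paper argues \emph{indirectly} via K\"ahler geometry: it introduces the change of variable $\tau(s)=\arctan\phi(s)$ relating $\bar\omega$ to the Fubini--Study metric, observes that $\bar\omega-\bar b(L)^2T^*\!\big(\tfrac12\omega_{FS}\big)=i\partial\bar\partial\varphi$ for some globally smooth $\varphi$ on $\mathbb{P}^{n-1}$ by the $\partial\bar\partial$-lemma, and then shows (via a $C^2$-extension and an integration-by-parts argument) that $\widehat\varphi$ differs from $\varphi$ by a constant.  Smoothness of $\widehat\varphi$ is thus inherited from the abstractly existing potential $\varphi$.

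Your argument is more elementary and self-contained: you never invoke the K\"ahler structure or the $\partial\bar\partial$-lemma, but instead directly exhibit $\widehat\varphi$ as a smooth function of the real-analytic invariants $P=A/|w_1|^2$ and $Q=|w_1|^2/A$ near the two singular loci.  The key analytic observations --- that the evenness conditions in \eqref{e: smoothness for a}--\eqref{e: smoothness for b} make $\phi(s)^2$ (resp.\ $\phi(s)^{-2}$) a smooth, invertible function of $s^2$ (resp.\ $(L-s)^2$), and that the coefficient $\bar b(L)^2$ produces the exact logarithmic cancellation at $s=L$ --- are identified cleanly and are precisely what drive the result.  The paper's approach has the advantage that it sets up the biholomorphism $\Phi=\Psi\circ T$ and the potential $\varphi$, both of which are reused in Proposition~\ref{iceland}; your approach has the advantage of being a direct verification that would work equally well outside the K\"ahler context.
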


\begin{proof}
Since $\widehat{\varphi}$ is a function of $\hat{s}$ only, and $\hat{s}$ is invariant under the diagonal $\mathbb{C}^{*}$-action, $\widehat{\varphi}$ has the same property. As such, it descends to a function $\tilde{\varphi}=\tilde{\varphi}(\tilde{s}):\mathbb{P}^{n-1}\to\mathbb{R}$, where $\tilde{s}$ is as in the proof of the previous lemma. To complete the proof of the lemma, it suffices to show that $\tilde{\varphi}$ is smooth in the induced homogeneous coordinates $[w_{1}:\ldots:w_{n}]$ on $\mathbb{P}^{n-1}$.

To this end, with $s$ still denoting the coordinate on the $(0,\,L)$-factor, let $\tau:(0,\,L)\to\left(0,\,\frac{\pi}{2}\right)$ be the unique solution of the ODE 
\begin{displaymath}
\left\{ \begin{array}{ll}
\bar{a}(s)\tau'(s)=\frac{1}{2}\sin(2\tau(s)), & \\
\tau\left(\frac{L}{2}\right)=\frac{\pi}{4}. & \\
\end{array} \right.
\end{displaymath}
Then $$\tau(s)=\arctan\left(e^{\int_{\frac{L}{2}}^{s}\frac{du}{\bar{a}(u)}}\right)=\arctan(\phi(s))$$ and hence is a diffeomorphism. 

Next recall from Example \ref{fubini} one half of the Fubini-Study metric on $S^{2n-3}\times\left(0,\,\frac{\pi}{2}\right)$, the K\"ahler form of which is given by 
\begin{equation}\label{tired}
\frac{1}{2}\omega_{FS}=dd^{c}\left(\frac{1}{2}\int_{\frac{\pi}{4}}^{t}\tan(u)\,du\right)=\frac{1}{2}dd^{c}\left(\ln\sec(t)\right)
\end{equation}
with $t$ the coordinate on the interval factor. $\tau(s)$
has the additional property that $\tau_{*}(\bar{a}(s)\partial_{s})=\frac{1}{2}\sin(2t)\partial_{t}$ and so we have an induced biholomorphism 
\begin{equation}\label{beta}
\begin{split}
T&:(S^{2n-3}\times\left(0,\,L\right),\,\bar{\omega})\to\left(S^{2n-3}\times \left(0,\,\frac{\pi}{2}\right),\,\frac{1}{2}\omega_{FS}\right),\\
 T(x,\,s)&=(x,\,\tau(s))=(x,\,\arctan(\phi(s))),
\end{split}
\end{equation}
which extends over the endpoints in the obvious way to give an automorphism of $\mathbb{P}^{n-1}$:
\begin{equation*}
\begin{split}
T&:\left(\left(S^{2n-3}\times\left(0,\,L\right)\right)\cup\{0\}\cup\left(\mathbb{P}^{n-2}\times\left\{L\right\}\right),\,\bar{\omega}\right)\\
&\qquad\qquad\qquad\to\left(\left(S^{2n-3}\times \left(0,\,\frac{\pi}{2}\right)\right)\cup\{0\}\cup\left(\mathbb{P}^{n-2}\times\left\{\frac{\pi}{2}\right\}\right),\,\frac{1}{2}\omega_{FS}\right).
\end{split}
\end{equation*}
Notice that $T^{*}t=\tau(s)=\arctan(\phi(s))$ and that  
\begin{equation*}
\begin{split}
T^{*}\left(\frac{1}{2}\omega_{FS}\right)&=dd^{c}\left(\frac{1}{2}\int_{\frac{\pi}{4}}^{\tau(s)}\tan(u)\,du\right)=\frac{1}{2}dd^{c}\left(\ln\sec(\tau(s))\right)\\
&=\frac{1}{4}dd^{c}\left(\ln\left(1+\left(e^{\int_{\frac{L}{2}}^{s}\frac{du}{\bar{a}(u)}}\right)^{2}\right)\right)=\frac{1}{4}dd^{c}\left(\ln\left[1+(\phi(s))^{2}\right)\right].
\end{split}
\end{equation*}

Now $\bar{\omega}$ lies in a positive multiple of the K\"ahler class of $T^{*}\left(\frac{1}{2}\omega_{FS}\right)$, and so there exists a smooth real-valued function $\varphi:\mathbb{P}^{n-1}\to\mathbb{R}$, defined up to addition of a constant, and $c>0$ such that $\bar{\omega}-cT^{*}\left(\frac{1}{2}\omega_{FS}\right)=i\partial\bar{\partial}\varphi$.
In light of \eqref{coeff}, we see that $c=\bar{b}(L)^{2}$.
On $S^{2n-3}\times(0,\,L)$, we can write
\begin{equation}\label{lovely}
\begin{split}
  \frac{1}{2}dd^{c}\varphi&=i\partial\bar{\partial}\varphi=\bar{\omega}-\bar{b}(L)^{2}T^{*}\left(\frac{1}{2}\omega_{FS}\right)\\
   &=\frac{1}{2}dd^{c}\left(\int_{\frac{L}{2}}^{s}\frac{\bar{b}(u)^{2}}{\bar{a}(u)}\,du-\frac{\bar{b}(L)^{2}}{2}\ln\left(1+e^{2\int_{\frac{L}{2}}^{s}\frac{du}{\bar{a}(u)}}\right)\right).
\end{split}
\end{equation}
Using the expansions of \eqref{boundaryy}, it is easy to show that the function inside the parentheses on the right-hand side
has a $C^{2}$-continuation over $s=0$, i.e., the function and its first and second derivative with respect to $s$ extend as continuous functions over $s=0$. The function also admits a $C^{2}$-continuation over $s=L$. To see this, again use the expansions of \eqref{boundaryy} and rewrite the function in the following way:
\begin{equation*}
\begin{split}
\int_{\frac{L}{2}}^{s}\frac{\bar{b}(u)^{2}}{\bar{a}(u)}\,du&-\frac{\bar{b}(L)^{2}}{2}\ln\left(1+e^{2\int_{\frac{L}{2}}^{s}\frac{du}{\bar{a}(u)}}\right)\\
&\qquad\qquad\qquad\qquad=\int_{\frac{L}{2}}^{s}\left(
\frac{\bar{b}(u)^{2}-b(L)^{2}}{\bar{a}(u)}\right)\,du
-\frac{\bar{b}(L)^{2}}{2}\ln\left(1+
e^{-2\int_{\frac{L}{2}}^{s}\frac{du}{\bar{a}(u)}}\right).
\end{split}
\end{equation*}
In addition, its first derivative with respect to $s$ vanishes when $s=0,\,L$, and so the gradient and Laplacian of this function both extend as continuous functions to the whole of $\mathbb{P}^{n-1}$. With the Laplacian of this function vanishing, an integration by parts argument now implies that
\begin{equation}\label{lost}
\varphi-2\int_{\frac{L}{2}}^{s}\frac{\bar{b}(u)^{2}}{\bar{a}(u)}\,du+\bar{b}(L)^{2}\ln\left(1+e^{2\int_{\frac{L}{2}}^{s}\frac{du}{\bar{a}(u)}}\right)\qquad\textrm{is constant.}
\end{equation}
In particular, 
\begin{equation*}
2\int_{\frac{L}{2}}^{s}\frac{\bar{b}(u)^{2}}{\bar{a}(u)}\,du-\bar{b}(L)^{2}\ln\left(1+e^{2\int_{\frac{L}{2}}^{s}\frac{du}{\bar{a}(u)}}\right)
\end{equation*}
is smooth on $\mathbb{P}^{n-1}$ because $\varphi$ 
is.

We next define a biholomorphism
$$\Psi:\left(S^{2n-3}\times\left(0,\,\frac{\pi}{2}\right)\right)\cup\{0\}\to\{[w_{1}:w_{2}:\ldots:w_{n}]\in\mathbb{P}^{n-1}\,|\,w_{1}\neq0\}$$ in the following way.
Choose an automorphism of the round Sasaki structures
$$\lambda:S^{2n-3}\to\{z=(z_{1},\ldots,z_{n-1})\in\mathbb{C}^{n-1}\,|\,|z|=1\}$$ and set
\begin{equation*}
\left\{ \begin{array}{ll}
\Psi(0)=[1:0:\ldots:0], & \\
\Psi(x,\,t)=[1:z_{1}:\ldots:z_{n-1}]=[1:\lambda(x)\cdot\tan(t)] & \textrm{for $t>0$ and $x\in S^{2n-3}$.}\\
\end{array} \right.
\end{equation*}
Note that $\psi:\left(0,\,\frac{\pi}{2}\right)\to(0,\,\infty)$,
$\psi(t):=\tan(t),$ is a diffeomorphism so that $\Psi$ is one also. 
Moreover, $\psi(t)$ is the unique solution of the ODE
\begin{displaymath}
\left\{ \begin{array}{ll}
\frac{1}{2}\sin(2t)\psi'(s)=\psi(t), & \\
\psi(\frac{\pi}{4})=1, & \\
\end{array} \right.
\end{displaymath}
hence $\Psi$ in addition satisfies $\Psi_{*}\left(\frac{1}{2}\sin(2t)\partial_{t}\right)=r\partial_{r}$, where $r(z)^{2}=\sum_{i=1}^{n-1}|z_{i}|^{2}$. Thus, $\Psi$ is indeed a biholomorphism.
Notice by construction that $\Psi^{*}r=\psi\circ t=\tan(t)$ and so in light of \eqref{tired},
$\Psi$ extends in the obvious way to a holomorphic isometry $$\Psi:\left(S^{2n-3}\times\left(0,\,\frac{\pi}{2}\right)\right)\cup\{0\}\cup\left(\mathbb{P}^{n-2}\times\left\{\frac{\pi}{2}\right\}\right)\to\{[w_{1}:\ldots:w_{n}]\in\mathbb{P}^{n-1}\}$$
of $\mathbb{P}^{n-1}$ with respect to the Fubini-Study metric. 

Recall that $\Psi^{*}r=\tan(t)$ and that $T^{*}t=\tau(s)=\arctan(\phi(s))$.
Set
$$\Phi:= \Phi=\Psi\circ T:\left(S^{2n-3}\times\left(0,\,L\right)\right)\cup\{0\}\cup\left(\mathbb{P}^{n-2}\times\left\{L\right\}\right)\to\{[w_{1}:w_{2}:\ldots:w_{n}]\in\mathbb{P}^{n-1}\}.$$
Explicitly, $\Phi$ is given by
\begin{equation}\label{fuck}
\left\{ \begin{array}{ll}
\Phi(0)=[1:0:\ldots:0], & \\
\Phi(x,\,s)=\left[1:\lambda(x)\cdot\phi(s)]=[1:\lambda(x)\cdot e^{\int^{s}_{\frac{L}{2}}\frac{du}{\bar{a}(u)}}\right]& \textrm{for $s>0$ and $x\in S^{2n-3}$.}\\
\end{array} \right.
\end{equation}
Since $\Phi^{*}r=\phi(s)$, we see directly that
$\Phi^{*}\tilde{s}=s$. Thus, in light of \eqref{lost}, we find that
\begin{equation}\label{fck}
\begin{split}
i\partial\bar{\partial}\left(\varphi-\Phi^{*}\tilde{\varphi}(\tilde{s})\right)=
i\partial\bar{\partial}\left(\varphi-\tilde{\varphi}(s)\right)=0,
\end{split}
\end{equation}
and so $\tilde{\varphi}$ is smooth, as required.
\end{proof}

The main result of this subsection is the following.
 
\begin{prop}\label{iceland}
Let $n\geq3$. 

{\rm (1)} Let $\bar{g}$ be a K\"ahler metric on $\mathbb P^{n-1}$ with K\"ahler form $\bar{\omega}$
invariant under the $U(n-1)$-action defined by matrix multiplication on the last $(n-1)$-homogeneous coordinates on $\mathbb P^{n-1}$. Then there exists $L>0$,
a smooth function $s:\mathbb{P}^{n-1}\to[0,\,L]$, and smooth functions $\bar{a},\bar{b}:[0,\,L]\to[0,\infty)$
with $\bar{a}(s)=\bar{b}(s)\bar{b}'(s)$
satisfying \eqref{e: smoothness for a} and \eqref{e: smoothness for b} such that $\bar{g}$ can be written as a doubly warped product 
\begin{equation*}
   \bar{g}=ds^{2}+\bar{a}^2(s)\eta^{2}+\bar{b}^2(s)g^{T} 
\end{equation*}
on $S^{2n-3}\times(0,\,L)$. Here, $(S^{2n-3},\,g^{T},\,\eta)$ is the standard round Sasaki structure on $S^{2n-3}$ as described in Example \ref{flat2}.

{\rm (2)} Let $\widehat{\varphi}:\mathbb{C}^{n}\setminus\{0\}\to\mathbb{R}$ be as in Lemma \ref{uk} and let $\hat{g}$ be K\"ahler cone metric on $\mathbb C^n\setminus\{0\}$ defined by the radial function $\hat{r}(w)=|w|^{\bar{b}(L)^2}e^{\widehat{\varphi}(w)}$ (cf.~Definition \ref{d: type two deformation}).
Let $\widehat{\omega}^{T}$ denote the tranverse K\"ahler form on $\mathbb{P}^{n-1}$. Then:

 {\rm (a)} $\hat{g}$ is invariant under the $U(1)\times U(n-1)$-action on $\mathbb{C}^{n}$ defined by  
$$U(1)\times U(n-1)\ni(\alpha,\,A)\cdot(z,\,w)\in\mathbb{C}\times\mathbb{C}^{n-1}\mapsto(\alpha\cdot z,\,Aw).$$ 

{\rm (b)} The induced Sasaki metric $h$ on $\{\hat{r}=1\}\cong S^{2n-1}$ 
satisfies
\begin{equation}\label{diam sasa}
    \operatorname{diam}(\mathbb{P}^{n-1}, \hat{g}^T)\le\operatorname{diam}(S^{2n-1}, h)\leq  \operatorname{diam}(\mathbb{P}^{n-1}, \,\hat{g}^T)+\pi \bar{b}(L)^2
\end{equation}
and
\begin{equation}\label{vol sasa}
    \operatorname{vol}_{h}(S^{2n-1})\leq 2\pi\bar{b}(L)^2 \operatorname{vol}_{\hat{g}^{T}}(\mathbb{P}^{n-1}).
\end{equation}

{\rm (c)} $\widehat{\omega}^{T}$ depends smoothly on $\bar{g}$ and there exists a biholomorphism $\Phi$ such that $\widehat{\omega}^{T}=(\Phi^{-1})^*\bar{\omega}$.
\end{prop}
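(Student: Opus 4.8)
The plan is to prove the three parts of Proposition \ref{iceland} by making the doubly warped product parametrization of $\bar{g}$ explicit, then feeding the resulting warping functions $\bar{a},\bar{b}$ into the Type II deformation recipe of Definition \ref{d: type two deformation}, and finally tracking how the transverse geometry and the induced Sasaki geometry depend on this data. The starting point for (1) is that any $U(n-1)$-invariant K\"ahler metric on $\mathbb{P}^{n-1}$ is cohomogeneity one under the $U(n-1)$-action whose principal orbits are copies of $S^{2n-3}$ (the round Sasaki sphere fibering over $\mathbb{P}^{n-2}$), with two singular orbits: a fixed point $[1:0:\cdots:0]$ and a copy of $\mathbb{P}^{n-2}$. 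One introduces the arclength parameter $s$ along a geodesic meeting all orbits orthogonally; invariance forces $\bar{g}=ds^2+\bar a(s)^2\eta^2+\bar b(s)^2 g^T$ on the principal part $S^{2n-3}\times(0,L)$, and the K\"ahler condition gives $\bar a=\bar b\bar b'$ by Lemma \ref{kahlerr}. The smoothness of $\bar g$ across the two singular orbits is exactly the content of conditions \eqref{e: smoothness for a} and \eqref{e: smoothness for b}, as recorded in Example \ref{projective}; the function $s:\mathbb{P}^{n-1}\to[0,L]$ is then the obvious map sending a point to its $s$-coordinate, smooth by the same argument that made $\tilde s$ continuous in Lemma \ref{l: s bar} (here one upgrades to smoothness using that $s^2$ — or rather $\phi$ and hence $\tilde\varphi$ — is smooth, exactly as in Lemma \ref{uk}).

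For part (2a), the invariance of $\hat g$ is formal: the radial function $\hat r=|w|^{\bar b(L)^2}e^{\widehat\varphi}$ is built from $|w|$ and $\widehat\varphi$, and $\widehat\varphi$ was shown in Lemma \ref{uk} to be a function of $\hat s$ only, hence invariant under the full $U(1)\times U(n-1)$-action (the first factor because it only sees $|w_1|$ through the ratio defining $\hat s$, the second because $\hat s$ depends on $w_2,\dots,w_n$ only through $\sum|w_j|^2$). Since $\hat r$ is invariant, so is the K\"ahler cone metric $\hat g=\frac{i}{2}\partial\bar\partial\hat r^2$. For part (2b), I would express the induced Sasaki metric $h$ on $\{\hat r=1\}$ as a doubly warped product over the interval: writing $\hat g_0=d\hat r^2+\hat r^2(\hat\eta^2+p^*\hat g^T)$ with $\hat g^T$ the transverse metric on $\mathbb{P}^{n-1}$, the link carries $h=\hat\eta^2+p^*\hat g^T$, and $\hat g^T$ itself is a doubly warped product over $(0,L)$ (this is where part (2c) is used). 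The diameter bounds in \eqref{diam sasa} follow from the Riemannian submersion $p:(S^{2n-1},h)\to(\mathbb{P}^{n-1},\hat g^T)$: the lower bound is the standard fact that a submersion is distance-nonincreasing, and the upper bound comes from estimating the fibre length — the $\hat\eta$-circles have length controlled by $\bar b(L)^2$ (the exponent in $\hat r$ is precisely the "area" of the $\mathbb{P}^{n-2}$-level, which is what \eqref{coeff} computes), so any two points differ by a horizontal path plus at most one fibre circuit of length $\le \pi\bar b(L)^2$. Inequality \eqref{vol sasa} is then Fubini: $\operatorname{vol}_h(S^{2n-1}) = \int_{\mathbb{P}^{n-1}}(\text{fibre length})\,d\operatorname{vol}_{\hat g^T}\le 2\pi\bar b(L)^2\operatorname{vol}_{\hat g^T}(\mathbb{P}^{n-1})$.

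For part (2c), the transverse K\"ahler form $\widehat\omega^T$ is characterized by $\frac12 d\hat\eta=p^*\widehat\omega^T$ where $\hat\eta=d^c\log\hat r$. Computing $d^c\log\hat r=\bar b(L)^2 d^c\log|w|+d^c\widehat\varphi$, one gets $\frac12 d\hat\eta=\bar b(L)^2\cdot\frac12\omega_{FS}+i\partial\bar\partial\widehat\varphi$ as forms on $\mathbb{P}^{n-1}$; but this is precisely the identity $\bar\omega=\bar b(L)^2 T^*(\frac12\omega_{FS})+i\partial\bar\partial\varphi$ from the proof of Lemma \ref{uk} transported via the biholomorphism $\Phi=\Psi\circ T$ of \eqref{fuck}, since \eqref{lost} shows $\widehat\varphi$ and $\Phi^*\varphi$ agree up to a constant. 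Hence $\widehat\omega^T=(\Phi^{-1})^*\bar\omega$, and smooth dependence on $\bar g$ follows because each ingredient — $L$, $\bar a$, $\bar b$, the ODE solution $\phi$, the integrals defining $\widehat\varphi$ — depends smoothly on $\bar g$ through the cohomogeneity-one reduction. \textbf{The main obstacle} I anticipate is the smoothness/regularity bookkeeping at the two singular orbits $s=0$ and $s=L$: one must verify that $\hat s$ (hence $\widehat\varphi$, hence $\hat r$ and the deformed cone metric) genuinely extends smoothly across the fixed point and across the $\mathbb{P}^{n-2}$, which requires carefully using the Milnor-type expansions \eqref{boundaryy} together with the integration-by-parts argument already carried out around \eqref{lost}; once that local analysis is in hand, the global assembly and the metric-geometry estimates in (2b) are comparatively routine.
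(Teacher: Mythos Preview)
Your proposal is correct and follows essentially the same route as the paper. Two places where the paper is more explicit than your sketch: in (2b) the fibre-length bound requires the computation $[d^{c}\widehat\varphi](\xi)=0$ (via the $\mathbb{C}^{*}$-invariance of $\widehat\varphi$) to conclude that the deformed contact form $\bar b(L)^{2}\eta+d^{c}\widehat\varphi$ evaluates to exactly $\bar b(L)^{2}$ on $\xi$, and in (2c) the paper secures smooth dependence by characterizing $\Psi^{*}\tilde\varphi$ as the unique normalized solution of a Laplace equation on $(\mathbb{P}^{n-1},g_{FS})$ whose right-hand side depends smoothly on $\bar g$, rather than by tracking each ingredient of $\widehat\varphi$ separately.
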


\begin{remark}\textnormal{Since $\widehat{\varphi}$ is smooth and invariant under the diagonal $\mathbb{C}^{*}$-action on $\mathbb{C}^{n}\setminus\{0\}$ by Lemma \ref{uk}, it descends to a well-defined smooth real-valued function $\varphi$ on projective space. The fact that $\hat{r}$ does indeed define the radial function of a K\"ahler cone metric on $\mathbb{C}^{n}\setminus\{0\}$ then follows from item (2c) of the proposition}.
\end{remark}

\begin{remark} 
\textnormal{A version of Proposition \ref{iceland} also holds when 
$n=2$. In this case, the K\"{a}hler metric $\bar{g}$ 
is a warped product on $(0,\,L)\times S^1$ of the form
$$\bar{g}=ds^2+\bar{a}^2(s)\eta^{2}$$
and $\bar{a}(s)$ satisfies \eqref{e: smoothness for a}.
We define another function $\bar{b}(s):=\sqrt{2\int_0^{s}a(\tau)\,d\tau}$. 
Then an induction argument shows that $\bar{b}(s)$ satisfies
\eqref{e: smoothness for b}. With this, the proof of Lemmas \ref{kahlerr}, \ref{ireland}, \ref{l: s bar}, and \ref{uk}, as well as that of Proposition \ref{iceland}, can be carried through. We leave the details to the interested reader.}
\end{remark}

\begin{proof}
{\rm (1)} We begin by showing that $\bar{g}$ takes the form of a doubly warped product as stated. Many of the ideas we use come from \cite{Cao1996}.

First recall from Example \ref{flat2} the round Sasaki structure on $S^{2n-3}$ with contact one form $\eta$ and transverse K\"ahler form $\omega^{T}=\frac{1}{2}\omega_{FS}$, one half of the Fubini-Study metric $g_{FS}$ on $\mathbb{P}^{n-2}$ normalised so that $\operatorname{Ric}(g_{FS})=(n-1)g_{FS}$. 
Let $z=[z_{1}:\ldots:z_{n-1}]$ denote homogeneous
coordinates on $\mathbb{P}^{n-1}$. The fact that $\bar{g}$
is invariant under the stipulated action means that
the restriction of $\bar{g}$ to 
the open set $\{z_{0}\neq0\}\subseteq\mathbb{P}^{n-1}$
is invariant under the standard $U(n-1)$-action on the 
holomorphic coordinates $(z_{1},\ldots,z_{n-1})$
induced from the standard coordinate chart 
covering the aforesaid open set. Let $|z|=:r:\mathbb{C}^{n-1}\to\mathbb{R}$ and write $r^{2}=e^{t}$. Being $U(n-1)$-invariant, we know that $\bar{\omega}=\frac{i}{2}\partial\bar{\partial}\Phi(t)$ for $\Phi:\mathbb{R}\to\mathbb{R}$ a given smooth function. Thus, recalling that $d^{c}:=i(\bar{\partial}-\partial)$ and $\eta=d^{c}\log(r)=\frac{1}{2}d^{c}t$, we can write
\begin{equation*}
\begin{split}
\bar{\omega}&=\frac{i}{2}\partial\bar{\partial}\Phi(t)=\frac{1}{4}dd^{c}\Phi(t)=\Phi'(t)\frac{1}{4}dd^{c}t+\Phi''(t)
\frac{dt}{2}\wedge\frac{d^{c}t}{2}=\mu(t)\omega^{T}+\mu'(t)\frac{dt}{2}\wedge\eta,
\end{split}
\end{equation*}
where $\mu(t):=\Phi'(t)$ is a smooth function on $(-\infty,\,\infty)$. 

In defining a metric, we clearly have that both $\mu(t),\,\mu'(t)>0$. We claim that $\bar{\omega}$ is a doubly warped product K\"ahler metric of the desired form with $L=\diam(\mathbb{P}^{n-1},\,\bar{g})$. To see this, set
$$s(t):=\frac{1}{2}\int_{-\infty}^t\sqrt{\mu'(\tau)}\,d\tau\qquad\textrm{for all $t\in\mathbb{R}$}.$$ Then since $\omega$ is in particular rotationally symmetric, the Euclidean radial line emerging from the origin is also a minimizing geodesic. Hence $s(t)$ is well-defined as it is the distance from the origin to any given $z\in \mathbb{C}^{n-1}$ with $t=2\ln|z|$. 

Next observe that $s'(t)=\frac{\sqrt{\mu'(t)}}{2}>0$ and so
$t$ can be written as a function of $s$. Let 
$$s_1:=\lim_{t\to-\infty}s(t)\qquad\text{and}\qquad s_2:=\lim_{t\to\infty}s(t).$$
Then $\bar{g}$ is a warped product metric over the interval $(s_{1},\,s_{2})$ with 
$$\bar{a}(s):=\sqrt{\mu'(t(s))}\qquad\textrm{and}\qquad\bar{b}(s):=\sqrt{\mu(t(s))}.$$ 
We claim that it is of the desired form. Indeed, for all $s\in (s_{1},\,s_{2})$, we have that
$$\bar{b}(s)\bar{b}'(s)=\frac{\mu'(t(s))}{2}\frac{dt}{ds}=\frac{\mu'(t(s))}{2}\frac{2}{\sqrt{\mu'(t(s))}}=\bar{a}(s)$$
and
$$\bar{\omega}=\mu(t)\omega^{T}+\mu'(t)\frac{dt}{2}\wedge\eta=\bar{b}(s)^2\omega^{T}+\bar{a}(s)ds\wedge\eta.$$
By the completeness of the metric, we see that
$$s_{1}=\lim_{t\to-\infty}s(t)=0
\qquad\textrm{and}\qquad s_{2}=\lim_{t\to \infty}s(t)=\operatorname{diam}(\mathbb{P}^{n-1},\,\bar{g})=L,$$
and by the arguments as laid out in Example \ref{projective} that \eqref{e: smoothness for a} and \eqref{e: smoothness for b} hold true, as required.

We now prove the remaining items of the proposition. 

 {\rm (2a)} Recall the definition of $\hat{s}$ from Lemma \ref{ireland}. Since $\widehat{\varphi}$ depends only on $\hat{s}$ which, by its very definition, is invariant under the action prescribed, it is clear that $\widehat{\varphi}$, and hence $\hat{r}$, is also. 

 {\rm (2b)} There exists a Riemannian submersion $p:(S^{2n-1}, h)\mapsto (\mathbb{P}^{n-1}, \hat g^T)$ which is an $S^1$-bundle with $S^1$-fiber parametrised by the flow of the Reeb vector field $\frac{\xi}{\bar{b}(L)^2}$. Moreover, the Sasaki metric $h$ on $\{\hat r=1\}$ is given by
$$h=( \bar{b}(L)^2\eta+i(\bar{\partial}-\partial)\widehat{\varphi})\otimes( \bar{b}(L)^2\eta+i(\bar{\partial}-\partial)\widehat{\varphi})+p^{*}\hat g^{T},$$
where $\eta$ and $\xi$ are the standard contact form and Reeb vector field of the flat K\"ahler cone $\mathbb{C}^n\setminus \{0\}$. Using the fact that 
$$[i(\bar{\partial}-\partial)\widehat{\varphi}](\xi)
=d^{c}\widehat{\varphi}(\xi)=-\mathcal{L}_{J\xi}\widehat{\varphi}
-d^{c}(\xi\cdot\widehat{\varphi})=0$$ 
because $\widehat{\varphi}$ is invariant under the diagonal $\mathbb{C}^{*}$-action generated by $\xi$ and $J\xi$, we see that the length of the $S^1$-fiber of $p$
is bounded above by $2\pi\bar{b}(L)^2$. \eqref{diam sasa} thus follows. We next apply the coarea formula to $p$ to see that \eqref{vol sasa} holds.

{\rm (2c)} We know that in homogeneous coordinates $[w_{1}:\ldots:w_{n}]$ on $\mathbb{P}^{n-1}$ induced from holomorphic coordinates $(w_{1},\ldots,w_{n})$
on the ambient $\mathbb{C}^{n}$, $\widehat{\omega}^{T}$ is given by 
\begin{equation}\label{slow}
\widehat{\omega}^{T}=\frac{1}{2}dd^{c}\log\hat{r}=\frac{\bar{b}(L)^{2}}{2}\omega_{FS}+i\partial\bar{\partial}\tilde{\varphi},
\end{equation}
where $\tilde{\varphi}$ is as in the proof of Lemma \ref{uk}. As a holomorphic isometry, we then see that  
\begin{equation*}
\begin{split}
\Psi^{*}\widehat{\omega}^{T}&=\frac{\bar{b}(L)^{2}}{2}\omega_{FS}+i\partial\bar{\partial}(\Psi^{*}\tilde{\varphi}) \\
&=(T^{-1})^{*}\left(\bar{b}(L)^{2}T^{*}\left(\frac{1}{2}\omega_{FS}\right)+i\partial\bar{\partial}[(\Psi\circ T)^{*}\tilde{\varphi}]\right)\\
&=(T^{-1})^{*}\left(\bar{b}(L)^{2}T^{*}\left(\frac{1}{2}\omega_{FS}\right)+i\partial\bar{\partial}\varphi\right)=(T^{-1})^{*}\bar{\omega},
\end{split}
\end{equation*}
where we have used \eqref{fck} followed by \eqref{lovely} in the last two equalities. Hence $\widehat{\omega}^{T}=(\Phi^{-1})^{*}\bar{\omega}$, where $\Phi=\Psi\circ T$ as in \eqref{fuck}.

Finally, note that by \eqref{coeff}, $\bar{b}(L)$ is determined by, and hence depends smoothly upon, $\operatorname{vol}_{\bar{g}}(\mathbb{P}^{n-1})$. Hence in light of 
\eqref{slow}, showing that $\widehat{\omega}^{T}$ depends smoothly on $\hat{g}$ comes down to showing that $\tilde{\varphi}$, or equivalently $\Psi^{*}\tilde{\varphi}$, depends smoothly on $\bar{g}$ (as a function on the interval $[0,\,\frac{\pi}{2}]$).
To this end, observe from \eqref{lovely} that

\begin{equation*}
\begin{split}
i\partial\bar{\partial}\left(\Psi^{*}\tilde{\varphi}\right)&=
i\partial\bar{\partial}\left((T^{-1})^{*}\varphi\right)
=(T^{-1})^{*}\bar{\omega}-\frac{\bar{b}(L)^{2}}{2}\omega_{FS}.
\end{split}
\end{equation*}
Contracting this equation with $g_{FS}$, the Fubini-Study metric, we see that, after imposing that $\varphi\left(\frac{L}{2}\right)=0$, $\Psi^{*}\tilde{\varphi}$ is the unique solution of the following elliptic PDE:
\begin{equation}\label{laplace}
\left\{ \begin{array}{ll}
\Delta_{g_{FS}}u=\operatorname{tr}_{g_{FS}}\left((T^{-1})^{*}\bar{\omega}\right)-\frac{n\bar{b}(L)^{2}}{2}, & \\
u\left(\frac{\pi}{4}\right)=0.\\
\end{array} \right.
\end{equation}
Now, from \eqref{beta}, we read that the map $T^{-1}$ is determined by the map 
$$\tau^{-1}=\phi^{-1}\circ\tan:\left(0,\,\frac{\pi}{2}\right)\to\mathbb{R}$$
and from its very definition we see that $\phi$ is defined in terms of data that is determined by $\bar{g}$, namely
$L$ which is equal to $\operatorname{diam}(\mathbb{P}^{n-1},\,\bar{g})$, $s$ which is the distance from the point $[1:0:\ldots:0]\in\mathbb{P}^{n-1}$, and 
$\bar{a}(s)^{2}$ which is the norm $[\bar{g}(\xi,\,\xi)](s)$ of the Reeb vector field $\xi$ of the round Sasaki structure on $S^{2n-3}$. Thus, we deduce that $\phi^{-1}$ depends smoothly on $\bar{g}$. It is now clear that the right-hand side of \eqref{laplace}, and as a result its unique solution $u$, depends smoothly on $\bar{g}$. This concludes the proof of item (2c). 
\end{proof}

\subsection{Positive curvature conditions}

In this subsection, we introduce several notions of positive curvature and their relation on K\"ahler manifolds. Throughout, we adopt the following convention of the curvature tensor (see also \eqref{conven}):
\[
\hat R(X,Y)Z=\widehat{\nabla}_X\left(\widehat{\nabla}_YZ\right)-\widehat{\nabla}_Y\left(\widehat{\nabla}_XZ\right)-\widehat{\nabla}_{[X,\,Y]}Z.
\]
Let us recall the notion of the curvature operator and its lower bound on $(1,\,1)$-forms on a K\"ahler manifold (see \cite{CaoChow1986, ChenZhu2005, Siu1980}).

\begin{defn}
Fix any $p\in M$, we say that a K\"ahler manifold has curvature operator $\Rm$
strictly greater than (or bounded below by) $2\lambda\in \mathbb{R}$ on $(1,1)$-forms at $p$ if for any nonzero $(1,1)$-form $i\,\xi_{\alpha\bar \beta}\,dz^{\alpha}\wedge dz^{\bar \beta}$ at $p$,
\begin{equation}\label{stronger}
-\hat R_{\alpha\bar{\beta}\gamma\bar{\delta}}\xi^{\alpha\bar\beta}\overline{\xi^{\delta\bar{\gamma}}}> (\ge \text{  resp.})\,2\lambda(\hat g_{\alpha\bar{\beta}}\hat g_{\gamma\bar{\delta}}+\hat g_{\alpha\bar \delta}\hat g_{\gamma\bar \beta})\xi^{\alpha\bar\beta}\overline{\xi^{\delta\bar{\gamma}}},
\end{equation}
where $\xi^{\alpha\bar\beta}=g^{\alpha\bar\varepsilon}g^{\eta\bar\beta}\xi_{\eta\bar \varepsilon}$.
\end{defn}
Note that the negative sign before $\hat R$ in \eqref{stronger} (as well as \eqref{e: unusual} below) is due to the convention we adopt in \eqref{conven}.
We may reformulate the condition above in a simpler manner in terms of curvature operator on \emph{real} $(1,\,1)$-forms. 

\begin{defn}[Real $(1,\,1)$-form]
Let $(M,\,J)$ be a complex manifold of dimension $n$ with complex structure $J$. 
A \emph{real $(1,\,1)$-form} $\sigma$ on $M$ is a real two-form $\sigma$ on $M$ satisfying either one of the following two equivalent conditions:  

{\rm{(1)}} $\sigma(J(\cdot),\,J(\cdot))=\sigma(\cdot\,,\,\cdot)$; 

{\rm{(2)}} In local holomorphic coordinates $(z_{1},\ldots,z_{n})$ on $M$, the complex bilinear extension
$\sigma_{\mathbb{C}}$ of $\sigma$ to $(TM\otimes\mathbb{C})^{\otimes 2}$ takes the form
$\sigma_{\mathbb{C}}=i\, u_{\alpha\bar \beta}\,dz^{\alpha}\wedge dz^{\bar \beta}$, where $(u_{\alpha\bar \beta})_{\alpha\beta}$ is hermitian, i.e., $\overline{u_{\alpha\bar \beta}}=u_{\beta\bar\alpha}$ for all $\alpha$ and $\beta$.  
\end{defn}

\noindent By the symmetry of the curvature tensor, it induces a symmetric linear operator \linebreak $\Rm:\Lambda^2(M)_p\to \Lambda^2(M)_p$ such that for any $u\in \Lambda^2(M)_p$,
\[
\Rm(u)_{ij}=\hat R_{ijkl}g^{ka}g^{lb}u_{ab}, \]
where $\Lambda^2(M)_p$ is the space of real $2$-forms at $p$.
We denote the space of real $(1,\,1)$-forms at $p$ by $\Lambda^{1,1}_{\mathbb{R}}(M)_p\subseteq \Lambda^2(M)_p$. It can be seen from the K\"ahlerity of the metric that  $\Rm\left(\Lambda^2(M)_p\right)\subseteq \Lambda^{1,1}_{\mathbb{R}}(M)_p$ 
and $\Rm\equiv 0$ on the orthogonal complement of $\Lambda^{1,1}_{\mathbb{R}}(M)_p$ in $ \Lambda^2(M)_p$ (see \cite{CaoChow1986, ChenZhu2005}). In particular, $\Rm$ has non-trivial kernel in complex dimension $n\ge 2$.

\begin{defn}\label{equiv}
A complex $n$-dimensional K\"ahler manifold is said to have \emph{curvature operator $\Rm$
strictly greater than (or bounded below by) $2\lambda\in \mathbb{R}$ on real $(1,\,1)$-forms at $p$} if for any nonzero real $(1,1)$-form $i\, u_{\alpha\bar \beta}\,dz^{\alpha}\wedge dz^{\bar \beta}$ at $p$,
\begin{equation}\label{e: unusual}
-\hat R_{\alpha\bar{\beta}\gamma\bar{\delta}}u^{\alpha\bar\beta}u^{\gamma\bar{\delta}}> (\ge \text{  resp.})\,2\lambda(\hat g_{\alpha\bar{\beta}}\hat g_{\gamma\bar{\delta}}+\hat g_{\alpha\bar \delta}\hat g_{\gamma\bar \beta})u^{\alpha\bar\beta}u^{\gamma\bar{\delta}},
\end{equation}
where $u^{\alpha\bar\beta}=g^{\alpha\bar\varepsilon}g^{\eta\bar\beta}u_{\eta\bar \varepsilon}$.
\end{defn}
It is not difficult to see that Condition \eqref{stronger} implies Condition \eqref{e: unusual}. Moreover, by considering the following decomposition of $(1,1)$-forms
\[
\xi_{\alpha\bar\beta}=\tfrac{\xi_{\alpha\bar\beta}+\overline{\xi_{\beta\bar\alpha}}}{2}+\tfrac{\xi_{\alpha\bar\beta}-\overline{\xi_{\beta\bar\alpha}}}{2}=a_{\alpha\bar\beta}+i\,b_{\alpha\bar \beta},
\]
where $a_{\alpha\bar\beta}:=\frac{1}{2}(\xi_{\alpha\bar\beta}+\overline{\xi_{\beta\bar\alpha}})$ and $b_{\alpha\bar\beta}:=\frac{1}{2i}(\xi_{\alpha\bar\beta}-\overline{\xi_{\beta\bar\alpha}})$, we have $\overline{a_{\alpha\bar\beta}}=a_{\beta\bar\alpha}$, $\overline{b_{\alpha\bar\beta}}=b_{\beta\bar\alpha}$, and so Condition \ref{e: unusual} also implies Condition \ref{stronger}, i.e., these two conditions are equivalent. 

By abuse of notation, we still denote the optimal $\lambda$ in \eqref{e: unusual} by $\lambda$, namely
\begin{equation}\label{lleqn}
    \lambda(p):=\inf_{i\,u_{\alpha\bar \beta}\,dz^{\alpha}\wedge dz^{\bar \beta}\in \Lambda^{1,1}_\mathbb{R}(M)_p\setminus\{0\}}\frac{-\sum_{\alpha,\beta, \gamma, \eta=1}^n\hat R_{\alpha\bar{\beta}\gamma\bar{\eta}}u^{\alpha\bar\beta}u^{\gamma\bar{\eta}}}{\sum_{\alpha,\beta, \gamma, \eta=1}^n 2(\hat g_{\alpha\bar{\beta}}\hat g_{\gamma\bar{\eta}}+\hat g_{\alpha\bar \eta}\hat g_{\gamma\bar \beta})u^{\alpha\bar\beta}u^{\gamma\bar{\eta}}}.
\end{equation}
As $\Rm\left(\Lambda^2(M)_p\right)\subseteq \Lambda^{1,1}_{\mathbb{R}}(M)_p$ and $\Rm\equiv 0$ 
on the orthogonal complement of $\Lambda^{1,1}_{\mathbb{R}}(M)_p$, nonnegative curvature operator on $(1,1)$-forms implies nonnegative curvature operator. It is also well known that strictly positive curvature operator on $(1,\,1)$-forms implies strictly positive sectional curvature \cite{ChenZhu2005, Siu1980}. Indeed,
let $X$, $Y$ be two linearly independent real vectors in $T_p M$ and let $v$ and $w$ be the corresponding complex vectors in $T_p^{1,0}M$ given by
\[
v=\frac{1}{2}\left(X-i\,J_0X\right)\qquad\text{and} \qquad w=\frac{1}{2}\left(Y-i\,J_0Y\right).
\]
Then the sectional curvature $K(\sigma)$ of the $2$-plane $\sigma$ generated by $X$ and $Y$ is equal to (cf.~\cite{Cao1996, Siu1980})
\begin{equation*}
    \begin{split}
&K(\sigma)=\\
&\frac{-\sum_{\alpha,\beta, \gamma, \delta=1}^n \hat R_{\alpha\bar{\beta}\gamma\bar{\delta}}\left(v^\alpha \overline{w^{\beta}}-w^\alpha \overline{v^{\beta}}\right)\left(w^\gamma \overline{v^{\delta}}-v^\gamma \overline{w^{\delta}}\right)}{\sum_{\alpha,\beta, \gamma, \delta=1}^ng_{\alpha \bar\delta}g_{\gamma \bar\beta}\left[\left(v^\alpha \overline{w^{\beta}}-w^\alpha \overline{v^{\beta}}\right)\left(w^\gamma \overline{v^{\delta}}-v^\gamma \overline{w^{\delta}}\right)-\left(v^\alpha w^\gamma-w^\alpha v^\gamma\right)\left(\overline{v^\beta} \overline{w^{\delta}}-\overline{w^\beta} \overline{v^{\delta}}\right)\right]}.
\end{split}
\end{equation*}
Hence by taking $u^{\alpha\bar\beta}=i\left(v^\alpha \overline{w^{\beta}}-w^\alpha \overline{v^{\beta}}\right)$ and $\lambda=0$ in \eqref{e: unusual}, strictly positive curvature operator on $(1,1)$-forms implies strictly positive sectional curvature and in turn strictly positive bisectional curvature. In the special case that $M$ is $U(n)$-invariant with fixed point $p$, then for any orthonormal pair $X$ and $Y$ in $T_p M$, the sectional curvature of $\sigma =$ span$\{X,Y\}$ is given by
\begin{equation}\label{uncur}
    K(\sigma)=\lambda\left(1+3 g(X,JY)^2\right),
\end{equation}
where $\lambda$ is as in \eqref{lleqn}.  
Recall from \cite{LiWang2005, GY2018, TamYu12}  that the bisectional curvature $BK\ge 2\lambda$ at $p\in M$ if for all holomorphic vectors $v,w\in T_p^{1,0}M$,
\[
-\hat R_{\alpha\bar{\beta}\gamma\bar{\delta}}v^{\alpha}\overline{v^{\beta}}w^{\gamma}\overline{w^{\delta}}\ge 2\lambda (\hat g_{\alpha\bar{\beta}}\hat g_{\gamma\bar{\delta}}+\hat g_{\alpha\bar \delta}\hat g_{\gamma\bar \beta})v^{\alpha}\overline{v^{\beta}}w^{\gamma}\overline{w^{\delta}}.
\]
It is not difficult to see that $\Rm\ge 2\lambda$ 
on $(1,1)$-forms implies that $BK\ge 2\lambda$. More precisely:

\begin{lem}\label{BKvsRm}
Let $M$ be a K\"ahler manifold and let $p\in M$. Suppose that $\Rm\ge 2\lambda$ on $(1,\,1)$-forms at $p$, where $\lambda\in \mathbb{R}$. Then the holomorphic bisectional curvature is bounded below by $2\lambda$ at $p$.
\end{lem}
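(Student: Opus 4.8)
The plan is to feed a rank-one complex $(1,1)$-tensor built from the two holomorphic vectors into the curvature operator inequality \eqref{stronger} and read off the bisectional curvature bound directly. If $v=0$ or $w=0$ the asserted inequality is the trivial $0\ge 0$, so we may assume $v,w\neq 0$. Define the complex $(1,1)$-tensor $\xi$ at $p$ by $\xi^{\alpha\bar\beta}:=v^{\alpha}\overline{w^{\beta}}$ (equivalently, lower the indices with $\hat g$ to obtain $\xi_{\alpha\bar\beta}$); this is nonzero since $v,w\neq0$. Although $\xi_{\alpha\bar\beta}$ is not hermitian in general, the discussion surrounding the decomposition $\xi_{\alpha\bar\beta}=a_{\alpha\bar\beta}+i\,b_{\alpha\bar\beta}$ into hermitian parts shows that the hypothesis $\Rm\ge 2\lambda$ on $(1,1)$-forms, i.e.\ inequality \eqref{stronger}, holds for \emph{all} complex $(1,1)$-tensors; hence \eqref{stronger} applies to this $\xi$.

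I would then evaluate the two sides. Using $\overline{\xi^{\delta\bar\gamma}}=\overline{v^{\delta}}\,w^{\gamma}$ gives $\xi^{\alpha\bar\beta}\overline{\xi^{\delta\bar\gamma}}=v^{\alpha}\overline{v^{\delta}}\,w^{\gamma}\overline{w^{\beta}}$. For the left-hand side, the Kähler symmetry $\hat R_{\alpha\bar\beta\gamma\bar\delta}=\hat R_{\alpha\bar\delta\gamma\bar\beta}$ in the two conjugated indices, followed by the relabelling $\beta\leftrightarrow\delta$ of summed indices, yields
\[ -\hat R_{\alpha\bar\beta\gamma\bar\delta}\,\xi^{\alpha\bar\beta}\overline{\xi^{\delta\bar\gamma}}=-\hat R_{\alpha\bar\beta\gamma\bar\delta}\,v^{\alpha}\overline{v^{\beta}}\,w^{\gamma}\overline{w^{\delta}}, \]
which is exactly the numerator in the definition of the bisectional curvature for the pair $(v,w)$. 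For the right-hand side, contracting $v^{\alpha}\overline{v^{\delta}}w^{\gamma}\overline{w^{\beta}}$ with $\hat g_{\alpha\bar\beta}\hat g_{\gamma\bar\delta}$ produces $|\hat g_{\alpha\bar\beta}v^{\alpha}\overline{w^{\beta}}|^{2}$ and with $\hat g_{\alpha\bar\delta}\hat g_{\gamma\bar\beta}$ produces $(\hat g_{\alpha\bar\delta}v^{\alpha}\overline{v^{\delta}})(\hat g_{\gamma\bar\beta}w^{\gamma}\overline{w^{\beta}})$, so a short computation shows
\[ 2\lambda(\hat g_{\alpha\bar\beta}\hat g_{\gamma\bar\delta}+\hat g_{\alpha\bar\delta}\hat g_{\gamma\bar\beta})\,\xi^{\alpha\bar\beta}\overline{\xi^{\delta\bar\gamma}}=2\lambda(\hat g_{\alpha\bar\beta}\hat g_{\gamma\bar\delta}+\hat g_{\alpha\bar\delta}\hat g_{\gamma\bar\beta})\,v^{\alpha}\overline{v^{\beta}}w^{\gamma}\overline{w^{\delta}}. \]
Substituting both identities into \eqref{stronger} gives precisely $-\hat R_{\alpha\bar\beta\gamma\bar\delta}v^{\alpha}\overline{v^{\beta}}w^{\gamma}\overline{w^{\delta}}\ge 2\lambda(\hat g_{\alpha\bar\beta}\hat g_{\gamma\bar\delta}+\hat g_{\alpha\bar\delta}\hat g_{\gamma\bar\beta})v^{\alpha}\overline{v^{\beta}}w^{\gamma}\overline{w^{\delta}}$, i.e.\ $BK\ge 2\lambda$ at $p$, and since $v,w$ were arbitrary the lemma follows.

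The only point requiring genuine care — the ``main obstacle'', such as it is — is the bookkeeping of index symmetries identifying $-\hat R_{\alpha\bar\beta\gamma\bar\delta}\xi^{\alpha\bar\beta}\overline{\xi^{\delta\bar\gamma}}$ with the bisectional curvature expression, which rests on the standard Kähler symmetry $\hat R_{\alpha\bar\beta\gamma\bar\delta}=\hat R_{\alpha\bar\delta\gamma\bar\beta}$. One should also be explicit that \eqref{stronger} is being applied to the non-hermitian tensor $\xi$, which is legitimate by the equivalence of \eqref{stronger} and \eqref{e: unusual} established earlier via the hermitian decomposition.
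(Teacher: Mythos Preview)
Your argument is correct and actually cleaner than the paper's. The paper works with the \emph{real} formulation \eqref{e: unusual}, plugging in the hermitian tensor $u^{\alpha\bar\beta}=i(v^{\alpha}\overline{w^{\beta}}-w^{\alpha}\overline{v^{\beta}})$; this produces not only the bisectional term $-\hat R(v,\bar w,w,\bar v)$ but also the unwanted cross terms $\hat R(v,\bar w,v,\bar w)+\hat R(\bar v,w,\bar v,w)$, and a polarization step (replacing $w$ by $iw$ and adding) is needed to cancel them before invoking the K\"ahler symmetry $\hat R_{\alpha\bar\delta\gamma\bar\beta}=\hat R_{\alpha\bar\beta\gamma\bar\delta}$. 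You instead exploit the complex formulation \eqref{stronger} with the rank-one (non-hermitian) tensor $\xi^{\alpha\bar\beta}=v^{\alpha}\overline{w^{\beta}}$, which yields the bisectional expression in one stroke after the same K\"ahler symmetry; no cross terms ever appear. The price is that you must appeal to the equivalence of \eqref{stronger} and \eqref{e: unusual} (via the hermitian decomposition $\xi=a+ib$), which you correctly flag; the paper's proof, by contrast, uses only \eqref{e: unusual} and so is self-contained within the real-$(1,1)$-form framework it emphasizes. Both routes ultimately rest on the same K\"ahler curvature symmetry, and your index bookkeeping is accurate.
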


\begin{remark}
    \textnormal{The converse of this lemma is not true in general \cite[Theorem 4]{WZ11}, i.e., a bisectional curvature lower bound is a weaker condition than a lower bound on the curvature operator on $(1,\,1)$-forms.}
\end{remark}

\begin{proof}[Proof of Lemma \ref{BKvsRm}]
For any $v,w\in T_p^{1,0}M$, let $u^{\alpha\bar\beta}=i\left(v^\alpha \overline{w^{\beta}}-w^\alpha \overline{v^{\beta}}\right)$. Then  
by \eqref{e: unusual}, we have that
\begin{equation*}
\begin{split}
   -2\hat{R}(v,\overline{w},w,\overline{v})+\hat{R}(v,\overline{w},v,\overline{w})+\hat{R}(\overline{v},w,\overline{v},w)
   &\ge\\
      -2\lambda(\hat g_{\alpha\bar{\beta}}\hat g_{\gamma\bar{\delta}}+\hat g_{\alpha\bar \delta}\hat g_{\gamma\bar \beta})(v^{\alpha}\overline{w^{\beta}}v^{\gamma}\overline{w^{\delta}}&+w^{\alpha}\overline{v^{\beta}}w^{\gamma}\overline{v^{\delta}}-v^{\alpha}\overline{w^{\beta}}w^{\gamma}\overline{v^{\delta}}-w^{\alpha}\overline{v^{\beta}}v^{\gamma}\overline{w^{\delta}}).
   \end{split}
\end{equation*}
Similarly, replacing $w$ by $i w$, substituting $u^{\alpha\bar\beta}=i\left(v^\alpha \overline{i w^{\beta}}-iw^\alpha \overline{v^{\beta}}\right)$ into \eqref{e: unusual},  and adding the resulting expression to the inequality above, we derive that
\[ 
-4\hat{R}(v,\overline{v},w,\overline{w})=-4\hat{R}(v,\overline{w},w,\overline{v})\ge 8\lambda (\hat g_{\alpha\bar{\beta}}\hat g_{\gamma\bar{\delta}}+\hat g_{\alpha\bar \delta}\hat g_{\gamma\bar \beta})v^{\alpha}\overline{v^{\beta}}w^{\gamma}\overline{w^{\delta}}.
\]
Here, we have used the symmetry $\hat R_{\alpha\bar{\delta}\gamma\bar{\beta}}=\hat R_{\alpha\bar{\beta}\gamma\bar{\delta}}$ of a K\"ahler curvature operator (see for instance \cite[Lemma 2.10]{RFTandA1}).
\end{proof}

Henceforth, we consider a complex $(n-1)$-dimensional K\"ahler manifold of the form \eqref{dwp} and
fix a special real orthonormal frame $\{\hat e_{k}\}_{k=1}^{2n-2}$ such that $\hat e_{2n-3}=\partial_r$ and for all $k=1,\dots,n-1$, $J_0\left(\hat e_{2k-1}\right)=\hat e_{2k}$. We also let $\{\hat E_\alpha\}_{\alpha=1}^{n-1}$ and $\{\hat E_{\bar{\alpha}}\}_{\alpha=1}^{n-1}$ denote the corresponding holomorphic and anti-holomorphic frames, namely
\[
 \hat{E}_{\alpha}=\tfrac{1}{2}(\hat{e}_{2\alpha-1}-i\,\hat{e}_{2\alpha}),\qquad
 \hat{E}_{\bar{\alpha}}=\tfrac{1}{2}(\hat{e}_{2\alpha-1}+i\,\hat{e}_{2\alpha}),\qquad \alpha=1,\ldots,n-1.
\]
We extend the curvature tensor complex linearly. Lengthy computations using
Cartan's formulas (see \eqref{curform} in Appendix A) give us that 
\begin{eqnarray*}
\hat R_{\alpha\bar{\beta}\gamma\bar{\delta}} &=& \frac{\tilde R_{\alpha\bar{\beta}\gamma\bar{\delta}}}{b(r)^{2}}+\frac{1}{2}\left(\frac{b'(r)}{b(r)}\right)^2\delta_{\alpha\delta}\delta_{\beta\gamma}
+\left(\frac{a(r)^{2}}{2b(r)^{4}}\right)
\delta_{\alpha\beta}\delta_{\gamma\delta}\quad  \textrm{if $1\leq\alpha,\,\beta,\,\gamma,\,\delta\leq n-2$},\\
\hat R_{\alpha\bar{\beta}\gamma\bar{\delta}} &=&\left(\frac{b''(r)}{2b(r)}\right)\delta_{\gamma,\,n-1}\delta_{\delta\alpha} \quad\textrm{if $1\leq\alpha\leq n-2$, $\beta=n-1$},\\
\hat R_{\alpha\bar{\beta}\gamma\bar{\delta}} &=& \frac{1}{2}\left[\left(\frac{b''(r)}{b(r)}\right)(\delta_{\gamma\delta}-\delta_{\gamma,\, n-1}\delta_{\delta,\, n-1})+
\left(\frac{a''(r)}{2 a(r)}\right)\delta_{\gamma,\,n-1}\delta_{\delta,\,n-1}\right] \quad \textrm{if $\alpha=n-1$, $\beta=n-1$},
\end{eqnarray*}
where $\tilde R$ is the curvature tensor of the tranverse metric $g^T$.
With the above formulas, we study the curvature lower bound of $\hat g$.

\begin{lem}\label{conditions on ab}
  In the above situation, for any nonnegative real number $\lambda\ge 0$ and $r\in (0,\,L)$, $\hat g$ has curvature operator strictly greater than $2\lambda$ on $(1,1)$-forms at $(r,w_0)\in\linebreak (0,L)\times S^{2n-3}$
  if and only if all of the following conditions are satisfied:

 {\rm{(1)}} $(1-\lambda b^2(r)-(b'(r))^2)>0$,
 
       {\rm{(2)}} $-a''(r)>4\lambda a(r)$,
       
       {\rm{(3)}} $-b''(r)>\lambda b(r)$,
       
       {\rm{(4)}} $\displaystyle\frac{2(n-1)(1-\lambda b^2(r)-(b'(r))^2)}{(n-2) b^2(r)}\left(-\frac{a''(r)}{4a(r)}-\lambda\right)>\left(-\frac{b''(r)}{b(r)}-\lambda\right)^2.$   
\end{lem}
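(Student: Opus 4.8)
The plan is to diagonalize the curvature operator $\Rm$ acting on the space $\Lambda^{1,1}_{\mathbb R}(M)_p$ of real $(1,1)$-forms using the adapted frame $\{\hat e_k\}$ already fixed above, and then read off when the smallest eigenvalue exceeds $2\lambda$ in the normalization of \eqref{e: unusual}. Concretely, I would use the explicit components $\hat R_{\alpha\bar\beta\gamma\bar\delta}$ recorded just before the statement. Writing a real $(1,1)$-form as $u = i\,u_{\alpha\bar\beta}\,dz^\alpha\wedge dz^{\bar\beta}$ with $(u_{\alpha\bar\beta})$ Hermitian, the quadratic form $-\hat R_{\alpha\bar\beta\gamma\bar\delta}u^{\alpha\bar\beta}u^{\gamma\bar\delta}$ splits according to the block structure dictated by whether indices equal $n-1$ (the ``radial/Reeb'' direction) or lie in $\{1,\dots,n-2\}$ (the ``transverse'' directions). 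The first step is therefore to organize the entries $u_{\alpha\bar\beta}$ into: (i) the diagonal transverse entries $u_{\alpha\bar\alpha}$, $1\le\alpha\le n-2$; (ii) the off-diagonal transverse entries $u_{\alpha\bar\beta}$, $\alpha\ne\beta\le n-2$; (iii) the mixed entries $u_{\alpha,\overline{n-1}}$, $1\le\alpha\le n-2$; and (iv) the single entry $u_{\overline{n-1},n-1}$.

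The second step is to analyze each block. For the mixed block (iii): the only surviving curvature term is the one proportional to $\tfrac{b''}{2b}\,\delta_{\gamma,n-1}\delta_{\delta\alpha}$, so the contribution of $u_{\alpha,\overline{n-1}}$ to $-\hat R(u,u)$ relative to its norm gives exactly the eigenvalue $-b''/b$ on these directions, yielding condition (3). For the purely transverse off-diagonal block (ii): since the transverse metric $g^T$ is (a multiple of) the Fubini–Study metric on $\mathbb P^{n-2}$, its curvature operator is known explicitly, and one combines $\tilde R_{\alpha\bar\beta\gamma\bar\delta}/b^2$ with the $\delta_{\alpha\delta}\delta_{\beta\gamma}$ term $\tfrac12(b'/b)^2$ to see that the resulting eigenvalue is $(1 - (b')^2)/b^2$ up to the normalization — this is where condition (1), $1 - \lambda b^2 - (b')^2 > 0$, comes from (note the $\delta_{\alpha\beta}\delta_{\gamma\delta}$ term vanishes here since $\alpha\ne\beta$). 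The genuinely coupled piece is the $2\times 2$ (or rather the $1\!+\!1$ with the transverse trace) block spanned by $u_{\overline{n-1},n-1}$ together with the $U(n-2)$-invariant combination $\sum_{\alpha\le n-2}u_{\alpha\bar\alpha}$: here both the $\delta_{\alpha\beta}\delta_{\gamma\delta}$ Reeb term $a^2/(2b^4)$, the transverse scalar-curvature contribution, the $b''/b$ term, and the $a''/(2a)$ term all enter, and after subtracting $2\lambda$ times the corresponding norm one gets a symmetric $2\times 2$ matrix whose positive-definiteness is governed by its trace and determinant. Positivity of the two diagonal entries gives conditions (2) (the entry along $u_{\overline{n-1},n-1}$, up to a factor, produces $-a''/(4a) > \lambda$) and, again, (3) via the transverse-trace diagonal entry; positivity of the determinant, after clearing denominators and using $a = bb'$, gives precisely the cross-term inequality (4).

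The main obstacle I expect is the bookkeeping in the coupled block in step three: correctly identifying which linear combination of the diagonal transverse entries $u_{\alpha\bar\alpha}$ couples to $u_{\overline{n-1},n-1}$ (it is the totally symmetric one, by the residual $U(n-2)$ symmetry), correctly computing the off-diagonal entry of the $2\times 2$ matrix from the $b''/b$ terms in the $\hat R_{\alpha\bar\beta,n-1\overline{n-1}}$ components, and then performing the determinant computation with the right combinatorial factors $(n-1)$, $(n-2)$ coming from summing over the $n-2$ transverse directions and the correct norm normalization $(\hat g_{\alpha\bar\beta}\hat g_{\gamma\bar\delta} + \hat g_{\alpha\bar\delta}\hat g_{\gamma\bar\beta})u^{\alpha\bar\beta}u^{\gamma\bar\delta}$. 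One must also verify that the diagonalization exhausts all of $\Lambda^{1,1}_{\mathbb R}(M)_p$ and that no cross-terms between blocks (i)–(iv) survive — this follows from the $\delta$-structure of the curvature components and the orthogonality of the frame, but should be stated carefully. The ``only if'' direction then follows by testing \eqref{e: unusual} on each eigen-form in turn, and the ``if'' direction by noting that the four conditions force every block to be positive definite, hence so is the whole operator. Finally, the hypothesis $\lambda \ge 0$ is used to guarantee that conditions (1)–(3) are nonvacuous and, in particular, that the leading coefficient $2(n-1)(1-\lambda b^2 - (b')^2)/((n-2)b^2)$ in (4) is positive so that (4) is a genuine determinant condition rather than a sign-reversed one.
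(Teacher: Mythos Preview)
Your approach is essentially the same as the paper's: the paper first records the quadratic form (in a separate appendix lemma) as
\[
\frac{2(1-\lambda b^2-(b')^2)}{b^2}\Bigl(\tfrac14\bigl(\textstyle\sum_{\alpha\le n-2}u^{\alpha\bar\alpha}\bigr)^2+\tfrac14\textstyle\sum_{\alpha,\beta\le n-2}|u^{\alpha\bar\beta}|^2\Bigr)+\Bigl(-\tfrac{a''}{4a}-\lambda\Bigr)|u^{n-1\,\overline{n-1}}|^2+\Bigl(-\tfrac{b''}{b}-\lambda\Bigr)\textstyle\sum_{\alpha\le n-2}|u^{\alpha\,\overline{n-1}}|^2+\Bigl(-\tfrac{b''}{b}-\lambda\Bigr)u^{n-1\,\overline{n-1}}\textstyle\sum_{\alpha\le n-2}u^{\alpha\bar\alpha},
\]
and then extracts (1)--(3) by testing on suitable $u$ and gets (4) by completing the square in the coupled $(u^{n-1\,\overline{n-1}},\sum u^{\alpha\bar\alpha})$ block, exactly as you outline. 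Two small corrections to your write-up: the transverse-trace diagonal entry of the $2\times2$ block yields condition (1) (not (3) again), and neither $a=bb'$ nor $\lambda\ge0$ is actually needed in the determinant step---condition (4) drops out directly from the coefficients above.
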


\begin{proof} Let $(r\,,w_0)\in (0,L)\times S^{2n-3}$.
By Lemma \ref{warp vs curv}, we have curvature operator strictly greater than $2\lambda$ on $(1,\,1)$-forms if and only if for any nonzero hermitian matrix $u^{\alpha\bar\beta}$, 
\begin{eqnarray*}
    0&<&\frac{2(1-\lambda b^2-(b')^2)}{b^2}\left(\frac14\left(\sum_{\alpha=1}^{n-2}u^{\alpha\bar\alpha}\right)^2+\frac14\sum_{\alpha, \beta=1}^{n-2}\left|u^{\alpha\bar\beta}\right|^2\right)+\left(-\frac{a''}{4a}-\lambda\right)\left|u^{n-1\overline{n-1}}\right|^2\\
    &&\quad +\left(-\frac{b''}{b}-\lambda\right)\sum_{\alpha=1}^{n-2}\left|u^{\alpha\overline{n-1}}\right|^2+\left(-\frac{b''}{b}-\lambda\right)u^{n-1\overline{n-1}}\sum_{\alpha=1}^{n-2}u^{\alpha\bar\alpha}.
\end{eqnarray*}
Thus, by choosing $u$ suitably, we see that the curvature lower bound implies Conditions (1)-(3). Furthermore, setting $u$ to be the diagonal matrix $u^{\alpha\bar\alpha}=\frac{2}{n-2}$, $u^{\alpha \overline{n-1}}=0=u^{n-1 \overline{\alpha}}=u^{\alpha\bar \beta}$ for $\alpha,\beta =1,\dots,n-2$ with $\alpha\neq \beta$, and
\[
u^{n-1\overline{n-1}}=-\frac{\left(\frac{b''}{b}+\lambda\right)}{\left(\frac{a''}{4a}+\lambda\right)},
\]
gives us Condition (4).

Conversely, if Conditions (1)--(4) hold, then by the Cauchy-Schwarz inequality and completing the square, for any nonzero hermitian $u$, we have that
\begin{eqnarray*}
    0&\le&
    \left[\frac{2(n-1)(1-\lambda b^2-(b')^2)}{(n-2)b^2}-\frac{\left(-\frac{b''(r)}{b(r)}-\lambda\right)^2}{\left(-\frac{a''}{4a}-\lambda\right)}\right]\frac14\left(\sum_{\alpha=1}^{n-2}u^{\alpha\bar\alpha}\right)^2\\
    &&\quad+\left(-\frac{a''}{4a}-\lambda\right)\left[u^{n-1\overline{n-1}}+\frac{\left(-\frac{b''(r)}{b(r)}-\lambda\right)}{2\left(-\frac{a''}{4a}-\lambda\right)}\sum_{\alpha=1}^{n-2}u^{\alpha\bar\alpha}\right]^2+\left(-\frac{b''}{b}-\lambda\right)\sum_{\alpha=1}^{n-2}\left|u^{\alpha\overline{n-1}}\right|^2\\
    &\le&\frac{2(1-\lambda b^2-(b')^2)}{b^2}\left(\frac14\left(\sum_{\alpha=1}^{n-2}u^{\alpha\bar\alpha}\right)^2+\frac14\sum_{\alpha, \beta=1}^{n-2}\left|u^{\alpha\bar\beta}\right|^2\right)+\left(-\frac{a''}{4a}-\lambda\right)\left|u^{n-1\overline{n-1}}\right|^2\\
    &&\quad +\left(-\frac{b''}{b}-\lambda\right)\sum_{\alpha=1}^{n-2}\left|u^{\alpha\overline{n-1}}\right|^2+\left(-\frac{b''}{b}-\lambda\right)u^{n-1\overline{n-1}}\sum_{\alpha=1}^{n-2}u^{\alpha\bar\alpha}.
\end{eqnarray*}
The expression on the right-hand side of this inequality is positive unless $u$ is equal to $0$. This completes the proof of the lemma.
\end{proof}

We will only consider asymptotically conical
expanding gradient K\"ahler-Ricci solitons \cite{con-der, De15, De14} satisfying $\Rm\ge0$ in this article. This latter condition is equivalent to the link of the asymptotic cone satisfying $\Rm\ge1$. In this case, the potential function is a strictly convex function with quadratic growth, the underlying manifold is diffeomorphic
to $\mathbb{R}^n$,
and the level sets of the soliton potential function are diffeomorphic to $S^{n-1}$. 
We will consider asymptotically conical expanding gradient K\"ahler-Ricci solitons resulting from the
following existence result by the second-named author and Deruelle.

\begin{theorem}[{\cite[Theorems A \& E]{con-der}}]\label{Thm E} Let $g_0$ be a K\"ahler cone metric on $\mathbb{C}^n$ 
with radial function $r$ such that $a\cdot r\partial_r$ is the Euler vector field, where $a\in (0,1)$. 
Then there exists a unique expanding gradient K\"ahler-Ricci soliton $g$ with soliton vector field $r\partial_r$ such that
$$|\nabla^{k}_{g_{0}}(g-g_{0})|_{g_{0}}=O(r^{-2-k})\qquad\textrm{for all $k\geq0$.}$$
Moreover, if the induced metric on the complex space $\mathbb{P}^{n-1}$ has curvature operator on $(1,\,1)$-forms strictly greater than $2$, then $g$ has strictly positive curvature operator on $(1,\,1)$-forms.
\end{theorem}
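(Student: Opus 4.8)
Since this is, up to the adjustment recorded in the footnote, \cite[Theorems~A and~E]{con-der}, the plan is to reconstruct the structure of that argument — the continuity method for existence and asymptotics, and a maximum-principle argument for the curvature positivity — and to indicate why the curvature hypothesis must carry the constant $2$.

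\smallskip\noindent\textbf{Existence, decay, and uniqueness.} Normalising the expanding constant to $\lambda=-1$, the soliton equation $\rho_{\omega}+i\partial\bar\partial f=-\omega$ for a K\"ahler metric $\omega=\omega_{0}+i\partial\bar\partial u$ with soliton field $\nabla f=r\partial_{r}$ reduces to a complex Monge--Amp\`ere equation of the form
\[
\log\frac{(\omega_{0}+i\partial\bar\partial u)^{n}}{\omega_{0}^{n}}=\tfrac12(r\partial_{r})u+u+F_{0},
\]
where $F_{0}$ is a Ricci potential of $\omega_{0}$, normalised so that $i\partial\bar\partial F_{0}=\rho_{\omega_{0}}=O(r^{-2})$. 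I would solve this along a continuity path $(g_{0}^{(t)})_{t\in[0,1]}$ of $U(1)\times U(n-1)$-invariant K\"ahler cone metrics joining a reference cone whose expander is already available (a flat cone, or one of Cao's $U(n)$-invariant cones) to the given $g_{0}$; as the footnote indicates, the Euler-scaling constant $a=a(t)\in(0,1)$ must be allowed to vary along the path, since this is exactly the freedom needed to keep Condition~(1) of Lemma~\ref{conditions on ab} — namely $1-\lambda b^{2}-(b')^{2}>0$ for the relevant $\lambda$ — valid up to the endpoint, where the transverse metric has $\Rm^{T}>2$ on real $(1,1)$-forms. Openness is the invertibility, between weighted H\"older spaces $r^{-2}C^{k,\alpha}$, of the linearisation $L=\Delta_{\omega}-\tfrac12\nabla_{r\partial_{r}}-1$: it is injective because a decaying $\psi$ with $L\psi=0$ attains an interior extremum where $d\psi=0$, forcing $\Delta_{\omega}\psi-\psi=0$ and hence $\psi\equiv0$, and surjective by the Fredholm/weighted-Schauder theory for drift-Laplacians on asymptotically conical manifolds (which uses the properness and quadratic growth of $f$). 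Closedness is a weighted a priori estimate: elliptic bootstrapping of the equation against the $O(r^{-2})$ decay of $\rho_{\omega_{0}}$ gives $|\nabla^{k}_{g_{0}}(g_{t}-g_{0})|_{g_{0}}=O(r^{-2-k})$ uniformly in $t$, which is also the asserted asymptotics of $g=g_{1}$. Uniqueness in this class is injectivity of $L$ once more: two solutions with the stated decay differ by a decaying element of $\ker L$, so they agree, and the soliton diffeomorphism is then pinned down.

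\smallskip\noindent\textbf{Positivity of the curvature operator.} Suppose now that the transverse metric on $\mathbb{P}^{n-1}$ has $\Rm^{T}>2$ on real $(1,1)$-forms. Applying the curvature formulas displayed above (and in Appendix~A) to the cone itself, i.e.\ with $a(r)=b(r)=r$ as in Example~\ref{cone}, this is precisely the statement that $(\mathbb{C}^{n},g_{0})$ has $\Rm_{g_{0}}\ge0$ on real $(1,1)$-forms, with kernel exactly the radial/Reeb block and strict positivity transverse to it. To transport this to $g$, I would pass to the associated self-similar K\"ahler--Ricci flow $g(\tau)=\tau\,\psi_{\tau}^{*}g$ on $(0,\infty)$, with $\psi_{\tau}$ generated by a time-rescaled multiple of $\nabla f$; by the decay estimates above, $(\mathbb{C}^{n},g(\tau))$ converges smoothly away from the vertex to $(\mathbb{C}^{n},g_{0})$ as $\tau\to0^{+}$. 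The fibrewise closed convex cone $\{\Rm\ge0\ \text{on real }(1,1)\text{-forms}\}$ is preserved by the K\"ahler--Ricci flow by the tensor maximum principle of Cao--Chow \cite{CaoChow1986} and Chen--Zhu \cite{ChenZhu2005}, and the weighted decay of $g-g_{0}$ furnishes the bounded-geometry hypotheses needed to run it on the non-compact flow with the cone as $\tau\to0^{+}$ initial datum. Hence $\Rm_{g(\tau)}\ge0$ on real $(1,1)$-forms for all $\tau>0$; then, since the flow is genuinely non-static, the strong maximum principle for systems forces $\Rm_{g(\tau)}$ to be positive for $\tau>0$ unless $(\mathbb{C}^{n},g)$ splits off a parallel flat factor — which is excluded because the strictly positively curved transverse metric makes the cone, and hence $g$, irreducible. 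Evaluating at $\tau=1$ gives the conclusion.

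\smallskip\noindent\textbf{Main obstacle.} The delicate part is the positivity step, where non-compactness meets the degeneracy of the initial cone: one must (i) justify the tensor maximum principle on the complete non-compact flow — which is exactly what the sharp decay $|g-g_{0}|_{g_{0}}=O(r^{-2})$ is for — and (ii) obtain the \emph{strict} improvement, i.e.\ show that the vanishing radial/Reeb directions of $\Rm_{g_{0}}$ are not preserved, which rests on the strong maximum principle for systems together with the K\"ahler identities and is why one works with the genuinely expanding flow rather than the elliptic soliton equation directly. The $t$-dependence of $a$ in the continuity path — needed precisely so that $\Rm^{T}>2$ (equivalently Condition~(1) of Lemma~\ref{conditions on ab}) survives along the whole path — is the bookkeeping change forced by the footnote.
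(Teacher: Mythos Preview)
The paper does not give its own proof of this statement: Theorem~\ref{Thm E} is quoted verbatim from \cite{con-der} (with the footnoted correction about the constant~$2$ and the $t$-dependence of $a$), and is used as a black box in the proof of Theorem~\ref{t: existence of new}. So there is no in-paper argument to compare against; your proposal is a reconstruction of the cited result.

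As such a reconstruction, your outline of the existence/uniqueness part is essentially the Conlon--Deruelle strategy: continuity method in weighted H\"older spaces, with openness from invertibility of the drift-Laplacian and closedness from uniform weighted estimates, yielding the $O(r^{-2-k})$ decay. Your reading of the footnote is also correct: the Euler-scaling constant must be allowed to depend on the deformation parameter so that the transverse condition $\Rm^{T}>2$ on real $(1,1)$-forms (equivalently, by Theorem~\ref{cone vs base}, positivity of the cone curvature in the transverse directions) persists along the entire path of cones.

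For the positivity step, your Ricci-flow route (flow out of the cone, invoke preservation of $\Rm\ge0$ on real $(1,1)$-forms, then apply the strong maximum principle) is a legitimate alternative, but it is organised differently in \cite{con-der}: there the positivity is propagated \emph{along the continuity path of expanders}, using that $\Rm>0$ on real $(1,1)$-forms is open in the expander and that degeneration at the boundary of the path is excluded by a maximum-principle argument directly on the soliton. One caution about your version: the cone has $\Rm=0$ on every $(1,1)$-form involving the radial/Reeb directions, so the initial datum is only $\Rm\ge0$ with a nontrivial kernel; your appeal to irreducibility to rule out the splitting alternative should be phrased for the \emph{expander}, not the cone (the cone is visibly degenerate in those directions). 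The cleanest way to exclude a flat factor is to note that a nontrivial Euclidean factor of the expander would force a Euclidean factor of the asymptotic cone, hence a flat factor of the transverse metric on $\mathbb{P}^{n-1}$, contradicting $\Rm^{T}>2$.
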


\begin{remark}\textnormal{
The lower bound
of the induced metric on the base space $\mathbb{P}^{n-1}$ should have curvature operator on $(1,\,1)$-forms strictly greater than $2$
in the statement of \cite[Theorem E]{con-der} as written here. To account for this change, the constant $a$ in \cite[p.38, Proof of Theorem E]{con-der} should be a function of $t$ as in the proof of Theorem \ref{t: existence of new}.}
\end{remark}

\begin{remark}\label{1eigv}
\textnormal{In the standard complex coordinates $z_{k}=x_{k}+iy_{k}$ on $\mathbb{C}^n$, the expanding gradient K\"ahler-Ricci soliton from of Theorem \ref{Thm E} has soliton vector field given by $X=c x_{\alpha}\tfrac{\partial}{\partial x_{\alpha}}+c y_{\alpha}\tfrac{\partial}{\partial y_{\alpha}}$
for some $c<0$. This has a unique critical point at $0\in \mathbb{C}^n$. Moreover at $0$, the Ricci curvature $\Ric_g$ of $g$ has only one distinct eigenvalue, i.e., $\Ric_{g}=\mu g$ for some $\mu\in\mathbb{R}$. To see this, we compute the Lie derivative $\mathcal{L}_{X}g$. Indeed, for $\alpha,\beta=1,2,\dots, n$,
we have that
\begin{equation*}
\begin{split}
   \mathcal{L}_{X}g\left(\tfrac{\partial}{\partial x_{\alpha}},\tfrac{\partial}{\partial x_{\beta}}\right)&= g\left(\nabla_{\tfrac{\partial}{\partial x_{\alpha}}}X, \tfrac{\partial}{\partial x_{\beta}}\right)+g\left(\tfrac{\partial}{\partial x_{\alpha}}, \nabla_{\tfrac{\partial}{\partial x_{\beta}}}X\right)\\
   &=cg\left(\delta_{\alpha\gamma}\tfrac{\partial}{\partial x_{\gamma}}+x_{\gamma}\nabla_{\tfrac{\partial}{\partial x_{\alpha}}}\tfrac{\partial}{\partial x_{\gamma}}+y_{\gamma}\nabla_{\tfrac{\partial}{\partial x_{\alpha}}}\tfrac{\partial}{\partial y_{\gamma}}, \tfrac{\partial}{\partial x_{\beta}}\right)\\
   &\qquad+cg\left(\tfrac{\partial}{\partial x_{\alpha}}, \delta_{\beta\gamma}\tfrac{\partial}{\partial x_{\gamma}}+x_{\gamma}\nabla_{\tfrac{\partial}{\partial x_{\beta}}}\tfrac{\partial}{\partial x_{\gamma}}+y_{\gamma}\nabla_{\tfrac{\partial}{\partial x_{\beta}}}\tfrac{\partial}{\partial y_{\gamma}}\right)\\
   &=2cg\left(\tfrac{\partial}{\partial x_{\alpha}},\tfrac{\partial}{\partial x_{\beta}}\right)
\end{split}
\end{equation*}
when evaluated at $0$, and so $\mathcal{L}_{X}g=2cg$. Thus, by \eqref{e: soliton}, we find that
\[
\Ric_{g}=-\tfrac{1}{2}\mathcal{L}_{X}g-\lambda g=-\left(c+\lambda\right)g\qquad\textrm{at $0$.}
\]
}\end{remark}

\section{Deformation of complex projective space}

The main goal of this section is to prove Proposition \ref{l: smooth metric with nonnegative Rm} which produces for each $n\ge3$ a sequence of smooth $U(n-1)$-invariant K\"ahler metrics on $\mathbb{P}^{n-1}$ with $\Rm\ge0$ on real $(1,\,1)$-forms everywhere, and $\Rm\ge2$ on $(1,1)$-forms outside of arbitrarily small subsets, which collapse to $\left[0,\tfrac{\pi}{2}\right]$ in the Gromov-Hausdorff sense. In the next section, we will improve this result to obtain a sequence of metrics that everywhere satisfy $\Rm\ge2$ on real $(1,\,1)$-forms. The existence of these metrics will imply the almost non-rigidity of diameter in Theorem \ref{t: GH convergence on CPn} and serve in the proof of Theorem \ref{t: existence of new} as the complex base of the asymptotic cone of the expanding gradient K\"ahler-Ricci solitons that degenerate to our desired steady K\"ahler-Ricci solitons.

To motivate and sketch the ideas of the construction in Proposition \ref{l: smooth metric with nonnegative Rm}, we demonstrate a singular example; a variation of the Fubini-Study metric. Upon substituting $L=\frac{\pi}{2}$,
$a(r)=\frac{\sin(2r)}{2k}$, $b(r)=\frac{\sin(r)}{\sqrt{k}}$ into \eqref{cohomo in App} to obtain the metric 
\begin{equation}\label{e: metric with conical singularity}
h_k:=dr^{2}+\frac{\sin^2(2r)}{4k^2}\eta\otimes \eta+\frac{\sin^2(r)}{k}g^{T}\qquad\textrm{on $\left(0,\,\frac{\pi}{2}\right)\times S^{2n-3}$},
\end{equation}
where $g^T$ is the transverse metric as in Example \ref{projective} with $n$ replaced by $(n-1)$, it is clear that $h_k$ collapses to $[0,\frac{\pi}{2}]$ as $k\to\infty$ in the Gromov-Hausdorff sense and satisfies $\Rm\ge2$ on real $(1,\,1)$-forms except for $r=0$ and $r=\frac{\pi}{2}$, where $a$ and $b$ don't fulfill the smooth boundary conditions for any $k>1$ (see Example \ref{projective}). However, we will build our desired metrics by gluing these singular metrics with suitable expanding solitons near $r=0$ to smooth out the conical singularities.

In preparation for the gluing construction, we recall some basic facts about expanding gradient K\"ahler-Ricci solitons, and in particular Cao's examples of such solitons. To this end, let $(M^n,g,f,p)$ be an expanding gradient Ricci soliton with $p$ a critical point of $f$.
Then $(M^n,g,f,p)$ satisfies
\begin{equation*}
    \Ric+\lambda\,g=\nabla^2 f
\end{equation*}
for some $\lambda>0$ and generates a canonical Ricci flow of expanding Ricci solitons defined by $g(t):=(2\lambda t)\phi^*_{t-\frac{1}{2\lambda}}g$, $f_t=\phi^*_{t-\frac{1}{2\lambda}}f$, $t\in(0,\infty)$, where $\{\phi_s\}_{s\in\left(-\frac{1}{2\lambda},\infty\right)}$ is the one-parameter family of diffeomorphisms generated by the time-dependent vector field $\frac{-1}{1+2\lambda s}\nabla f$ with $\phi_0$ the identity, and 
\begin{equation*}
    \Ric(g(t))+\frac{1}{2t}g(t)=\nabla^2 f_t.
\end{equation*}

For any $\alpha>1$, Cao proved that there exists a unique smooth $U(n-1)$-invariant expanding gradient K\"ahler-Ricci soliton $(\mathbb C^{n-1},g_{\alpha},f_{\alpha},p_{\alpha})$ with positive curvature operator on real $(1,\,1)$-forms and with $R(p_{\alpha})=\max R=1$ that is asymptotic to the following cone metric \cite{Cao1996, ChenZhu2005}:
\begin{equation}\label{e: cone alpha}
g_{\operatorname{cone},\,\alpha}=dr^{2}+\frac{r^2}{\alpha^2}\,\eta\otimes \eta+\frac{r^2}{\alpha}\,g^{T}.
\end{equation}
Under the radial coordinates, we can write the expanding soliton metric $g_{\alpha}$ as
\begin{equation}\label{e: expander alpha}
    g_{\alpha}=dr^{2}+a^2_{\alpha}(r)\,\eta\otimes \eta+b^2_{\alpha}(r)\,g^{T}.
\end{equation}
Let $g_{\alpha}(t)$ be the canonical Ricci flow associated to $g_{\alpha}$. Since the expanding Ricci soliton is asymptotically conical, we know that the flow $g_{\alpha}(t)$ converges smoothly locally to the cone metric $g_{\operatorname{cone},\,\alpha}$ away from the tip $*$ of the cone as $t\to0$ \cite[c.f. Theorem 4.3.1]{Siepmann2013Thesis}. In the following lemma, we show that this local convergence is uniform for all $\alpha$ in a compact subset $I\subset(1,\infty)$.

\begin{lem}[Uniform convergence to asymptotic cones]\label{l: uniform convergence}
    For any compact subset $I\subset(1,\infty)$, $k\in\mathbb{N}$, $\varepsilon>0$, and $D>1$, there exists $t_0>0$ such that for all $0<t<t_0$ and all $\alpha\in I$, 
    $$\|\nabla^k(g_{\alpha}(t)-g_{\operatorname{cone},\,\alpha})\|\le\varepsilon$$
    on $B(*,D)\setminus B(*,D^{-1})$, where the derivatives, norms, and metric balls are measured with respect to the cone metric $g_{\operatorname{cone},\,\alpha}$.
\end{lem}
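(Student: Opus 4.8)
The assertion for a single $\alpha$ is precisely the statement that the canonical Ricci flow of an asymptotically conical expanding soliton emerges from its asymptotic cone, which is \cite{Siepmann2013Thesis} (see also \cite{De14, con-der}); the content of the lemma is uniformity over the compact parameter set $I\subset(1,\infty)$. My plan is to run the single-parameter argument while keeping track of the $\alpha$-dependence of every constant, the underlying reason for uniformity being that Cao's expanders and all of their asymptotic data depend smoothly on $\alpha$, hence are uniformly controlled on the compact set $I$.

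\emph{Uniform asymptotics.} Write $g_\alpha=dr^2+a_\alpha(r)^2\eta^2+b_\alpha(r)^2g^T$ and let $\lambda_\alpha>0$ be the soliton constant of $g_\alpha$, so that $\Ric_{g_\alpha}+\lambda_\alpha g_\alpha=\nabla^2 f_\alpha$. I would first record that, uniformly for $\alpha\in I$: (i) $0<\lambda_{\min}(I)\le\lambda_\alpha\le\lambda_{\max}(I)$; (ii) $R_{g_\alpha}\le 1$ and $\Rm_{g_\alpha}\ge 0$, so in particular $|\Rm_{g_\alpha}|\le C(n)$; and (iii) the asymptotically conical bounds
\[
\bigl|\partial_r^j\bigl(a_\alpha(r)-\tfrac{r}{\alpha}\bigr)\bigr|+\bigl|\partial_r^j\bigl(b_\alpha(r)-\tfrac{r}{\sqrt{\alpha}}\bigr)\bigr|+\bigl|\partial_r^j\bigl(f_\alpha'(r)-\lambda_\alpha r\bigr)\bigr|\le C_j(I)\,r^{-1-j}
\]
for $r$ large and every $j\ge 0$. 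For fixed $\alpha$ these are in \cite{Cao1996, ChenZhu2005} (and in Theorem \ref{Thm E}); the uniformity holds because the ODEs governing $a_\alpha,b_\alpha,f_\alpha$, their shooting data, and the coefficients of their asymptotic expansions at $r=\infty$ depend smoothly on $\alpha$, with $I$ compact — equivalently, because the solution operator of the continuity method of \cite{con-der, De14} depends continuously on the cone parameter in a weighted H\"older topology.

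\emph{The self-similar formula.} By $U(n-1)$-invariance, $\nabla^{g_\alpha}f_\alpha=f_\alpha'(r)\partial_r$ is radial, so the diffeomorphisms $\phi_{s,\alpha}$ generated by $\tfrac{-1}{1+2\lambda_\alpha s}\nabla^{g_\alpha}f_\alpha$ act as a reparametrisation of the radial coordinate, $\phi_{s,\alpha}\colon r\mapsto\rho_\alpha(r,s)$, with $\partial_s\rho_\alpha=\tfrac{-1}{1+2\lambda_\alpha s}f_\alpha'(\rho_\alpha)$ and $\rho_\alpha(r,0)=r$; hence $g_\alpha(t)=(2\lambda_\alpha t)\,\phi_{t-\frac1{2\lambda_\alpha},\alpha}^{*}g_\alpha$ is again a doubly warped metric, with coefficients $2\lambda_\alpha t\,(\partial_r\rho_\alpha)^2$, $2\lambda_\alpha t\,a_\alpha(\rho_\alpha)^2$ and $2\lambda_\alpha t\,b_\alpha(\rho_\alpha)^2$. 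The cone $g_{\mathrm{cone},\alpha}=dr^2+\tfrac{r^2}{\alpha^2}\eta^2+\tfrac{r^2}{\alpha}g^T$ has radial coordinate equal to the distance to its tip, so $B(*,D)\setminus B(*,D^{-1})=\{D^{-1}<r<D\}$; on this annulus I would substitute $f_\alpha'(\rho)=\lambda_\alpha\rho+O(\rho^{-1})$ and evaluate at $s=t-\tfrac1{2\lambda_\alpha}$, where $1+2\lambda_\alpha s=2\lambda_\alpha t\to0$ uniformly in $\alpha\in I$ by (i). A Gr\"onwall estimate, uniform on the annulus and in $\alpha\in I$, then gives $\rho_\alpha(r,t-\tfrac1{2\lambda_\alpha})\to\infty$ at the rate dictated by (iii), and feeding this together with $a_\alpha(R)=R/\alpha+O(R^{-1})$, $b_\alpha(R)=R/\sqrt{\alpha}+O(R^{-1})$ (and their $r$-derivative versions) into the three coefficients, each equals the corresponding coefficient of $g_{\mathrm{cone},\alpha}$ up to a $C^k$ error that is $O(t)$ on $\{D^{-1}/2\le r\le 2D\}$, with all implied constants depending only on $I,D,n,k$. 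Hence $\|\nabla^k(g_\alpha(t)-g_{\mathrm{cone},\alpha})\|\le C(I,D,n,k)\,t$ on the annulus, measured in $g_{\mathrm{cone},\alpha}$, and choosing $t_0$ with $C(I,D,n,k)\,t_0<\varepsilon$ finishes the proof.

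\emph{Main obstacle, and an alternative.} The crux is the uniform asymptotics (i)--(iii): ensuring that the constants $C_j(I)$ and the soliton constants $\lambda_\alpha$ do not degenerate as $\alpha$ ranges over $I$. If one prefers to avoid the explicit expansions, one can instead argue by contradiction and compactness: were the lemma false there would be $\alpha_i\to\alpha_\infty\in I$ and $t_i\to0$ with $\|\nabla^k(g_{\alpha_i}(t_i)-g_{\mathrm{cone},\alpha_i})\|\ge\varepsilon$ on the annulus; using the uniform curvature bound from (ii), the smooth convergence $g_{\alpha_i}\to g_{\alpha_\infty}$ (and $f_{\alpha_i}\to f_{\alpha_\infty}$, $\lambda_{\alpha_i}\to\lambda_{\alpha_\infty}$) in $C^\infty_{\mathrm{loc}}$, local stability of the Ricci flow, the single-parameter convergence for $\alpha_\infty$, and $g_{\mathrm{cone},\alpha_i}\to g_{\mathrm{cone},\alpha_\infty}$, one would extract a limit Ricci flow on $B(*,D)\setminus B(*,D^{-1})$ coinciding with $g_{\mathrm{cone},\alpha_\infty}$ as $t\to0$, contradicting the lower bound. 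Either route ultimately rests on a parameter-uniform version of "the flow of an asymptotically conical expander emerges from its asymptotic cone", so the essential work is the same.
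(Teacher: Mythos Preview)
Your proposal is correct. Your \emph{alternative} route is essentially the paper's proof: the paper argues by contradiction with sequences $\alpha_i\to\alpha_\infty\in I$, $t_i\to 0$, first establishes a uniform $|\Rm_{g_{\alpha_i}(t)}|\le C/t$ bound (itself by contradiction---a putative sequence with $C_i\to\infty$ would subconverge to a non-collapsed steady K\"ahler--Ricci soliton with $R(p_\infty)=1$, forced to be flat by Perelman's curvature estimate for non-collapsed ancient flows with nonnegative curvature operator), and then combines Cheeger--Gromov compactness with Shi's \emph{local} derivative estimates on the annulus to upgrade the convergence $g_{\alpha_i}(t)\to g_{\alpha_\infty}(t)$ so that it is uniform on $[0,\infty)$ there, contradicting the assumed lower bound via the triangle inequality $\|g_{\alpha_i}(t_i)-g_{\mathrm{cone},\alpha_i}\|\le\|g_{\alpha_i}(t_i)-g_{\alpha_\infty}(t_i)\|+\|g_{\alpha_\infty}(t_i)-g_{\mathrm{cone},\alpha_\infty}\|+\|g_{\mathrm{cone},\alpha_\infty}-g_{\mathrm{cone},\alpha_i}\|$. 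Your \emph{main} route---direct analysis of the self-similar formula $g_\alpha(t)=(2\lambda_\alpha t)\,\phi^*_{t-1/(2\lambda_\alpha),\alpha}g_\alpha$ using the radial ODE structure of Cao's expanders and the uniform asymptotic expansions (iii)---is genuinely different and more quantitative (it yields an explicit $O(t)$ rate rather than mere $o(1)$), but it is specific to the cohomogeneity-one situation and hinges on verifying (i)--(iii) uniformly, which you rightly flag as the crux. The paper's soft argument, by contrast, uses nothing about Cao's solitons beyond non-collapsing and asymptotic conicality, and in particular \emph{proves} your (i) (equivalently the bound $C_i<C$) geometrically via the steady-limit argument rather than reading it off the ODEs.
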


\begin{proof}
Suppose that this was not the case. Then there exist $\varepsilon_0>0$, $D_0>0$, $k_0\in\mathbb{N}$, and sequences $t_i\to0$ and $\alpha_i\in I$ such that for the expanding K\"ahler-Ricci soliton $(\mathbb C^{n-1},g_{\alpha_i},f_i,p_i)$ and the canonical Ricci flow $g_{\alpha_i}(t)$, we have that
\begin{equation}\label{contrad b''}
      \|\nabla^{k_0}(g_{\alpha_i}(t_i)-g_{\operatorname{cone},\,\alpha_i})\|\ge\varepsilon_0
\end{equation}
on $B(*,D_0)\setminus B(*,D_0^{-1})$. Recall that $R_{g_{\alpha_i}}(p_i)=1$.

Assume that $g_{\alpha_i}=g_{\alpha_i}(C_i)$ for some positive constant $C_i>0$. We claim that there exists a positive constant $C$ such that $C_i<C$ and for all $t>0$,
  \begin{equation*}
  |\Rm_{g_{\alpha_i}(t)}|\le \frac{C}{t}.
  \end{equation*}
To prove this, we assume the contrary. Then there is a sequence $C_i\rightarrow\infty$ such that $|\Rm_{g_{\alpha_i}(t)}|\le \frac{C_i}{t}$ and $R(p_i,C_i)=1$. By the same limiting argument as in the proof of \cite[Lemma 2.3]{Lai2020_flying_wing}, we can take a smooth sequential limit pointed at $p_i$ of the flows $g_{\alpha_i}(t+C_i)$, $t\in(-C_i,\infty)$, to obtain a smooth Ricci flow for $t\in(-\infty,\infty)$ which is associated to a smooth steady K\"ahler-Ricci soliton $(\mathbb C^{n-1},g_{\infty},f_{\infty},p_{\infty})$. Since $I\subset(0,\infty)$ is compact, these solitons are uniformly non-collapsed in the sense that there is some $\kappa>0$ such that $B_{g_{\alpha_i}}(p_i,r)\ge\kappa r^n$ for all $r>0$, hence the limit steady Ricci soliton and the asymptotic cone are also $\kappa$-non-collapsed, and $R(p_{\infty},0)=1$. By Perelman’s curvature estimate for Ricci flows with nonnegative
curvature operator (see for example \cite[Corollary 45.1(b)]{KL} and also \cite[Theorem 2]{ni2005ancient} for the K\"ahler case), this implies that the steady Ricci soliton is flat. But this contradicts the fact that $R(p_{\infty},0)=1$ and so we have the claim. By Shi's estimates \cite{Shi1987derivative1}, it follows that
\begin{equation}\label{e: curvature derivatives}
  |\nabla^k\Rm_{g_{\alpha_i}(t)}|\le \frac{C_k}{t^{\frac{k+2}{2}}}
  \end{equation}
for some $C_k>0$ depending $k$ and uniform for all $i$.

By the curvature bounds \eqref{e: curvature derivatives}, we may assume, after taking a subsequence if necessary, that $\alpha_i\to\alpha_{\infty}\in I$ and $g_{\alpha_i}(t)$ converges smoothly locally on $(0,\,\infty)$ to a flow $g_{\infty}(t)$ on $(0,\,\infty)$. Moreover, if $\alpha_i\to\alpha_{\infty}$ as $i\to\infty$, then by the same argument as in \cite[Lemma 2.3]{Lai2020_flying_wing}, one can show that $g_{\infty}(t)=g_{\alpha_{\infty}}(t)$ is the canonical Ricci flow of the expanding Ricci soliton $g_{\alpha_{\infty}}$ asymptotic to the cone metric $g_{\operatorname{cone},\,\alpha_{\infty}}$. 
Since $I$ is compact in $(1,\infty)$, for each $k\in\mathbb N$, we can find $A_k>0$, uniform over all $i=1,...,\infty$, such that on each smooth Riemannian cone $g_{\operatorname{cone},\,\alpha_i}$, we have that
\begin{equation}\label{curv est}
    \sup_{x\,\in\,\mathbb C^{n-1}}r^{2+k}_{\operatorname{cone},\,\alpha_i}(x)|\nabla^k\Rm|(x)=\sup_{x\,\in\,\partial B(*,\,1)}|\nabla^k\Rm|(x)=A_k<\infty,
\end{equation}
where $r_{\operatorname{cone},\,\alpha_i}(x)=d_{g_{\operatorname{cone},\,\alpha_i}}(x,\,*)$ and the curvature and derivatives are with respect to $g_{\operatorname{cone},\,\alpha_{\infty}}$. Since the Ricci flows $g_{\alpha_i}(t)$ all converge smoothly locally to their asymptotic cones $g_{\operatorname{cone},\,\alpha_i}$ as $t\to0$ on $\mathbb C^{n-1}\setminus\{*\}$, $g_{\alpha_i}(t)$ can be extended smoothly to $t=0$ on $\mathbb C^{n-1}\setminus\{*\}$. 
Moreover, by \eqref{e: curvature derivatives}, \eqref{curv est}, and Shi's local derivative estimates \cite{RFTandA2}, we can deduce the following uniform curvature estimates for all $i=1,...,\infty$: for all $k\in\mathbb{N}$ and $D>0$, there exists $A_k(D)>0$ such that $|\nabla^k\Rm_{g_{\alpha_i}(t)}|\le A_k$ on $B(*,D)\setminus B(*,D^{-1})$ for all $t>0$.
Therefore, after passing to a subsequence if necessary, we may assume the Ricci flows $g_{\alpha_i}(t)$ converge smoothly to $g_{\alpha_{\infty}}(t)$ on any compact subset of $\mathbb C^{n-1}\setminus\{*\}$, uniformly on any $[0,\infty)$. Thus, we have that 
\begin{equation*}
    \begin{split}
        \|\nabla^k(g_{\alpha_i}(t_i)-g_{\operatorname{cone},\,\alpha_i})\|&\le \|\nabla^k(g_{\alpha_i}(t_i)-g_{\alpha_{\infty}}(t_i))\|+\|\nabla^k(g_{\operatorname{cone},\,\alpha_{\infty}}-g_{\alpha_{\infty}}(t_i))\|\\
&\qquad\qquad\qquad\qquad+\|\nabla^k(g_{\operatorname{cone},\,\alpha_{\infty}}-g_{\operatorname{cone},\,\alpha_i})\|\\
        &<\tfrac{\varepsilon_0}{2}
    \end{split}
\end{equation*}
for all sufficiently large $i$ on $B(*,D)\setminus B(*,D^{-1})$, where the norms and derivatives are taken with respect to $g_{\operatorname{cone},\,\alpha_{\infty}}$. Choosing $D>D_0$, this contradicts \eqref{contrad b''} and we are done.
\end{proof}

In the next lemma, we show that for 
any singular metric from \eqref{e: metric with conical singularity}, there exists a 
expanding Ricci soliton from Cao's family such that we can glue a compact $U(n-1)$-invariant subset of it with a $U(n-1)$-invariant subset of the singular metric to obtain a metric on $\mathbb P^{n-1}$ which is $C^1$-smooth near $r=0$.

\begin{lem}\label{e: find the expanding soliton}
For all large $k\in\mathbb{N}$, there exists a sequence $\{(\mathbb{C}^{n-1},g_i,r_i,s_i)\}_{i=1}^{\infty}$, where $r_i$, $s_i\to 0$ as $i\to\infty$, and $(\mathbb{C}^{n-1},g_i)$ is a $U(n-1)$-invariant expanding gradient K\"ahler-Ricci soliton which takes the form
\[
g_i=dr^2+a_i^2(r)\eta\otimes \eta+b_i^2(r)g^T
\]
for some positive smooth functions $a_i, b_i:(0,\infty)\to (0,\infty)$ satisfying
\begin{equation}\label{e: derivatives m=0,1,2}
b_i^{(m)}(s_i)=\frac{\sin^{(m)}(r_i)}{\sqrt{k}}\qquad \textnormal{for }m=0,1,2.
\end{equation}
\end{lem}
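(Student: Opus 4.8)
The plan is to realize each $g_i$ as a rescaling $g_i:=c_i^{2}\,g_{\alpha_i}$ of one of Cao's expanders $(\mathbb C^{n-1},g_{\alpha_i})$, $\alpha_i>1$, and to pin down the scaling factor $c_i>0$, the radius $\rho_i$ on the unrescaled soliton (so that $s_i:=c_i\rho_i$), and the radius $r_i$ on $h_k$ so that the rescaled warping function $b_i(r)=c_i\,b_{\alpha_i}(r/c_i)$ agrees with $\sin(r)/\sqrt k$ to second order at $s_i$. Since rescaling by $c_i^{2}$ preserves being a complete $U(n-1)$-invariant expanding gradient K\"ahler--Ricci soliton on $\mathbb C^{n-1}$ (with soliton constant $\lambda_{\alpha_i}/c_i^{2}$ and the same potential up to an additive constant) and preserves the doubly warped product form together with the K\"ahlerity relation $a_i=b_i b_i'$, it only remains to arrange \eqref{e: derivatives m=0,1,2} with $r_i,s_i\to0$.

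\emph{Reduction to one scalar equation.} Using $b_i(s_i)=c_i\,b_{\alpha_i}(\rho_i)$, $b_i'(s_i)=b_{\alpha_i}'(\rho_i)$ and $b_i''(s_i)=c_i^{-1}b_{\alpha_i}''(\rho_i)$, the system $b_i^{(m)}(s_i)=\sin^{(m)}(r_i)/\sqrt k$, $m=0,1,2$, reads $c_i b_{\alpha_i}(\rho_i)=\tfrac1{\sqrt k}\sin r_i$, $\ b_{\alpha_i}'(\rho_i)=\tfrac1{\sqrt k}\cos r_i$, $\ c_i^{-1}b_{\alpha_i}''(\rho_i)=-\tfrac1{\sqrt k}\sin r_i$. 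Since Cao's expander has $\Rm>0$ on real $(1,1)$-forms, Lemma \ref{conditions on ab} with $\lambda=0$ yields $b_{\alpha_i}>0$, $b_{\alpha_i}'>0$, $b_{\alpha_i}''<0$ on $(0,\infty)$. Multiplying the first and third equations, squaring the second, and adding, one sees that the three conditions are equivalent to the single equation
\[
F_{\alpha_i}(\rho_i)\;:=\;\bigl(b_{\alpha_i}'(\rho_i)\bigr)^{2}-b_{\alpha_i}(\rho_i)\,b_{\alpha_i}''(\rho_i)\;=\;\tfrac1k,
\]
together with the definitions $r_i:=\arccos\bigl(\sqrt k\,b_{\alpha_i}'(\rho_i)\bigr)$, $c_i:=\sin(r_i)/\bigl(\sqrt k\,b_{\alpha_i}(\rho_i)\bigr)$ and $s_i:=c_i\rho_i$. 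Here $F_{\alpha_i}>\bigl(b_{\alpha_i}'\bigr)^{2}$ (as $-b_{\alpha_i}b_{\alpha_i}''>0$) together with $b_{\alpha_i}'(\rho_i)>1/\sqrt{\alpha_i}>0$ forces $\sqrt k\,b_{\alpha_i}'(\rho_i)\in(0,1)$, so $r_i$ is well-defined with $r_i\in(0,\tfrac\pi2)$, and then $\sin^{2}r_i=1-k(b_{\alpha_i}'(\rho_i))^{2}=-k\,b_{\alpha_i}(\rho_i)b_{\alpha_i}''(\rho_i)>0$, which is exactly the compatibility needed for the first and third equations to hold with the above choices.

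\emph{Solving $F_\alpha(\rho)=1/k$.} Near the tip $b_\alpha(\rho)=\rho+O(\rho^{3})$, so $F_\alpha(\rho)\to1$ as $\rho\to0$; by Theorem \ref{Thm E} the expander is asymptotic to $g_{\operatorname{cone},\,\alpha}$, so $b_\alpha(\rho)\sim\rho/\sqrt\alpha$, $b_\alpha'(\rho)\to1/\sqrt\alpha$, $b_\alpha(\rho)b_\alpha''(\rho)\to0$, and hence $F_\alpha(\rho)\to1/\alpha$ as $\rho\to\infty$. Take $\alpha_i:=k+\tfrac1i$; since $k>1$ we have $1/\alpha_i<1/k<1$, so the intermediate value theorem produces $\rho_i\in(0,\infty)$ with $F_{\alpha_i}(\rho_i)=1/k$. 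Setting $r_i,c_i,s_i$ as above and $g_i:=c_i^{2}g_{\alpha_i}$, \eqref{e: derivatives m=0,1,2} holds by construction; it remains only to verify $r_i,s_i\to0$.

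\emph{Asymptotics of the parameters, and the main obstacle.} I would first show $\rho_i\to\infty$: if a subsequence had $\rho_i\to\rho_*\in[0,\infty)$ then $\rho_*>0$ (since $F_\alpha(\rho)\to1$ uniformly for $\alpha$ near $k$ as $\rho\to0$, while $F_{\alpha_i}(\rho_i)=1/k<1$), and by continuous dependence of $g_\alpha$ on $\alpha$ one would get $F_k(\rho_*)=1/k$; but $b_k'$ decreases strictly from $1$ to $1/\sqrt k$, so $(b_k'(\rho_*))^{2}>1/k$ and thus $F_k(\rho_*)=(b_k'(\rho_*))^{2}-b_k(\rho_*)b_k''(\rho_*)>1/k$, a contradiction. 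With $\rho_i\to\infty$, the asymptotic-cone estimates, uniform for $\alpha$ in the compact set $\{k\}\cup\{k+\tfrac1i:i\ge1\}\subset(1,\infty)$, give $b_{\alpha_i}(\rho_i)b_{\alpha_i}''(\rho_i)\to0$, so $(b_{\alpha_i}'(\rho_i))^{2}=\tfrac1k-b_{\alpha_i}(\rho_i)b_{\alpha_i}''(\rho_i)\to\tfrac1k$, whence $\sqrt k\,b_{\alpha_i}'(\rho_i)\to1$ and $r_i\to0$; moreover $b_{\alpha_i}(\rho_i)\sim\rho_i/\sqrt k\to\infty$, so $c_i\to0$ and $s_i=c_i\rho_i=\tfrac{\sin r_i}{\sqrt k}\cdot\tfrac{\rho_i}{b_{\alpha_i}(\rho_i)}\to0$. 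The main obstacle is making this last step rigorous: one needs that both the asymptotic-cone behaviour at infinity and the near-Euclidean behaviour at the tip hold uniformly as $\alpha$ ranges over a small interval around $k$, together with continuity of $\alpha\mapsto g_\alpha$ — precisely the kind of uniform estimates established in Lemma \ref{l: uniform convergence} and implicit in Theorem \ref{Thm E}, which one invokes to drive the limiting argument.
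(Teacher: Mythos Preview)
Your argument is correct and takes a genuinely different route from the paper's own proof. The paper first fixes, for each $i$, a large radius $R_i$ coming directly from the uniform-convergence Lemma~\ref{l: uniform convergence}, then rescales $g_\alpha$ by the factor $\lambda_{i,\alpha}=-b_\alpha''(R_i)/b_\alpha(R_i)$ so that the rescaled warping function has $-\bar b''/\bar b=1$ at the rescaled radius $s_{i,\alpha}$; this choice of scale automatically matches the $m=0$ and $m=2$ conditions once $r_{i,\alpha}$ is defined by $\bar b(s_{i,\alpha})=\sin(r_{i,\alpha})/\sqrt k$, and the remaining $m=1$ condition is then obtained by the intermediate value theorem \emph{in the cone parameter} $\alpha$ over $[k-1,k+1]$. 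By contrast, you fix $\alpha_i=k+1/i$ and observe that the three matching equations collapse to the single scalar equation $F_{\alpha_i}(\rho):=(b_{\alpha_i}')^{2}-b_{\alpha_i}b_{\alpha_i}''=1/k$, then apply the intermediate value theorem \emph{in the radial variable} $\rho$, using $F_\alpha(0^+)=1$ and $F_\alpha(\infty)=1/\alpha<1/k$. Your reduction to one equation is clean and avoids the two-step ``scale, then tune $\alpha$'' structure; the price is that to force $r_i,s_i\to0$ you must let $\alpha_i\to k$ (a fixed $\alpha>k$ would yield a fixed $\rho$ and hence fixed $r,s$) and then justify uniform-in-$\alpha$ tip and cone asymptotics, which you correctly identify as relying on Lemma~\ref{l: uniform convergence} and the continuous dependence of Cao's expanders on $\alpha$. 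The paper's approach, conversely, builds the smallness of $s_{i,\alpha}$ and $r_{i,\alpha}$ directly into the choice of $R_i$, so no separate $\rho_i\to\infty$ step is needed once $\alpha$ is selected; but it requires varying $\alpha$ over a whole interval rather than along a sequence.
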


\begin{proof}
Let $100\le k\in\mathbb{N}$. By Lemma \ref{l: uniform convergence}, for each $i\in\mathbb{N}$ we can find a large $R_i>0$ such that the rescaled expanding soliton $R_i^{-2}g_{\alpha}$ (where $g_{\alpha}$ is from \eqref{e: expander alpha}) is $\frac{1}{100\,i^2}$-close to the cone metric $g_{\operatorname{cone},\,\alpha}$ on near radius $1$  for all $\alpha\in[k-1,k+1]$. In particular, this implies that
$$-\frac{R_i^2b''_{\alpha}(R_i)}{b_{\alpha}(R_i)}+|b'_\alpha(R_i)-\tfrac{1}{\sqrt{\alpha}}|\le\frac{1}{i^2},$$
noting that these two quantities vanish on $g_{\operatorname{cone},\,\alpha}$. In particular, we see that
\[
0<\lambda_{i,\,\alpha}:=-\frac{b''_\alpha(R_i)}{b_\alpha(R_i)}\le \frac{1}{R_i^2i^2}\qquad\text{and}\qquad b'_\alpha(R_i)\in \left[\frac{1}{\sqrt{\alpha+\frac12}}, \frac{1}{\sqrt{\alpha-\frac12}}\right].
\]
Furthermore, by the smoothness conditions $b_\alpha(0)=0$, $b_\alpha'(0)=1$, and the concavity of $b_\alpha$, we know that $b_\alpha(R_i)\le R_i$. Consider the further rescaled expanding soliton $h_{\alpha}:=\lambda_{i,\,\alpha} g_\alpha$ which has the form
\[
h_\alpha=ds^2+\bar a^2_\alpha(s) \eta\otimes \eta+\bar b_\alpha^2(s)g^T.
\]
Then we have 
\[
-\frac{\bar b''_\alpha(s_{i,\,\alpha})}{\bar b_\alpha(s_{i,\,\alpha})}=1,\qquad\bar b_\alpha(s_{i,\,\alpha})\le s_{i,\,\alpha}\le\frac 1i,
 \]
where $s_{i,\,\alpha}=\sqrt{\lambda_{i,\,\alpha}} R_i\le\frac{1}{i}$,
and 
\begin{equation}\label{e: b' compare 1}
    \bar b_\alpha'(s_{i,\,\alpha})=b_\alpha'(R_i)\in\left[\frac{1}{\sqrt{\alpha+\frac12}}, \frac{1}{\sqrt{\alpha-\frac12}}\right].
\end{equation}

We can assume $i>k$ is sufficiently large. Then
there is a unique $r_{i,\,\alpha}\in(0,\tfrac{\pi}{2})$ such that
\begin{equation}
\label{e: def r}
    -\bar b''_\alpha(s_{i,\,\alpha})=\bar b_\alpha(s_{i,\,\alpha})=\frac{\sin(r_{i,\,\alpha})}{\sqrt{k}}\le\frac{1}{i},
\end{equation}
and we have $r_{i,\,\alpha}\le \frac{2\sqrt{k}}{i}\le \frac{2}{\sqrt{i}}$ and also
\begin{equation}\label{e: cos range}
\frac{1}{\sqrt{k+1/2}}<\frac{\cos(r_{i,\,\alpha})}{\sqrt{k}}<\frac{1}{\sqrt{k-1/2}}. 
\end{equation}
In particular, \eqref{e: def r} implies that the assertion \eqref{e: derivatives m=0,1,2} holds for $m=0,2$ with our choice of $r_{i,\,\alpha}$, $s_{i,\,\alpha}$ and the expanding soliton $h_{\alpha}$ for any $\alpha\in[k-1,k+1]$.
We will further determine a value of $\alpha$ so that \eqref{e: derivatives m=0,1,2} also holds for $m=1$. To do this, we first observe that by combining \eqref{e: b' compare 1} and \eqref{e: cos range}, that $\bar b'_\alpha(s_{i,\,\alpha})> \frac{\cos(r_{i,\,\alpha})}{\sqrt{k}}$ for $\alpha=k-1$ and $\bar b'_\alpha(s_{i,\,\alpha})< \frac{\cos(r_{i,\,\alpha})}{\sqrt{k}}$ for $\alpha=k+1$.
It is easy to see that $r_{i,\,\alpha}$ and $s_{i,\,\alpha}$ are both continuous in $\alpha\in [k-1,k+1]$, so by the intermediate value theorem there exists an $\alpha=\alpha(i,k)\in (k-1,k+1)$ such that $b'_\alpha(s_{i,\,\alpha})=\frac{\cos(r_{i,\,\alpha})}{\sqrt{k}}$. This implies that
\eqref{e: derivatives m=0,1,2} also holds for $m=1$.
\end{proof}

For each singular metric $h_k=dr^{2}+\frac{\sin^2(2r)}{4k^2}\eta\otimes \eta+\frac{\sin^2(r)}{k}g^{T}$, $r\in[0,\frac{\pi}{2}]$ from \eqref{e: metric with conical singularity},
by gluing $h_k$ with the expanding solitons obtained in Lemma \ref{e: find the expanding soliton} at arbitrarily small scales near $r=0$, we can obtain a sequence of metrics that are $C^1$-smooth near $r=0$ and converge to $h_k$.
In the next proposition, we will further modify these metrics to obtain a sequence of smooth K\"ahler metrics which converge to $h_k$, and satisfy $\Rm\ge0$ on real $(1,\,1)$-forms everywhere, and $\Rm\ge2$ on $(1,1)$-forms outside of arbitrarily small subsets.

\begin{prop}[Smoothing out cone points by gluing expanding solitons]\label{l: smooth metric with nonnegative Rm} Let $n\ge 3$ and let $k\in\mathbb{N}$ be any fixed large number. Then there exists a sequence of positive numbers $\delta_i\to0$ as $i\to\infty$, and a sequence of smooth $U(n-1)$-invariant K\"ahler manifolds $(\mathbb{P}^{n-1},\,g_i)$ which can be written as $g_i=dr^2+a^2_i(r)\eta\otimes\eta+ b^2_i(r)g^T$, $r\in[0,\frac{\pi}{2}-\delta_i]$, satisfying the following:
    \begin{enumerate}[label=\textnormal{(\alph{*})}, ref=(\alph{*})]
        \item\label{i: Rm>0} $\Rm_{g_i}>0$ on real $(1,\,1)$-forms everywhere;
        \item\label{i: Rm>2}  $\Rm_{g_i}\ge 2$ on real $(1,\,1)$-forms on $r^{-1}([\delta_i,\tfrac{\pi}{2}-\delta_i])$; 
        \item\label{i: converge to h_k}
         For every $\varepsilon>0$, $g_i$ smoothly converges to $h_k$ on $r^{-1}([\varepsilon,\frac{\pi}{2}-\varepsilon])$ as $i\to\infty$.
    \end{enumerate}
\end{prop}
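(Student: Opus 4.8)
The plan is to produce $g_i$ by two surgeries on the singular metric $h_k$ of \eqref{e: metric with conical singularity}. Near $r=0$ we will cut out a tiny conical neighborhood of the apex and graft in a rescaled portion of one of Cao's expanders supplied by Lemma~\ref{e: find the expanding soliton}; near $r=\tfrac{\pi}{2}$ we will alter the warping functions so that the resulting doubly warped product closes up to a genuinely smooth metric on $\mathbb{P}^{n-1}$; and in between, $g_i$ will coincide verbatim with a reparametrization of $h_k$, which already carries $\Rm\ge2$.

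First I would assemble a $C^{1,1}$ model $\bar g_i$: take the expander $(\mathbb{C}^{n-1},g_i,r_i,s_i)$ of Lemma~\ref{e: find the expanding soliton} (with $r_i,s_i\to0$ and $b_i$ matching $\tfrac1{\sqrt k}\sin(\cdot)$ to second order at $s_i$ by \eqref{e: derivatives m=0,1,2}) on $r\in[0,s_i]$, and attach to it, for $r\ge s_i$, the reparametrized model $dr^2+\bar a_k(r-s_i+r_i)^2\eta\otimes\eta+b_k(r-s_i+r_i)^2g^T$ with $b_k:=\tfrac1{\sqrt k}\sin(\cdot)$, $\bar a_k:=b_kb_k'$ (a reparametrization of $h_k$). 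By \eqref{e: derivatives m=0,1,2} and the K\"ahler relation $a=bb'$ of Lemma~\ref{kahlerr}, across $r=s_i$ one has $b^2\in C^2$ and $a^2\in C^{1,1}$, so $\bar g_i$ is a $U(n-1)$-invariant K\"ahler metric of class $C^{1,1}$ whose curvature jumps but is bounded at $s_i$. The key point is that both one-sided curvature operators at $s_i$ lie in the open convex cone of operators that are $>0$ on real $(1,1)$-forms: the expander side because Cao's expanders have $\Rm>0$ there, and the $h_k$ side because the conditions of Lemma~\ref{conditions on ab} with $\lambda=1$ hold for $h_k$ on all of $\{0<r<\tfrac\pi2\}$ when $k>1$ — those in (2), (3), (4) being equalities — so $h_k$ has $\Rm\ge2$ on $(1,1)$-forms there.

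Next I would smooth $\bar g_i$ near $s_i$ at a scale $w_i\to0$ without leaving that cone. Since only $a''$ jumps at $s_i$, I would interpolate at the level of $a''$ — a smooth strictly negative function matching $a_i''$ and the model's $a''$ to infinite order at the two ends of $[s_i-w_i,s_i+w_i]$, which is possible as both boundary values are strictly negative (condition (2) of Lemma~\ref{conditions on ab} on each side) and close — then integrate twice to recover $a$ (matching $a,a'$ at $s_i-w_i$) and then $b$ from $\tfrac12(b^2)'=a$, producing a smooth $U(n-1)$-invariant K\"ahler metric that agrees with $\bar g_i$, hence with $h_k$, off $(s_i-w_i,s_i+w_i)$. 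On that interval $a,a',b,b',b''$ stay within $O(w_i)$ of their (continuous) values at $s_i$ while $a''<0$ by construction, so the curvature operator there is close to a convex combination of the two one-sided values; since those satisfy the strict conditions (1), (3), (4) with positive margin, the positivity cone being open forces $\Rm>0$ on $(1,1)$-forms throughout once $w_i$ is small enough — chosen \emph{after} $i$. Separately, near the right endpoint the reparametrized $h_k$ has cone angle $2\pi/k<2\pi$ (equivalently $a'=-\tfrac1k\neq-1$, violating \eqref{e: smoothness for a}), so on $[L_i-\mu_i,L_i]$, $\mu_i\to0$, I would drive $b''$ from $\approx-\tfrac1{\sqrt k}$ down to $-1/b(L_i)\approx-\sqrt k$, enforcing \eqref{e: smoothness for a} and \eqref{e: smoothness for b} at $L_i$; this keeps $b'$ of size $O(\sqrt k\,\mu_i)$, makes $a=bb'\approx L_i-r$ with $-a''$ of size $\mu_i^{-1}$, and — as when one caps off a sub-$2\pi$ cone angle — inserts only positive curvature. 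Checking Lemma~\ref{conditions on ab} with $\lambda=1$ on $[L_i-\mu_i,L_i]$: (1) survives since $1-b^2-(b')^2\approx1-\tfrac1k$; (2), (3) are strengthened because $-a'',-b''$ become large; and (4) holds because its left side, of size $k\,\mu_i^{-2}$, dominates its right side, at most $(k-1)^2$, for $\mu_i$ small, while at the endpoint the inequality reduces to a bound satisfied for $k$ large (this also uses $n\ge3$, since Lemma~\ref{conditions on ab}, in particular its condition (4), presupposes a transverse $\mathbb{P}^{n-2}$).

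Setting $\delta_i:=\tfrac\pi2-L_i\to0$ with $\delta_i>s_i+w_i$, the metric $g_i=dr^2+a_i^2\eta\otimes\eta+b_i^2g^T$ on $[0,\tfrac\pi2-\delta_i]$ closes up smoothly on $\mathbb{P}^{n-1}$; it has $\Rm>0$ on $(1,1)$-forms everywhere (expander piece, the $s_i$-transition, the reparametrized $h_k$ where $\Rm\ge2$, and the endpoint modification where $\Rm\ge2$), giving \ref{i: Rm>0}; it has $\Rm\ge2$ on $(1,1)$-forms on $r^{-1}([\delta_i,\tfrac\pi2-\delta_i])$, where it equals either the reparametrized $h_k$ or the endpoint modification, giving \ref{i: Rm>2}; and for fixed $\epsilon>0$, once $i$ is large $r^{-1}([\epsilon,\tfrac\pi2-\epsilon])$ sits inside the region where $g_i$ equals $h_k$ precomposed with the translation $r\mapsto r-s_i+r_i$, so \ref{i: converge to h_k} follows from $s_i-r_i\to0$. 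The main obstacle is precisely the preservation of the curvature conditions through the two surgeries: near $r=0$ the smoothing must land back in the (convex, but with a degenerating lowest eigenvalue) positivity cone, which forces the ``$i$ first, then $w_i$'' order of quantifiers and the choice to interpolate $a''$ so that the delicate sign $a''<0$ is not lost; near $r=\tfrac\pi2$ one needs the stronger conclusion $\Rm\ge2$ rather than merely $\Rm>0$, and that is the quantitative content of condition (4) of Lemma~\ref{conditions on ab} with $\lambda=1$, which is where the size of $k$ (and the dimensional constraint $n\ge3$) enters.
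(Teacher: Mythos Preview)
Your outline matches the paper's: expander graft near $r=0$ via Lemma~\ref{e: find the expanding soliton}, the shifted $h_k$ in the middle, and a modification near the right end to close up smoothly. Two implementation choices differ. At the junction the paper interpolates $b'''$ (not $a''$), integrates from $0$, and accepts only $C^2$-closeness to $b_2$ past $s_i$---it then recovers~(b) up to an $o(1)$ error and fixes this by a final rescaling $(1-\epsilon_i)g_i$. Your claim that interpolating $a''$ yields \emph{exact} agreement off $(s_i-w_i,s_i+w_i)$ would need three moment constraints on the interpolant (so that $a$, $a'$, and then $b^2=2\int a$ all reconnect at $s_i+w_i$), which you do not impose. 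More substantively, the paper closes up the right end by transitioning $-a''/a$ to a constant $\alpha^2$, so $a_4$ becomes a sine near its first zero; the smoothness conditions \eqref{e: smoothness for a}--\eqref{e: smoothness for b} then follow automatically once a one-parameter intermediate-value argument on $\alpha$ forces $a_4'=-1$ there.

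Your alternative of driving $b''$ from $-1/\sqrt{k}$ to $-\sqrt{k}$ on $[L_i-\mu_i,L_i]$ is plausible but underspecified: hitting $b'(L_i)=0$ exactly is itself an IVT step you have not supplied, and you must also force all higher odd derivatives of $b$ to vanish at $L_i$. Your size argument for condition~(4) of Lemma~\ref{conditions on ab} is not uniformly correct on that interval: once $b''$ has settled to its terminal value $-1/b(L_i)$, L'H\^opital gives $-a''/(4a)\to a'''(L_i)/4=\tfrac14\bigl(3b(L_i)^{-2}+b(L_i)b^{(4)}(L_i)\bigr)\approx\tfrac{3k}{4}$, of order $k$ rather than $\mu_i^{-2}$. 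Condition~(4) with $\lambda=1$ does still hold there---the left side is $\approx\tfrac{2(n-1)}{n-2}(k-1)(\tfrac{3k}{4}-1)$ while the right side is $\approx(k-1)^2$, and $\tfrac{3(n-1)}{2(n-2)}>1$---but not for the reason you state, so this piece of the verification (and likewise the transition zone where $b'''$ is large) needs the interval-by-interval check the paper carries out.
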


\begin{proof}
Fix $k$ large, for each $i$ let $r_i,\,s_i>0$ be the sequences from Lemma \ref{e: find the expanding soliton} converging to zero, and let $b_1:[0,\infty)\to[0,\infty)$ be the warping function of the corresponding expanding Ricci soliton, where we omit the dependence of $b_1$ (and all the following functions $a_1,a_2,a_3,a_4$ and $b,b_2,b_3,b_4$) in $i$.  Let $b:[0,\tfrac{\pi}{2}+s_i-r_i]\to[0,\infty)$ be defined by
    \begin{equation*}
        b(r) =\left\{\begin{array}{lr}
        b_1(r) & \text{for } r\in[0,s_i],\\
     b_2(r):=\frac{1}{\sqrt{k}}\sin(r-s_i+r_i)& \text{for } r\in[s_i,\tfrac{\pi}{2}+s_i-r_i].\\
        \end{array}\right.
    \end{equation*}
    Then $b$ is $C^2$ by Lemma \ref{e: find the expanding soliton}. We also write $a_2:=b_2b_2'=\tfrac{1}{2k}\sin(2(r-s_i+r_i))$.

Let $\varphi:(-\infty,\infty)\to[0,\infty)$ be a non-decreasing smooth function such that $\varphi(r)=0$ on $(-\infty,0]$ and $\varphi(r)=1$ on $[t,\infty)$, where $t\in(0,\min\{s_i,r_i\})$ is some sufficiently small number whose value we will choose later.
Define
    \begin{equation*}
        B_i(r):=\varphi(s_i-r)b'''_1(r)+(1-\varphi(s_i-r))b'''_2(r)\qquad\textrm{for $r\in [0,\tfrac{\pi}{2}+s_i-r_i]$}.
    \end{equation*}
Then $B_i$ is smooth. Let $b_3:[0,\tfrac{\pi}{2}+s_i-r_i]\to\mathbb R$ be the unique smooth function solving
    \begin{equation}\label{e: defn of b}
        \left\{\begin{array}{lr}
        b_3(0)=0,\quad b_3'(0)=1,\quad b_3''(0)=0, & \\
     b_3'''(r)= B_i(r). &\\
        \end{array}\right.
    \end{equation}
    Because $B_i\equiv b_1'''$ on $[0,s_i-t]$ and $B_i\equiv b_2'''$ on $[s_i,\tfrac{\pi}{2}+s_i-r_i]$, we know that $b_3\equiv b_1$ on $[0,s_i-t]$ and that $b_3$ satisfies the smoothness condition \eqref{e: smoothness for b} at $0$. Henceforth, for fixed $r_i,s_i$, we shall write $o(1)$ to denote all constants that go to zero as $t\to0$. By the $C^2$-regularity of $b$, it is then clear that
    \begin{equation}\label{e: C2 closeness}
        \|b_3-b\|_{C^2[0,\tfrac{\pi}{2}+s_i-r_i]}\le o(1).
    \end{equation}
  Moreover, since $b_3'''\equiv b_2'''$ on $[s_i,\frac{\pi}{2}+s_i-r_i]$, by letting $a_3=b_3b_3'$, we also see that
  \begin{equation}\label{e: C2 closeness of a}
        \|a_3-a_2\|_{C^2[s_i,\tfrac{\pi}{2}+s_i-r_i]}\le o(1).
    \end{equation}
    
For $\alpha>0$ to be chosen later, define a smooth function $A_{i,\alpha}:\mathbb R\to\mathbb R$ by
\begin{equation*}
    A_{i,\alpha}(r)=\varphi\left(\tfrac{\pi}{2}-r_i-r\right)\frac{-a_3''}{a_3}+\left(1-\varphi\left(\tfrac{\pi}{2}-r_i-r\right)\right)\alpha^2
\end{equation*}
and let $a_4:[0,\infty)\to\mathbb R$ be the unique smooth function, omitting its dependence on $\alpha$, with $a_4=a_3$ on $[0,\tfrac{\pi}{2}-r_i-t]$ and solving
\begin{equation*}
        \left\{\begin{array}{lr}
        a_4\left(\tfrac{\pi}{2}-2r_i\right)=a_3\left(\tfrac{\pi}{2}-2r_i\right),\quad a_4'\left(\tfrac{\pi}{2}-2r_i\right)=a_3'\left(\tfrac{\pi}{2}-2r_i\right), & \\
     a_4''= -a_4\cdot A_{i,\alpha}.&\\
        \end{array}\right.
    \end{equation*}
Set $b_4(r):=\sqrt{\int_0^r2a_4(\tau)\,d\tau}$. Then $a_4=b_4b_4'$, and \eqref{e: C2 closeness of a} implies 
\begin{equation}\label{e: C2 closeness of a_4 t a_2}
   \|a_4-a_2\|_{C^2[s_i,\tfrac{\pi}{2}-r_i-t]}\le o(1).
\end{equation}

In the following we will show that there exists some $\alpha=\alpha_\#$ such that 
$a_4$ and $b_4$ satisfy the smoothness conditions \eqref{e: smoothness for a} and \eqref{e: smoothness for b}, where $a_4=0$. 
Let $v_i(\alpha),\,c_i(\alpha)>0$ be two smooth functions such that $a_4\ge0$ on $[0,\tfrac{\pi}{2}-r_i+v_i(\alpha)]$ and
\begin{equation}\label{e: hat a near pi/2}
    a_4(\tfrac{\pi}{2}-r_i+v_i(\alpha))=0,\qquad a_4'(\tfrac{\pi}{2}-r_i+v_i(\alpha))=-c_i(\alpha).
\end{equation}
Then it suffices to find $\alpha_\#$ such that $c_i(\alpha_\#)=1$.
To do this, we consider the function 
$$w_{i,\alpha}(r):=c_{0,i}(\alpha)\cdot\alpha^{-1}\sin(\alpha(\tfrac{\pi}{2}-r_i+v_{0,i}(\alpha)-r)),$$ 
where $c_{0,i}(\alpha),v_{0,i}(\alpha)>0$ are two smooth functions in $\alpha$ such that
$w_{i,\alpha}\ge0$ on $[\tfrac{\pi}{2}-r_i,\tfrac{\pi}{2}-r_i+v_{0,i}(\alpha)]$ and $w_{i,\alpha}$ solves the following IVP:
\begin{equation*}
        \left\{\begin{array}{lr}
        w\left(\tfrac{\pi}{2}-r_i\right)=a_2\left(\tfrac{\pi}{2}-r_i\right), \qquad
     w'\left(\tfrac{\pi}{2}-r_i\right)=a_2'\left(\tfrac{\pi}{2}-r_i\right), & \\
     w''= -w\cdot \alpha^2.&\\
        \end{array}\right.
\end{equation*}
In particular, for
$$\alpha_0=\sqrt{\frac{4k^2\left(1-\frac{1}{k^2}\cos^2(2s_i)\right)}{\sin^2{(2s_i)}}},\qquad v_0=\frac{\arccos(\tfrac{1}{k}(\cos(2s_i)))}{\alpha_0},$$
direct computation shows that $c_{0,i}(\alpha_0)=1$, and so
\begin{equation*}
    w_{i,\alpha_0}(\tfrac{\pi}{2}-r_i+v_0)=0,\qquad w_{i,\alpha_0}'(\tfrac{\pi}{2}-r_i+v_0)=-c_{0,i}(\alpha_0)=-1.
\end{equation*}
It is also easy to see that $c_{0,i}(\alpha_0-1)<1$ and $c_{0,i}(\alpha_0+1)>1$.
By \eqref{e: C2 closeness of a}, in letting $t\to0$, $a_4$ converges in the $C^1$-sense to $a_2$ on $[s_i,\frac{\pi}{2}-r_i]$ and to $w_{i,\alpha}$ on $[\frac{\pi}{2}-r_i,\frac{\pi}{2}-r_i+\max\{v_{0,i}(\alpha),v_i(\alpha)\}]$.
In particular, this implies that
$$c_i(\alpha)=c_{0,i}(\alpha)+o(1)\qquad\textrm{and}\qquad v_i(\alpha)=v_{0,i}(\alpha)+o(1).$$ Thus, for $t$ sufficiently small, we have that $c_i(\alpha_0-1)<1$ and $c_i(\alpha_0+1)>1$. Continuity and the intermediate value theorem now give us an $\alpha_\#\in(\alpha_0-1,\alpha_0+1)$ such that $c_i(\alpha_\#)=1$.

Henceforth fixing this $\alpha_\#$, denote the corresponding $v_i(\alpha_\#)$ by $v_i$. Then $g_i:=dr^2+a_4^2(r)\eta\otimes\eta+b_4^2(r)g^T$ is a $U(n-1)$-invariant smooth K\"ahler metric on $\mathbb{P}^{n-1}$. It is moreover clear from \eqref{e: C2 closeness} and \eqref{e: C2 closeness of a} that $$\|b_4-b_2\|_{C^2[2r_i,\frac{\pi}{2}-2r_i]}+\|a_4-a_2\|_{C^2[2r_i,\frac{\pi}{2}-2r_i]}\le o(1).$$ So $g_i$ clearly satisfies assertion (c) of the proposition.

It now suffices to show $g_i$ satisfies $\Rm>0$ on real $(1,1)$-forms on $[0,s_i]$, and $\Rm\ge 2+o(1)$ on real $(1,\,1)$-forms on $[s_i,\frac{\pi}{2}-r_i+v_i]$. If this is true, then there exists
a sequence $\varepsilon_i\to0$, such that the rescaled metrics $(1-\varepsilon_i)g_i$ satisfy all assertions of the proposition.
This is equivalent to checking that $a_4,\,b_4$ satisfy inequalities (1)--(4) of Lemma \ref{conditions on ab} with $\lambda>0$ on $[0,s_i]$ and $\lambda=1+o(1)$ on $[s_i,\frac{\pi}{2}-r_i+v_i]$ (see \eqref{lamb def} for the geometric meaning of $\lambda$).

First, we verify $a_4,\,b_4$ satisfy inequalities (1) and (3) with $\lambda>0$ on $[0,s_i]$ and $\lambda=1+o(1)$ on $[s_i,\frac{\pi}{2}-r_i-t]$.
Since the expanding Ricci soliton $dr^2+a_1^2(r)\eta\otimes\eta+b_1^2(r)g^T$ satisfies $\Rm>0$ on all $(1,\,1)$-forms, there exists $\delta_i>0$ which tends to $0$ as $i\to\infty$, such that $a_1,\,b_1$ satisfy inequalities (1)--(4) of Lemma \ref{conditions on ab} on $[0,s_i]$. In particular, inequalities $(1)$ and $(3)$ only involve at most second order derivatives of $b_4=b_3$, so by \eqref{e: C2 closeness}, we see that $b_4$ satisfies $(1)$ and $(3)$ with $\lambda=\delta_i$ on $[0,\,s_i]$.
Similarly, since $h_k=dr^2+a_2^2(r)\eta\otimes\eta+b_2^2(r)g^T$ satisfies $\Rm\ge2$ on $(1,1)$-forms except at $r=0$ and $r=\frac{\pi}{2}$, it follows that $a_2,\,b_2$ satisfy (1)--(4) with $\lambda=1$. So by \eqref{e: C2 closeness} we see that $a_4,\,b_4$ satisfy (1) and (3) with $\lambda=1+o(1)$ on $[s_i,\tfrac{\pi}{2}-r_i-t]$.

Next, we show (2) and (4) on $[0,\tfrac{\pi}{2}-r_i-t]$.
Since $b'_2$ is decreasing and $b'_2(\tfrac{\pi}{2}-r_i+s_i)=0$, for sufficiently small $t$, it follows from \eqref{e: C2 closeness} that $b_3'\ge\tfrac12b'_2(\tfrac{\pi}{2}-r_i)>0$ on $[0,\tfrac{\pi}{2}-r_i]$. By
\eqref{e: defn of b} and \eqref{e: C2 closeness}, we therefore find that
$$-\frac{a_3''}{4a_3}=-\frac{3}{4}\frac{b_3''}{b_3}-\frac{1}{4}\frac{b_3'''}{b_3'}
=-\frac{3}{4}\frac{b''}{b}-\frac{1}{4}\frac{B_i}{b'}+o(1)\qquad\textrm{on $\left[0,\,\tfrac{\pi}{2}-r_i-t\right]$}.$$
On $[s_i-t,s_i]$, as $B_i$ takes values between $b_1'''(s_i)+o(1)$ and $b_2'''(s_i)+o(1)$ on $[s_i-t,s_i]$, we see that $-\frac{a_4''}{4a_4}$ is also bounded between $-\frac{a_1''}{4a_1}(s_i)+o(1)\ge\tfrac{\delta_i}{2}$ and $-\frac{a_2''}{4a_2}(s_i)+o(1)\ge1+o(1)$, and hence $-\frac{a_4''}{4a_4}\ge\frac{\delta_i}{4}$, which implies (2) with $\lambda=\tfrac{\delta_i}{2}$ on $[0,\,s_i]$. On $[s_i,\tfrac{\pi}{2}-r_i-t]$, because $-\frac{a_2''}{4a_2}=1$ and by \eqref{e: C2 closeness of a_4 t a_2}, we have $-\frac{a_4''}{4a_4}\ge1+o(1)$, which implies (2) with $\lambda=1+o(1)$. Similarly, we can verify inequality (4).

Secondly, we verify (1)--(4) on $[\tfrac{\pi}{2}-r_i-t, \tfrac{\pi}{2}-r_i]$ with $\lambda=1+o(1)$. Since $-\frac{a_3''}{4a_3}\le1+o(1)<\alpha_\#$ and $\alpha_\#\in(\alpha_0-1,\alpha_0+1)$, we see that
\begin{equation}\label{e: a_4 and a_3}
    \|a_4-a_3\|_{C^1[\tfrac{\pi}{2}-r_i-t,\tfrac{\pi}{2}-r_i]}=o(1),
\end{equation}
and also
\begin{equation}\label{e: a_4''}
    -\frac{a_4''}{4a_4}\ge -\frac{a_3''(\tfrac{\pi}{2}-r_i)}{4a_3(\tfrac{\pi}{2}-r_i)}+o(1).
\end{equation}
\eqref{e: a_4 and a_3} together with \eqref{e: C2 closeness of a} implies
\begin{equation}\label{e: a_4 and a_2}
    \|a_4-a_2\|_{C^1[\tfrac{\pi}{2}-r_i-t,\tfrac{\pi}{2}-r_i]}+ \|b_4-b_2\|_{C^2[\tfrac{\pi}{2}-r_i-t,\tfrac{\pi}{2}-r_i]}=o(1),
\end{equation}
which immediately implies (1) and (3) hold with $\lambda=1+o(1)$ on $[\tfrac{\pi}{2}-r_i-t,\tfrac{\pi}{2}-r_i]$. Moreover, together with \eqref{e: a_4''}, this implies (2) and (4).

Lastly, we verify (1)--(4) on $[\tfrac{\pi}{2}-r_i,\tfrac{\pi}{2}-r_i+v_i]$. First, $(2)$ follows from 
\begin{equation}\label{e: 4 LHS 2}
    -\frac{a_4''}{4a_4}\ge \frac{(\alpha_0-1)^2}{4}\ge \frac{\alpha_0^2}{8}\ge \frac{k^2}{16 r_i^2}
    \ge1.
\end{equation}
Let $w_i>0$ denote all constants with $w_i\to0$ as $r_i\to0$. Then we have
\begin{equation}\label{pi a/b}
b_4'(r)=\frac{a_4(r)}{b_4(r)}\le \frac{a_4(\tfrac{\pi}{2}-r_i-t)}{b_4(\tfrac{\pi}{2}-r_i-t)}\le w_i.
\end{equation}
It is also easy to see that $|b_4^2-b_4^2(\tfrac{\pi}{2}-r_i-t)|=w_i$, and so the bound
$|b_4^2(\tfrac{\pi}{2}-r_i-t)-\frac{1}{k}|\le w_i$ implies that $|b_4^2-\frac{1}{k}|\le w_i$. Using in addition \eqref{pi a/b}, we derive inequality (1) by
\begin{equation}\label{e: 4 LHS 1}
    \frac{1-(b_4')^2}{b_4^2}=\frac{1-\left(\frac{a_4}{b_4}\right)^2}{b_4^2}\ge k-w_i\ge k-1\ge1.
\end{equation}
Next, by using the fact that $-a_4'$ is increasing, \eqref{e: a_4 and a_2}, and $b_2'(r)\le w_i$, (3) follows from
\begin{eqnarray*}
    -\frac{b_4''}{b_4}=\frac{-a_4'+(b_4')^2}{b_4^2}
    \ge\frac{-a_2'(\tfrac{\pi}{2}-r_i-t)+(b_2')^2(\tfrac{\pi}{2}-r_i-t)}{b_2^2(\tfrac{\pi}{2}-r_i-t)}-w_i= 1-w_i.
\end{eqnarray*}
Finally, by using the inequalities $a_4'\ge a_4'(\tfrac{\pi}{2}-r_i+v_i)=-1$,
$|b_4^2-\frac{1}{k}|\le w_i$, and $0\le b_4'\le w_i$, we derive that
\begin{eqnarray}\label{e: 4 RHS}
 -\frac{b_4''}{b_4}= \frac{-a_4'+(b_4')^2}{b_4^2}&\le k+w_i\le k+1.
\end{eqnarray}
In conclusion, \eqref{e: 4 LHS 1}, \eqref{e: 4 LHS 2}, and \eqref{e: 4 RHS} together give us that
\begin{eqnarray*}
    \frac{2(n-1)(1-\lambda b_4^2(r)-(b_4'(r))^2)}{(n-2) b_4^2(r)}\left(-\frac{a_4''(r)}{4a_4(r)}-\lambda\right)&\ge& \frac{2(n-1)}{(n-2)}(k-1-\lambda)\left(\frac{k^2}{16 r_i^2}
    -\lambda\right)\\
    &>& (k+1-\lambda)^2\ge \left(-\frac{b_4''(r)}{b_4(r)}-\lambda\right)^2,
\end{eqnarray*}
which implies $(4)$ for all large $k$. This completes the proof. 
\end{proof}

\section{Ricci flows emanating from the singular metric}

Let $n\geq3$. The goal of this section is to show that there is a Ricci flow that smooths out the singular metric \eqref{e: metric with conical singularity}, namely
$$h_k=dr^{2}+\frac{\sin^2(2r)}{4k^2}\eta\otimes \eta+\frac{\sin^2(r)}{k}g^{T},$$
where $r(\cdot):\mathbb{P}^{n-1}\to[0,\frac{\pi}{2}]$ is the radial function, $r^{-1}(0)=\{p\}$, and $r^{-1}(\frac{\pi}{2})$ is biholomorphic to $\mathbb P^{n-2}$.
We construct this Ricci flow by taking 
limits of the Ricci flows coming out of the metrics $\{g^i_k\}_{i=1}^{\infty}$ from Proposition \ref{l: smooth metric with nonnegative Rm} that converge to $h_k$. These Ricci flows are smooth, have uniform existence time, and satisfy a uniform curvature decay $|\Rm|\le\frac{C}{t}$.
We will show that these smooth Ricci flows satisfy $\Rm\ge0$ everywhere, and $\Rm\ge2$ outside of arbitrarily small subsets, so eventually the Ricci flows converge to a limit flow that satisfies $\Rm\ge 2$ on real $(1,\,1)$-forms \emph{everywhere}. 

The curvature condition of $\Rm\ge2$ on real $(1,\,1)$-forms is equivalent to $\lambda\ge1$, where
$\lambda\in\R$ is half of the infimum of $\Rm$ on real $(1,\,1)$-forms given by \eqref{lleqn} in complex $(n-1)$-dimensions, namely
\begin{equation}\label{lamb def}
    \lambda:=\frac{1}{2}\inf_{u\in\Lambda^{1,1}_\mathbb{R}(M)\setminus\{0\}} \frac{\Rm(u,\bar{u})}{|u|_*^2},
\end{equation}
where for $u=i\,u_{\alpha\bar \beta}\,dz^{\alpha}\wedge dz^{\bar \beta}$, the norm $|u|_*$ is defined by 
$$|u|_*^2=\sum_{\alpha,\beta, \gamma, \eta=1}^{n-1}(\hat g_{\alpha\bar{\beta}}\hat g_{\gamma\bar{ \eta}}+\hat g_{\alpha\bar  \eta}\hat g_{\gamma\bar \beta})u^{\alpha\bar\beta}u^{\gamma\bar{ \eta}},$$
and we have $\Rm(u,\,\bar{u})=-\sum_{\alpha,\beta, \gamma, \eta=1}^{n-1}\hat R_{\alpha\bar{\beta}\gamma\bar{\eta}}u^{\alpha\bar\beta}u^{\gamma\bar{\eta}}$.
Using the fact that $u^{\alpha\bar\beta}=g^{\alpha\bar\varepsilon}g^{\eta\bar\beta}u_{\eta\bar \varepsilon}$ and $\overline{u_{\alpha\bar \beta}}=u_{\beta\bar\alpha}$ for any $\alpha$ and $\beta$, we have
\[
  |u|_*^2= \left|\sum_{\alpha, \beta=1}^{n-1}g_{\alpha\bar{\beta}}u^{\alpha\bar\beta}\right|^2+ u_{\alpha\bar \beta}g^{\alpha \bar \gamma}g^{\eta\bar\beta}\overline{u_{\gamma\bar\eta}}\ge  u_{\alpha\bar \beta}g^{\alpha \bar \gamma}g^{\eta\bar\beta}\overline{u_{\gamma\bar\eta}} \ge 0
\]
with equality if and only if $u=0$.

We first observe that $\lambda$ is a supersolution of the heat equation.

\begin{lem}\label{l: heat inequality}
Let $(M,g(t))$, $t\in[0,T]$, be a smooth K\"ahler-Ricci flow (not necessarily complete) with nonnegative curvature operator. Then 
\begin{equation*}
  \partial_t\lambda\ge\Delta \lambda
    \end{equation*}
holds in the following barrier sense: for any $(q,\tau)\in M\times (0,T)$, there exists a neighborhood $U\subset M\times (0,T)$ of $(q,\tau)$ and a $C^{\infty}$ (upper barrier) function $\phi:U\to\R$ such that $\phi\ge \lambda$ on $U$, with equality at $(q,\tau)$ and
\begin{equation*}
    \partial_t\phi\ge\Delta \phi\quad\textnormal{at }(q,\tau).
\end{equation*}
\end{lem}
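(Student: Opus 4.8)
The plan is to prove the barrier inequality by fixing $(q,\tau)\in M\times(0,T)$ and building an explicit smooth upper barrier $\phi$ for $\lambda$ out of a real $(1,1)$-form that realises the infimum in \eqref{lamb def} at that point, and then checking $\partial_t\phi\ge\Delta\phi$ at $(q,\tau)$ by a pointwise computation. Since $|\cdot|_*$ is a genuine norm on the finite-dimensional fibre $\Lambda^{1,1}_{\mathbb R}(M)_q$ and the quotient in \eqref{lamb def} is scale-invariant, the infimum is attained at some $u_0$ with $|u_0|_*^2=1$ and $\Rm_{g(\tau)}(u_0,\overline{u_0})=2\lambda(q,\tau)$. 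The structural input I will use is the Euler--Lagrange equation of this minimiser: as linear functionals on $\Lambda^{1,1}_{\mathbb R}(M)_q$,
\[
\Rm_{g(\tau)}(u_0,\overline{\,\cdot\,})=2\lambda(q,\tau)\,\langle u_0,\,\cdot\,\rangle_*,
\]
where $\langle\cdot,\cdot\rangle_*$ is the Hermitian form polarising $|\cdot|_*^2$.

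Next I would extend $u_0$ to a smooth section $\tilde u$ of the bundle of real $(1,1)$-forms on a space-time neighbourhood $U$ of $(q,\tau)$. It is cleanest to first pass to the Uhlenbeck trick for K\"ahler--Ricci flow, i.e.\ to work on a fixed Hermitian vector bundle $E$, with fixed metric and fixed complex structure, into which $\Lambda^{1,1}_{\mathbb R}(M,g(t))$ is isometrically embedded for each $t$, so that the curvature operator becomes a time-dependent section $\Rm(t)$ of $\operatorname{Sym}^2 E^*$ obeying $\partial_t\Rm=\Delta\Rm+\Rm^2+\Rm^{\#}$ (a standard computation using $\partial_t g=-2\Ric$, the metric contributions being absorbed by the Uhlenbeck isometry), while $|\cdot|_*$ and the associated ``identity-type'' operator become parallel and time-independent. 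In this picture I would take $\tilde u$ to be the parallel transport of $u_0$ along $g(\tau)$-geodesics from $q$, extended to be independent of $t$; then on $U$ one has $\tilde u(q,\tau)=u_0$, $\nabla\tilde u(q,\tau)=0$, $\partial_t\tilde u\equiv 0$, $|\tilde u|_*^2$ time-independent, and $|\tilde u|_*^2>0$ near $(q,\tau)$. Set $\phi:=\tfrac12\,\Rm(t)(\tilde u,\overline{\tilde u})/|\tilde u|_*^2$. Since $\tilde u$ is a nonzero real $(1,1)$-form at every point of $U$, \eqref{lamb def} gives $\phi\ge\lambda$ on $U$ with equality at $(q,\tau)$, and $\phi$ is smooth; so $\phi$ is an admissible upper barrier.

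It then remains to verify $\partial_t\phi\ge\Delta\phi$ at $(q,\tau)$. I would compute both sides with the quotient rule, using that $\nabla\tilde u$, $\partial_t\tilde u$, $\nabla|\tilde u|_*^2$, $\partial_t|\tilde u|_*^2$ all vanish at $(q,\tau)$ and that $|\cdot|_*$ is parallel. The first-order and mixed terms vanish, and the two surviving terms proportional to $\Delta\tilde u(q,\tau)$ — one coming from $\Delta\big(\Rm(\tilde u,\overline{\tilde u})\big)$ and one from the $\Delta|\tilde u|_*^2$ piece of the quotient rule — cancel exactly, by the Euler--Lagrange equation above. What is left is
\[
\partial_t\phi-\Delta\phi=\tfrac12\big[(\partial_t-\Delta)\Rm\big](u_0,\overline{u_0})=\tfrac12\big(\Rm^2+\Rm^{\#}\big)(u_0,\overline{u_0})\qquad\text{at }(q,\tau),
\]
which is $\ge 0$ because the hypothesis $\Rm_{g(\tau)}\ge 0$ makes $\Rm^2\ge 0$ trivially and $\Rm^{\#}\ge 0$ by the Hamilton--Uhlenbeck lemma (valid on real $(1,1)$-forms for K\"ahler metrics). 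This establishes the claimed barrier inequality at the arbitrary point $(q,\tau)$.

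The main obstacle is the bookkeeping of the second paragraph: making precise that, for K\"ahler--Ricci flow and after restriction to real $(1,1)$-forms, the curvature operator evolves by exactly $\partial_t\Rm=\Delta\Rm+\Rm^2+\Rm^{\#}$ with $\Rm^2+\Rm^{\#}\ge 0$ under $\Rm\ge 0$ — equivalently, that the contributions of $\partial_t g=-2\Ric$ to the raised-index curvature components and to $|\cdot|_*^2$ are precisely cancelled by the Uhlenbeck trick. Everything else (existence and the Euler--Lagrange characterisation of the minimiser, smoothness of $\phi$, the quotient-rule calculation, and the $\Delta\tilde u$-cancellation) is routine, and I would quote the evolution equation and the Hamilton--Uhlenbeck positivity from the standard Ricci flow references already cited in the paper.
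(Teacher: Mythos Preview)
Your proposal is correct and follows essentially the same route as the paper: pick a minimising real $(1,1)$-form at $(q,\tau)$, extend it via radial parallel transport in space and the Uhlenbeck evolution in time so that $|\tilde u|_*$ stays constant and $\nabla\tilde u$, $\nabla_t\tilde u$ vanish at the point, then use $(\partial_t-\Delta)\Rm=\Rm^2+\Rm^\#\ge 0$. One small redundancy: your radial parallel extension already forces $\Delta\tilde u(q,\tau)=0$ (this is exactly what the paper exploits), so the Euler--Lagrange cancellation you describe, while valid, is not actually needed.
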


\begin{proof}
Fix $(p_0,\,t_0)$ and suppose that the infimum in \eqref{lamb def} is attained at a nonzero $(1,\,1)$-form $w_0\in\Lambda^{1,1}_{p_0}$. We may assume by scaling that $|w_0|_{*,\,g_{t_0}}=1$. We first extend $w_0$ to a spacetime neighborhood $V\times(t_0-\varepsilon,t_0+\varepsilon)$ of $(p_0,t_0)$ in the following way. Let $\{e_k\}_{k=1}^{2n-2}$ be an orthonormal basis of $(T_{p_0}M,g_{t_0}|_{T_{p_0}M})$ such that $J(e_{2\alpha-1})=e_{2\alpha}$. By abusing notation and shrinking the neighborhood $V$ of $p_0$ if necessary, we extend $\{e_k\}_{k=1}^{2n-2}$ to a smooth local orthonormal frame near $p_0$ via parallel translation. Then $J(e_{2\alpha-1})=e_{2\alpha}$ on $V$ and we have at $(x_0,t_0)$ that
\begin{equation}\label{para frm}
\nabla_{e_j}e_k=0  \quad\text{and}\quad \nabla_X\nabla_X e_i=0 \quad\text{for any } X\in T_{p_0}M \text{ at  } p_0.
\end{equation}

Let $\nabla_t$ denote the natural space-time extension of the Levi-Civita connection $\nabla^{g(t)}$
so that it is compatible with the metric; that is to say, $\partial_t |X|^2_{g(t)}=2\langle \nabla_tX,X\rangle_{g(t)}$ for any time-dependent vector field $X$.
We extend $\{e_k\}_{k=1}^{2n-2}$ to a time-dependent local frame $\{e_k(t)\}_{k=1}^{2n-2}$ so that $e_k(x,t_0)= e_k(x)$ for all $x\in V$, and 
\begin{align}\label{Uh ODE}
\nabla_te_k(x,t) &=\partial_t(e_k(x,t))-\Ric(e_k(x,t))=0.
\end{align}
By ODE theory, after choosing a smaller $V$ if necessary, we can solve the system on\linebreak $(t_0-\varepsilon,t_0+\varepsilon)$ for some small $\varepsilon>0$ with $e_k(x,t)$ smooth in $(x,\,t)\in V\times(t_0-\varepsilon,t_0+\varepsilon) $.
It also follows from the fact that $J\circ \Ric=\Ric\circ J$ and the uniqueness of solutions of ODEs that $J(e_{2\alpha-1}(x,t))=e_{2\alpha}(x,t)$. Let $\{\theta_k(t)\}_{k=1}^{2n-2}$ denote the dual frame. We introduce the corresponding holomorphic and antiholomorphic frames:
\[
E_{\alpha}(x,t)=\frac{1}{2}\left(e_{2\alpha-1}(x, t)-i\,e_{2\alpha}(x,t)\right)\qquad\textrm{and}\qquad E_{\bar \alpha}(x,t)=\frac{1}{2}\left(e_{2\alpha-1}(x, t)+i\,e_{2\alpha}(x,t)\right);
\]
and their dual frames
\[
\Theta_{\alpha}(x,t)=\theta_{2\alpha-1}(x, t)+i\,\theta_{2\alpha}(x,t)\qquad\textrm{and}\qquad \Theta_{\bar \alpha}(x,t)=\theta_{2\alpha-1}(x, t)-i\,\theta_{2\alpha}(x,t).
\]
Thanks to \eqref{para frm} and \eqref{Uh ODE}, at $(p_0,t_0)$ we have that
\begin{equation}\label{para Th}
    \nabla_t\Theta_{\alpha}=\nabla \Theta_{\alpha}=\nabla \Theta_{\bar\alpha}=\Delta \Theta_{\bar\alpha}=0.
\end{equation}
Since $w_0$ is a real $(1,\,1)$-form, we may assume $w_0=i\,w_{\alpha\bar\beta}\Theta_{\alpha}(p_0,t_0)\wedge \Theta_{\bar\beta}(p_0,t_0)$ for some complex numbers $\overline{w_{\alpha\bar\beta}}=w_{\beta\bar\alpha}$, $\alpha, \beta=1,\dots, n$.
Let $w_t$ be the 2-form given by
\[
w_t=i\,w_{\alpha\bar\beta}\,\Theta_{\alpha}(x,t)\wedge \Theta_{\bar\beta}(x,t)
\]
on $V\times(t_0-\varepsilon,t_0+\varepsilon)$.
Then by the K\"ahlerity of the metric, $w_t$ is $J$-invariant, i.e., $w_t(J(\cdot),\,J(\cdot))=w_t(\cdot\,,\, \cdot)$, and so it is of type $(1,\,1)$.
Moreover, by \eqref{para Th} at $(p_0,t_0)$, we have that
\begin{equation}\label{grad hess w}
   \nabla_t w_t=0,\quad\nabla w_t=0 \quad\text{and} \quad \Delta w_t=0.
\end{equation}

Next we define the smooth function $\varphi$ by
\[
\varphi(x,t):=\frac{1}{2}\frac{\Rm(w_t,\overline{w}_t)}{|w_t|_*^2}=\frac{1}{2}\Rm(w_t,\overline{w}_t),
\]
where the curvature and norms are all with respect to $g(t)$, and we use that $|w_t|_*\equiv1$.
It can be seen from the definition that $\varphi\ge \lambda$ on $V\times(t_0-\varepsilon,t_0+\varepsilon)$ and equality holds at $(p_0,t_0)$.
Recall that $$\nabla_t\Rm=\Delta\Rm+Q(\Rm)=\Delta\Rm+\Rm^2+\Rm^{\sharp},$$
and both $\Rm^2$ and $\Rm^\sharp$ are nonnegative operators when the curvature operator $\Rm$ is nonnegative. Hence by \eqref{grad hess w}, when evaluated at $(p_0,t_0)$, we have that
\begin{eqnarray*}
(\partial_t-\Delta)\varphi=\frac12(\partial_t-\Delta)\left(\Rm(w_t,\overline{w}_t)\right)&=&((\nabla_t-\Delta)\Rm) (w_t,\overline{w}_t) \\
&=& \Rm^2(w_t,\overline{w}_t)+\Rm^{\sharp}(w_t,\overline{w}_t)\ge 0.
\end{eqnarray*}
This proves the lemma.
\end{proof}

From Lemma \ref{l: heat inequality}, using the maximum principle, it is easy to see that the lower bound $\lambda\ge1$ is preserved along any smooth Ricci flow. In Lemma \ref{l: limit flows g_k} below, we will show this also holds for the Ricci flow coming out of the singular metric $h_k$, where $\lambda\ge1$ only holds on the smooth part. This is achieved by a heat kernel estimate on the smooth Ricci flows that converge at positive times to the limit flow that smooths out $h_k$. 
The heat kernel estimate guarantees that the heat comes evenly from everywhere at $t=0$ to positive times, and hence one can ignore the lack of enough curvature from the arbitrarily small regions. We first recall some basic facts about heat kernels.

Suppose that $(M,\,g(t))$ is a smooth complete Ricci flow for $t\in[a,\,b]$ with bounded curvature. Let $G(x,t;y,s)$ be the heat kernel of the heat equation $\partial_tu=\Delta u$, i.e., 
\begin{eqnarray*}
    (\partial_t-\Delta_{x,t})G(\cdot, \cdot; y,s)=0 &\text{  and  }& \lim_{t\to s^+}G(\cdot, t; y,s)=\delta_y(\cdot),\\
    (-\partial_s-\Delta_{y,s}+R)G(x,t;\cdot, \cdot)=0 &\text{  and  }& \lim_{s\to t^-}G(x, t; \cdot,s)=\delta_x(\cdot).
\end{eqnarray*}
In particular, we have $\int_M G(x,t;y,s)\,d_sy=1$ for all $s<t$; see \cite[Chapter 26]{RFTandA3} for more discussion of the heat kernel. We also recall the heat kernel upper bound given in terms of the lower bound on the pointed Nash entropy by Bamler in \cite[Theorem 7.1]{bamler2020entropy}:
\begin{equation}\label{e: heat kernel upper}
    G(x,t;y,s)\le \frac{C}{(t-s)^{\tfrac{n}{2}}}\exp(-\mathcal{N}_{(x,t)}(t-s)),
\end{equation}
where $C>0$ is a constant depending on $R_{\min}(t-s)$, and also the lower bound on the Nash entropy $\mathcal{N}_{(x,t)}(\cdot)$ in terms of the lower bound on the volume ratio in \cite[Theorem 8.1]{bamler2020entropy}:
\begin{equation}\label{e: nash entropy lower}
    \frac{\operatorname{vol}_{g(t)}(B_{g(t)}(x,r))}{r^n}\le C\exp(\mathcal{N}_{(x,t)}(r^2))\exp(C_0),
\end{equation}
where $C_0>0$ is a dimensional constant and $C>0$ is some constant depending on $R_{\min}r^2$.

Now we prove the main result of this section.
\begin{lem}\label{l: limit flows g_k}
Let $n\ge3$. Then for each fixed $k\in\mathbb{N}$, there exist $C_k,\,T_k>0$ and a Ricci flow $(\mathbb{P}^{n-1},g_k(t))$, $t\in[0,\,T_k]$, such that
\begin{enumerate}[label=\textnormal{(\arabic{*})}, ref=(\arabic{*})]
    \item $|\Rm|\le\frac{C_k}{t}$ and $g_k(t)\to h_k$ smoothly on any compact subset of $r^{-1}((0,\frac{\pi}{2}))$ as $t\to0$;
    \item The metrics $d_{g_k(t)}$ converge to $d_{h_k}$ as $t\to0$;
    \item $\lambda(x,t)\ge1$ for all $(x,t)\in\mathbb P^{n-1}\times (0,T_k]$.
\end{enumerate}
\end{lem}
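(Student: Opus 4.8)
The plan is to construct the flow $g_k(t)$ as a smooth limit of the canonical Ricci flows $g^i_k(t)$ emanating from the approximating metrics $g^i_k$ from Proposition~\ref{l: smooth metric with nonnegative Rm}, and then to verify the three asserted properties in turn. First, since each $g^i_k$ satisfies $\Rm\ge0$ on $(1,1)$-forms (hence $\Rm\ge0$), and since the $g^i_k$ converge smoothly to $h_k$ on compact subsets of $r^{-1}((0,\tfrac{\pi}{2}))$ while their injectivity radii and geometry are uniformly controlled near the (collapsing but smooth) region $r^{-1}([0,\epsilon])$, one runs the short-time argument of \cite[Lemma 2.3]{Lai2020_flying_wing} (or the standard pseudolocality/compactness package): one shows a uniform lower bound $T_k>0$ on the existence time of the canonical flows $g^i_k(t)$, a uniform curvature bound $|\Rm_{g^i_k(t)}|\le C_k/t$ on $(0,T_k]$ coming from Perelman's estimate for flows with $\Rm\ge0$ plus the uniform non-collapsing at the scale of the curvature, and then extracts a smooth sublimit $g_k(t)$ on $\mathbb{P}^{n-1}\times(0,T_k]$. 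This limit flow inherits $|\Rm|\le C_k/t$, inherits $\Rm\ge0$, and the smooth convergence $g^i_k\to h_k$ on compacts of $r^{-1}((0,\tfrac{\pi}{2}))$ upgrades to $g_k(t)\to h_k$ smoothly there as $t\to0$, giving property (1). Property (2), the convergence $d_{g_k(t)}\to d_{h_k}$, then follows from (1) together with a distance-distortion estimate: the uniform bound $|\Rm|\le C_k/t$ implies $\left|\partial_t d_{g_k(t)}(x,y)\right|\le C\sqrt{C_k/t}$ away from the tip, which is integrable in $t$, so the distances converge; near the tip $r^{-1}(0)$ one controls the diameter of small balls directly using the uniform curvature bound and the fact that the $g^i_k$ are uniformly close to $h_k$ there.

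The substance of the lemma is property (3), the bound $\lambda(x,t)\ge1$ on all of $\mathbb{P}^{n-1}\times(0,T_k]$, even though $h_k$ only satisfies $\lambda\ge1$ away from $r=0$ and $r=\tfrac{\pi}{2}$. Here I would argue by the heat-kernel averaging scheme advertised just before the statement. Fix $(x,t)$ with $t\in(0,T_k]$. For the approximating flows, by Proposition~\ref{l: smooth metric with nonnegative Rm}\ref{i: Rm>2} we have $\lambda_{g^i_k(0)}\ge1$ outside the set $S_i:=r^{-1}([0,\delta_i))\cup r^{-1}((\tfrac{\pi}{2}-\delta_i,\tfrac{\pi}{2}])$, with $\delta_i\to0$, and everywhere $\lambda_{g^i_k(0)}>0$. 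By Lemma~\ref{l: heat inequality}, $u:=\lambda-1$ is a (barrier) supersolution of the heat equation along $g^i_k(t)$, and on $S_i$ we have the crude but uniform bound $u\ge -C_k/t_0$ for any fixed small reference time — more carefully, $\lambda$ is controlled below by $-|\Rm|\ge -C_k/t$, so $\lambda$ is bounded below by a fixed constant $-\Lambda_k$ on, say, $[t/2,t]$. Writing the representation of a supersolution against the conjugate heat kernel $G=G_{g^i_k}$,
\begin{equation*}
u(x,t)\ \ge\ \int_{\mathbb{P}^{n-1}}G(x,t;y,0)\,u(y,0)\,d_0y\ \ge\ \int_{\mathbb{P}^{n-1}\setminus S_i}G\cdot 0\ -\ \Lambda_k\int_{S_i}G(x,t;y,0)\,d_0y.
\end{equation*}
It therefore suffices to show $\int_{S_i}G(x,t;y,0)\,d_0y\to0$ uniformly in $(x,t)$ as $i\to\infty$. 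This is exactly where the heat-kernel upper bound \eqref{e: heat kernel upper} and the Nash-entropy–volume estimate \eqref{e: nash entropy lower} enter: \eqref{e: heat kernel upper} gives $G(x,t;y,s)\le C(t-s)^{-n/2}e^{-\mathcal{N}_{(x,t)}(t-s)}$, \eqref{e: nash entropy lower} bounds $e^{\mathcal N_{(x,t)}(r^2)}$ below by a volume ratio, and the volume of $S_i$ with respect to $g^i_k(0)$ — which is comparable to $\mathrm{vol}_{h_k}(S_i)$ — tends to $0$ because the warping functions vanish like $\sin(r)$ near $r=0$ and like $\sin(\tfrac{\pi}{2}-r)$ near $r=\tfrac{\pi}{2}$, so $\mathrm{vol}(S_i)\le C\delta_i^{\,2n-2}\to0$ (and for the endpoint $\tfrac{\pi}{2}$ the collapsed $\mathbb{P}^{n-2}$ still contributes volume $\to0$ with the right power of $\delta_i$). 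Bounding $\int_{S_i}G\,d_0y$ by $\big(\sup_y G(x,t;y,0)\big)\cdot\mathrm{vol}_{g^i_k(0)}(S_i)$ and using that $\sup_y G(x,t;y,0)$ is bounded by a constant depending only on $k$ and a fixed lower time (uniformly in $i$, using the uniform non-collapsing and the scalar curvature lower bound of these flows) kills the term as $i\to\infty$. Hence $\lambda_{g^i_k(t)}(x)\ge 1-o_i(1)$ for every $(x,t)$, and passing to the smooth limit $i\to\infty$ yields $\lambda_{g_k(t)}\ge1$ on $\mathbb{P}^{n-1}\times(0,T_k]$.

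The main obstacle is the last step: making the heat-kernel averaging genuinely uniform as $i\to\infty$. One must secure a uniform (in $i$) upper bound for $\sup_y G_{g^i_k}(x,t;y,0)$ — or at least for $\int_{S_i}G$ — and here one cannot quite invoke \eqref{e: heat kernel upper} with $s=0$ directly because $g^i_k(0)$ is collapsing; the clean fix is to apply the estimate at a small positive time $s_0$ on the smooth flow, use the uniform $\kappa$-non-collapsing at scale $\sqrt{s_0}$ (which holds because $\Rm\ge0$ and the flows have uniformly bounded $R_{\min}$), and then let $s_0\to0$ with the volume of $S_i$ still going to zero. The semigroup property $\int_{S_i}G(x,t;y,0)\,d_0y=\int_{\mathbb{P}^{n-1}}G(x,t;z,s_0)\big(\int_{S_i}G(z,s_0;y,0)\,d_0y\big)d_{s_0}z$ lets one transfer the smallness from time $0$ to time $s_0$; combined with $\int G(z,s_0;\cdot,0)=1$ and the volume decay of $S_i$, this gives the needed uniform smallness. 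The rest — uniform existence time, the $C_k/t$ bound, passing to the limit, and the distance convergence — is by now standard and follows the pattern of \cite{Lai2020_flying_wing}.
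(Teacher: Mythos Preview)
Your proposal is correct and follows the same heat-kernel averaging strategy as the paper, with one difference in execution: you run the argument on the approximating flows $g^i_k(t)$, integrate the supersolution inequality from $s=0$, and then let $i\to\infty$, whereas the paper first passes to the limit flow $g_k(t)$ and integrates from a small positive time $s=\delta$, using the already-established smooth convergence $g_k(t)\to h_k$ on the regular part to get $\lambda(\cdot,\delta)\ge 1-\varepsilon$ on $r^{-1}(\delta,\tfrac{\pi}{2}-\delta)$. Two of your detours are unnecessary: since $\Rm_{g^i_k}>0$ on $(1,1)$-forms at $t=0$ by Proposition~\ref{l: smooth metric with nonnegative Rm}\ref{i: Rm>0}, you already have $\lambda(\cdot,0)>0$ and hence $u(\cdot,0)\ge -1$ on $S_i$, with no appeal to the $C_k/t$ bound; and Bamler's estimate \eqref{e: heat kernel upper} only requires a lower bound on $\mathcal N_{(x,t)}(t-s)$, which via \eqref{e: nash entropy lower} follows from a volume lower bound at time $t$ (this is exactly \eqref{e: volume lower bound}, obtained as in \cite{Simon2012}), so taking $s=0$ is unproblematic and the semigroup manoeuvre can be dropped. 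The paper's version on the limit flow is slightly cleaner because it avoids tracking the heat-kernel bound uniformly in $i$; your version has the minor advantage that $\mathrm{vol}_{g^i_k(0)}(S_i)\to 0$ is immediate from the construction, whereas the paper invokes weak continuity of Hausdorff measure under the Gromov--Hausdorff convergence $d_{g_k(\delta)}\to d_{h_k}$.
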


\begin{proof}
For each fixed $k\in\mathbb{N}$, let $\{(\mathbb{P}^{n-1},g_k^i)\}_{i=1}^\infty$ be the sequence of metrics from Proposition \ref{l: smooth metric with nonnegative Rm}. Then $(\mathbb{P}^{n-1},g_k^i)$ converge smoothly to the singular metric $(\mathbb{P}^{n-1},h_k)$ on any compact subset in $r^{-1}((0,\frac{\pi}{2}))$ as $i\to\infty$. Let $(\mathbb{P}^{n-1},g_k^i(t))_{t\in[0,T_k^i)}$ be the Ricci flow starting from $g_k^i$, where $T_k^i>0$ is the maximal existence time.
It is clear that there exists $v_k>0$ such that $\operatorname{vol}_{h_k}(B_{h_k}(x,1))\ge v_k$ for all $x\in\mathbb P^{n-1}$, so that $\operatorname{vol}_{g_k^i}(B_{g_k^i}(x,1))\ge v_k$ everywhere for all large $i$. Since $g_k^i(0)$ in addition satisfies $\Rm\ge0$, it follows from \cite{BCRW, Lai2019}, the existence theorem for Ricci flows under almost nonnegative curvature and non-collapsing conditions, that there exist $0<T_k<1$ and $C_k>0$ such that $T_k^i\ge T_k$ for all $i$, and $|\Rm|\le\frac{C_k}{t}$ for all $t\in(0,T_k]$.
Moreover, using the initial volume bound bound and the curvature bound $|\Rm|\le\frac{C_k}{t}$, we can argue exactly as in \cite[Corollary 6.2]{Simon2012} to get 
\begin{equation}\label{e: volume lower bound}
      \operatorname{vol}_{g^i_k(t)}(B_{g^i_k(t)}(x,1))\ge \widetilde v_k
\end{equation}
for some $\widetilde v_k>0$ depending on $v_k, C_k$.

After passing to a subsequence, these flows converge smoothly to a Ricci flow $(\mathbb{P}^{n-1},g_k(t))$, $t\in(0,T_k]$, satisfying
$|\Rm|\le\frac{C_k}{t}$ for some $T_k,C_k>0$.
By Proposition \ref{l: smooth metric with nonnegative Rm}(c), the curvature of the metrics $g_k^i$ on any compact subset of $r^{-1}((0,\frac{\pi}{2}))$ has a uniform bound for all large $i$. By Perelman's pseudolocality theorem and Shi's derivative estimates (see for example \cite{KL}), this implies that the Ricci flows $\{g^i_k(t)\}$ uniformly smoothly
converge on any compact subset of $r^{-1}((0,\frac{\pi}{2}))$ as $t\to0$. The limit Ricci flow $g_k(t)$ therefore smoothly converges to $h_k$ on any compact subset of $r^{-1}((0,\frac{\pi}{2}))$ as $t\to0$.
Finally, convergence of metric spaces $(\mathbb P^{n-1},d_{g_k(t)})$ to $(\mathbb P^{n-1},d_{h_k})$ as $t\to0$ follows from standard distance distortion estimates; see for example \cite{BCRW, Lai2019, mollification}.     

Fix $(x,\,t)\in\mathbb P^{n-1}\times(0,T_k]$.
It remains to verify that $\lambda(x,t)\ge1$ for any $x\in\mathbb P^{n-1}$ and $t\in(0,T_k]$. 
To being with, to estimate the heat kernel $G(x,t;y,s)$, by using \eqref{e: volume lower bound} in \eqref{e: nash entropy lower} we obtain $\mathcal{N}_{(x,t)}(t-s)\ge C^{-1}$ 
for all $s\in[0,\tfrac{t}{2}]$, where here and below $C>0$ denotes a generic constant that only depends on the point $(x,\,t)$. It follows from \eqref{e: heat kernel upper} that for all $(y,s)\in\mathbb P^{n-1}\times(0,\tfrac{t}{2}]$,
\begin{equation}\label{e: hk integral is small}
    G(x,t;y,s)\le C.
\end{equation}
Next, since $\lambda\ge1$ on $r^{-1}((0,\frac{\pi}{2}))\times\{0\}$ and $\lambda$ is continuous on $r^{-1}((0,\frac{\pi}{2}))\times[0,T_k]$, for any $\varepsilon>0$ we can find $\delta\in(0,\tfrac{t}{2})$ with $\delta\to0$ as $\varepsilon\to0$ such that for all $(y,s)\in r^{-1}((\delta,\frac{\pi}{2}-\delta))\times[0,\delta]$, we have 
\begin{equation}\label{e: continuity of lambda}
    \lambda(y,s)\ge 1-\varepsilon.
\end{equation}
By Lemma \ref{l: heat inequality}, $\lambda$ is a super-solution to the heat equation, and so we have the lower bound
    \begin{equation*}
    \begin{split}
        \lambda(x,t)&\ge\int_{\mathbb P^{n-1}} \lambda(y,\delta)\,G(x,t;y,\delta)\,d_{\delta}y\\
        &\ge(1-\varepsilon)\int_{r^{-1}(\delta,\tfrac{\pi}{2}-\delta)}G(x,t;y,\delta)\,d_{\delta}y\\
        &\ge(1-\varepsilon)\left(1-C\cdot \operatorname{vol}_{\delta}\left(r^{-1}\left([0,\delta]\cup\left[\tfrac{\pi}{2}-\delta,\tfrac{\pi}{2}\right]\right)\right)\right),
    \end{split}
    \end{equation*}
where in the second inequality
we have used \eqref{e: continuity of lambda}, and in the last inequality we have used \linebreak $\int_{\mathbb P^{n-1}}G(x,t;y,s)\,d_sy=1$ for all $s<t$, as well as \eqref{e: hk integral is small}.
Since the Hausdorff measure is weakly
continuous under the Gromov-Hausdorff convergence of $(\mathbb P^{n-1},d_{g_k(t)})$ to $(\mathbb P^{n-1},d_{h_k})$ as $t\to0$ (cf.~\cite{BGP}), we deduce that $\operatorname{vol}_{\delta}\left(r^{-1}\left([0,\delta]\cup\left[\tfrac{\pi}{2}-\delta,\tfrac{\pi}{2}\right]\right)\right)\to0$ as $\delta\to0$.
Letting $\varepsilon\to0$, this implies that $\lambda(x,t)\ge1$.
\end{proof}

We conclude this section with the following corollary which implies Theorem \ref{t: GH convergence on CPn}.

\begin{cor}\label{p: heat kernel}
Let $n\ge2$. Then there exists a sequence of $U(n-1)$-invariant metrics $\{g_k\}_{k=1}^{\infty}$ on $\mathbb{P}^{n-1}$ 
satisfying $\lambda\ge1$ such that $(\mathbb{P}^{n-1},d_{g_k})$ converges in the Gromov-Hausdorff sense to the interval $[0,\tfrac{\pi}{2}]$.
\end{cor}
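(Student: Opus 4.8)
The plan is to let $g_{k}$ be the time-$t_{k}$ slice $g_{k}(t_{k})$ of the Ricci flow $(\mathbb{P}^{n-1},g_{k}(t))_{t\in[0,T_{k}]}$ furnished by Lemma \ref{l: limit flows g_k}, for a parameter $t_{k}\in(0,T_{k}]$ to be chosen small. For every fixed $k$ and every $t>0$ the metric $g_{k}(t)$ is a smooth metric on $\mathbb{P}^{n-1}$, and since the approximating metrics $g_{k}^{i}$ from Proposition \ref{l: smooth metric with nonnegative Rm} are $U(n-1)$-invariant and the isometry group is preserved both under Ricci flow and under smooth limits, each $g_{k}(t)$ is $U(n-1)$-invariant as well. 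By part (3) of Lemma \ref{l: limit flows g_k} we have $\lambda\ge1$ on all of $\mathbb{P}^{n-1}$ at every positive time, so $g_{k}:=g_{k}(t_{k})$ satisfies the required curvature condition for any admissible choice of $t_{k}$.

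It remains only to arrange the Gromov--Hausdorff convergence, and this is a diagonal argument combining two separate limits. First, for each fixed $k$, part (2) of Lemma \ref{l: limit flows g_k} asserts that $(\mathbb{P}^{n-1},d_{g_{k}(t)})\to(\mathbb{P}^{n-1},d_{h_{k}})$ as $t\to0^{+}$, so we may choose $t_{k}\in(0,T_{k}]$ with $d_{GH}\big((\mathbb{P}^{n-1},d_{g_{k}(t_{k})}),(\mathbb{P}^{n-1},d_{h_{k}})\big)\le\frac1k$. Second, as observed just after \eqref{e: metric with conical singularity}, the singular metrics $h_{k}$ collapse to the interval $[0,\tfrac{\pi}{2}]$ in the Gromov--Hausdorff sense as $k\to\infty$. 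The triangle inequality for $d_{GH}$ then gives
\[
d_{GH}\big((\mathbb{P}^{n-1},d_{g_{k}(t_{k})}),\,[0,\tfrac{\pi}{2}]\big)\ \le\ \frac1k+d_{GH}\big((\mathbb{P}^{n-1},d_{h_{k}}),\,[0,\tfrac{\pi}{2}]\big),
\]
and the right-hand side tends to $0$ as $k\to\infty$. Thus $(\mathbb{P}^{n-1},d_{g_{k}})$ converges to $[0,\tfrac{\pi}{2}]$, as desired.

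When $n=2$ the target is $\mathbb{P}^{1}=S^{2}$ and the transverse base is a point, so $h_{k}=dr^{2}+\tfrac{\sin^{2}(2r)}{4k^{2}}\eta^{2}$ is a surface of revolution over $(0,\tfrac{\pi}{2})$; it still collapses to $[0,\tfrac{\pi}{2}]$, and one runs the same argument using the $n=2$ analogues of the singular metric, of Proposition \ref{l: smooth metric with nonnegative Rm}, and of Lemma \ref{l: limit flows g_k} (cf.\ the remark following Proposition \ref{iceland}).

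I expect no genuine obstacle here, since all of the analysis --- producing, for each $k$, a Ricci flow that simultaneously smooths out the conical singularities of $h_{k}$ and maintains $\lambda\ge1$ everywhere at positive times --- has already been carried out in Lemma \ref{l: limit flows g_k}. The one point that requires care is that we must invoke the global \emph{distance} convergence in Lemma \ref{l: limit flows g_k}(2), not merely the smooth convergence on compact subsets of the regular region $r^{-1}((0,\tfrac{\pi}{2}))$: the latter says nothing about the geometry near the two cone points and hence cannot by itself control the Gromov--Hausdorff limit.
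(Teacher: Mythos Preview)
For $n\ge3$ your argument is exactly the paper's: take a small-time slice $g_k(t_k)$ of the Ricci flow from Lemma \ref{l: limit flows g_k}, use part (3) for $\lambda\ge1$, and run the diagonal argument via part (2) and the collapse of $h_k$ to $[0,\tfrac{\pi}{2}]$.

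The one place where you diverge from the paper is the case $n=2$. The paper does \emph{not} attempt to push the present machinery down to $n=2$; instead it simply observes that $U(1)=SO(2)$ and quotes the Riemannian construction of $SO(2)$-invariant collapsing metrics on $S^2$ from \cite{Lai2020_flying_wing}. Your proposal instead invokes ``$n=2$ analogues'' of Proposition \ref{l: smooth metric with nonnegative Rm} and Lemma \ref{l: limit flows g_k}, but both of those results are stated and proved only for $n\ge3$ (the doubly-warped product involves the transverse metric $g^T$ on $\mathbb{P}^{n-2}$, which degenerates to a point when $n=2$, and Lemma \ref{conditions on ab} as written does not apply). The remark you cite after Proposition \ref{iceland} concerns that proposition, not the smoothing results you need. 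So while an $n=2$ version of the argument is plausible, it is not established in the paper, and the clean fix is simply to cite \cite{Lai2020_flying_wing} for $n=2$ as the paper does.
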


\begin{proof}
    For $n=2$, we have that $U(1)=SO(2)$, and so the sequence of $SO(2)$-invariant metrics is constructed in \cite{Lai2020_flying_wing}. We may therefore assume that $n\ge3$. Let $g_k(t)$ be the Ricci flow constructed in Lemma \ref{l: limit flows g_k} that smooths out the singular metric $h_k$. Since $h_k$ converges to $[0,\tfrac{\pi}{2}]$ and $g_k(t)$ converges to $h_k$ as $t\to0$ in the Gromov-Hausdorff sense, we can find a sequence $t_k\to0$ so that $g_k(t_k)$ satisfies the assertion of the corollary. 
\end{proof}

\section{Construction of the steady Ricci solitons}

In this section, we complete the construction of the family of $U(1)\times U(n-1)$-invariant steady K\"ahler-Ricci solitons for $n\ge 2$ and prove Theorem \ref{t: existence of new}. We present the proof in higher dimensions and point out the essential differences for complex dimension $2$.
In the proof, we use some standard notions and facts from Alexandrov geometry and metric comparison. We refer the reader to \cite[Section 2.4]{Lai2022_O(2)} for a more detailed exposition.

\begin{proof}[Proof of Theorem \ref{t: existence of new}]
By Corollary \ref{p: heat kernel}, we have a sequence of $U(n-1)$-invariant smooth K\"ahler metrics $(\mathbb{P}^{n-1},h_i)$ satisfying the following conditions:
\begin{enumerate}
    \item $\diam(\mathbb{P}^{n-1},\,h_i)\rightarrow\tfrac{\pi}{2}$ as $i\rii$;
   
    \item  \label{Rm bdd in const2} $\Rm(h_{i})> 2$ on real $(1,\,1)$-forms (or equivalently, sectional curvature $>4$ if $n=2$);
    \item $\lim_{i\rii}\operatorname{vol}_{h_i}(\mathbb{P}^{n-1})=0$.
\end{enumerate}
In particular, by \eqref{Rm bdd in const2}, 
$h_i$ has positive holomorphic bisectional curvature for all $i$. For each $i$, we run a normalized K\"ahler-Ricci flow $h_i(t)$ on each $(\mathbb{P}^{n-1}, h_i)$. Then by the results of Collins-Sz\'ekelyhidi \cite{CS2016} and Tian-Zhang-Zhang-Zhang \cite{TZZZ2013}, the normalized flow exists for all time and converges smoothly to a K\"ahler-Einstein metric on $\mathbb{P}^{n-1}$, which is a positive multiple of the Fubini-Study metric $g_{FS}$ giving the same volume as $h_i$, by the uniqueness of K\"ahler-Einstein metrics up to biholomorphism \cite{BM87}.
Since there is a positive lower bound for $\Rm$ on real $(1,\,1)$-forms for all times along the K\"ahler-Ricci flow, and $\Rm>2$ for $t=0$, reparametrizing the flow $h_i(t)$ and rescaling it by a suitable time-dependent positive function yields a smooth family of $U(n-1)$-invariant K\"ahler metrics $(h_{i,\,\mu})_{\mu\in[0,\,1]}$ on $\mathbb{P}^{n-1}$ with
\begin{enumerate}
\item\label{p: endpoint} $h_{i,\,1}=h_i$ and $h_{i,\,0}=c_ig_{FS}$ for some $c_i>0$;
    \item\label{p: diameter} $\diam(\mathbb{P}^{n-1},\,h_{i,\,1})=\diam(\mathbb{P}^{n-1},\,h_i)\rightarrow\tfrac{\pi}{2}$ as $i\rii$;
    \item\label{Rm bdd in const} $\Rm(h_{i,\,\mu})> 2$ on real $(1,\,1)$-forms (or equivalently, sectional curvature $>4$ if $n=2$) for all $\mu\in[0,1]$;
    \item\label{p: volume collapsing} $\operatorname{vol}_{h_{i,\,\mu}}(\mathbb{P}^{n-1})\ri0$ as $i\to\infty$ uniformly for all $\mu\in[0,1]$.
\end{enumerate}

For each fixed $i$, since the metrics $h_{i,\,\mu}$ on $\mathbb P^{n-1}$ are $U(n-1)$-invariant and vary smoothly in $\mu$, we see from Proposition \ref{iceland} that there is a $U(1)\times U(n-1)$-invariant path of K\"ahler cone metrics $C_{i,\,\mu}$ on $\mathbb C^{n}\setminus\{0\}$ varying smoothly in $\mu$. The links of these cones are Sasaki metrics on $S^{2n-1}$ over $h_{i,\,\mu}$. The above conditions \eqref{p: diameter} and \eqref{p: volume collapsing} satisfied by $h_{i,\,\mu}$, together with \eqref{diam sasa} and \eqref{vol sasa} in Proposition \ref{iceland}, then imply that the length of the orbit of the flow of the Reeb vector field tends to $0$ as $i\to\infty$, and the Sasaki metrics over $h_{i,\,\mu}$ also satisfy conditions \eqref{p: diameter} and \eqref{p: volume collapsing}. Moreover, by condition (3) and Theorem \ref{cone vs base}, the K\"ahler cones $C_{i,\,\mu}$ have positive curvature operator on real $(1,\,1)$-forms in the transverse direction. 
Therefore, by the local uniqueness theorem \cite[Theorem 5.3]{con-der}, there exists a unique deformation of $g_{i,\,0}$ on $\mathbb C^{n}$ by a smooth path of $U(1)\times U(n-1)$-invariant expanding gradient K\"ahler-Ricci solitons $(M_{i,\,\mu},  g_{i,\,\mu},p_{i,\,\mu})$ with positive curvature operator on real $(1,\,1)$-forms asymptotic to the K\"ahler cone over $h_{i,\,\mu}$. Here, $p_{i,\,\mu}$ denotes the critical points of the soliton potential function and $R(p_{i,\,\mu})=1$. 

Since the Sasaki metrics over $h_{i,\,\mu}$ satisfy \eqref{p: volume collapsing}, it follows that the asymptotic volume ratio of the expanding solitons $g_{i,\,\mu}$ decreases to zero uniformly for all $\mu$ as $i\rii$, and so by the same argument as in \cite[Lemma 2.3]{Lai2020_flying_wing}, we can show that for any sequence $\mu_i\in[0,\,1]$, the $U(1)\times U(n-1)$-invariant expanding gradient K\"ahler-Ricci solitons $(M_{i,\,\mu_i}, g_{i,\,\mu_i}, p_{i,\,\mu_i})$ converge smoothly in the Cheeger-Gromov sense to a $U(1)\times U(n-1)$-invariant steady gradient K\"ahler-Ricci soliton. In particular, on one hand, for $\mu_i\equiv0$, since the K\"ahler cone is $U(n)$-invariant, the expanding solitons $g_{i,\,0}$ are also $U(n)$-symmetric by the uniqueness theorem in \cite{CF16}, therefore subsequentially converge to a limit steady soliton $(M_{\infty,0},g_{\infty,0},p_{\infty,0})$. By Cao's uniqueness result \cite[Proposition 2.1]{Cao1996}, this limit has to be Cao's positively curved $U(n)$-invariant steady soliton constructed in \cite{Cao1996}. 
On the other hand, for $\mu_i\equiv1$, we may assume that the expanding solitons $(M_{i,\,1},g_{i,\,1},p_{i,\,1})$ converge to the steady soliton $(M_{\infty,1},g_{\infty,1},p_{\infty,1})$. Moreover by Remark \ref{1eigv}, $\Ric$ has only one positive distinct eigenvalue at the critical point of the steady soliton. 
Since the sequence of Sasaki metrics over $h_{i,\,1}$ also converge to the interval $[0,\tfrac{\pi}{2}]$ in the Gromov-Hausdorff sense, it follows that the asymptotic K\"ahler cones of the expanding solitons converge to the positive quadrant in the Euclidean plane $\R^2$.

We now show that the steady K\"ahler-Ricci soliton $(M_{\infty,1},g_{\infty,1},p_{\infty,1})$ must split as a product of a $(2n-2)$-dimensional $U(n-1)$-invariant steady K\"ahler-Ricci soliton, and a two-dimensional $U(1)$-invariant steady K\"ahler-Ricci soliton. To this end, first note that since it exhibits $U(1)\times U(n-1)$ symmetry, there exists a $(2n-2)$-dimensional totally geodesic submanifold $N_1$ fixed by the $U(1)$-isometry, and a two-dimensional totally geodesic submanifold $N_2$ fixed by the $U(n-1)$-isometry. Similarly, we write $N_{1,i},N_{2,i}$ for the fixed point sets in the expanding soliton $(M_{i,\,1},g_{i,\,1},p_{i,\,1})$ of the $U(1)$- and $U(n-1)$-action, respectively.
Note that we can find a $U(1)$-invariant point and a $U(n-1)$-invariant point in the Sasaki metric over $h_{i,1}$ such that their distance is greater than $\frac{\pi}{2}-\varepsilon_i$, where here and below $\varepsilon_i$ denotes a general sequence such that $\varepsilon_i\to0$ as $i\to\infty$.
So arguing as \cite[Lemma 4.2]{Lai2020_flying_wing},
for any $x_{i}\in N_{1,i},\,y_{i}\in N_{2,i}$ with $d_{g_{i,\,1}}(x_{i},p_{i,\,1})=d_{g_{i,\,1}}(y_{i},p_{i,\,1})$, the comparison angle satisfies 
$\widetilde\measuredangle x_{i}p_{i,\,1}y_{i}\ge \frac{\pi}{2}-\varepsilon_i$. In particular, let $\{C_k\}_{k=0}^{\infty}$ be a sequence of positive numbers going to infinity as $k\to\infty$. Then for any points $x_{k,i}\in N_{1,i},\,y_{k,i}\in N_{2,i}$ with $d_{g_{i,\,1}}(x_{k,i},p_{i,\,1})=d_{g_{i,\,1}}(y_{k,i},p_{i,\,1})=C_k$, we have 
\begin{equation}\label{e: comparison angle at k}
    \widetilde\measuredangle x_{k,i}p_{i,\,1}y_{k,i}\ge \frac{\pi}{2}-\varepsilon_i.
\end{equation} 

On $(M_{\infty,1},g_{\infty,1},p_{\infty})$, on the one hand, for any points $x_{k,\infty}\in N_1,\,y_{k,\infty}\in N_2$ with $$d_{g_{\infty,1}}(x_{k,\infty},p_{\infty,1})=d_{g_{\infty,1}}(y_{k,\infty},p_{\infty,1})=C_k,$$ let 
$\gamma_{1,k},\gamma_{2,k}:[0,C_k]\to M_{\infty,1}$, be the unit speed minimizing geodesics connecting $p_{\infty,1},x_{k,\infty}$ and $p_{\infty,1},y_{k,\infty}$, and let 
$\gamma_1,\gamma_2:[0,\infty)\to M_{\infty,1}$, be two unit speed geodesic rays starting from $p_{\infty,1}$ obtained as any subsequential limit of $\gamma_{1,k},\gamma_{2,k}$.
Then \eqref{e: comparison angle at k} implies that the asymptotic angle between $\gamma_1,\gamma_2$ satisfies
\begin{equation}\label{e: bigger than pi/2}
    \measuredangle \gamma_1\gamma_2=\lim_{k\to\infty}\widetilde\measuredangle x_{k,\infty}p_{\infty,1}y_{k,\infty} \ge\tfrac{\pi}{2}.
\end{equation}
On the other hand, since the $U(1)\times U(n-1)$-action induces an isometric action on the tangent space at the unique fixed point
$p_{\infty,1}$ and induces an orthogonal decomposition $T_{p_{\infty,1}}M_{\infty,1}=V_1\perp V_2$, where $V_1,V_2$ are the $(2n-2)$-dimensional and $2$-dimensional subspaces fixed by the $U(1)$- and $U(n-1)$-actions respectively, and $U(n-1)$ acts on $V_1$ transitively and $U(1)$ acts transitively on $V_2$,
suppose that $\gamma'_{1,k}(0)=v_{1,k}+v_{2,k}$ and $\gamma'_{2,k}(0)=w_{1,k}+w_{2,k}$,
where $v_{1,k},w_{1,k}\in V_1$ and $v_{2,k},w_{2,k}\in V_2$. Then 
\begin{equation}\label{e: gamma1 gamma2 less than pi/2}
    \langle\gamma'_{1,k}(0),\gamma'_{2,k}(0)\rangle=\langle v_{1,k},w_{1,k}\rangle+\langle v_{2,k},w_{2,k}\rangle.
\end{equation}
By replacing $\gamma_{1,k}$ with its image under a suitable isometry in $U(n-1)$, we can keep invariant $v_{2,k}$ and replace $v_{1,k}$ by any vector of the same norm in $V_1$, and thus assume that $\langle v_{1,k},w_{1,k}\rangle\ge0$ and equality is achieved if and only if $v_{1,k}=0$ or $w_{1,k}=0$. Similarly, by replacing $\gamma_{2,k}$ with its image under a suitable isometry in $U(1)$ we can keep invariant $w_{1,k}$ and replace $w_{2,k}$ by any vector in $V_2$, and thus assume that $\langle v_{2,k},w_{2,k}\rangle\ge0$ and equality is achieved if and only if $v_{2,k}=0$ or $w_{2,k}=0$.

We claim that, after replacing $x_{k,\infty},\,y_{k,\infty}$ with their images under suitable $U(n-1)$- and $U(1)$-actions respectively, we can assume that
$\langle v_{i,k},w_{i,k}\rangle\ge 0$ for $i=1,2$, and thus
the angle formed by $\gamma_{1,k},\gamma_{2,k}$ at $p_{\infty,1}$ satisfies
\begin{equation}\label{e: angle at p}
    \measuredangle (\gamma'_{1,k}(0),\gamma'_{2,k}(0))\le\tfrac{\pi}{2}.
\end{equation}
Indeed, by replacing $\gamma_{1,k}$ with its image under the $U(n-1)$-action mapping $v_{1,k}$ to $-v_{1,k}$ if necessary, we have that $\langle v_{1,k},w_{1,k}\rangle\ge0$. Then by replacing $\gamma_{2,k}$ with its image under the $U(1)$-action mapping $w_{2,k}$ to $-w_{2,k}$ if necessary, we have that $\langle v_{2,k},w_{2,k}\rangle\ge0$, so we obtain \eqref{e: angle at p}.  The monotonicity of comparison angles then gives us that
\begin{equation}\label{e: smaller than pi/2}
    \measuredangle \gamma_1\gamma_2=\lim_{k\to\infty}\widetilde{\measuredangle}x_{k,\infty}p_{\infty,1}y_{k,\infty}\leq\limsup_{k\to\infty}\measuredangle (\gamma'_{1,k}(0),\gamma'_{2,k}(0))\le\tfrac{\pi}{2}.
\end{equation}
Combining \eqref{e: bigger than pi/2} and \eqref{e: smaller than pi/2}, we find that $\measuredangle \gamma_1\gamma_2=\tfrac{\pi}{2}$, and so
by \eqref{e: angle at p} and the monotonicity of comparison angles again, we arrive at the fact that $$\widetilde{\measuredangle}\gamma_1(s)p\gamma_2(s)\equiv\tfrac{\pi}{2}\qquad\textnormal{for all }s\ge0.$$

We claim that the sectional curvature of the 2-plane $\sigma$ spanned by $\gamma_1'(0)$ and $\gamma_2'(0)$ at $p_{\infty,1}$ vanishes. Suppose the contrary. Then the positivity of the sectional curvature of $\sigma$ implies that there is a small neighborhood in the two-dimensional submanifold $\Sigma:=\exp_{p_{\infty,1}}(\sigma)$ in which the Gauss curvature is positive with respect to restricted metric $g_{\Sigma}$. However, this would imply that for some small $s>0$, the $g_{\Sigma}$-minimizing geodesic in $\Sigma$ connecting $\gamma_{1}(s)$ and $\gamma_{2}(s)$ has length strictly smaller than $\sqrt{2}s$, which implies that $\widetilde\measuredangle \gamma_1(s)p_{\infty,1}\gamma_2(s)<\frac{\pi}{2}$. This is a contradiction.
The vanishing of the sectional curvature of $\sigma$ implies that \eqref{e: gamma1 gamma2 less than pi/2} vanishes, and
\begin{equation}
    \langle v_{1,k},w_{1,k}\rangle=\langle v_{2,k},w_{2,k}\rangle=0.
\end{equation}
Consequently, we have $w_{1,k}=v_{2,k}=0$ or $w_{2,k}=v_{1,k}=0$. This implies that $\gamma_1'(0)\in N_1,\gamma_2'(0)\in N_2$ or $\gamma_2'(0)\in N_1,\gamma_1'(0)\in N_2$.
In each case, we obtain
\begin{equation}
    \Rm(v_1,v_2,v_2,v_1)=0\qquad\textnormal{for any $v_1\in V_1$ and $v_2\in V_2$}.
\end{equation}
Since $\Rm\ge0$ is a symmetric linear operator acting on $\bigwedge^2T_{p_{\infty,1}}M_{\infty,1}$, this implies that 
\begin{equation}\label{e: v_1 v_2 curvature vanish}
    \Rm(v_1\wedge v_2,\cdot)=0\qquad\textnormal{for any $v_1\in V_1$ and $v_2\in V_2$}.
\end{equation}

Now we show
that the steady Ricci soliton splits isometrically as $M_{\infty,1}=M_1\times M_2$ for some submanifolds $M_1,M_2$ of nonzero dimension. Indeed, note that, as a smooth limit of manifolds diffeomorphic to $\mathbb{C}^n$, $M_{\infty,1}$ is diffeomorphic to $\mathbb{C}^n$ and by the strong maximum principle we may assume that $\Ric>0$ on $M_{\infty,1}$. The results of Bryant \cite{Bry08} and Chau-Tam \cite{CT05} subsequently infer that $M_{\infty,1}$ is in fact biholomorphic to $\mathbb{C}^n$. We may then argue as in \cite[Lemma 4.1]{ChenZhu2005} or \cite[Theorem 7.34]{HaRF} to deduce that if $M_{\infty,1}$ is irreducible, then $M_{\infty,1}$ has either positive curvature operator on real $(1,\,1)$-forms or is symmetric or is Einstein. The latter two cases are impossible as they would imply constant scalar curvature on $M_{\infty,1}$, contradicting the fact that $\Delta R+\langle\nabla f,\nabla R\rangle=-2|\Ric|^2$ and $\Ric>0$. $M_{\infty,1}$ must therefore have positive curvature operator on real $(1,\,1)$-forms and positive sectional curvature (see Section 2.3), which is again absurd due to the fact that the sectional curvature vanishes on a certain $2$-plane. Thus, we conclude that $M_{\infty,1}$ is indeed reducible and that by the de Rham decomposition theorem, splits isometrically and holomorphically as a product of lower dimensional steady gradient K\"ahler-Ricci solitons \cite{KN69}.

We now show that the decomposition of the tangent space $T_{p_{\infty,1}}M_{\infty,1}=V_1\perp V_2$ induces a splitting of the manifold as $M_{\infty,1}=N_1\times N_2$. First, we claim that the sectional curvature is positive on each $N_i$ at $p_{\infty,1}$ for $i=1,2$. Otherwise, suppose a 2-plane of $V_1$ has zero sectional curvature. Then we would have that the sectional curvature is zero for all 2-planes of $V_1$. Indeed, the curvature of the induced K\"{a}hler metric on $N_1$ at the fixed point $p_{\infty,1}$ of $U(n-1)$ has the form
\[
-R_{\alpha\bar \beta \gamma\bar \delta}=\kappa(g_{\alpha\bar \beta}g_{ \gamma\bar \delta}+g_{\alpha\bar \delta}g_{\gamma\bar \beta})
\]
for some constant $\kappa\ge 0$. For $\kappa>0$, the ratio between the smallest sectional curvature and the scalar curvature of the induced K\"{a}hler metric on $N_1$ at $p_{\infty,1}$ is a strictly positive dimensional constant (see \eqref{uncur}), hence zero sectional curvature at a $2$-plane implies zero curvature operator and thus zero sectional curvature for all $2$-plane in $T_{p_{\infty,1}}N_1$. This implies that $\Ric(v_i,\cdot)=0$ for some non-zero $v_i\in V_1$, contradicting Remark \ref{1eigv}. 
Next, since $M_{\infty,1}=M_1\times M_2$, it follows that for any arbitrary non-zero vectors $v_1+v_2\in M_1$ and $w_1+w_2\in M_2$, where $v_1,w_1\in V_1$ and $v_2,w_2\in V_2$, we have that
\begin{equation}\label{e: fixed curvature vanish}
    R(v_1+v_2,w_1+w_2,w_1+w_2,v_1+v_2)=0.
\end{equation}
By \eqref{e: v_1 v_2 curvature vanish} and the first Bianchi identity we deduce that $R(v_1,w_1,w_2,v_2)=0$, which together with \eqref{e: v_1 v_2 curvature vanish} and \eqref{e: fixed curvature vanish} implies that
\begin{equation}
R(v_1,w_1,w_1,v_1)=R(v_2,w_2,w_2,v_2)=0.
\end{equation}
Since the sectional curvatures are positive 
on $V_1,V_2$, this implies that $v_1\wedge w_1=v_2\wedge w_2=0$.
If $v_1,v_2\neq0$, then this implies that $w_1= av_1$, $w_2= bv_2$ and $w_1+w_2=av_1+bv_2$ for some $a,b\in\R$.
Hence $T_{p_{\infty,1}}M_2\subset \textnormal{Span}(v_1,v_2)$.
Since the dimension of $M_2$ is at least $2$, we have that $T_{p_{\infty,1}}M_2= \textnormal{Span}(v_1,v_2)$, and in particular $v_1+v_2\in T_{p_{\infty,1}}M_2\cap T_{p_{\infty,1}}M_1$, which is a contradiction. So we have that $v_2=0$ or $v_1=0$, which implies that $T_{p_{\infty,1}}M_1\subset V_1$ or $T_{p_{\infty,1}}M_1\subset V_2$. Without loss of generality we may assume that $T_{p_{\infty,1}}M_1\subset V_1$. 
Similarly, we can deduce either $w_1$ or $w_2$ vanishes and thus $T_{p_{\infty,1}}M_2\subset V_2$ or $T_{p_{\infty,1}}M_2\subset V_1$. The latter case cannot happen and so we must have
$T_{p_{\infty,1}}M_2\subset V_2$.
Therefore, we obtain $T_{p_{\infty,1}}M_i=V_i$ for both $i=1,2$. 
Since $N_1,N_2$ are the totally geodesic subspaces, it follows that $M_i=N_i$ for $i=1,2$, and $M_1$ (resp.~$M_2$) is a $U(n-1)$-invariant (resp.~$U(1)$-invariant) steady gradient K\"ahler Ricci soliton. In particular, they are scalings of Cao's steady gradient K\"ahler Ricci solitons.

For a 2-plane of $T_{p_{i,\mu}}M_{i,\mu}$ spanned by a vector in the $U(1)$-fixed subspace and another vector in the $U(n-1)$-fixed subspace, we can identify it with the 2-plane $\sigma$ of $T_{p_{\infty,1}}M_{\infty,1}$. Then the sectional curvature 
$K_{i,\mu}:=K_{g_{i,\mu}}(\sigma)$ varies smoothly in $\mu$ for each fixed $i$. We have just demonstrated that $K_{\infty,1}=0$. It is also clear that $K_{\infty,0}>0$ in the non-flat $U(n)$-invariant steady Ricci soliton $(M_{\infty,0},g_{\infty,0},p_{\infty,0})$. 
Therefore, for any given $\alpha\in(0,K_{\infty,0})$, we have that $K_{i,0}<\alpha<K_{i,1}$ for all sufficiently large $i$, and hence by the intermediate value theorem
there exists $\mu_i\in(0,1)$ such that $K_{i,\,\mu_i}=\alpha$. A subsequence of the expanding gradient Ricci solitons $(M_{i,\,\mu_i},g_{i,\,\mu_i},p_{i,\,\mu_i})$ therefore converges to a steady K\"ahler-Ricci soliton $(M_{\infty},g_{\infty},p_{\infty})$ with $K_{g_{\infty}}(\sigma)=\alpha$ and $R(p_{\infty})=1$. This proves Theorem \ref{t: existence of new}.

\end{proof}

\appendix\label{appaa}

\section{Curvature of doubly-warped product K\"ahler metrics}

In this appendix, we present the curvature computations for the Riemannian curvature tensor of a doubly-warped product K\"ahler metric $\hat{g}$ on $\widehat{M}=M\times I$ of the form
\[ \hat{g}:=ds^{2}+a^2(s)\eta\otimes \eta+b^2(s)g^{T},
\]
where $M$ is a Sasaki manifold of real dimension $(2n-1)$ with $n\geq2$ with contact one-form $\eta$ and transverse metric $g^{T}$ as described in Subsection \ref{T3}, $I\subseteq\mathbb{R}$ is an interval, and $a,\,b:I\to[0,\,\infty)$ are smooth functions. We assume that $a(s)=b(s)b'(s)$ so that $(\widehat{M},\,\hat{g})$
does indeed defines a K\"ahler manifold (cf.~Lemma \ref{kahlerr}) and we write $\widehat{J}$ for the complex structure of $\widehat{M}$ (see Subsection \ref{T3} for more details) . We compute using Cartan's structure equations for the connection one-forms and corresponding curvature two-forms.

\subsection{Connection one-forms}
We denote the Levi-Civita connection of $\hat{g}$ by $\widehat{\nabla}$. We compute in this subsection the connection one-forms of $\widehat{\nabla}$.
Recall that a differential form $\sigma$ on a Sasaki manifold is \emph{basic} if $\xi\lrcorner\sigma=0$ and $\mathcal{L}_{\xi}\sigma=0$, where $\xi$ is the Reeb vector field.

To this end, let $\theta_{1},\ldots,\theta_{2n-2}$ be a local basic orthonormal coframe of $g^{T}$ satisfying $\theta_{i}\circ \widehat{J}=-\theta_{i+1}$ for $i$ odd
so that $d\eta=2\omega^{T}=2\sum_{j=1}^{n-1}{\theta}_{2j-1}\wedge {\theta}_{2j},$ and let
$(\omega_{ij})_{1\,\leq\, i,\,j\,\leq\,2n-2}$ denote the matrix of connection one-forms of $g^{T}$. Then
$(\omega_{ij})$ solves the Cartan structure equations
\begin{equation*}
\left\{ \begin{array}{ll}
d{\theta}_{i}=\sum_{j\,=\,1}^{2n-2}{\omega}_{ji}\wedge\theta_{j},\\
{\omega}_{ij}+{\omega}_{ji}=0.
\end{array} \right.
\end{equation*}
Next set
$$\hat{\theta}_{i}:=b(s)\theta_{i}\qquad\textrm{for $i=1,\ldots,2n-2$},\qquad\hat{\theta}_{2n-1}:=ds,
\qquad\textrm{and}\qquad
\hat{\theta}_{2n}:=a(s)\eta.$$
Then the matrix of connection one-forms $(\widehat{\omega}_{ij})_{1\,\leq\, i,\,j\,\leq\, 2n}$ of $\hat{g}$ with respect to this coframe is given by
$$\widehat{\omega}_{ij}=\omega_{ij}-\delta_{j,\,i+1}\frac{a(s)}{b(s)^{2}}\hat{\theta}_{2n}\qquad\textrm{$1\leq i\leq 2n-2$ odd,\qquad$1\leq j\leq 2n-2$}.$$
$$\widehat{\omega}_{ij}=\omega_{ij}+\delta_{j,\,i-1}\frac{a(s)}{b(s)^{2}}\hat{\theta}_{2n}\qquad\textrm{$1\leq i\leq 2n-2$ even,\qquad$1\leq j\leq 2n-2$}.$$

\begin{equation*}
\begin{split}
\widehat{\omega}_{2n-1,\,i}&=\left(-\frac{b'(s)}{b(s)}\right)\hat{\theta}_{i}=-b'(s)\theta_{i},\qquad 1\leq i\leq 2n-2,\\
\widehat{\omega}_{2n-1,\,2n}&=\left(-\frac{a'(s)}{a(s)}\right)\hat{\theta}_{2n}=-a'(s)\eta,\\
\widehat{\omega}_{2n,\,i}&=\left\{ \begin{array}{ll}
\frac{a(s)}{b(s)^{2}}\hat{\theta}_{i+1}=\frac{a(s)}{b(s)}\theta_{i+1},\qquad\textrm{$1\leq i\leq 2n-2$ odd},\\
-\frac{a(s)}{b(s)^{2}}\hat{\theta}_{i-1}=-\frac{a(s)}{b(s)}\theta_{i-1},\qquad\textrm{$1\leq i\leq 2n-2$ even}.\\
\end{array} \right.\\
\end{split}
\end{equation*}
This translates to the following for $i=1,2,\dots, n-1$:
\begin{equation*}
\begin{split}
\widehat{\nabla}\hat{\theta}_{2i-1}=\sum_{k\,=\,1}^{2n}\widehat{\omega}_{k\, 2i-1}\otimes\hat{\theta}_{k}
&=\sum_{k\,=\,1}^{2n-2}\left(\omega_{k\, 2i-1}+\delta_{k,2i}\left(\frac{a(s)}{b(s)^{2}}\right)\hat{\theta}_{2n}\right)\otimes\hat{\theta}_{k}
-\frac{b'(s)}{b(s)}\hat{\theta}_{2i-1}\otimes\hat{\theta}_{2n-1}\\
&\qquad+\frac{a(s)}{b(s)^{2}}\hat{\theta}_{2i}\otimes\hat{\theta}_{2n},\\
\widehat{\nabla}\hat{\theta}_{2i}=\sum_{k\,=\,1}^{2n}\widehat{\omega}_{k\, 2i}\otimes\hat{\theta}_{k}
&=\sum_{k\,=\,1}^{2n-2}\left(\omega_{k\, 2i}-\delta_{k,2i-1}\left(\frac{a(s)}{b(s)^{2}}\right)\hat{\theta}_{2n}\right)\otimes\hat{\theta}_{k}
-\frac{b'(s)}{b(s)}\hat{\theta}_{2i}\otimes\hat{\theta}_{2n-1}\\
&\qquad-\frac{a(s)}{b(s)^{2}}\hat{\theta}_{2i-1}\otimes\hat{\theta}_{2n},\\
\widehat{\nabla}\hat{\theta}_{2n-1}=\sum_{k\,=\,1}^{2n}\widehat{\omega}_{k,\,2n-1}\otimes\hat{\theta}_{k}
&=\sum_{k\,=\,1}^{2n-2}\frac{b'(s)}{b(s)}\hat{\theta}_{k}\otimes\hat{\theta}_{k}+\frac{a'(s)}{a(s)}\hat{\theta}_{2n}\otimes\hat{\theta}_{2n},\\
\widehat{\nabla}\hat{\theta}_{2n}=\sum_{k\,=\,1}^{2n}\widehat{\omega}_{k,\,2n}\otimes\hat{\theta}_{k}&=
\sum_{k\,=\,1}^{n-1}-\frac{a(s)}{b(s)^{2}}\hat{\theta}_{2k}\otimes\hat{\theta}_{2k-1}+\sum_{k\,=\,1}^{n-1}\frac{a(s)}{b(s)^{2}}\hat{\theta}_{2k-1}\otimes\hat{\theta}_{2k}\\
&\qquad-\frac{a'(s)}{a(s)}\hat{\theta}_{2n}\otimes\hat{\theta}_{2n-1}.\\
\end{split}
\end{equation*}

\subsection{Curvature two-forms}

Recall that $a(s)=b(s)b'(s)$ so that
$(\widehat{M},\,\hat{g},\,\widehat{J})$ is K\"ahler.
We define the curvature tensor $\hat{R}(\cdot\,,\,\cdot)(\cdot)$
of $\hat{g}$ by
$$\hat{R}(X,\,Y)Z=\widehat{\nabla}_{X}\widehat{\nabla}_{Y}Z-\widehat{\nabla}_{Y}\widehat{\nabla}_{X}Z-\widehat{\nabla}_{[X,\,Y]}Z\qquad\textrm{for $X,\,Y,\,Z\in\Gamma(T\widehat{M})$.}$$
With this, we realize $\hat{R}$ as a $(0,\,4)$-tensor via
$$\hat{R}(X,\,Y,\,Z,\,W)=\langle\hat{R}(X,\,Y)W,\,Z\rangle_{\hat{g}}\qquad
\textrm{for $X,\,Y,\,Z,\,W\in\Gamma(T\widehat{M})$},$$
and write
\begin{equation*}
\begin{split}
\hat{R}&=\hat{R}(\hat{e}_{i},\,\hat{e}_{j},\,\hat{e}_{k},\,\hat{e}_{l})\,\hat{\theta}_{i}\otimes\hat{\theta}_{j}
\otimes\hat{\theta}_{k}\otimes\hat{\theta}_{l}\\
&=\langle\hat{R}(\hat{e}_{i},\,\hat{e}_{j})\hat{e}_{l},\,\hat{e}_{k}\rangle_{\hat{g}}\,
\hat{\theta}_{i}\otimes\hat{\theta}_{j}
\otimes\hat{\theta}_{k}\otimes\hat{\theta}_{l}\\
&=\hat{R}_{ijkl}\,
\hat{\theta}_{i}\otimes\hat{\theta}_{j}
\otimes\hat{\theta}_{k}\otimes\hat{\theta}_{l},\\
\end{split}
\end{equation*}
where
\begin{equation}\label{conven}
\hat{R}_{ijkl}=\langle\hat{R}(\hat{e}_{i},\,\hat{e}_{j})\hat{e}_{l},\,\hat{e}_{k}\rangle_{\hat{g}}.
\end{equation}
We define the curvature two-forms $\widehat{\Omega}_{ij},\,1\leq i,\,j\leq 2n,$ of
$\hat{R}$ by $$\hat{R}=\hat{\theta}_{i}\otimes\hat{\theta}_{j}\otimes\widehat{\Omega}_{ij}$$
so that
$$\widehat{\Omega}_{ij}=\hat{R}_{ijkl}
\hat{\theta}_{k}\otimes\hat{\theta}_{l}=\frac{1}{2}\hat{R}_{ijkl}\hat{\theta}_{k}\wedge\hat{\theta}_{l}.$$
Then
\begin{equation*}
\hat{R}_{ijkl}=\langle\hat{R}(\hat{e}_{i},\,\hat{e}_{j})\hat{e}_{l},\,\hat{e}_{k}\rangle_{\hat{g}}=\widehat{\Omega}_{ij}(\hat{e}_{k},\,\hat{e}_{l})
\end{equation*}
and $\widehat{\Omega}_{ij}$ satisfies the Cartan structure equations
$$\widehat{\Omega}_{ij}=d\widehat{\omega}_{ij}+\widehat{\omega}_{ik}\wedge\widehat{\omega}_{kj}.$$
By the symmetries of the curvature tensor, we also see that the two-forms $\widehat{\omega}_{ij}$ are real $(1,\,1)$-forms. In this subsection, we compute the curvature two-forms $(\widehat{\Omega})_{1\leq i,\,j\leq n}$ of $\hat{g}$.

Let $\Omega_{ij},\,1\leq i,\,j\leq 2n-2,$ denote the curvature two-forms of $g^{T}$. These satisfy
$$\Omega_{ij}=d\omega_{ij}+\sum_{k=1}^{2n-2}\omega_{ik}\wedge\omega_{kj}\qquad\textrm{
for $1\leq i,\,j\leq 2n-2$.}$$
We have the freedom to choose the local basic orthonormal coframe $\{\theta_{i}\}_{i=1}^{2n-2}$ of $g^T$ to be parallel at a point so that $(\omega_{ij})_{1\leq i,\,j\leq2n-2}=0$ at this point. This we do. With this simplification, a computation shows that at this point, the two-forms $\Omega_{ij},\,1\leq i,\,j\leq 2n-2,$
are given by
\begin{equation*}
\begin{split}
&\widehat{\Omega}_{ij}=\Omega_{ij}-\frac{b'(s)^{2}}{b(s)^{2}}\left(\hat{\theta}_{i}\wedge\hat{\theta}_{j}+\hat{\theta}_{i+1}\wedge\hat{\theta}_{j+1}\right),\qquad\textrm{$1\leq i,\,j\leq 2n-2$ odd},\\
&\widehat{\Omega}_{ij}=\Omega_{ij}-\frac{b'(s)^{2}}{b(s)^{2}}\left(\hat\theta_{i}\wedge\hat{\theta}_{j}+\hat{\theta}_{i-1}\wedge\hat{\theta}_{j-1}\right),\qquad\textrm{$1\leq i,\,j\leq 2n-2$ even},\\
&\widehat{\Omega}_{ij}=\Omega_{ij}-\frac{b'(s)^{2}}{b(s)^{2}}\left(\hat{\theta}_{i}\wedge\hat{\theta}_{j}-\hat{\theta}_{i+1}\wedge\hat{\theta}_{j-1}\right)-2\delta_{j,\,i+1}\biggl[\left(\frac{a'(s)}{b(s)^{2}}-\frac{a(s)b'(s)}{b(s)^{3}}\right)\hat{\theta}_{2n-1}\wedge\hat{\theta}_{2n}\\
&\qquad+\left(\frac{a(s)^{2}}{b(s)^{2}}\right)\omega^{T}\biggr],\qquad\textrm{$1\leq i\leq 2n-2$ odd, $1\leq j\leq 2n-2$ even},\\
&\widehat{\Omega}_{i,\,\,2n-1}=\left(\frac{b''(s)}{b(s)}\right)\left(\hat{\theta}_{2n-1}\wedge\hat{\theta}_{i}-\hat{\theta}_{i+1}\wedge\hat{\theta}_{2n}\right),\qquad i=1,\ldots,2n-2,\textrm{$i$ odd},\\
&\widehat{\Omega}_{i,\,\,2n-1}=\left(\frac{b''(s)}{b(s)}\right)\left(\hat{\theta}_{2n-1}\wedge\hat{\theta}_{i}+\hat{\theta}_{i-1}\wedge\hat{\theta}_{2n}\right),\qquad i=1,\ldots,2n-2,\textrm{$i$ even},\\
&\widehat{\Omega}_{2n,\,i}=\left(\frac{a'(s)}{b(s)^{2}}-\frac{a(s)b'(s)}{b(s)^{3}}\right)\left(
\hat{\theta}_{2n-1}\wedge\hat{\theta}_{i+1}+\hat{\theta}_{i}\wedge\hat{\theta}_{2n}\right),\qquad i=1,\ldots,2n-2,\textrm{$i$ odd},\\
&\widehat{\Omega}_{2n,\,i}=-\left(\frac{a'(s)}{b(s)^{2}}-\frac{a(s)b'(s)}{b(s)^{3}}\right)\left(\hat{\theta}_{2n-1}\wedge\hat{\theta}_{i-1}-\hat{\theta}_{i}\wedge\hat{\theta}_{2n}\right),\qquad i=1,\ldots,2n-2,\textrm{$i$ even},\\
&\widehat{\Omega}_{2n,\,2n-1}=
2\left(a'(s)-\frac{a(s)b'(s)}{b(s)}\right)\omega^{T}+\frac{a''(s)}{a(s)}\hat{\theta}_{2n-1}\wedge\hat{\theta}_{2n}.\\
\end{split}
\end{equation*}

Henceforth working at thie point at which the basis $\{\theta_{i}\}_{i=1}^{2n-2}$ of $g^{T}$ is parallel, we have that $\widehat{J}\hat{e}_{2k-1}=\hat{e}_{2k}$ because $\hat{\theta}_{k}$ is dual to $\hat{e}_{k}$. We now define for $k=1,\ldots,n$,
$$\hat{\Theta}_{k}=\hat{\theta}_{2k-1}+i\,\hat{\theta}_{2k},\qquad
\hat{\Theta}_{\bar{k}}=\hat{\theta}_{2k-1}-i\,\hat{\theta}_{2k},$$
$$ \hat{E}_{k}=\frac{1}{2}\left(\hat{e}_{2k-1}-i\,\hat{e}_{2k}\right),\qquad
 \hat{E}_{\bar{k}}=\frac{1}{2}\left(\hat{e}_{2k-1}+i\,\hat{e}_{2k}\right).$$
Similarly, for $k=1,\ldots,n-1$, we define the corresponding frame for $g^T$:
$$E_{k}=\frac{1}{2}\left(e_{2k-1}-i\,e_{2k}\right),\qquad
 E_{\bar{k}}=\frac{1}{2}\left(e_{2k-1}+i\,e_{2k}\right).$$
In particular, observe that
$\hat{E}_{n}=\frac{1}{2}\left(\partial_r-i\left(\frac{\xi}{a(s)}\right)\right)$
because by definition, $J_0\partial_r=\frac{\xi}{a(s)}$.
We extend the $(0,\,4)$-tensor $\hat{R}$ by complex multi-linearity and write the corresponding components as
$$\hat{R}_{\alpha\bar{\beta}\gamma\bar{\delta}}=\hat{R}(\hat{E}_{\alpha},\,\hat{E}_{\bar{\beta}},\,\hat{E}_{\gamma},\,
\hat{E}_{\bar{\delta}}).$$
Then
$$\hat{R}=\left(\hat\Theta_{\alpha}\wedge\hat\Theta_{\bar{\beta}}\right)\otimes\widehat{\Omega}_{\alpha\bar{\beta}},$$
where the complex-valued $(1,\,1)$-forms
$\widehat{\Omega}_{\alpha\bar{\beta}},\,1\leq\alpha,\,\beta\leq n,$
satisfy
$$\widehat{\Omega}_{\alpha\bar{\beta}}(\hat{E}_{\gamma},\,\hat{E}_{\bar{\delta}})=\hat{R}_{\alpha\bar{\beta}\gamma\bar{\delta}},$$
and are related to the real $(1,\,1)$-forms $\widehat{\Omega}_{ij},\,1\leq i,\,j\leq 2n,$
by
$$\widehat{\Omega}_{\alpha\bar{\beta}}=\frac{1}{2}\left(\widehat{\Omega}_{2\alpha-1,\,2\beta-1}+i\,\widehat{\Omega}_{2\alpha-1,\,2\beta}\right).$$

For $1\leq\alpha,\,\beta\leq n-1$, 
we have
$$\widehat{\Omega}_{2\alpha-1, 2\beta-1}
=\Omega_{2\alpha-1, 2\beta-1}-\frac{1}{2}\left(\frac{b'(s)}{b(s)}\right)^2\left[\hat{\Theta}_{\alpha}\wedge \hat{\Theta}_{\bar{\beta}}+\hat{\Theta}_{\bar{\alpha}}\wedge \hat{\Theta}_{\beta}\right]$$
and
\begin{equation*}
\begin{split}
\widehat{\Omega}_{2\alpha-1, 2\beta}&=\Omega_{2\alpha-1, 2\beta}-\frac{i}{2}\left(\frac{b'(s)}{b(s)}\right)^2\left[\hat{\Theta}_{\alpha}\wedge \hat{\Theta}_{\bar{\beta}}-\hat{\Theta}_{\bar{\alpha}}\wedge \hat{\Theta}_{\beta}\right]\\
&\qquad-\delta_{\alpha\beta}\left[i\left(\frac{b''(s)}{b(s)}\right)\hat{\Theta}_{n}\wedge\hat{\Theta}_{\bar{n}}
+i\left(\frac{a(s)^{2}}{b(s)^{4}}\right)\sum_{l=1}^{n-1}\hat{\Theta}_{l}\wedge\hat{\Theta}_{\bar{l}}\right].\\
\end{split}
\end{equation*}
It therefore follows that
$$\widehat{\Omega}_{\alpha\bar{\beta}}=\frac{\Omega_{\alpha\bar{\beta}}}{b(s)^{2}}-\frac{1}{2}\left(\frac{b'(s)}{b(s)}\right)^2\hat{\Theta}_{\bar{\alpha}}\wedge \hat{\Theta}_{\beta}+\frac{\delta_{\alpha\beta}}{2}\left[\left(\frac{b''(s)}{b(s)}\right)\hat{\Theta}_{n}\wedge\hat{\Theta}_{\bar{n}}
+\left(\frac{a(s)^{2}}{b(s)^{4}}\right)\sum_{l=1}^{n-1}\hat{\Theta}_{l}\wedge\hat{\Theta}_{\bar{l}}\right]$$
for $1\leq\alpha,\,\beta\leq n-1$.

Next, for $1\leq\alpha\leq n-1$, $\beta=n$, 
we have
\begin{equation*}
\begin{split}
\widehat{\Omega}_{2\alpha-1,2n-1}
&=\frac{b''(s)}{2b(s)}\left(\hat{\Theta}_{n}\wedge\hat{\Theta}_{\bar{\alpha}}+\hat{\Theta}_{\bar{n}}\wedge\hat{\Theta}_{\alpha}\right)\\
\end{split}
\end{equation*}
and
\begin{equation*}
\begin{split}
\widehat{\Omega}_{2\alpha-1,2n}
&=-\frac{i}{2}\left(\frac{b''(s)}{b(s)}\right)\left(\hat{\Theta}_{n}\wedge\hat{\Theta}_{\bar{\alpha}}-\hat{\Theta}_{\bar{n}}\wedge\hat{\Theta}_{\alpha}\right).\\
\end{split}
\end{equation*}
Hence in this case we see that
\begin{equation*}
\begin{split}
\widehat{\Omega}_{\alpha\bar{n}}&=\frac{1}{2}\left(\widehat{\Omega}_{2\alpha-1,\,2n-1}+i\,\widehat{\Omega}_{2\alpha-1,\,2n}\right)=\frac{b''(s)}{2b(s)}\hat{\Theta}_{n}\wedge\hat{\Theta}_{\bar{\alpha}}.\\
\end{split}
\end{equation*}

Finally, for $\alpha=n$, $\beta=n$, we have that
\begin{equation*}
\begin{split}
\widehat{\Omega}_{n\bar{n}}
&=\frac{1}{2}\left(\widehat{\Omega}_{2n-1,\,2n-1}+i\,\widehat{\Omega}_{2n-1,\,2n}\right)\\
&=\frac{i}{2}\widehat{\Omega}_{2n-1,\,2n}\\
&=\frac{1}{2}\left[\left(\frac{b''(s)}{b(s)}\right)\sum_{l=1}^{n-1}\hat{\Theta}_{l}\wedge\hat{\Theta}_{\bar{l}}+\frac{a''(s)}{2 a(s)}\hat{\Theta}_{n}\wedge\hat{\Theta}_{\bar{n}}\right].\\
\end{split}
\end{equation*}

In summary, $\widehat{\Omega}_{\alpha\bar{\beta}}$ is equal to
\begin{displaymath}
\left\{ \begin{array}{ll}
\frac{\Omega_{\alpha\bar{\beta}}}{b(s)^{2}}-\frac{1}{2}\left(\frac{b'(s)}{b(s)}\right)^2\hat{\Theta}_{\bar{\alpha}}\wedge \hat{\Theta}_{\beta}+\frac{\delta_{\alpha\beta}}{2}\left[\left(\frac{b''(s)}{b(s)}\right)\hat{\Theta}_{n}\wedge\hat{\Theta}_{\bar{n}}
+\left(\frac{a(s)^{2}}{b(s)^{4}}\right)\sum_{l=1}^{n-1}\hat{\Theta}_{l}\wedge\hat{\Theta}_{\bar{l}}\right] & \textrm{$1\leq\alpha,\,\beta\leq n-1$},\\
\left(\frac{b''(s)}{2b(s)}\right)\hat{\Theta}_{n}\wedge\hat{\Theta}_{\bar{\alpha}} & \textrm{$1\leq\alpha\leq n-1$, $\beta=n$},\\
\frac{1}{2}\left[\left(\frac{b''(s)}{b(s)}\right)\sum_{l=1}^{n-1}\hat{\Theta}_{l}\wedge\hat{\Theta}_{\bar{l}}+\frac{a''(s)}{2 a(s)}\hat{\Theta}_{n}\wedge\hat{\Theta}_{\bar{n}}\right] & \textrm{$\alpha=\beta=n$}.
\end{array} \right.
\end{displaymath}
In particular, this yields that
\begin{equation}\label{curform}
\hat R_{\alpha\bar{\beta}\gamma\bar{\delta}} = \left\{ \begin{array}{ll}
\frac{\tilde{R}_{\alpha\bar{\beta}\gamma\bar{\delta}}}{b(s)^{2}}+\frac{1}{2}\left(\frac{b'(s)}{b(s)}\right)^2\delta_{\alpha\delta}\delta_{\beta\gamma}
+\left(\frac{a(s)^{2}}{2b(s)^{4}}\right)
\delta_{\alpha\beta}\delta_{\gamma\delta} & \textrm{$1\leq\alpha,\,\beta,\,\gamma,\,\delta\leq n-1$},\\
\left(\frac{b''(s)}{2b(s)}\right)\delta_{\gamma n}\delta_{\delta\alpha} & \textrm{$1\leq\alpha\leq n-1$, $\beta=n$},\\
\frac{1}{2}\left[\left(\frac{b''(s)}{b(s)}\right)\left(\delta_{\gamma\delta}-\delta_{\gamma n}\delta_{\delta n}\right)+
\left(\frac{a''(s)}{2 a(s)}\right)\delta_{\gamma n}\delta_{\delta n}\right] & \textrm{$\alpha=n$, $\beta=n$},
\end{array} \right.
\end{equation}
where $\tilde R$ is the curvature tensor of $g^T$ and $\tilde R_{\alpha\bar{\beta}\gamma\bar{\delta}}=\tilde R\left(E_\alpha, E_{\bar\beta}, E_{\gamma}, E_{\bar\delta}\right)$.

\subsection{Curvature of a K\"ahler cone}
Recalling the construction of the $n$-dimensional
K\"ahler cone from Example \ref{cone}, we have that $a(s)=b(s)=s$, and so from the above we determine that for this example,
\begin{displaymath}
\widehat{\Omega}_{\alpha\bar{\beta}} = \left\{ \begin{array}{ll}
\frac{1}{s^{2}}\left(\Omega_{\alpha\bar{\beta}}-\frac{1}{2}\hat{\Theta}_{\bar{\alpha}}\wedge \hat{\Theta}_{\beta}+\frac{\delta_{\alpha\beta}}{2}\sum_{l=1}^{n-1}\hat{\Theta}_{l}\wedge\hat{\Theta}_{\bar{l}}\right) & \textrm{if $1\leq\alpha,\,\beta\leq n-1$},\\
0 & \textrm{if $1\leq\alpha\leq n$, $\beta=n$}.\\
\end{array} \right.
\end{displaymath}

It is well-known that a Riemannian cone has $\Rm\ge0$ if and only if the link has $\Rm\ge1$. The K\"ahler version of this fact is the following.
\begin{theorem}[{\cite{book:Boyer}}]\label{cone vs base}
A K\"ahler cone has strictly positive curvature operator on real $(1,\,1)$-forms in the transverse directions if and only if the link is a Sasaki link whose base satisfies \eqref{e: unusual} with $\lambda=1$.
\end{theorem}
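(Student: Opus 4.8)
The plan is to deduce Theorem~\ref{cone vs base} directly from the curvature formula~\eqref{curform} for a doubly warped product K\"ahler metric, evaluated in the K\"ahler cone case $a(s)=b(s)=s$ treated at the end of Appendix~A. There $b'\equiv1$ and $a''\equiv b''\equiv0$, so~\eqref{curform} collapses: every component $\hat R_{\alpha\bar\beta\gamma\bar\delta}$ carrying the index $n$ --- i.e.\ involving the complex direction $\hat E_{n}=\tfrac12(\partial_r-i\,\xi/a)$ spanned by the radial and Reeb fields --- is a multiple of $a''$ or $b''$ and hence vanishes identically (equivalently, $\widehat\Omega_{\alpha\bar n}\equiv0$ for every $\alpha$, together with the symmetries of the K\"ahler curvature tensor), while for transverse indices one is left with
\begin{equation*}
\hat R_{\alpha\bar\beta\gamma\bar\delta}=\frac{1}{r^{2}}\Bigl(\tilde R_{\alpha\bar\beta\gamma\bar\delta}+\tfrac12\delta_{\alpha\delta}\delta_{\beta\gamma}+\tfrac12\delta_{\alpha\beta}\delta_{\gamma\delta}\Bigr),\qquad 1\le\alpha,\beta,\gamma,\delta\le n-1,
\end{equation*}
where $\tilde R$ is the curvature of the transverse K\"ahler metric $g^{T}$. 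Consequently the curvature operator on $\Lambda^{1,1}_{\mathbb R}$ annihilates every real $(1,1)$-form with a component along the radial/Reeb direction and restricts to the space of \emph{transverse} real $(1,1)$-forms, namely those pulled back from the base under $p\colon C_{0}\to D$.

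It follows that ``$\Rm>0$ on real $(1,1)$-forms in the transverse directions'' means exactly that $-\sum_{\alpha,\beta,\gamma,\delta=1}^{n-1}\hat R_{\alpha\bar\beta\gamma\bar\delta}u^{\alpha\bar\beta}u^{\gamma\bar\delta}>0$ for every nonzero hermitian $(u^{\alpha\bar\beta})_{1\le\alpha,\beta\le n-1}$. Since the displayed expression is $r^{-2}$ times a quantity independent of $r$, and the condition is scale-invariant, it suffices to verify it at $r=1$, where the transverse part of the cone metric equals $g^{T}$ and a transverse real $(1,1)$-form on $C_{0}$ over a point $x\in D$ is literally a real $(1,1)$-form on $D$ at $x$. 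Contracting the formula at $r=1$,
\begin{equation*}
-\sum_{\alpha,\beta,\gamma,\delta=1}^{n-1}\hat R_{\alpha\bar\beta\gamma\bar\delta}u^{\alpha\bar\beta}u^{\gamma\bar\delta}=-\sum_{\alpha,\beta,\gamma,\delta=1}^{n-1}\tilde R_{\alpha\bar\beta\gamma\bar\delta}u^{\alpha\bar\beta}u^{\gamma\bar\delta}-\sum_{\alpha,\beta,\gamma,\delta=1}^{n-1}\bigl(\tfrac12\delta_{\alpha\delta}\delta_{\beta\gamma}+\tfrac12\delta_{\alpha\beta}\delta_{\gamma\delta}\bigr)u^{\alpha\bar\beta}u^{\gamma\bar\delta}.
\end{equation*}
The Kronecker deltas here come from the real orthonormal frame $\hat e_{i}$, for which $g^{T}_{\alpha\bar\beta}=\hat g(\hat E_{\alpha},\hat E_{\bar\beta})=\tfrac12\delta_{\alpha\beta}$ at $r=1$ --- the same normalisation under which~\eqref{e: unusual} is written --- so that $\tfrac12\delta_{\alpha\delta}\delta_{\beta\gamma}=2\,g^{T}_{\alpha\bar\delta}g^{T}_{\gamma\bar\beta}$ and $\tfrac12\delta_{\alpha\beta}\delta_{\gamma\delta}=2\,g^{T}_{\alpha\bar\beta}g^{T}_{\gamma\bar\delta}$. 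Hence the two pure-metric terms assemble into precisely $2\bigl(g^{T}_{\alpha\bar\beta}g^{T}_{\gamma\bar\delta}+g^{T}_{\alpha\bar\delta}g^{T}_{\gamma\bar\beta}\bigr)u^{\alpha\bar\beta}u^{\gamma\bar\delta}$, which is the right-hand side of~\eqref{e: unusual} with $\lambda=1$. Therefore $-\sum\hat R_{\alpha\bar\beta\gamma\bar\delta}u^{\alpha\bar\beta}u^{\gamma\bar\delta}>0$ for all nonzero transverse $u$ over $x$ if and only if $g^{T}$ satisfies the strict inequality~\eqref{e: unusual} with $\lambda=1$ at $x$; since $p$ is surjective, this holds over all of $C_{0}$ iff it holds on all of $D$. (For an irregular link one reads ``$D$'' as the transverse K\"ahler structure; the computation is local and unchanged, and the link of a K\"ahler cone is Sasaki by Definition~\ref{Sasakii}.)

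I expect the only real obstacle to be the bookkeeping: tracking the raising/lowering and frame-normalisation conventions carefully enough to see that the ``$+\,2\lambda$'' shift comes out with exactly $\lambda=1$, i.e.\ that the $\tfrac12$-coefficients produced by the warping match the quadratic form $g^{T}_{\alpha\bar\beta}g^{T}_{\gamma\bar\delta}+g^{T}_{\alpha\bar\delta}g^{T}_{\gamma\bar\beta}$ with the correct constant. Conceptually this is the K\"ahler counterpart of the elementary fact that a Riemannian cone has $\Rm\ge0$ iff its link has $\Rm\ge1$, the extra ``$+1$'' (here ``$+2$'') being exactly the constant-curvature contribution of the round / Fubini--Study-type terms created by the warping functions $a(s)=b(s)=s$.
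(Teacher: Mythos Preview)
Your argument is correct and follows essentially the same route as the paper: specialise the curvature formula~\eqref{curform} to the cone case $a(s)=b(s)=s$, observe that all components involving the index $n$ vanish, and then rewrite the remaining Kronecker-delta terms as $2(g^{T}_{\alpha\bar\beta}g^{T}_{\gamma\bar\delta}+g^{T}_{\alpha\bar\delta}g^{T}_{\gamma\bar\beta})$ using $g^{T}_{\alpha\bar\beta}=\tfrac12\delta_{\alpha\beta}$. The paper's proof is slightly terser but makes exactly the same substitutions and bookkeeping check you describe.
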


\begin{proof}
It suffices to show that  for all nonzero real $(1,\,1)$-forms involving only transverse components, namely those of the form $\sum_{\alpha,\,\beta=1}^{n-1}i\,u_{\alpha\bar\beta}\hat\Theta_{\alpha}\wedge \hat\Theta_{\bar\beta}$, we have that
\[
0<-\hat R_{\alpha\bar\beta\gamma\bar\eta}u^{\alpha\bar\beta}u^{\gamma\bar\eta}=\frac{\left(-\tilde R_{\alpha\bar\beta\gamma\bar\eta}-\tfrac{1}{2}(\delta_{\alpha\beta}\delta_{\gamma\eta}+\delta_{\alpha\eta}\delta_{\beta\gamma})\right)}{s^2}u^{\alpha\bar\beta}u^{\gamma\bar\eta},
\]
where $u^{\alpha\bar \beta}=\hat g^{\alpha\bar \gamma}\hat g^{\eta\bar\beta}u_{\eta\bar\gamma}$. Since $g^T\left(E_{\alpha},\,E_{\bar\beta}\right)=g^T_{\alpha\bar\beta}=\frac{\delta_{\alpha\beta}}{2}$, this happens precisely when the curvature tensor of the transverse metric satisfies
\begin{eqnarray*}
-\tilde R_{\alpha\bar\beta\gamma\bar\eta}u^{\alpha\bar\beta}u^{\gamma\bar\eta}&>&\frac{1}{2}\left(\delta_{\alpha\beta}\delta_{\gamma\eta}+\delta_{\alpha\eta}\delta_{\beta\gamma}\right)u^{\alpha\bar\beta}u^{\gamma\bar\eta}\\
&=& 2\left(g^T_{\alpha\bar\beta}g^T_{\gamma\bar\eta}+g^T_{\alpha\bar\eta}g^T_{\gamma\bar\beta}\right)u^{\alpha\bar\beta}u^{\gamma\bar\eta}.
\end{eqnarray*}
Thus, the transverse metric has curvature operator strictly greater than $2$ on real $(1,1)$-forms. This proves Theorem \ref{cone vs base}.
\end{proof}

In the next lemma, we consider a slightly more general situation than that of the K\"ahler cone.
\begin{lem}\label{warp vs curv} Let $\lambda\in \mathbb{R}$ and let $\hat g$ be a doubly-warped product K\"ahler metric on $(0,\,L)\times\nolinebreak S^{2n-1}$, $L>0$, of the form
$$\hat g=ds^{2}+a^2(s)\eta^{2}+b^2(s)g^{T},$$
where $a,\,b:(0,\,L)\to[0,\,\infty)$ are smooth functions
and $(S^{2n-1},\,\eta,\,g^{T})$ is the round Sasaki structure on $S^{2n-1}$ as described in Example \ref{flat2} with $g^T=\frac{1}{2}g_{\text{FS}}$, where $g_{\text{FS}}$ is the Fubini-Study metric on $\mathbb{P}^{n-1}$ normalized so that $\Ric(g_{FS})=n g_{FS}$.

Let $p=(s,\,w_0)\in (0,\,L)\times S^{2n-1}$. 
Then $\hat g$ has curvature operator strictly greater than $2\lambda$ on $(1,1)$-forms at $p$ if and only if for any nonzero hermitian matrix $u^{\alpha\bar\beta}$,
\begin{eqnarray*}
    0&<&\frac{2(1-\lambda (b(s))^2-(b'(s))^2)}{(b(s))^2}\left(\frac14\left(\sum_{\alpha=1}^{n-1}u^{\alpha\bar\alpha}\right)^2+\frac14\sum_{\alpha, \beta=1}^{n-1}\left|u^{\alpha\bar\beta}\right|^2\right)+\left(-\frac{a''(s)}{4a(s)}-\lambda\right)\left|u^{n\overline{n}}\right|^2\\
    &&\quad +\left(-\frac{b''(s)}{b(s)}-\lambda\right)\sum_{\alpha=1}^{n-1}\left|u^{\alpha\overline{n}}\right|^2+\left(-\frac{b''(s)}{b(s)}-\lambda\right)u^{n\overline{n}}\sum_{\alpha=1}^{n-1}u^{\alpha\bar\alpha}.
\end{eqnarray*}
\end{lem}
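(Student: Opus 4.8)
The plan is to combine the explicit curvature formula \eqref{curform} with the known curvature of the round Sasaki base, and then to unwind the definition of the lower bound ``$\Rm>2\lambda$ on $(1,1)$-forms'' into a single quadratic form in the hermitian matrix $u^{\alpha\bar\beta}$, recognizing the result as the right-hand side of the asserted inequality.

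First I would record the transverse and Reeb curvature components. Since $\hat g$ is K\"ahler we have $a=bb'$ by Lemma \ref{kahlerr}, so in particular $a(s)^2/b(s)^4=(b'(s))^2/b(s)^2$. The transverse metric $g^T=\tfrac12 g_{FS}$ on $\mathbb{P}^{n-1}$ has constant holomorphic sectional curvature, and by the computation carried out in the proof of Theorem \ref{cone vs base} (equivalently: the flat cone $\mathbb{C}^n$ over the round $S^{2n-1}$ of Example \ref{flat2} has vanishing curvature, which forces the bracketed term in the ``Curvature of a K\"ahler cone'' formula to vanish) one has
\[
-\tilde R_{\alpha\bar\beta\gamma\bar\delta}=\tfrac12\bigl(\delta_{\alpha\beta}\delta_{\gamma\delta}+\delta_{\alpha\delta}\delta_{\beta\gamma}\bigr),\qquad 1\le\alpha,\beta,\gamma,\delta\le n-1,
\]
i.e.\ $g^T$ has curvature operator exactly $2$ on real $(1,1)$-forms. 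Plugging this and $a=bb'$ into \eqref{curform} collapses the transverse block to
\[
-\hat R_{\alpha\bar\beta\gamma\bar\delta}=\frac{1-(b'(s))^{2}}{2\,b(s)^{2}}\bigl(\delta_{\alpha\delta}\delta_{\beta\gamma}+\delta_{\alpha\beta}\delta_{\gamma\delta}\bigr)\qquad(1\le\alpha,\beta,\gamma,\delta\le n-1),
\]
while from \eqref{curform} the only components carrying the index $n$ are, up to the K\"ahler symmetries $\hat R_{\alpha\bar\beta\gamma\bar\delta}=\hat R_{\gamma\bar\beta\alpha\bar\delta}=\hat R_{\alpha\bar\delta\gamma\bar\beta}$,
\[
\hat R_{\alpha\bar n n\bar\alpha}=\hat R_{\alpha\bar\alpha n\bar n}=\frac{b''(s)}{2b(s)}\ \ (1\le\alpha\le n-1),\qquad \hat R_{n\bar n n\bar n}=\frac{a''(s)}{4a(s)}.
\]

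Next I would write out the two sides of the defining inequality in Definition \ref{equiv} (using that, on hermitian $u$, conditions \eqref{stronger} and \eqref{e: unusual} agree). In the frame $\{\hat E_\alpha\}$ one has $\hat g_{\alpha\bar\beta}=\tfrac12\delta_{\alpha\beta}$, hence the ``denominator'' is
\[
2\lambda(\hat g_{\alpha\bar\beta}\hat g_{\gamma\bar\delta}+\hat g_{\alpha\bar\delta}\hat g_{\gamma\bar\beta})u^{\alpha\bar\beta}u^{\gamma\bar\delta}
=\frac{\lambda}{2}\Bigl[\Bigl(\sum_{\alpha=1}^{n}u^{\alpha\bar\alpha}\Bigr)^{2}+\sum_{\alpha,\beta=1}^{n}\lvert u^{\alpha\bar\beta}\rvert^{2}\Bigr],
\]
and the ``numerator'' $-\hat R_{\alpha\bar\beta\gamma\bar\delta}u^{\alpha\bar\beta}u^{\gamma\bar\delta}$ is computed from the components above. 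I would then split every index sum into the transverse block $1\le\cdot\le n-1$ together with the single index $n$, using that $u^{n\bar n}$ is real and that $u$ is hermitian (so $\delta_{\alpha\delta}\delta_{\beta\gamma}u^{\alpha\bar\beta}u^{\gamma\bar\delta}=\sum_{\alpha,\beta}\lvert u^{\alpha\bar\beta}\rvert^{2}$ and $\lvert u^{n\bar\alpha}\rvert^{2}=\lvert u^{\alpha\bar n}\rvert^{2}$). Subtracting $2\lambda$ times the denominator from the numerator, the transverse piece produces $\dfrac{2\bigl(1-\lambda b^{2}(s)-(b'(s))^{2}\bigr)}{b^{2}(s)}\Bigl(\tfrac14\bigl(\sum_{\alpha=1}^{n-1}u^{\alpha\bar\alpha}\bigr)^{2}+\tfrac14\sum_{\alpha,\beta=1}^{n-1}\lvert u^{\alpha\bar\beta}\rvert^{2}\Bigr)$, the entry $\hat R_{n\bar n n\bar n}$ contributes $\bigl(-\tfrac{a''(s)}{4a(s)}-\lambda\bigr)\lvert u^{n\bar n}\rvert^{2}$, the mixed entries $\hat R_{\alpha\bar n n\bar\alpha}$ (counted with multiplicity two because of the symmetry swap) contribute $\bigl(-\tfrac{b''(s)}{b(s)}-\lambda\bigr)\sum_{\alpha=1}^{n-1}\lvert u^{\alpha\bar n}\rvert^{2}$, and the entries $\hat R_{\alpha\bar\alpha n\bar n}$ (again with multiplicity two) contribute the cross term $\bigl(-\tfrac{b''(s)}{b(s)}-\lambda\bigr)u^{n\bar n}\sum_{\alpha=1}^{n-1}u^{\alpha\bar\alpha}$. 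The sum of these four groups is exactly the right-hand side of the claimed inequality, and ``$\Rm>2\lambda$ on $(1,1)$-forms at $p$'' means precisely that this quantity is positive for every nonzero hermitian $u^{\alpha\bar\beta}$; this establishes the stated equivalence.

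The only real obstacle is this last, purely algebraic step: one must track every index arrangement of $\hat R_{\alpha\bar\beta\gamma\bar\delta}$ in which at least one index equals $n$ and feed in the correct multiplicity coming from the K\"ahler symmetries of the curvature tensor (for instance, the pairs $(\hat R_{\alpha\bar n n\bar\alpha},\hat R_{n\bar\alpha\alpha\bar n})$ and $(\hat R_{\alpha\bar\alpha n\bar n},\hat R_{n\bar n\alpha\bar\alpha})$ each double up), while splitting the hermitian-norm and trace terms in the denominator consistently. No idea beyond careful bookkeeping is needed; everything else is a direct substitution into \eqref{curform} using the normalization of the round base.
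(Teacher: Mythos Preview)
Your proposal is correct and follows essentially the same approach as the paper's proof: both compute the quadratic form $-\hat R_{\alpha\bar\beta\gamma\bar\delta}u^{\alpha\bar\beta}u^{\gamma\bar\delta}-2\lambda(\hat g_{\alpha\bar\beta}\hat g_{\gamma\bar\delta}+\hat g_{\alpha\bar\delta}\hat g_{\gamma\bar\beta})u^{\alpha\bar\beta}u^{\gamma\bar\delta}$ by inserting the explicit components from \eqref{curform}, the Fubini--Study curvature $-\tilde R_{\alpha\bar\beta\gamma\bar\delta}=\tfrac12(\delta_{\alpha\beta}\delta_{\gamma\delta}+\delta_{\alpha\delta}\delta_{\beta\gamma})$, and $\hat g_{\alpha\bar\beta}=\tfrac12\delta_{\alpha\beta}$, then splitting the sums into the transverse block $1\le\cdot\le n-1$ and the index $n$ while using the hermitian symmetry of $u$ to collect the cross and off-diagonal terms. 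The only stylistic difference is that you first simplify the curvature components (via $a=bb'$) before contracting, whereas the paper contracts directly and simplifies afterwards; the bookkeeping of multiplicities you flag as the main obstacle is exactly what the paper writes out term by term.
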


\begin{proof}
From the formulas in \eqref{curform}, it suffices to express the difference $$-\hat R_{\alpha\bar\beta\gamma\bar\eta}-2\lambda \left(\hat g_{\alpha\bar\beta}\hat g_{\gamma\bar\eta}+\hat g_{\alpha\bar\eta}\hat g_{\gamma\bar\beta}\right)$$ in terms of the warping functions $a$ and $b$. To this end, let $p=(s,\,w_0)\in (0,L)\times S^{2n-1}$ 
 and for any nonzero real $(1,\,1)$-form $\sum_{\alpha,\beta=1}^{n}i\,u_{\alpha\bar\beta}\hat\Theta_{\alpha}\wedge \hat\Theta_{\bar\beta}$ at $p$, let $u^{\alpha\bar\beta}=\hat g^{\alpha\bar\varepsilon}\hat g^{\eta\bar\beta}u_{\eta\bar \varepsilon}=4u_{\beta\bar \alpha}$. Then
\begin{equation}\label{curf3}
\begin{split}
2\lambda(\hat g_{\alpha\bar\beta}\hat g_{\gamma\bar\eta}+\hat g_{\alpha\bar\eta}\hat g_{\gamma\bar\beta})&u^{\alpha\bar\beta}u^{\gamma\bar\eta}
=\frac{\lambda}{2}(\delta_{\alpha\beta}\delta_{\gamma\eta}+\delta_{\alpha\eta}\delta_{\beta\gamma})u^{\alpha\bar\beta}u^{\gamma\bar\eta}\\
&=\frac{\lambda}{2}\left[\left(u^{n\overline{n}}+\sum_{\alpha=1}^{n-1}u^{\alpha\bar\alpha}\right)^2+\sum_{\alpha,\beta=1}^{n}\left|u^{\alpha\bar\beta}\right|^2\right]\\
&=\frac{\lambda}{2}\Bigg[2\left|u^{n\overline{n}}\right|^2+2u^{n\overline{n}}\sum_{\alpha=1}^{n-1}u^{\alpha\bar\alpha}+\left(\sum_{\alpha=1}^{n-1}u^{\alpha\bar\alpha}\right)^2+\sum_{\alpha,\beta=1}^{n-1}\left|u^{\alpha\bar\beta}\right|^2+2\sum_{\alpha=1}^{n-1}\left|u^{\alpha\overline{n}}\right|^2\Bigg].
\end{split}
\end{equation}
Here we have used the fact that $\overline{u^{\alpha\overline{n}}}=u^{n\overline{\alpha}}$ in the last equality. By the symmetries of the curvature tensor, we moreover have that
\begin{eqnarray*}
-\sum_{\alpha, \beta, \gamma,\eta=1}^{n}\hat  R_{\alpha\bar\beta\gamma\bar\eta}u^{\alpha\bar\beta}u^{\gamma\bar\eta}&=&\sum_{\alpha, \beta, \gamma,\eta=1}^{n-1}-\hat R_{\alpha\bar\beta\gamma\bar\eta}u^{\alpha\bar\beta}u^{\gamma\bar\eta}-\hat R_{n\overline{n}n\overline{n}}u^{n\overline{n}}u^{n\overline{n}}\\
&& -2\sum_{\alpha=1}^{n-1}u^{\alpha\overline{n}}u^{n\overline{n}}\hat R_{\alpha\overline{n}n\overline{n}}-2\sum_{\alpha=1}^{n-1}u^{n\overline{\alpha}}u^{n\overline{n}}\hat R_{n\overline{\alpha}n\overline{n}}\\
&&-2\sum_{\alpha,\beta=1}^{n-1}u^{\alpha\overline{\beta}}u^{n\overline{n}}\hat R_{\alpha\overline{\beta}n\overline{n}}-2\sum_{\alpha,\beta=1}^{n-1}u^{\alpha\overline{n}}u^{n\overline{\beta}}\hat R_{\alpha\overline{n}n\overline{\beta}}\\
&&-\sum_{\alpha,\beta=1}^{n-1}u^{n\overline{\alpha}}u^{n\overline{\beta}}\hat R_{n\overline{\alpha}n\overline{\beta}}-\sum_{\alpha,\beta=1}^{n-1}u^{\alpha\overline{n}}u^{\beta\overline{n}}\hat R_{\alpha\overline{n}\beta\overline{n}}\\
&&-2\sum_{\alpha,\beta,\gamma=1}^{n-1}u^{\alpha\overline{\beta}}u^{n\overline{\gamma}}\hat R_{\alpha\overline{\beta}n\overline{\gamma}}-2\sum_{\alpha,\beta,\gamma=1}^{n-1}u^{\alpha\overline{\beta}}u^{\gamma\overline{n}}\hat R_{\alpha\overline{\beta}\gamma\overline{n}}.
\end{eqnarray*}
This expression can be simplified using the fact that
$$
\hat R_{\alpha\overline{\beta}n\overline{\gamma}}=\hat R_{\alpha\overline{\beta}\gamma\overline{n}}=\hat R_{\alpha\overline{n}n\overline{n}}=\hat R_{n\overline{\alpha}n\overline{n}}=\hat R_{\alpha\overline{n}\beta\overline{n}}=\hat R_{n\overline{\alpha}n\overline{\beta}}=0.
$$
Since the the curvature tensor of $\frac{1}{2}g_{FS}$ is given by $-\tfrac{1}{2}(\delta_{\alpha\beta}\delta_{\gamma\eta}+\delta_{\alpha\eta}\delta_{\beta\gamma})$, we see that
\begin{eqnarray*}
\sum_{\alpha, \beta, \gamma,\eta=1}^{n-1}-\hat R_{\alpha\bar\beta\gamma\bar\eta}u^{\alpha\bar\beta}u^{\gamma\bar\eta}&=&\sum_{\alpha, \beta, \gamma,\eta=1}^{n-1}
\left[\frac{(\delta_{\alpha\beta}\delta_{\gamma\eta}+\delta_{\alpha\eta}\delta_{\beta\gamma})}{2b^2}-\frac{(b')^2(\delta_{\alpha\beta}\delta_{\gamma\eta}+\delta_{\alpha\eta}\delta_{\beta\gamma})}{2b^2}\right]u^{\alpha\bar\beta}u^{\gamma\bar\eta}\\
&=&\frac{1-(b')^2}{2b^2}\left[\left(\sum_{\alpha=1}^{n-1}u^{\alpha\bar\alpha}\right)^2+\sum_{\alpha,\beta=1}^{n-1}\left|u^{\alpha\bar\beta}\right|^2\right].
\end{eqnarray*}
By the curvature formulas given in \eqref{curform}, we find that
\[
-\hat R_{n\overline{n}n\overline{n}}u^{n\overline{n}}u^{n\overline{n}}=-\frac{a''}{4a}\left|u^{n\overline{n}}\right|^2,
\]
\[
-2\sum_{\alpha,\beta=1}^{n-1}u^{\alpha\overline{\beta}}u^{n\overline{n}}\hat R_{\alpha\overline{\beta}n\overline{n}}=-2\sum_{\alpha,\beta=1}^{n-1}u^{\alpha\overline{\beta}}u^{n\overline{n}}\frac{b''}{2b}\delta_{\alpha\beta}=-\frac{b''}{b}u^{n\overline{n}}\sum_{\alpha=1}^{n-1}u^{\alpha\bar\alpha},
\]
\[
-2\sum_{\alpha,\beta=1}^{n-1}u^{\alpha\overline{n}}u^{n\overline{\beta}}\hat R_{\alpha\overline{n}n\overline{\beta}}=-\sum_{\alpha,\beta=1}^{n-1}u^{\alpha\overline{n}}u^{n\overline{\beta}}\frac{b''}{b}\delta_{\alpha\beta}=-\frac{b''}{b}\sum_{\alpha=1}^{n-1}\left|u^{\alpha\overline{n}}\right|^2,
\]
and so
\begin{eqnarray}
-\sum_{\alpha, \beta, \gamma,\eta=1}^{n}\hat R_{\alpha\bar\beta\gamma\bar\eta}u^{\alpha\bar\beta}u^{\gamma\bar\eta}
\notag &=&\frac{1-(b')^2}{2b^2}\left[\left(\sum_{\alpha=1}^{n-1}u^{\alpha\bar\alpha}\right)^2+\sum_{\alpha,\beta=1}^{n-1}\left|u^{\alpha\bar\beta}\right|^2\right]-\frac{a''}{4a}\left|u^{n\overline{n}}\right|^2\\
\label{curf2} &&-\frac{b''}{b}u^{n\overline{n}}\sum_{\alpha=1}^{n-1}u^{\alpha\bar\alpha}-\frac{b''}{b}\sum_{\alpha=1}^{n-1}\left|u^{\alpha\overline{n}}\right|^2.
\end{eqnarray}
The lemma now follows by combining \eqref{curf3} and \eqref{curf2}, together with the fact that $u^{\alpha\bar\beta}$ is an arbitrary nonzero real $(1,\,1)$-vector.
\end{proof}

\section{Comparison with the steady solitons of Apostolov-Cifarelli}\label{s: appendix B}

Let $(z_{1},\ldots,z_{n})$ denote coordinates on $\mathbb{C}^{n}$ with $z_{k}=x_{k}+iy_{k}$ and 
suppose that $(\mathbb{C}^{n},\,g,\,X)$ is a steady K\"ahler-Ricci soliton with $\operatorname{Ric}_{g}=\frac{1}{2}\mathcal{L}_{X}g$,
where 
\begin{equation}\label{standard}
X=2\operatorname{Re}\left(\sum_{\alpha=1}^{n}a_{\alpha}z_{\alpha}\partial_{z_{\alpha}}\right)=
\sum_{\alpha=1}^{n}a_{\alpha}\left(x_{\alpha}\partial_{x_{\alpha}}+y_{\alpha}\partial_{y_{\alpha}}\right)\qquad\textrm{with $a_{\alpha}\in\mathbb{R}$}.
\end{equation}

We begin with a general lemma.

\begin{lem}\label{positive}
If $\operatorname{Ric}_{g}>0$ at $0\in\mathbb{C}^{n}$, then $a_{\alpha}>0$ for all $\alpha=1,\ldots,n$.
\end{lem}

\begin{proof}
Without loss of generality, suppose that $a_{1}\leq 0$ at $0\in\mathbb{C}^{n}$. Let $\nabla$ denote the Levi-Civita connection of $g$. Then 
\begin{equation*}
\begin{split}
0<\operatorname{Ric}_{g}(\partial_{x_{1}},\,\partial_{x_{1}})&=\frac{1}{2}\mathcal{L}_{X}g(\partial_{x_{1}},\,\partial_{x_{1}})\\
&=g\left(\nabla_{\partial_{x_{1}}}X,\,\partial_{x_{1}}\right)\\
&=\sum_{\alpha=1}^{n}a_{\alpha}g\left(\nabla_{\partial_{x_{1}}}\left(x_{\alpha}\partial_{x_{\alpha}}+y_{\alpha}\partial_{y_{\alpha}}\right),\,\partial_{x_{1}}\right)\\
&=\sum_{\alpha=1}^{n}a_{\alpha}g\left(\delta_{\alpha1}\partial_{x_{\alpha}}+x_{\alpha}\nabla_{\partial_{x_{1}}}\partial_{x_{\alpha}},\,\partial_{x_{1}}\right)
+\sum_{\alpha=1}^{n}a_{\alpha}g\left(y_{\alpha}\nabla_{\partial_{x_{1}}}\partial_{y_{\alpha}},\,\partial_{x_{1}}\right)\\
&=a_{1}g(\partial_{x_{1}},\,\partial_{x_{1}})\qquad\textrm{at $0\in\mathbb{C}^{n}$}.
\end{split}
\end{equation*}
This is a contradiction.
\end{proof}

Next, we show that there exist holomorphic coordinates that linearise the soliton vector field of the K\"ahler flying wing steady solitons.

\begin{lem}
For $(\mathbb{C}^{n},\,g,\,X)$ the K\"ahler flying wing steady solitons, holomorphic coordinates exist so that $X$ takes the form given in \eqref{standard}.
\end{lem}

\begin{proof}
The expanding K\"ahler-Ricci solitons in \cite{con-der} 
are asymptotically conical, hence the scalar curvature has a maximum value in the interior of the manifold. Since these expanding solitons have positive Ricci curvature, it follows from the soliton identities that the maximum value of the scalar curvature is achieved at a zero of the soliton vector field. By construction, this is a single point. We denote this point by $0\in\mathbb{C}^{n}$.

Now, the K\"ahler flying wing steady solitons are obtained as the pointed Cheeger-Gromov limits of the aforementioned expanding solitons based at the point $0$, after rescaling the scalar curvature at $0$ to be equal to $1$. It follows that the scalar curvature of the limiting steady soliton $g$ also has a maximum value of $1$ at its base point. Since $\operatorname{Ric}_{g}>0$, by the soliton identities $X$ vanishes at this point. The fact that the limiting soliton potential is convex because $\operatorname{Ric}_{g}>0$ then implies that this zero of $X$ is unique.

Finally, by 
\cite[Proposition 6]{Bry08}, we can choose local holomorphic coordinates on $\mathbb{C}^{n}$ centred at the zero of $X$
so that $X$ takes the form \eqref{standard}. Since the zero of $X$ is unique, these coordinates can be extended globally by 
\cite[Proposition 2.28]{CDS19} on $\mathbb{C}^{n}$ so that
the representation \eqref{standard} of $X$ is global.
\end{proof}

Finally, we show that the soliton vector field of our K\"ahler flying wing steady solitons is a multiple of the Euler vector field on $\mathbb{C}^{n}$.

\begin{prop}\label{diagonalisable}
For $(\mathbb{C}^{n},\,g,\,X)$ the K\"ahler flying wing steady solitons in Theorem \ref{t: existence of new}, we have $a_{1}=\ldots=a_{n}>0$ in \eqref{standard}.
\end{prop}

\begin{proof}
By Remark 2.27, the Ricci tensor of the expanding K\"ahler-Ricci solitons constructed in \cite{con-der} is diagonal at $0\in\mathbb{C}^{n}$, 
the unique critical point of the soliton vector field. It follows that $\operatorname{Ric}_{g}=\mu g$ for some $\mu\in\mathbb{R}$ at $0\in\mathbb{C}^{n}$ for the K\"ahler flying wing steady solitons. Since $\operatorname{Ric}_{g}>0$, we see from Lemma \ref{positive} above that $\mu>0$. For any index $\alpha=1,\ldots,n$, we then have that
\begin{equation*}
\begin{split}
\mu g(\partial_{x_{\alpha}},\,\partial_{x_{\alpha}})&=\operatorname{Ric}_{g}(\partial_{x_{\alpha}},\,\partial_{x_{\alpha}})=\frac{1}{2}\mathcal{L}_{X}g(\partial_{x_{\alpha}},\,\partial_{x_{\alpha}})\\
&=g\left(\nabla_{\partial_{x_{\alpha}}}X,\,\partial_{x_{\alpha}}\right)\\
&=\sum_{\beta=1}^{n}a_{\beta}g\left(\nabla_{\partial_{x_{\alpha}}}\left(x_{\beta}\partial_{x_{\beta}}+y_{\beta}\partial_{y_{\beta}}\right),\,\partial_{x_{\alpha}}\right)\\
&=\sum_{\beta=1}^{n}a_{\beta}g\left(\delta_{\alpha\beta}\partial_{x_{\beta}}+x_{\beta}\nabla_{\partial_{x_{\alpha}}}\partial_{x_{\beta}},\,\partial_{x_{\alpha}}\right)
+\sum_{\beta=1}^{n}a_{\beta}g\left(y_{\beta}\nabla_{\partial_{x_{\alpha}}}\partial_{y_{\beta}},\,\partial_{x_{\alpha}}\right)\\
&=a_{\alpha}g(\partial_{x_{\alpha}},\,\partial_{x_{\alpha}})\qquad\textrm{at $0\in\mathbb{C}^{n}$},
\end{split}
\end{equation*}
i.e., $a_{\alpha}=\mu>0$ for all $\alpha=1,\ldots,n$.
\end{proof}

Next recall the steady solitons from \cite[Theorem 1.2]{apostolov2023hamiltonian}. We consider this theorem 
with $\ell=2$, $d_{1}=0$, and $d_{2}=n-2$. In this case, this theorem yields complete $U(1)\times U(n-1)$-invariant steady solitons on $\mathbb{C}^{n}$. 
By \cite[Lemma 5.1]{apostolov2023hamiltonian}, the soliton vector field $X$ is given by $$X=-\frac{a}{(\alpha_{2}-\alpha_{1})}\left(-\frac{q(\alpha_{1})}{2}X_{1}+\frac{q(\alpha_{2})}{2(n-1)}X_{2}\right),$$
where $X_{1}$ is the Euler vector field on the first $\mathbb{C}$-factor and 
$X_{2}$ is the Euler vector field on the second $\mathbb{C}^{n-1}$-factor of $\mathbb{C}^{n}$, where $q(t)$ is a polynomial of degree one, and $-\infty<\alpha_{1}<\alpha_{2}<\infty$ (cf.~\cite[Section 5.1]{apostolov2023hamiltonian}). Write $q(t)=q_{0}+q_{1}t$ and note that
we can always normalise by translation and scaling so that $\alpha_{1}=0$ and $\alpha_{2}=1$. Then it is clear that
\begin{lem}\label{hello}
$X$ coincides with a multiple of the Euler vector field on $\mathbb{C}^{n}$ if and only if $q_{1}=-n q_0$.
\end{lem}

The next proposition shows that this can never be the case.
\begin{prop}\label{bye}
The vector field $X$ of the steady solitons of \cite[Theorem 1.2]{apostolov2023hamiltonian} with $U(1)\times U(n-1)$-symmetry can never  
be a multiple of the Euler vector field on $\mathbb{C}^{n}$.
\end{prop}

\begin{proof}
In this case we have that $p_{c}(t)=(t-1)^{n-2}$ (cf.~\cite[p.14]{apostolov2023hamiltonian}). Suppose that the proposition is false. Then by Lemma 
\ref{hello} we have that $q(t)=1-nt$ up to a non-zero scalar multiple. By
construction, the solution $F$ of the ODE satisfies \cite[equation (5.1)]{apostolov2023hamiltonian}:
$$0=F(1)=e^{-2a}\int_{0}^{1}e^{2ax}q(x)p_{c}(x)\,dx,$$
which by definition of $p_{c}(t)$ and $q(t)$ equates to 
$$0=\int_{0}^{1}e^{2ax}(1-nx)(x-1)^{n}\,dx.$$
Here $a>0$ is a real constant. Then
\begin{equation*}
\begin{split}
0&=\int_{0}^{1}\left(x(x-1)^{n-1}e^{2ax}\right)'\,dx\\
&=\int_{0}^{1}(x-1)^{n-1}e^{2ax}\,dx+(n-1)\int_{0}^{1}x(x-1)^{n-2}e^{2ax}\,dx+2a\int_{0}^{1}x(x-1)^{n-1}e^{2ax}\,dx\\
&=\int_{0}^{1}\left((x-1)+(n-1)x\right)(x-1)^{n-2}e^{2ax}\,dx+2a\int_{0}^{1}x(x-1)^{n-1}e^{2ax}\,dx\\
&=-\underbrace{\int_{0}^{1}(1-nx)(x-1)^{n-2}e^{2ax}\,dx}_{=\,0}+2a\int_{0}^{1}x(x-1)^{n-1}e^{2ax}\,dx.
\end{split}
\end{equation*}
This is a contradiction because $x(x-1)^{n-1}e^{2ax}$ has a sign on $(0,\,1)$.   
\end{proof}

Finally, the steady solitons from \cite[Theorem 1.4]{apostolov2023hamiltonian}. 
can never have a soliton vector field a scalar multiple of the Euler vector field on $\mathbb{C}^{n}$; cf.~\cite[Section 5.3]{apostolov2023hamiltonian}. As a result of these observations, we obtain

\begin{cor}\label{noniso}
The solitons of Theorem \ref{t: existence of new} are non-isometric to those of 
\cite{apostolov2023hamiltonian}.
\end{cor}

\begin{proof}
  If a steady soliton of \cite{apostolov2023hamiltonian} doesn't have positive sectional curvature or is not $U(1)\times U(n-1)$-invariant, then the result is clear. So suppose that it has both of these properties. Then any isometry will map a critical point of the scalar curvature to a critical point of the scalar curvature. By the soliton identities, the fact that the Ricci curvature is positive implies that for each steady soliton, the critical points of the scalar curvature and the zero set of the soliton vector field coincide. Each soliton vector field has one zero at the origin and positive Ricci curvature implies that this zero is unique, hence any isometry will map the origin to the origin. At every point, the steady soliton equation implies that the eigenvalues of the Ricci tensor coincide with those of the Hessian of the potential function. In particular, at the origin, being a zero of the soliton vector field, the Hessian of the soliton potential is independent of the choice of metric. This yields a contradiction. Indeed, at the origin, Proposition \ref{diagonalisable} implies that the Ricci tensor of the steady solitons of Theorem \ref{t: existence of new} is a multiple of the identity, whereas the other observations above imply that this is not the case for those steady solitons of \cite{apostolov2023hamiltonian} admitting $U(1)\times U(n-1)$-symmetry.
\end{proof}

\bibliography{bib}

\begin{thebibliography}{10}

\bibitem{An90}
M.~Anderson.
\newblock {M}etrics of positive {R}icci curvature with large diameter.
\newblock {\em Manuscripta Math.}, 68(4):405--415, 1990.

\bibitem{apostolov2023hamiltonian}
V.~Apostolov and C.~Cifarelli.
\newblock {Hamiltonian $2$-forms and new explicit Calabi Yau metrics and gradient steady K\"ahler-Ricci solitons on $\mathbb{C}^n$}.
\newblock {\em arXiv:2305.15626v2}, 2023.

\bibitem{appleton2017family}
A.~Appleton.
\newblock A family of non-collapsed steady {R}icci solitons in even dimensions greater or equal to four.
\newblock {\em arXiv:1708.00161, to appear in Peking Math.~J.}

\bibitem{bamler2020entropy}
R.~Bamler.
\newblock Entropy and heat kernel bounds on a {R}icci flow background.
\newblock {\em arXiv:2008.07093}, 2020.

\bibitem{BCRW}
R.~Bamler, E.~Cabezas-Rivas, and B.~Wilking.
\newblock The {R}icci flow under almost non-negative curvature conditions.
\newblock {\em Invent. Math.}, 217(1):95--126, 2019.

\bibitem{bamler2023degree}
R.~Bamler and E.~Chen.
\newblock Degree theory for 4-dimensional asymptotically conical gradient expanding solitons.
\newblock {\em arXiv:2305.03154}, 2023.

\bibitem{BM87}
S.~Bando and T.~Mabuchi.
\newblock Uniqueness of {E}instein {K}\"{a}hler metrics modulo connected group actions.
\newblock In {\em Algebraic geometry, {S}endai, 1985}, volume~10 of {\em Adv. Stud. Pure Math.}, pages 11--40. North-Holland, Amsterdam, 1987.

\bibitem{BM17}
O.~Biquard and H.~Macbeth.
\newblock Steady {K}\"{a}hler-{R}icci solitons on crepant resolutions of finite quotients of {$\Bbb C^n$}.
\newblock {\em J. Lond. Math. Soc. (2)}, 109(1):Paper No. e12833, 27, 2024.

\bibitem{book:Boyer}
C.~Boyer and K.~Galicki.
\newblock {\em Sasakian geometry}.
\newblock Oxford Mathematical Monographs. Oxford University Press, Oxford, 2008.

\bibitem{brendlesteady3d}
S.~Brendle.
\newblock Rotational symmetry of self-similar solutions to the {R}icci flow.
\newblock {\em Invent. Math.}, 194(3):731--764, 2013.

\bibitem{Brendle_jdg_high}
S.~Brendle.
\newblock Rotational symmetry of {R}icci solitons in higher dimensions.
\newblock {\em J. Differential Geom.}, 97(2):191--214, 2014.

\bibitem{bryant}
R.~Bryant.
\newblock {Ricci flow solitons in dimension three with $SO(3)$-symmetries}.
\newblock 2005.
\newblock available at \url{https://services.math.duke.edu/~bryant/3DRotSymRicciSolitons.pdf}.

\bibitem{Bry08}
R.~Bryant.
\newblock Gradient {K}\"{a}hler {R}icci solitons.
\newblock {\em Ast\'{e}risque}, (321):51--97, 2008.
\newblock G\'{e}om\'{e}trie diff\'{e}rentielle, physique math\'{e}matique, math\'{e}matiques et soci\'{e}t\'{e}. I.

\bibitem{BGP}
Y.~Burago, M.~Gromov, and G.~Perelman.
\newblock {A.D. Alexandrov spaces with curvature bounded below}.
\newblock {\em Russian Mathematical Surveys}, 47(2):1--58, 1992.

\bibitem{calabi1979metriques}
E.~Calabi.
\newblock M\'{e}triques k\"{a}hl\'{e}riennes et fibr\'{e}s holomorphes.
\newblock In {\em Eugenio {C}alabi---collected works}, pages 485--510. Springer, Berlin, [2020] \copyright 2020.
\newblock Reprint of [0543218].

\bibitem{Cao1996}
H.-D. Cao.
\newblock Existence of gradient {K}\"{a}hler-{R}icci solitons.
\newblock In {\em Elliptic and parabolic methods in geometry ({M}inneapolis, {MN}, 1994)}, pages 1--16. A K Peters, Wellesley, MA, 1996.

\bibitem{Cao1997jdg}
H.-D. Cao.
\newblock Limits of solutions to the {K}\"{a}hler-{R}icci flow.
\newblock {\em J. Differential Geom.}, 45(2):257--272, 1997.

\bibitem{CaoHD}
H.-D. Cao.
\newblock Recent progress on {R}icci solitons.
\newblock In {\em Recent advances in geometric analysis}, volume~11 of {\em Adv. Lect. Math. (ALM)}, pages 1--38. Int. Press, Somerville, MA, 2010.

\bibitem{CaoChow1986}
H.-D. Cao and B.~Chow.
\newblock Compact {K}\"{a}hler manifolds with nonnegative curvature operator.
\newblock {\em Invent. Math.}, 83(3):553--556, 1986.

\bibitem{CH00}
H.-D. Cao and R.~Hamilton.
\newblock Gradient {K}\"{a}hler-{R}icci solitons and periodic orbits.
\newblock {\em Comm. Anal. Geom.}, 8(3):517--529, 2000.

\bibitem{cao2023complete}
H.-D. Cao, T.~Liu, and J.~Xie.
\newblock Complete gradient expanding {R}icci solitons with finite asymptotic scalar curvature ratio.
\newblock {\em Calc. Var. Partial Differential Equations}, 62(2):Paper No. 48, 19, 2023.

\bibitem{chan2023curvature}
P.-Y. Chan.
\newblock Curvature estimates and gap theorems for expanding {R}icci solitons.
\newblock {\em Int. Math. Res. Not. IMRN}, (1):406--454, 2023.

\bibitem{CS05}
A.~Chau and O.~Schn\"{u}rer.
\newblock Stability of gradient {K}\"{a}hler-{R}icci solitons.
\newblock {\em Comm. Anal. Geom.}, 13(4):769--800, 2005.

\bibitem{CT05}
A.~Chau and L.-F. Tam.
\newblock A note on the uniformization of gradient {K}\"{a}hler {R}icci solitons.
\newblock {\em Math. Res. Lett.}, 12(1):19--21, 2005.

\bibitem{cheeger1995almost}
J.~Cheeger and T.~Colding.
\newblock Almost rigidity of warped products and the structure of spaces with {R}icci curvature bounded below.
\newblock {\em C. R. Acad. Sci. Paris S\'{e}r. I Math.}, 320(3):353--357, 1995.

\bibitem{ChenZhu2005}
B.-L. Chen and X.-P. Zhu.
\newblock Volume growth and curvature decay of complete positively curved {K}\"{a}hler manifolds.
\newblock {\em Q. J. Pure Appl. Math.}, 1(1):68--108, 2005.

\bibitem{Cheng1975}
S.-Y. Cheng.
\newblock Eigenvalue comparison theorems and its geometric applications.
\newblock {\em Math. Z.}, 143(3):289--297, 1975.

\bibitem{chodosh}
O.~Chodosh.
\newblock Expanding {R}icci solitons asymptotic to cones.
\newblock {\em Calc. Var. Partial Differential Equations}, 51(1-2):1--15, 2014.

\bibitem{CF16}
O.~Chodosh and F.~Fong.
\newblock Rotational symmetry of conical {K}\"{a}hler-{R}icci solitons.
\newblock {\em Math. Ann.}, 364(3-4):777--792, 2016.

\bibitem{RFTandA1}
B.~Chow, S.-C. Chu, D.~Glickenstein, C.~Guenther, J.~Isenberg, T.~Ivey, D.~Knopf, P.~Lu, F.~Luo, and L.~Ni.
\newblock {\em The {R}icci flow: techniques and applications. {P}art {I}}, volume 135 of {\em Mathematical Surveys and Monographs}.
\newblock American Mathematical Society, Providence, RI, 2007.
\newblock Geometric aspects.

\bibitem{RFTandA2}
B.~Chow, S.-C. Chu, D.~Glickenstein, C.~Guenther, J.~Isenberg, T.~Ivey, D.~Knopf, P.~Lu, F.~Luo, and L.~Ni.
\newblock {\em The {R}icci flow: techniques and applications. {P}art {II}}, volume 144 of {\em Mathematical Surveys and Monographs}.
\newblock American Mathematical Society, Providence, RI, 2008.
\newblock Analytic aspects.

\bibitem{RFTandA3}
B.~Chow, S.-C. Chu, D.~Glickenstein, C.~Guenther, J.~Isenberg, T.~Ivey, D.~Knopf, P.~Lu, F.~Luo, and L.~Ni.
\newblock {\em The {R}icci flow: techniques and applications. {P}art {III}. {G}eometric-analytic aspects}, volume 163 of {\em Mathematical Surveys and Monographs}.
\newblock American Mathematical Society, Providence, RI, 2010.

\bibitem{HaRF}
B.~Chow, P.~Lu, and L.~Ni.
\newblock {\em Hamilton's {R}icci flow}, volume~77 of {\em Graduate Studies in Mathematics}.
\newblock American Mathematical Society, Providence, RI; Science Press Beijing, New York, 2006.

\bibitem{cheeger1996shape}
T.~Colding.
\newblock Shape of manifolds with positive {R}icci curvature.
\newblock {\em Invent. Math.}, 124(1-3):175--191, 1996.

\bibitem{CS2016}
T.~Collins and G.~Sz\'{e}kelyhidi.
\newblock The twisted {K}\"{a}hler-{R}icci flow.
\newblock {\em J. Reine Angew. Math.}, 716:179--205, 2016.

\bibitem{CD20}
R.~J. Conlon and A.~Deruelle.
\newblock {Steady gradient K\"ahler-Ricci solitons on crepant resolutions of Calabi-Yau cones}.
\newblock {\em arXiv:2006.03100, to appear in Mem.~Amer.~Math.~Soc.}

\bibitem{con-der}
R.~J. Conlon and A.~Deruelle.
\newblock Expanding {K}\"{a}hler-{R}icci solitons coming out of {K}\"{a}hler cones.
\newblock {\em J. Differential Geom.}, 115(2):303--365, 2020.

\bibitem{CDS19}
R.~J. Conlon, A.~Deruelle, and S.~Sun.
\newblock {Classification results for expanding and shrinking gradient K\"ahler-Ricci solitons}.
\newblock {\em arXiv:1904.00147, to appear in Geom.~Topol.}

\bibitem{CuiThesis16}
X.~Cui.
\newblock {\em On curvature, volume growth and uniqueness of steady {R}icci solitons}.
\newblock ProQuest LLC, Ann Arbor, MI, 2016.
\newblock Thesis (Ph.D.)--Lehigh University.

\bibitem{dancer2008steady}
A.~Dancer and M.~Wang.
\newblock Some new examples of non-{K}\"{a}hler {R}icci solitons.
\newblock {\em Math. Res. Lett.}, 16(2):349--363, 2009.

\bibitem{DancerWang2011}
A.~Dancer and M.~Wang.
\newblock On {R}icci solitons of cohomogeneity one.
\newblock {\em Ann. Global Anal. Geom.}, 39(3):259--292, 2011.

\bibitem{DS2023}
V.~Datar and H.~Seshadri.
\newblock Diameter rigidity for {K}\"{a}hler manifolds with positive bisectional curvature.
\newblock {\em Math. Ann.}, 385(1-2):471--479, 2023.

\bibitem{DZ20}
Y.~Deng and X.~Zhu.
\newblock Rigidity of {$\kappa$}-noncollapsed steady {K}\"{a}hler-{R}icci solitons.
\newblock {\em Math. Ann.}, 377(1-2):847--861, 2020.

\bibitem{De15}
A.~Deruelle.
\newblock Smoothing out positively curved metric cones by {R}icci expanders.
\newblock {\em Geom. Funct. Anal.}, 26(1):188--249, 2016.

\bibitem{De14}
A.~Deruelle.
\newblock Asymptotic estimates and compactness of expanding gradient {R}icci solitons.
\newblock {\em Ann. Sc. Norm. Super. Pisa Cl. Sci. (5)}, 17(2):485--530, 2017.

\bibitem{deruelle2022initial}
A.~Deruelle, F.~Schulze, and M.~Simon.
\newblock Initial stability estimates for {R}icci flow and three dimensional {R}icci-pinched manifolds.
\newblock {\em arXiv:2203.15313}, 2022.

\bibitem{eguchi1978asymptotically}
T.~Eguchi and A.~Hanson.
\newblock Asymptotically flat self-dual solutions to euclidean gravity.
\newblock {\em Physics Letters B}, 74(3):249--251, 1978.

\bibitem{feldman2003rotationally}
M.~Feldman, T.~Ilmanen, and D.~Knopf.
\newblock Rotationally symmetric shrinking and expanding gradient {K}\"{a}hler-{R}icci solitons.
\newblock {\em J. Differential Geom.}, 65(2):169--209, 2003.

\bibitem{FOW09}
A.~Futaki, H.~Ono, and G.~Wang.
\newblock Transverse {K}\"{a}hler geometry of {S}asaki manifolds and toric {S}asaki-{E}instein manifolds.
\newblock {\em J. Differential Geom.}, 83(3):585--635, 2009.

\bibitem{gastel2004family}
A.~Gastel and M.~Kronz.
\newblock A family of expanding {R}icci solitons.
\newblock In {\em Variational problems in {R}iemannian geometry}, volume~59 of {\em Progr. Nonlinear Differential Equations Appl.}, pages 81--93. Birkh\"{a}user, Basel, 2004.

\bibitem{cigar}
R.~Hamilton.
\newblock The {R}icci flow on surfaces.
\newblock In {\em Mathematics and general relativity ({S}anta {C}ruz, {CA}, 1986)}, volume~71 of {\em Contemp. Math.}, pages 237--262. Amer. Math. Soc., Providence, RI, 1988.

\bibitem{huybrechts}
D.~Huybrechts.
\newblock {\em Complex geometry}.
\newblock Universitext. Springer-Verlag, Berlin, 2005.
\newblock An introduction.

\bibitem{ivey1994newsteady}
T.~Ivey.
\newblock New examples of complete {R}icci solitons.
\newblock {\em Proc. Amer. Math. Soc.}, 122(1):241--245, 1994.

\bibitem{KL}
B.~Kleiner and J.~Lott.
\newblock Notes on {P}erelman's papers.
\newblock {\em Geom. Topol.}, 12(5):2587--2855, 2008.

\bibitem{Kle77}
P.~Klembeck.
\newblock A complete {K}\"{a}hler metric of positive curvature on {$C\sp{n}$}.
\newblock {\em Proc. Amer. Math. Soc.}, 64(2):313--316, 1977.

\bibitem{KN69}
S.~Kobayashi and K.~Nomizu.
\newblock {\em Foundations of differential geometry. {V}ol. {II}}, volume Vol. II of {\em Interscience Tracts in Pure and Applied Mathematics, No. 15}.
\newblock Interscience Publishers John Wiley \& Sons, Inc., New York-London-Sydney, 1969.

\bibitem{koiso1990rotationally}
N.~Koiso.
\newblock On rotationally symmetric {H}amilton's equation for {K}\"{a}hler-{E}instein metrics.
\newblock In {\em Recent topics in differential and analytic geometry}, volume 18-{\rm I} of {\em Adv. Stud. Pure Math.}, pages 327--337. Academic Press, Boston, MA, 1990.

\bibitem{Lai2022_O(2)}
Y.~Lai.
\newblock ${O}(2)$-symmetry of 3{D} steady gradient {R}icci solitons.
\newblock {\em arXiv:2205.01146, to appear in Geom.~Topol.}

\bibitem{Lai2019}
Y.~Lai.
\newblock Ricci flow under local almost non-negative curvature conditions.
\newblock {\em Adv. Math.}, 343:353--392, 2019.

\bibitem{lai20223d}
Y.~Lai.
\newblock 3{D} flying wings for any asymptotic cones.
\newblock {\em arXiv:2207.02714}, 2022.

\bibitem{Lai2020_flying_wing}
Y.~Lai.
\newblock A family of 3d steady gradient solitons that are flying wings.
\newblock {\em J. Differential Geom.}, 126(1):297--328, 2024.

\bibitem{lee2022three}
M.-C. Lee and P.~Topping.
\newblock Three-manifolds with non-negatively pinched {R}icci curvature.
\newblock {\em arXiv:2204.00504}, 2022.

\bibitem{LiWang2005}
P.~Li and J.~Wang.
\newblock Comparison theorem for {K}\"{a}hler manifolds and positivity of spectrum.
\newblock {\em J. Differential Geom.}, 69(1):43--74, 2005.

\bibitem{GY2018}
G.~Liu and Y.~Yuan.
\newblock Diameter rigidity for {K}\"{a}hler manifolds with positive bisectional curvature.
\newblock {\em Math. Z.}, 290(3-4):1055--1061, 2018.

\bibitem{lott2017note}
J.~Lott and P.~Wilson.
\newblock Note on asymptotically conical expanding {R}icci solitons.
\newblock volume 145, pages 3525--3529, 2017.

\bibitem{Maximo23}
D.~M\'aximo and H.~Stufflebeam.
\newblock Stability of {C}onvex {S}pheres.
\newblock {\em arXiv:2312.13995}, 2023.

\bibitem{milnor}
J.~Milnor.
\newblock {\em Morse theory}, volume No. 51 of {\em Annals of Mathematics Studies}.
\newblock Princeton University Press, Princeton, NJ, 1963.
\newblock Based on lecture notes by M. Spivak and R. Wells.

\bibitem{mori1979projective}
S.~Mori.
\newblock Projective manifolds with ample tangent bundles.
\newblock {\em Ann. of Math. (2)}, 110(3):593--606, 1979.

\bibitem{ni2005ancient}
L.~Ni.
\newblock Ancient solutions to {K}\"{a}hler-{R}icci flow.
\newblock {\em Math. Res. Lett.}, 12(5-6):633--653, 2005.

\bibitem{Ot91}
Y.~Otsu.
\newblock On manifolds of positive {R}icci curvature with large diameter.
\newblock {\em Math. Z.}, 206(2):255--264, 1991.

\bibitem{pedersen1999quasi}
H.~Pedersen, C.~T{\o}nnesen-Friedman, and G.~Valent.
\newblock Quasi-{E}instein {K}\"{a}hler metrics.
\newblock {\em Lett. Math. Phys.}, 50(3):229--241, 1999.

\bibitem{perelman1995diameter}
G.~Perelman.
\newblock A diameter sphere theorem for manifolds of positive {R}icci curvature.
\newblock {\em Math. Z.}, 218(4):595--596, 1995.

\bibitem{petersen}
P.~Petersen.
\newblock {\em Riemannian geometry}, volume 171 of {\em Graduate Texts in Mathematics}.
\newblock Springer, Cham, third edition, 2016.

\bibitem{RR2023}
T.~Ren and X.~Rong.
\newblock Quantitative {M}aximal {D}iameter {R}igidity of {P}ositive {R}icci {C}urvature.
\newblock {\em arXiv:2308.01592}, 2023.

\bibitem{Sch21}
J.~Sch\"{a}fer.
\newblock {Asymptotically cylindrical steady K\"ahler-Ricci solitons}.
\newblock {\em arXiv:2103.12629}, 2021.

\bibitem{Sch20}
J.~Sch\"{a}fer.
\newblock Existence and uniqueness of {$S^1$}-invariant {K}\"{a}hler-{R}icci solitons.
\newblock {\em Ann. Fac. Sci. Toulouse Math. (6)}, 32(1):15--53, 2023.

\bibitem{schulze2013expanding}
F.~Schulze and M.~Simon.
\newblock Expanding solitons with non-negative curvature operator coming out of cones.
\newblock {\em Math. Z.}, 275(1-2):625--639, 2013.

\bibitem{Shi1987derivative1}
W.-X. Shi.
\newblock Deforming the metric on complete {R}iemannian manifolds.
\newblock {\em J. Differential Geom.}, 30(1):223--301, 1989.

\bibitem{Siepmann2013Thesis}
M.~Siepmann.
\newblock {\em Ricci flows of {R}icci flat cones}.
\newblock PhD thesis, ETH Z\"urich, 2013.
\newblock available at \url{https://www.research-collection.ethz.ch/bitstream/handle/20.500.11850/154163/eth-7556-02.pdf?sequence=2}.

\bibitem{Simon2012}
M.~Simon.
\newblock Ricci flow of non-collapsed three manifolds whose {R}icci curvature is bounded from below.
\newblock {\em J. Reine Angew. Math.}, 662:59--94, 2012.

\bibitem{mollification}
M.~Simon and P.~Topping.
\newblock Local mollification of {R}iemannian metrics using {R}icci flow, and {R}icci limit spaces.
\newblock {\em Geom. Topol.}, 25(2):913--948, 2021.

\bibitem{Siu1980}
Y.-T. Siu.
\newblock The complex-analyticity of harmonic maps and the strong rigidity of compact {K}\"{a}hler manifolds.
\newblock {\em Ann. of Math. (2)}, 112(1):73--111, 1980.

\bibitem{siu1980compact}
Y.-T. Siu and S.-T. Yau.
\newblock Compact {K}\"{a}hler manifolds of positive bisectional curvature.
\newblock {\em Invent. Math.}, 59(2):189--204, 1980.

\bibitem{TamYu12}
L.-F. Tam and C.~Yu.
\newblock Some comparison theorems for {K}\"{a}hler manifolds.
\newblock {\em Manuscripta Math.}, 137(3-4):483--495, 2012.

\bibitem{TZZZ2013}
G.~Tian, S.~Zhang, Z.~Zhang, and X.~Zhu.
\newblock Perelman's entropy and {K}\"{a}hler-{R}icci flow on a {F}ano manifold.
\newblock {\em Trans. Amer. Math. Soc.}, 365(12):6669--6695, 2013.

\bibitem{Tran2023}
H.~Tran.
\newblock {K\"ahler {G}radient {R}icci {S}olitons with {L}arge {S}ymmetry}.
\newblock {\em arXiv:2306.05787}, 2023.

\bibitem{wang2004kahler}
X.-J. Wang and X.~Zhu.
\newblock K\"{a}hler-{R}icci solitons on toric manifolds with positive first {C}hern class.
\newblock {\em Adv. Math.}, 188(1):87--103, 2004.

\bibitem{WZ11}
H.-H. Wu and F.~Zheng.
\newblock Examples of positively curved complete {K}\"{a}hler manifolds.
\newblock In {\em Geometry and analysis. {N}o. 1}, volume~17 of {\em Adv. Lect. Math. (ALM)}, pages 517--542. Int. Press, Somerville, MA, 2011.

\bibitem{Yang12}
B.~Yang.
\newblock A characterization of noncompact {K}oiso-type solitons.
\newblock {\em Internat. J. Math.}, 23(5):1250054, 13, 2012.

\bibitem{YZ13}
B.~Yang and F.~Zheng.
\newblock {$U(n)$}-invariant {K}\"{a}hler-{R}icci flow with non-negative curvature.
\newblock {\em Comm. Anal. Geom.}, 21(2):251--294, 2013.

\end{thebibliography}
\bibliographystyle{abbrv}

\end{document}